\begin{document}
%
%
%

%
% Theorem environments
%

\theoremstyle{plain}
\newtheorem{theorem}{Theorem}[section]
\newtheorem{lemma}[theorem]{Lemma}
\newtheorem{proposition}[theorem]{Proposition}
\newtheorem{corollary}[theorem]{Corollary}
\newtheorem{definition}[theorem]{Definition}

\theoremstyle{definition}
\newtheorem{example}[theorem]{Example}
\newtheorem{remark}[theorem]{Remark}
\newtheorem{summary}[theorem]{Summary}
\newtheorem{notation}[theorem]{Notation}
\newtheorem{problem}[theorem]{Problem}

\theoremstyle{remark}
\newtheorem{claim}[theorem]{Claim}
\newtheorem{sublemma}[theorem]{Sub-lemma}
\newtheorem{innerremark}[theorem]{Remark}

%
% Preferences
%

\numberwithin{equation}{section}
\numberwithin{figure}{section}
\allowdisplaybreaks
\renewcommand{\labelitemi}{$\centerdot$}

%
% New commands
%

\def \N{\mathbb{N}_0}
\def \Z{\mathbb{Z}}
\def \Q{\mathbb{Q}}
\def \R{\mathbb{R}}
\def \K{\mathbb{K}}

\def \A{\mathcal{A}}

\def \ad{{\rm ad}}
\def \aug{{\rm aug}}
\def \Aut{{\rm Aut}}
\def \cl{{\rm cl}}
\def \Coker{{\rm Coker}}
\def \deg{{\rm deg}}
\def \ideg{{\rm i\hbox{-}deg}}
\def \edeg{{\rm e\hbox{-}deg}}
\def \End{{\rm End}}
\def \Gr{{\rm Gr}}
\def \Hom{{\rm Hom}}
\def \incl{{\rm incl}}
\def \id{{\rm id}}
\def \Img{{\rm Im}}
\def \Ker{{\rm Ker}}
\def \Lk{{\rm Lk}}
\def \mod{{\rm mod}}
\def \ord{{\rm ord}}
\def \pr{{\rm pr}}
\def \QDer{{\rm QDer}}
\def \rk{{\rm rk}}
\def \sgn{{\rm sgn}}
\def \sign{\pm}
\def \Tors{{\rm Tors}}
\def \ud{{\rm d}}
\def \Mat{\operatorname{Mat}}

\newcommand{\set}[1]{\lfloor #1\rceil}
\newcommand{\horiz}[1]{\displaystyle {#1}^{\rightleftharpoons}}
\newcommand{\ecrossing}{\begin{array}{c}\includegraphics[scale=0.06]{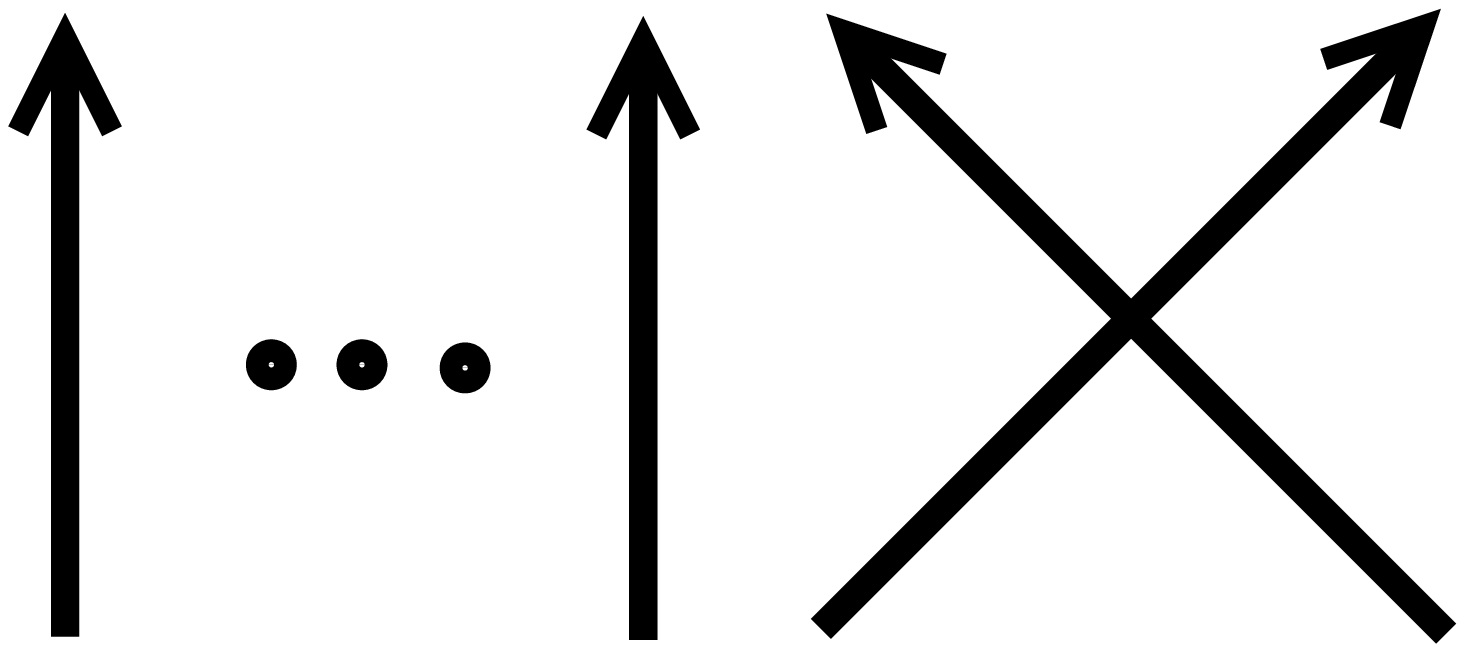}\end{array}}
\newcommand{\ichord}{\begin{array}{c}\includegraphics[scale=0.06]{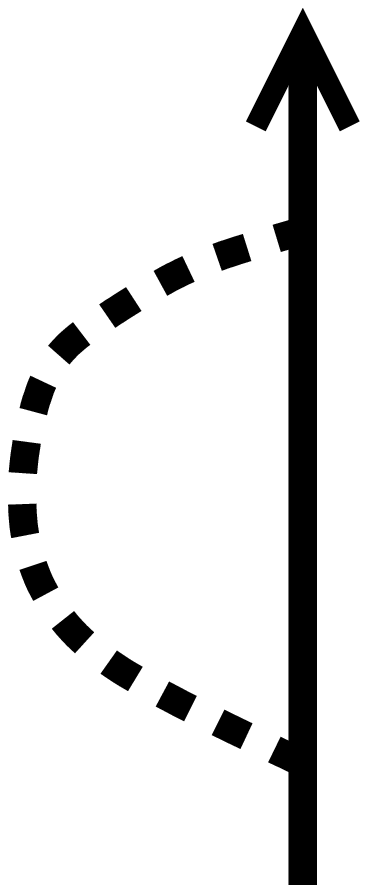}\end{array}}
\newcommand{\tstrut}[1]{\begin{array}{c}\labellist \scriptsize \hair 2pt  \pinlabel {$#1$}  [r] at 1 120
					\endlabellist  \includegraphics[scale=0.06]{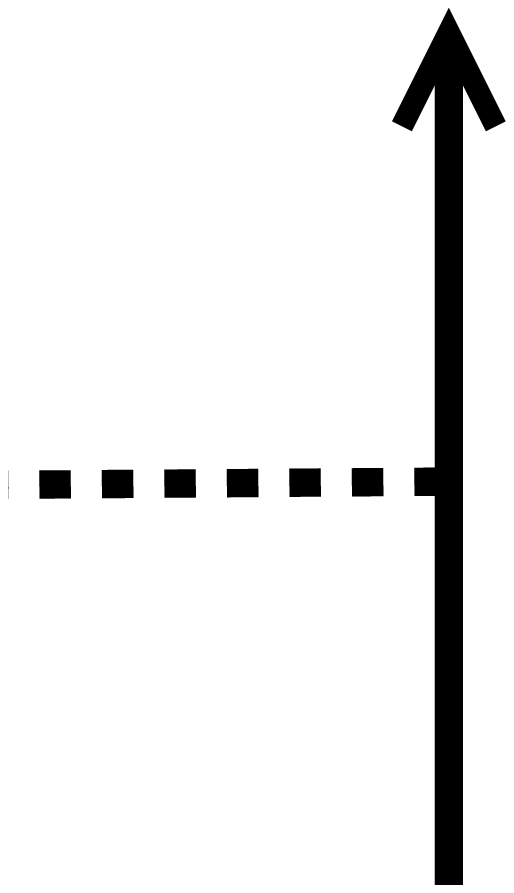} \end{array}}
\newcommand{\strutgraph}[2]{ \! \! \begin{array}{c}  \labellist \scriptsize \hair 2pt 
					\pinlabel {\scriptsize $#1$} [l] at 37 18
					\pinlabel {\scriptsize $#2$} [l] at 37 170
					\endlabellist \includegraphics[scale=0.1]{one-chord} \end{array}}
\newcommand{\figtotext}[3]{\begin{array}{c}\includegraphics[width=#1pt,height=#2pt]{#3}\end{array}}

\newcommand{\by}[1]{\stackrel{\eqref{#1}}{=}}
\newcommand{\up}{\vspace{-0.5cm}}

\stackMath
\newcommand\reallywidehat[1]{%
\savestack{\tmpbox}{\stretchto{%
  \scaleto{%
    \scalerel*[\widthof{\ensuremath{#1}}]{\kern-.6pt\bigwedge\kern-.6pt}%
    {\rule[-\textheight/2]{1ex}{\textheight}}%WIDTH-LIMITED BIG WEDGE
  }{\textheight}% 
}{0.5ex}}%
\stackon[1pt]{#1}{\tmpbox}%
}

\title[]{\large  Formal descriptions of Turaev's loop operations}
\author[Gw\'ena\"el Massuyeau]{Gw\'ena\"el Massuyeau}
\address{Gw\'ena\"el Massuyeau \newline
\indent IRMA,    Universit\'e de Strasbourg \& CNRS \newline
\indent 7 rue Ren\'e Descartes \newline
\indent 67084 Strasbourg, France \newline
\indent $\mathtt{massuyeau@math.unistra.fr}$
}

\date{{July 13, 2016 (first version: November 12, 2015)}}

\begin{abstract}
Using intersection  and self-intersection of loops,
Turaev introduced in the seventies  two fundamental operations  on the algebra $\Q[\pi]$ of the  fundamental group  $\pi$ of a surface with boundary.
The first operation is binary and  measures the intersection of two oriented based curves on the surface, 
while the second operation is unary and computes the self-intersection of an oriented based curve.
It is already known that Turaev's intersection pairing has an algebraic description
when  the group algebra $\Q[\pi]$ is completed with respect to powers of its augmentation ideal 
and is appropriately identified to the degree-completion of the tensor algebra $T(H)$ of $H:=H_1(\pi;\Q)$.

In this paper, we obtain a similar algebraic description for  Turaev's self-intersection map in the case of a disk with $p$ punctures.
Here we consider the  identification between the completions of $\Q[\pi]$ and $T(H)$ 
that arises from a Drinfeld associator  by embedding $\pi$ into the pure braid group on $(p+1)$ strands;
 our algebraic description  involves a formal power series which is explicitly determined by the associator. 
The proof is based on some  three-dimensional formulas for  Turaev's loop operations,
which  involve $2$-strand pure braids  and are shown for any  surface with boundary.
\end{abstract}

%\thanks{}

\maketitle

\setcounter{tocdepth}{1} 
\tableofcontents

\section{Introduction}

Let $\Sigma$ be a connected oriented surface with non-empty boundary.
Let $\Z[\pi]$ be the group ring of the fundamental group $\pi:= \pi_1(\Sigma,\ast)$ based at a point $\ast \in \partial \Sigma$.
Turaev introduced in \cite{Tu_loops} two fundamental operations in $\Z[\pi]$.
On the one hand, there is the \emph{homotopy intersection pairing}
$$
\eta: \Z[\pi] \times \Z[\pi] \longrightarrow  \Z[\pi],
$$
which is  a version of  Reidemeister's equivariant intersection pairing in $\Sigma$. 
It appeared later that the pairing $\eta$ and the Hopf algebra structure of $\Z[\pi] $
 determine all  the loop operations that have been considered by Goldman
 in his study of the Poisson structures on the representation varieties of surface groups \cite{Go};
 these operations include the so-called ``Goldman bracket.''
The pairing $\eta$ has also been rediscovered in~\cite{Pe}, 
where it  is used  to study the unitarity property for the Magnus representation of mapping class groups 
 and the Burau/Gassner representations of braid groups.
 
On the other hand, there is the \emph{homotopy self-intersection map}, 
which is more naturally defined on the group ring of the fundamental group 
$\overrightarrow{\pi}:= \pi_1(U\Sigma,\vec \ast\,)$ of the unit tangent bundle of~$\Sigma$:
$$
\vec{\mu}: \Z[\overrightarrow{\pi}] \longrightarrow  \Z[\pi].
$$
Using the  Hopf algebra structures of $\Z[\overrightarrow{\pi}]$ and $\Z[\pi]$,
this map determines the so-called ``Turaev cobracket'' introduced in \cite{Tu_skein}, which constitutes a Lie bialgebra with the Goldman bracket.
The map $\vec{\mu}$ has also  been used in \cite{BP} to generalize Whitney's index formula to arbitrary surfaces.
It is legitimate to regard the homotopy self-intersection map   as a refinement of the homotopy intersection pairing, since we have
$$
\vec{\mu}\big(\vec{a}\, \vec{b}\big) = a\, \vec{\mu}( \vec{b}) + \vec{\mu}(\vec{a})\, b + \eta(a,b)
$$
for any  $\vec{a},\vec{b}\in \overrightarrow{\pi}$ mapping to $a,b\in \pi$ under the canonical projection $\overrightarrow{\pi} \to \pi$.

We now take rational coefficients and consider  the completion $\widehat{\Q[\pi]}$ of the group algebra $\Q[\pi]$ 
with respect to the filtration defined by powers of the augmentation ideal $I$. Since $\pi$ is a free group,
there exists an isomorphism of complete Hopf algebras 
\begin{equation} \label{eq:cha_iso}
 \widehat{\Q[\pi]} \longrightarrow T(\!(H)\!)
\end{equation} 
onto the degree-completion $T(\!(H)\!)$ of the tensor algebra $T(H)$ generated by $H:= H_1(\Sigma;\Q)$.
For instance, given a basis $(\zeta_i)_i$ of $\pi$, we can always consider the multiplicative map $\pi \to T(\!(H)\!)$ defined by
\begin{equation} \label{eq:basis_expansion}
\zeta_i \longmapsto \exp([\zeta_i]) \quad \hbox{where $[\zeta_i]\in H$ is the homology class of $\zeta_i$},
\end{equation}
and extend this map  by linearity/continuity to $\widehat{\Q[\pi]}$.
Of course, there is (generally speaking) no canonical isomorphism between the complete Hopf algebras $\widehat{\Q[\pi]}$ and $T(\!(H)\!)$, 
but we can always require that \eqref{eq:cha_iso} induces the canonical isomorphism at the graded level
 \begin{equation} \label{eq:canonical_alg_iso}
 \Gr\, \widehat{\Q[\pi]} \simeq \Gr\, {\Q[\pi]}  = \bigoplus_{k=0}^\infty I^k/I^{k+1}
 \stackrel{\simeq}{\longrightarrow} T(H)= \Gr\, T(H)\simeq    \Gr\, T(\!(H)\!)  
\end{equation}
defined in degree $1$ by $(x-1)\mapsto [x]$ for any $x\in \pi$.
Kawazumi and Kuno  proved in \cite{KK_twists} that, if the surface $\Sigma$ is compact with a single boundary component,
and if the isomorphism \eqref{eq:cha_iso} is ``symplectic'' in some sense \cite{Ma},
then the Lie algebra defined by the Goldman bracket corresponds  to the Lie algebra associated by Kontsevich to the symplectic vector space $(H,\omega)$ and to  the cyclic operad of associative algebras \cite{Kn}.
Note that Kontsevich's Lie algebra is also the ``necklace Lie algebra''  \cite{BLB,Gi} associated to a quiver with a single vertex and 
as many edges as the genus of $\Sigma$.
This ``formal description'' of the Goldman bracket has  been generalized in \cite{MT_twists} where it is shown that, under the same assumptions, 
the loop operation $\eta$   corresponds {through} the isomorphism \eqref{eq:cha_iso} to the sum of two simple operations in $T(\!(H)\!)$: 
the first tensor operation is defined by contraction with the homology intersection form $\omega:H \times H \to \Q$,
while the second tensor operation involves some multiplications ruled by Bernoulli numbers.

It is now natural to ask for similar ``formal descriptions'' of the homotopy self-intersection map and the Turaev cobracket.
This problem has been considered in the past few years by Kawazumi and Kuno and, independently, by Turaev and the author.
Indeed Schedler upgraded  the necklace Lie algebra to a Lie bialgebra by defining a quiver analogue of the Turaev cobracket~\cite{Sc_necklace}.
It turns out that, if $\Sigma$ is compact with a single boundary component  and if the isomorphism \eqref{eq:cha_iso} is {``symplectic'',}
then the Turaev cobracket corresponds to the Schedler cobracket \emph{at the graded level}.
This cobracket correspondence between surfaces and quivers was announced in \cite{MT_dim_2} and has been proved in \cite{MT_draft}.
Independently, Kawazumi and Kuno obtained a similar algebraic description of the graded level of the Turaev cobracket, which has been proved in \cite{KK_intersection}.
Their motivation in the study of mapping class groups 
 was to obtain new obstructions for the image of the Johnson homomorphisms 
by relating Morita's traces~\cite{Mo} to the Turaev cobracket: see the survey paper \cite{KK_survey}.
But some computations done by Kuno \cite{Ku} reveal that there exist some isomorphisms~\eqref{eq:cha_iso} satisfying the symplectic condition,
and for which the cobracket correspondence between surfaces and quivers  does not hold \emph{beyond} the graded level.

In this paper, we consider the simpler case where $\Sigma$ is a disk with finitely many punctures numbered from $1$ to $p$,
and where the isomorphism \eqref{eq:cha_iso} satisfies a ``special'' condition replacing the above-mentioned symplectic condition.
In this case too, there is a formal description of the homotopy intersection pairing 
which can be deduced from the case of a compact surface of genus $p$ with a single boundary component.
We  show that, if the isomorphism \eqref{eq:cha_iso} arises from the Kontsevich integral $Z$ by embedding $\pi$ into
 the group of $(p+1)$-strand pure braids  \cite{HgM,AET},
then the self-intersection map translates  into the sum of two simple operations in $T(\!(H)\!)$: 
the first tensor operation is canonical,
while the second tensor operation depends explicitly on the Drinfeld associator $\Phi$ underlying the construction of $Z$ (see Theorem \ref{th:vec_mu}). 
To~be more specific, this second tensor operation consists of  multiplications with a formal power series $\phi$
defined by some coefficients of $\Phi$, and it turns out that $\phi$ is essentially the $\Gamma$-function of  $\Phi$  defined by Enriquez \cite{En_Gamma}.
Besides, we prove that the second tensor operation does not depend anymore on $\Phi$  if the latter is assumed to be even
and, again, it reduces to some multiplications ruled by the Bernoulli numbers (see Corollary \ref{cor:even_case}).
Finally we obtain that, for any Drinfeld associator $\Phi$, the Turaev cobracket translates into the Schedler cobracket
associated to a star-shaped quiver consisting of one ``central'' vertex  connected by $p$ edges to $p$ ``peripheral'' vertices (see Corollary \ref{cor:delta}).

We mention that another formal description of the Turaev cobracket for a punctured disk 
has  been  obtained recently by Kawazumi \cite{Ka_genus_0}.
In his work, the isomorphism \eqref{eq:cha_iso} is  simply defined by \eqref{eq:basis_expansion}
using the basis $(\zeta_1,\dots,\zeta_p)$ of $\pi$ given by some small loops around the punctures.
But this isomorphism does not have the above-mentioned special property, 
so that the resulting formulas for the Turaev cobracket seem to be much more complicated than ours.
{Besides, around the same time when the first version of our paper was released, Alekseev, Kawazumi, Kuno and Naef  announced some results similar to ours: 
in their work  \cite{AKKN}, the use of Drinfeld associators is replaced by  solutions to the Kashiwara--Vergne problem.
It is expected that the results of \cite{AET}  make the link between the two approaches.}

To get our results for a punctured disk, we start by revisiting Turaev's loop operations. 
Specifically we prove some three-dimensional formulas for the operations $\eta$ and $\vec\mu$
which involve some embeddings of the groups $\pi$ and $\overrightarrow{\pi}$ into the group of pure braids in $\Sigma$.
Roughly speaking, for any $a,b\in \pi$, our formula for $\eta(a,b)$ is a certain ``free differential operator'' applied to the $2$-strand pure braid
$$
\labellist
\scriptsize\hair 2pt
 \pinlabel {$a^{-1}$} at 38 437
 \pinlabel {$b$} at 144 329
 \pinlabel {$a$}  at 36 221
 \pinlabel {$b^{-1}$} at 150 110
\endlabellist
\centering
\includegraphics[height=2.7cm,width=2.3cm]{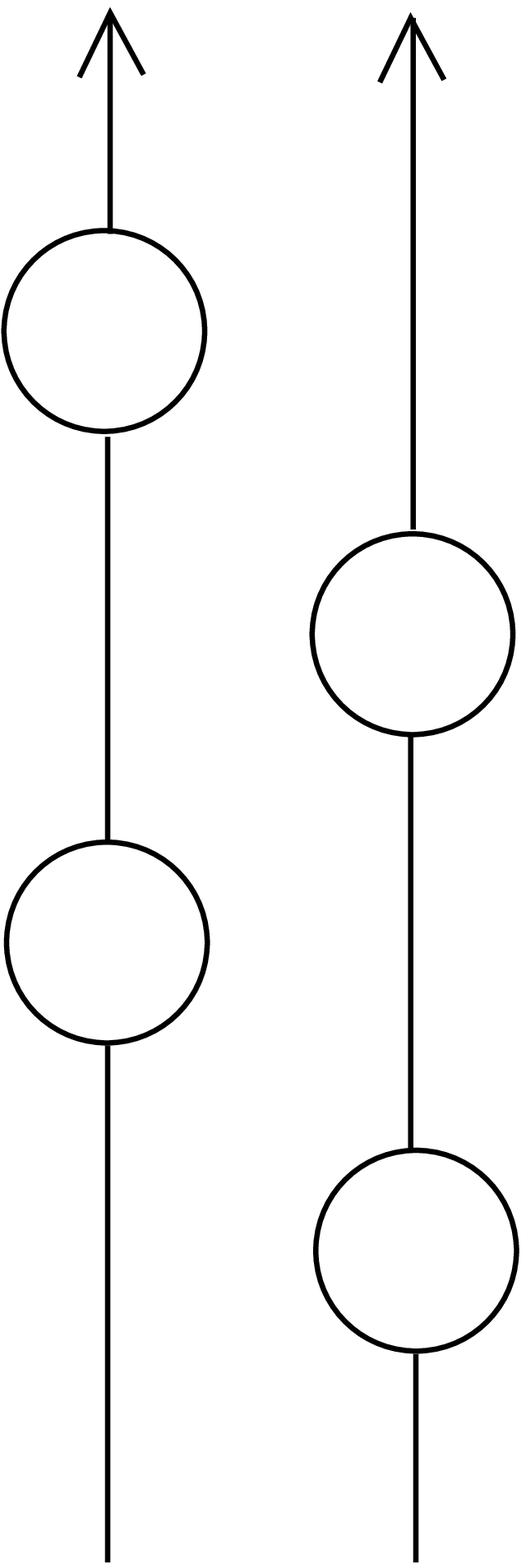}
$$
(see Theorem \ref{th:3d-eta}).
Similarly, for any $\vec{a} \in \overrightarrow{\pi}$ projecting to $a\in \pi$, our formula for $\vec\mu(\vec a)$
 is the same ``free differential operator'' applied to the $2$-strand pure braid
$$
\labellist
\scriptsize\hair 2pt
 \pinlabel {$\hbox{``doubling'' of {$\vec{a}$}}$}  at 109 232
 \pinlabel  {$a^{-1}$} at 170 118
 \pinlabel    {$a^{-1}$} at 61 367
\endlabellist
\centering
\includegraphics[height=2.6cm,width=2.6cm]{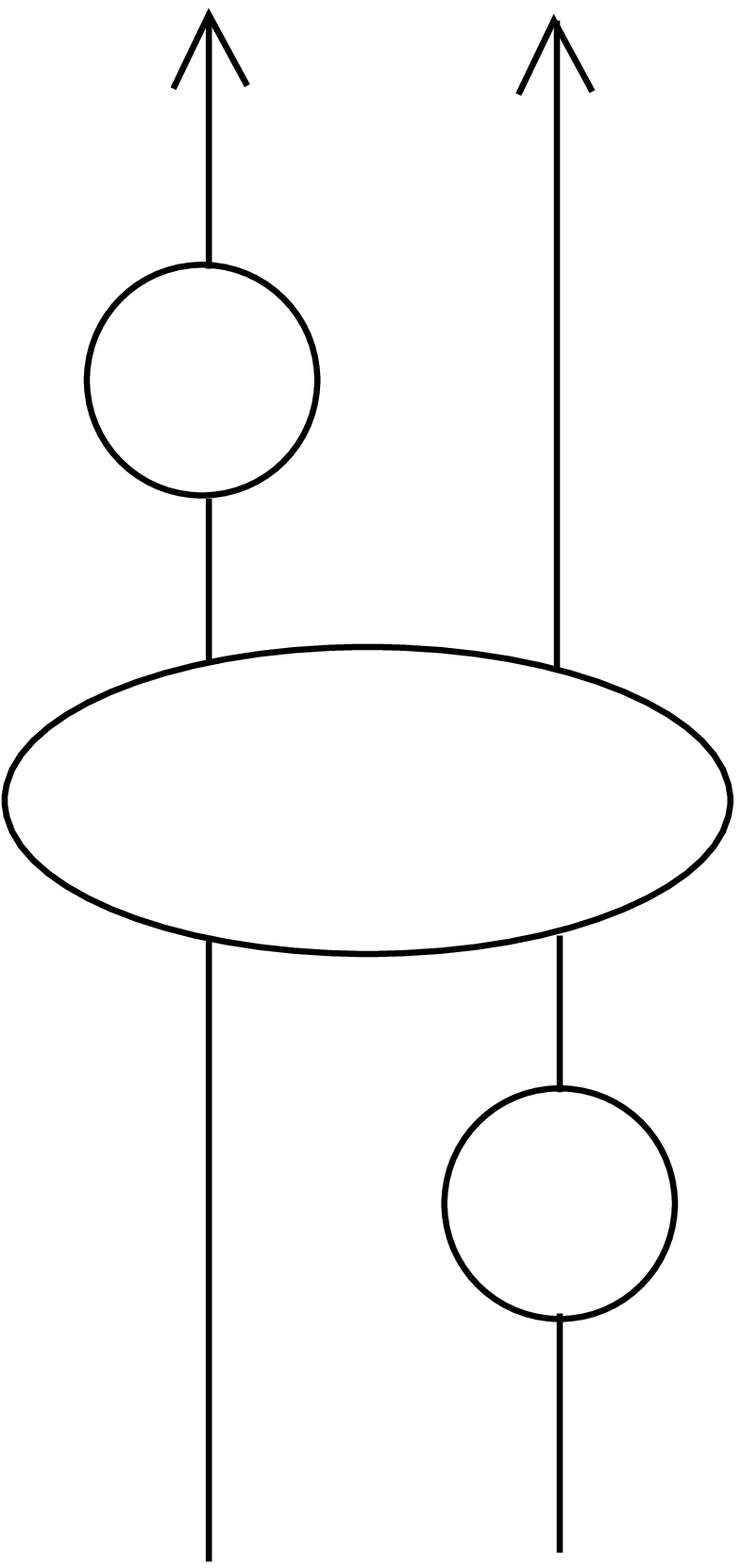}
$$
(see Theorem \ref{th:3d-mu}).
Despite their simplicity, these three-dimensional formulas for Turaev's loop operations seem to be new and we believe that they can be  of independent interest.
Therefore we prove those formulas in the  general case of a connected oriented surface $\Sigma$ with non-empty boundary.
In particular, we hope them to be  useful to obtain 
some similar, formal descriptions of the map $\vec \mu$ and the Turaev cobracket
when $\Sigma$ is a compact surface with a single boundary component.
The proof of this more difficult case would follow the same strategy and should involve a higher-genus analogue of the Kontsevich integral:
this could be  the invariant of surface tangles derived either from  Enriquez's higher-genus associators  \cite{En_elliptic,Hu}
or, alternatively, from the LMO functor \cite{CHM,No}. (See \cite{Katz} for a connection between these two alternatives.)

The paper is organized as follows. 
We review in Section~\ref{sec:Fox} the notion of ``Fox pairing'' and present the related notion of  ``quasi-derivation'':
these notions abstract the properties of the pairing $\eta$ and the map $\vec \mu$, respectively.
In Section~\ref{sec:Turaev_operations} we review the original {two-dimensional} definitions of the  operations  $\eta$ and  $\vec \mu$,
while the {three-dimensional} formulas for these operations are stated and proved in Section~\ref{sec:Turaev-dim3}.
This first part of the paper holds for any connected oriented surface  $\Sigma$ with non-empty boundary.
Next, we assume that $\Sigma$ is a punctured disk.
Section~\ref{sec:special} introduces ``special expansions'' of $\pi=\pi_1(\Sigma,\ast)$, 
which produce the special isomorphisms  \eqref{eq:cha_iso} that we need.
The construction of a special expansion from the Kontsevich integral is explained in Section \ref{sec:special_Z}.
Section~\ref{sec:statement} states our main results:
the formal description of the self-intersection map $\vec{\mu}$ for an arbitrary (respectively, even) associator $\Phi$,
and the formal description of the Turaev cobracket.
Section~\ref{sec:proofs} contains the proofs of these results.
The paper  ends with an appendix where the formal description of $\eta$ for a punctured disk is derived from the analogous result for a compact surface 
with one boundary component.\\

In the sequel, we fix the following miscellaneous conventions:
\begin{itemize}
\item {The set of non-negative integers is denoted by $\N$.}
\item We shall specify a commutative ring $\K$ at the beginning of each section
and by a ``module'',  an ``algebra'',  a ``linear map'', etc$.$
we shall mean a ``$\K$-module'',  an ``associative $\K$-algebra'', a ``$\K$-linear map'', etc. 
\item  The \emph{commutator} of two elements $x,y$ of a group $G$ is  defined by $[x,y]:={xy x^{-1} y^{-1} \in G}$.
The \emph{commutator} of two elements $a,b$ of an associative $\K$-algebra $A$ is   the element $[a,b]:= ab -ba \in A$.
\item  Given a based loop $\alpha$ in a pointed topological space $(X,\ast)$, we still denote by $\alpha$ its class 
in the fundamental group $\pi_1(X,\ast)$.
\item  We  use the ``blackboard framing'' convention to present framed tangles by their projection diagrams.
\end{itemize}

\noindent
\textbf{Acknowledgements.}
{We are grateful to Benjamin Enriquez and Vladimir Turaev for  helpful  discussions.
We would like to thank Nariya Kawazumi  for his comments on a first version of this paper.
We are also indebted to the referee for  their careful reading and for having corrected/simplified  the proof of Lemma \ref{lem:special_to_symplectic}.}\\

\section{Fox pairings and quasi-derivations} \label{sec:Fox}

 We briefly review from \cite{MT_dim_2} the notion  of ``Fox pairing'',
 we present the  notion of ``quasi-derivation'' and we introduce several constructions related to these notions.
In this section,   $\K$ is an arbitrary commutative ring,
and by a ``Hopf algebra'' we mean an involutive Hopf algebra over $\K$.

\subsection{Fox derivatives}

Let $A$ be an augmented algebra with counit $\varepsilon:A \to \K$.
A map ${\partial}:A \to A$ is a \emph{left} (respectively, a \emph{right})   \emph{Fox derivative} if
$\partial(ab)=a \partial(b)+ \partial(a) \varepsilon(b)$ (respectively, $\partial(ab)= \partial(a) b+ \varepsilon(a) \partial(b)$)  for any $a,b \in A$.

As an example, consider  the free product  $G\ast F(z)$ of a  group $G$ and the group $F(z)$ freely generated by an element $z$. 
Let $A:=\K[G\ast F(z)]$ be the group algebra of $G\ast F(z)$. Then there is a unique left  Fox derivative
$$
\frac{\partial \ }{\partial z}: A \longrightarrow A
$$
such that 
$$
\frac{\partial z}{\partial z} = 1 \quad  \hbox{and} \quad \forall g \in G, \
\frac{\partial g}{\partial z} =0.
$$
The following lemma, which develops this example, will be very useful in the next sections.

\begin{lemma} \label{lem:Fox}
Let $p: G\ast F(z) \to G$ be the group homomorphism defined by $p(z):=1$ and $p(g):=g$ for all $g \in G$. 
Let $J$ be a finite ordered set and, for every $j\in J$, let $\varepsilon_j\in \{-1,+1\}$ and $r_j\in G\ast F(z)$. Then
$$
p \frac{\partial \ }{\partial z}\left(\, \prod_{j\in J} r_j\, z^{\varepsilon_j}\, r_j^{-1}\right) = \sum_{j\in J} \varepsilon_j\, p(r_j) \ \in \K[G].
$$
\end{lemma}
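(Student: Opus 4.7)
The plan is a direct computation using the iterated Leibniz rule for Fox derivatives, combined with the observation that $p$ collapses the conjugator terms because $p(z)=1$.

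First I would record the two standard consequences of the defining identity $\partial(ab) = a\partial(b) + \partial(a)\varepsilon(b)$. Iterating gives, for any $a_1, \dots, a_n \in A$,
$$
\frac{\partial}{\partial z}(a_1 \cdots a_n) = \sum_{i=1}^n a_1 \cdots a_{i-1} \frac{\partial a_i}{\partial z}\, \varepsilon(a_{i+1} \cdots a_n),
$$
and applying the rule to $\partial(r r^{-1}) = \partial(1) = 0$ gives $\partial(r^{-1}) = -r^{-1}\partial(r)$ for every $r \in G\ast F(z)$.

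Next I would apply these to each factor $a_j := r_j z^{\varepsilon_j} r_j^{-1}$. Since $\varepsilon(a_j) = 1$ for every $j$, the iterated Leibniz rule reduces to
$$
\frac{\partial}{\partial z}\left(\prod_{j \in J} a_j\right) = \sum_{j \in J} \Bigl(\prod_{i<j} a_i\Bigr) \frac{\partial a_j}{\partial z}.
$$
A short direct computation, using $\partial z/\partial z = 1$, $\partial(z^{-1})/\partial z = -z^{-1}$, and the formula for $\partial(r^{-1})$, yields
$$
\frac{\partial (r z r^{-1})}{\partial z} = r + (1 - rzr^{-1})\, \frac{\partial r}{\partial z},\qquad \frac{\partial (r z^{-1} r^{-1})}{\partial z} = -rz^{-1} + (1 - rz^{-1}r^{-1})\, \frac{\partial r}{\partial z}.
$$

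Finally I would apply the homomorphism $p$. Since $p(z) = 1$ and $p$ is multiplicative, $p(a_j) = p(r_j z^{\varepsilon_j} r_j^{-1}) = 1$ for every $j$, so $p\bigl(\prod_{i<j} a_i\bigr) = 1$ and the coefficient $(1 - r_j z^{\pm 1} r_j^{-1})$ in the formulas above is killed by $p$. What survives of $p\bigl(\partial a_j/\partial z\bigr)$ is exactly $\varepsilon_j\, p(r_j)$, and summing over $j$ gives the claimed identity.

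There is no real obstacle here: the only subtle point is recognizing that the augmentation $\varepsilon$ evaluates to $1$ on every conjugate $r_j z^{\varepsilon_j} r_j^{-1}$ (so the iterated Leibniz rule takes its simplest form) and that $p$ annihilates the $\partial r_j/\partial z$ contribution through the factor $(1 - r_j z^{\varepsilon_j} r_j^{-1})$. One could alternatively organize the argument as an induction on $|J|$, but the one-shot computation above is essentially the same work written linearly.
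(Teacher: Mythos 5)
Your proof is correct and follows essentially the same route as the paper's: both are direct computations with the left Fox derivative's Leibniz rule, the identity $\partial(r^{-1}) = -r^{-1}\partial(r)$, and the fact that $p(z)=1$ kills the conjugate factors and the $\partial r_j/\partial z$ contributions. The paper merely organizes the computation as an induction on $|J|$, peeling off one conjugate at a time, whereas you write out the full iterated Leibniz expansion first and apply $p$ at the end — a presentational difference only.
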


\begin{proof}
Let $\varepsilon \in \{-1,+1\}$, and  $r,s  \in G \ast F(z)$. Then
\begin{eqnarray*}
p \frac{\partial \ }{\partial z} (r z^\varepsilon r^{-1} s) &=& p \left( \frac{\partial r}{\partial z}\right)+ p \left( r \frac{\partial  z^\varepsilon r^{-1} s}{\partial z} \right) \\
&=&p \left( \frac{\partial r}{\partial z}\right) + p(r)\, p\left(  \frac{\partial z^\varepsilon  }{\partial z} +  z^\varepsilon \frac{\partial r^{-1} s}{\partial z}\right) \\
&=& p \left( \frac{\partial r}{\partial z}\right)  + \varepsilon p(r) + p(r)\, p \left(\frac{\partial r^{-1} s}{\partial z}\right) \\
& = & p \left( \frac{\partial r}{\partial z}\right)  + \varepsilon p(r)  + p(r)\,  p\left( \frac{\partial r^{-1}}{\partial z}+ r^{-1} \frac{\partial s }{\partial z}\right) \\
& = & p \left( \frac{\partial r}{\partial z}\right)  + \varepsilon p(r)  + p(r)\,  p\left( -r^{-1} \frac{\partial r}{\partial z}\right)+ p\left( \frac{\partial s }{\partial z}\right)
\  =  \  \varepsilon p(r) + p\left(\frac{\partial s}{\partial z}\right).
\end{eqnarray*} 
Hence the lemma follows from  an induction on the cardinality of $J$.
\end{proof}

\subsection{Fox pairings} \label{subsec:Fox_pairings}

We recall some definitions and results from \cite{MT_dim_2}.
Let $A$ be a Hopf algebra with coproduct $\Delta: A \to A \otimes A$, counit $\varepsilon:A \to \K$ and antipode $S: A\to A$.
We will  use Sweedler's convention
$$
\Delta(a) = a' \otimes a'', \quad (\Delta \otimes \id) \Delta(a)= a' \otimes a'' \otimes a''', \quad \hbox{etc.}
$$
to denote the successive iterations of the coproduct of an element $a$ of   $A$.

A \emph{Fox pairing} in $A$ is a bilinear map $\rho: A \times A \to A$ 
which is a left Fox derivative in its first variable and a right Fox derivative in its second variable:
$$
\forall a,b,c \in A, \quad \rho(ab,c)= a \rho(b,c) + {\rho(a,c)\, \varepsilon(b) }
\quad \hbox{and} \quad
\rho(a,bc) = \rho(a,b)c + {\varepsilon(b)\, \rho(a,c) }
$$
For instance, any element $e\in A$ defines an \emph{inner} Fox pairing defined by 
$$
\forall a,b \in A, \quad \rho_e(a,b) := (a-\varepsilon(a))\, e\, (b-\varepsilon(b)).
$$

The \emph{transpose} of a Fox pairing $\rho$ in $A$
is the bilinear map $\rho^t: A \times A \to A$ defined by $\rho^t(a,b):=S\rho(S(b),S(a))$ for any $a,b\in A$. 
Note that $\rho^t$ is a Fox pairing and $(\rho^t)^t =\rho$ (here we use the fact that $S^2 =\id$, see \cite[Lemma 5.1]{MT_dim_2}).
Furthermore, it can be proved that
\begin{equation}
\label{eq:re-involution}
\forall a,b\in A, \quad \rho^t(a,b) = a'\, S\big(\rho(b'',a'')\big)\, b'.
\end{equation}
(see  \cite[Lemma 5.2]{MT_dim_2}).
The Fox pairing $\rho$ is said to be  \emph{skew-symmetric} if $\rho^t=-\rho$.

We now explain an auxiliary construction for Fox pairings.
Consider the bilinear map  $\langle-,-\rangle_\rho: A \times A \to A$  defined by
\begin{equation} \label{eq:bracket}
\forall a,b \in A, \ \langle a, b \rangle_\rho := b'\, S(\rho(a'',b'')')\, a' \, \rho(a'',b'')''   {.}
\end{equation}
 The following facts are verified by direct computations:
\begin{itemize}
\item[(i)] $\langle-,-\rangle_\rho$ is a derivation in its second variable (see \cite[Lemma 6.1]{MT_dim_2}); 
\item[(ii)] for any $e\in A$, the map $\langle-,-\rangle_{\rho_e}$ induced by the inner Fox pairing $\rho_e$ is trivial (see \cite[Lemma 6.3]{MT_dim_2});
\item[(iii)] if $\rho$ is skew-symmetric, $\langle-,-\rangle_\rho$ induces a skew-symmetric  bilinear map
$
\langle-,-\rangle_\rho: {\check{A} \times \check{A}}  \to \check{A} 
$
on the quotient module  $\check{A} := A/[A,A]$, where $[A,A]$ denotes the submodule spanned by commutators in the algebra $A$
(see \cite[Lemma 6.2]{MT_dim_2}).
\end{itemize}
The operation in $\check{A}$ produced in (iii) is called the \emph{bracket} induced by the  Fox pairing $\rho$. 
In the sequel, with a slight abuse of notation, the class modulo $[A,A]$ of an element $a\in A$ is still denoted by $a\in \check{A}$.

\subsection{Quasi-derivations} \label{subsec:quasi-der}

Let $\widetilde{A}$ and $A$ be  Hopf algebras.
We shall denote their coproducts, counits and antipodes by the same letters $\Delta$, $\varepsilon$ and $S$,
except if this could lead to some confusion
 (in which case we shall write $\Delta_{\widetilde A}, \varepsilon_{\widetilde A}, S_{\widetilde A}$ and $\Delta_{A}, \varepsilon_{A}, S_{A}$).
We also assume that a surjective homomorphism of Hopf algebras $p:\widetilde{A} \to A$ is given: 
 the image $p(\widetilde{a})$ of an element $\widetilde{a}\in \widetilde{A}$ will often be simply denoted by $a\in A$.

A linear map $q: \widetilde{A} \to A$  is called a \emph{quasi-derivation ruled by} the  Fox pairing  $\rho:A \times A \to A$ if it satisfies
$$
\forall \widetilde{a},\widetilde{b} \in \widetilde{A}, \quad q\big(\widetilde{a}\, \widetilde{b}\big) =  q(\widetilde{a})\, b + a\, q(\widetilde{b}) + \rho(a,b). 
$$
Denote by $\QDer(\rho)$ the set of quasi-derivations $\widetilde{A} \to A$ ruled by $\rho$. 
(Clearly, this set depends on the projection $p:\widetilde{A}\to A$ too, but we omit it from our notations.)
The module of derivations $\widetilde{A} \to A$ (where $A$ is regarded as an $\widetilde{A}$-bimodule via $p$)
acts freely and transitively  by addition on the set $\QDer(\rho)$.

\begin{lemma}
Let $e_1,e_2 \in A$  and set $e:=e_1+e_2$. Then the linear map defined by
\begin{eqnarray*}
\forall \widetilde{a}\in \widetilde{A}, \quad q_{e_1,e_2}(\widetilde{a}) 
& :=  & (\varepsilon(a)-a)\, e_1 + e_2\, (\varepsilon(a)-a)  \\
&=& \varepsilon(a)\, e -ae_1 -e_2a
\end{eqnarray*}
is a quasi-derivation ruled by the inner Fox pairing $\rho_e$.
\end{lemma}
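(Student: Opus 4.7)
The plan is to verify the quasi-derivation identity by straightforward expansion, since $q_{e_1,e_2}$ is manifestly linear (a linear combination of the scalar-valued $\varepsilon \circ p$, left multiplication by $-e_2$ on $p$, and right multiplication by $-e_1$ on $p$), and the equality of the two displayed formulas for $q_{e_1,e_2}(\widetilde{a})$ is immediate:
$$(\varepsilon(a)-a)\,e_1 + e_2\,(\varepsilon(a)-a) \;=\; \varepsilon(a)(e_1+e_2) - a e_1 - e_2 a \;=\; \varepsilon(a)\,e - a e_1 - e_2 a.$$

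Next, I would compute the left-hand side of the quasi-derivation identity, using that $p$ is multiplicative and $\varepsilon$ is a counit, to get
$$q_{e_1,e_2}(\widetilde{a}\,\widetilde{b}) \;=\; \varepsilon(a)\varepsilon(b)\,e \;-\; (ab)\,e_1 \;-\; e_2\,(ab).$$
On the right-hand side, I would expand $q_{e_1,e_2}(\widetilde{a})\,b + a\,q_{e_1,e_2}(\widetilde{b})$ into six terms, and expand the inner Fox pairing as
$$\rho_e(a,b) \;=\; a e b \;-\; \varepsilon(a)\,e b \;-\; \varepsilon(b)\,a e \;+\; \varepsilon(a)\varepsilon(b)\,e,$$
for four more terms.

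The key simplification is the splitting $a e b = a e_1 b + a e_2 b$ coming from $e = e_1 + e_2$: the term $a e_1 b$ cancels the $-a e_1 b$ contribution from $q_{e_1,e_2}(\widetilde{a})\,b$, and the term $a e_2 b$ cancels the $-a e_2 b$ contribution from $a\,q_{e_1,e_2}(\widetilde{b})$. Similarly, the counit terms $\varepsilon(a)\,e b$ and $\varepsilon(b)\,a e$ produced by $q_{e_1,e_2}(\widetilde{a})\,b$ and $a\,q_{e_1,e_2}(\widetilde{b})$ cancel against the matching terms of $\rho_e(a,b)$. What remains is precisely $-(ab)\,e_1 - e_2\,(ab) + \varepsilon(a)\varepsilon(b)\,e$, matching the left-hand side.

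There is no real obstacle here beyond bookkeeping: the verification is routine once the splitting $e = e_1 + e_2$ is inserted at the right moment to absorb the $aeb$ term from $\rho_e$. I would organize the computation so that the two distinct cancellation mechanisms (the $e$-splitting and the counit cancellation) are visible side by side.
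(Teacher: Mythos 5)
Your proposal is correct and matches the paper's proof: both are a direct expansion of the quasi-derivation identity using $\varepsilon(ab)=\varepsilon(a)\varepsilon(b)$ and the splitting $aeb = ae_1b + ae_2b$ to effect the cancellations. The only cosmetic difference is that the paper computes the difference $q_{e_1,e_2}(\widetilde{a}\,\widetilde{b}) - q_{e_1,e_2}(\widetilde{a})\,b - a\,q_{e_1,e_2}(\widetilde{b})$ and shows it equals $\rho_e(a,b)$, whereas you verify the equality of the two sides directly.
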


\begin{proof}
For any $\widetilde{a}, \widetilde{b} \in \widetilde{A}$, 
\begin{eqnarray*}
&& q_{e_1,e_2}\big(\widetilde{a}\, \widetilde{b}\big) -q_{e_1,e_2}(\widetilde{a})\, b - a\, q_{e_1,e_2}(\widetilde{b})\\
&=&  \big( \varepsilon(ab)\, e -ab e_1 - e_2 ab \big)
 - \big(\varepsilon(a)\, e -ae_1 -e_2a\big)b  - a \big(\varepsilon(b)\, e -be_1 -e_2b\big)\\
&=&  \varepsilon(a) \varepsilon(b) e  -a b e_1   - e_2 ab  - \varepsilon(a)\, eb + ae_1b + e_2ab  -  \varepsilon(b)\, ae + abe_1 +ae_2b  \\
&=&    \varepsilon(a) \varepsilon(b)\, e  + a eb -   \varepsilon(a)\, e b  -   \varepsilon(b)\, ae \ = \ \rho_e(a,b).
\end{eqnarray*}

\up
\end{proof}

The \emph{transpose} of a $q\in \QDer(\rho)$ is the linear map $q^{t}: \widetilde{A} \to A$ defined by  $q^t(\widetilde{a}) :=  S_A\, q\, S_{\widetilde A}(\widetilde{a})$ for all $\widetilde{a} \in \widetilde{A}$.
We say that $q $ is \emph{skew-symmetric} if $q^t = -q$.

\begin{lemma}
For any $q\in \QDer(\rho)$, we have $q^t \in \QDer(\rho^t)$.
\end{lemma}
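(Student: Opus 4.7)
The plan is to unfold all definitions and rely entirely on the fact that $S$ is an anti-algebra homomorphism satisfying $S^2 = \id$, that $p \colon \widetilde{A} \to A$ is a Hopf algebra map (so it intertwines antipodes, $p \circ S_{\widetilde A} = S_A \circ p$), and that $q$ is a quasi-derivation ruled by $\rho$. There is no conceptual obstacle; this is a pure bookkeeping exercise.

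First I would write, for $\widetilde{a},\widetilde{b}\in\widetilde{A}$,
\[
q^t\bigl(\widetilde{a}\,\widetilde{b}\bigr)
= S_A\, q\, S_{\widetilde A}\bigl(\widetilde{a}\,\widetilde{b}\bigr)
= S_A\, q\bigl(S_{\widetilde A}(\widetilde{b})\, S_{\widetilde A}(\widetilde{a})\bigr),
\]
using that $S_{\widetilde A}$ is anti-multiplicative. Setting $\widetilde{x}:=S_{\widetilde A}(\widetilde{b})$ and $\widetilde{y}:=S_{\widetilde A}(\widetilde{a})$, their images in $A$ are $p(\widetilde{x})=S_A(b)$ and $p(\widetilde{y})=S_A(a)$, so the quasi-derivation property of $q$ gives
\[
q\bigl(\widetilde{x}\,\widetilde{y}\bigr)
= q(\widetilde{x})\,S_A(a) + S_A(b)\,q(\widetilde{y}) + \rho\bigl(S_A(b),S_A(a)\bigr).
\]

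Next I would apply $S_A$ to both sides, again using anti-multiplicativity, and then use $S_A^2=\id$ on the occurrences of $S_A(a)$ and $S_A(b)$ that become outer factors. This yields
\[
q^t\bigl(\widetilde{a}\,\widetilde{b}\bigr)
= a\, S_A q(\widetilde{x}) + S_A q(\widetilde{y})\, b + S_A\rho\bigl(S_A(b),S_A(a)\bigr).
\]
By the very definitions, $S_A q(\widetilde{x}) = S_A\, q\, S_{\widetilde A}(\widetilde{b}) = q^t(\widetilde{b})$, $S_A q(\widetilde{y}) = q^t(\widetilde{a})$, and $S_A\rho(S_A(b),S_A(a)) = \rho^t(a,b)$. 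Combining,
\[
q^t\bigl(\widetilde{a}\,\widetilde{b}\bigr)
= q^t(\widetilde{a})\, b + a\, q^t(\widetilde{b}) + \rho^t(a,b),
\]
which is exactly the assertion that $q^t$ is a quasi-derivation ruled by $\rho^t$. The only step that requires any care is the correct use of anti-multiplicativity together with the involutivity $S^2=\id$; everything else is formal.
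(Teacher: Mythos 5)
Your proof is correct and is essentially identical to the paper's: both expand $q^t(\widetilde{a}\,\widetilde{b})$ via anti-multiplicativity of the antipodes, apply the quasi-derivation property of $q$ to $S_{\widetilde A}(\widetilde{b})\,S_{\widetilde A}(\widetilde{a})$ using $p\circ S_{\widetilde A}=S_A\circ p$, and then push $S_A$ back through using $S_A^2=\id$ and the definition of $\rho^t$. No discrepancies.
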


\begin{proof}
For any $\widetilde{a}, \widetilde{b} \in \widetilde{A}$, we have
\begin{eqnarray*}
q^t\big(\widetilde{a}\, \widetilde{b}\big) &=&  S q\big( S(\widetilde{b})\, S(\widetilde{a})\big) \\
&=& S \left( pS(\widetilde{b})\, qS(\widetilde{a}) + qS(\widetilde{b})\, pS(\widetilde{a}) + \rho(pS(\widetilde{b}),pS(\widetilde{a})) \right)  \\
&=& S \left( S(b)\, qS(\widetilde a) + qS(\widetilde{b})\, S(a) + \rho(S(b),S(a)) \right) \\
&=&  SqS(\widetilde a)\, b + a\, SqS(\widetilde b) + S \rho(S(b),S(a))\\
&=& q^t(\widetilde{a})\, b + a\, q^t(\widetilde{b})+ \rho^t(a,b).
\end{eqnarray*}

\up
\end{proof}

We now explain some auxiliary constructions for quasi-derivations.
First, any $q \in \QDer(\rho)$ can be transformed to a  map $d_q: \widetilde{A} \to A \otimes A$ by the formula
\begin{equation} \label{eq:d_q}
\forall \widetilde{a} \in  \widetilde{A}, \quad d_q (\widetilde{a}) := p(\widetilde{a}')\, S_{{A}}( q(\widetilde{a}'')') \otimes q(\widetilde{a}'')''
\end{equation}
where the coproducts $\Delta_{\widetilde{A}}(\widetilde{a})= \widetilde{a}' \otimes \widetilde{a}''$  and $\Delta_A(q(\widetilde{a}''))= q(\widetilde{a}'')' \otimes q(\widetilde{a}'')''$ 
are written with Sweedler's convention.
Note that $q$ can be recovered from $d_q$ by the formula $q = (\varepsilon_A \otimes \id_A) d_q$.

\begin{lemma}
Let $e_1, e_2 \in A$ and set $e := e_1+e_2$. Then the map $d_{e_1,e_2} := d_{q_{e_1,e_2}}$ is given by 
$$
\forall \widetilde{a} \in \widetilde{A}, \quad d_{e_1,e_2}(\widetilde{a}) =  aS( e') \otimes e'' -  a'  S(e_1') S( a'') \otimes  a'''  e_1''  -  S(e'_2) \otimes e_2'' a.
$$
\end{lemma}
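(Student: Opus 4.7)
The plan is to apply the definition~\eqref{eq:d_q} of $d_q$ to the quasi-derivation $q_{e_1,e_2}$ produced by the preceding lemma, and then simplify the result term by term using standard Hopf-algebra manipulations. From that lemma we have
$$
q_{e_1,e_2}(\widetilde{a}'') = \varepsilon(\widetilde{a}'')\, e - a'' e_1 - e_2\, a''
$$
where $a'' := p(\widetilde{a}'')$. Substituting this into
$d_{e_1,e_2}(\widetilde{a}) = p(\widetilde{a}')\, S(q_{e_1,e_2}(\widetilde{a}'')') \otimes q_{e_1,e_2}(\widetilde{a}'')''$ and applying $\Delta_A$ inside each summand splits the output as a sum of three pieces, one for each term of $q_{e_1,e_2}$.

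For the $\varepsilon(\widetilde{a}'')\,e$-piece, the counit factor $\varepsilon(\widetilde a'')$ sits in the first tensor slot, so I can use $\sum a'\, \varepsilon(a'') = a$ to collapse it, producing $a\, S(e') \otimes e''$. For the $-a''\, e_1$-piece, I would first use coassociativity to rewrite $a' \otimes (a'')' \otimes (a'')'' = a'\otimes a''\otimes a'''$, and then invoke the anti-algebra property $S(xy) = S(y)S(x)$ to split the antipode; this produces $-a'\, S(e_1')\, S(a'') \otimes a'''\, e_1''$ with no further simplification needed. For the $-e_2\, a''$-piece, the same two moves give $-a'\, S(a'')\, S(e_2') \otimes e_2''\, a'''$, after which I would invoke the identity
$$
\sum a'\, S(a'') \otimes a''' \;=\; 1 \otimes a,
$$
a direct consequence of coassociativity combined with the antipode axiom $\sum x'\, S(x'')=\varepsilon(x)\,1$. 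This collapses the third piece to $-S(e_2') \otimes e_2''\, a$, and summing the three contributions yields the stated formula.

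The only real obstacle is purely notational: since both $\widetilde{a}$ and $e_1, e_2$ contribute Sweedler indices that interact through the antipode, one has to be careful to track exactly which $a^{(i)}$-factors are eligible to be paired off by the antipode axiom, and in which order the factors of $e_1$ and $e_2$ end up after the anti-algebra identity is applied. Once this bookkeeping is set up, the lemma is an entirely mechanical computation with no conceptual subtlety beyond the three Hopf-algebraic identities listed above.
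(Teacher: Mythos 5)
Your proposal is correct and follows essentially the same route as the paper's proof: substitute the explicit formula for $q_{e_1,e_2}$ into the definition \eqref{eq:d_q}, split into three pieces, collapse the first by the counit, split the antipodes in the other two by the anti-homomorphism property, and reduce the third via $a'\,S(a'')\otimes a''' = 1\otimes a$. The only bookkeeping point the paper spells out that you gloss over is the use of the fact that $p$ is a coalgebra map to identify $p(\widetilde a'')'\otimes p(\widetilde a'')''$ with $(p(\widetilde a))''\otimes (p(\widetilde a))'''$, but this is exactly the "notational obstacle" you flag and poses no real gap.
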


\begin{proof}
Let $\widetilde{a} \in \widetilde{A}$. We have
\begin{eqnarray*}
&& d_{e_1,e_2}(\widetilde{a}) \\
 & = &  \varepsilon( p(\widetilde{a}''))  p(\widetilde{a}') S( e') \otimes e''  -  p(\widetilde{a}') S( p(\widetilde{a}'')'\, e_1') \otimes  p(\widetilde{a}'')''\, e_1''  
   -  p(\widetilde{a}') S(e_2'\, p(\widetilde{a}'')' ) \otimes e_2''\, p(\widetilde{a}'')'' \\
& = &    \varepsilon(\widetilde{a}'')  p(\widetilde{a}') S( e') \otimes e''  -  p(\widetilde{a})' S( (p(\widetilde{a})'')' e_1') \otimes  (p(\widetilde{a})'')''  e_1''    
-  p(\widetilde{a})' S(e_2' (p(\widetilde{a})'')' ) \otimes e_2'' (p(\widetilde{a})'')'' \\
& = &   aS( e') \otimes e'' -  a' S( a'' e_1') \otimes  a'''  e_1''  -  a' S(e_2' a'' ) \otimes e_2'' a''' \\
& = &   aS( e') \otimes e'' -  a'  S(e_1') S( a'') \otimes  a'''  e_1''  -  a' S(a'' ) S(e'_2) \otimes e_2'' a''' \\
& = &   aS( e') \otimes e'' -  a'  S(e_1') S( a'') \otimes  a'''  e_1''  -  S(e'_2) \otimes e_2'' a.
\end{eqnarray*}

\up
\end{proof}

Next, any $q\in \QDer(\rho)$ induces a map $\delta_q: \widetilde{A} \to \check{A} \otimes \check{A}$ defined by
\begin{equation}  \label{eq:delta_q}
\forall \widetilde{a} \in \widetilde{A}, \quad \delta_q(\widetilde {a}) := d_q(\widetilde {a}) - P_{21} d_q(\widetilde {a}) \in \check{A} \otimes \check{A}
\end{equation}
where  $P_{21}: \check{A} \otimes \check{A} \to \check{A} \otimes \check{A}$ is the permutation map defined by $P_{21}(u\otimes v):= v \otimes u$.

\begin{lemma} \label{lem:delta_e1_e2}
Let $e_1, e_2 \in A$, set $e:= e_1+e_2$. 
Then the map $\delta_{e_1,e_2} := \delta_{q_{e_1,e_2}}$ only depends on~$e$ and is given by
$$
\forall \widetilde{a} \in \widetilde{A}, \quad  \delta_{e_1,e_2} (\widetilde{a})= aS( e') \otimes e'' +a e'' \otimes S(e') -  S(e')  \otimes   e''  a     - e'' \otimes  S(e') a \ \in \check{A} \otimes \check{A}.
$$
\end{lemma}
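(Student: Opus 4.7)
The plan is to compute $\delta_{e_1,e_2}(\widetilde a) = d_{e_1,e_2}(\widetilde a) - P_{21}\, d_{e_1,e_2}(\widetilde a)$ by applying $P_{21}$ to the formula for $d_{e_1,e_2}(\widetilde a)$ furnished by the previous lemma, and then to reduce the six resulting summands modulo commutators in each tensor factor of $\check A \otimes \check A$. Four of these summands require only cyclic manipulations: the first summand $aS(e')\otimes e''$ of $d_{e_1,e_2}$ remains as is, while its $P_{21}$-image $-e''\otimes aS(e')$ becomes $-e''\otimes S(e')a$ in $\check A \otimes \check A$; similarly, the third summand $-S(e_2')\otimes e_2''a$ of $d_{e_1,e_2}$ is unchanged and its $P_{21}$-image $+e_2''a\otimes S(e_2')$ becomes $+ae_2''\otimes S(e_2')$.

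The crux of the proof is the treatment of the middle summand $-a'S(e_1')S(a'')\otimes a'''e_1''$ of $d_{e_1,e_2}(\widetilde a)$, which is the only contribution where the iterated coproduct of $a$ plays a non-trivial role. Cycling its first factor in $\check A$ rewrites it as $-S(e_1')\,[S(a'')a']\otimes a'''e_1''$. I would then invoke the identity
\[
\sum_{(a)} S(a'')a' \otimes a''' \ = \ 1 \otimes a,
\]
valid in any involutive Hopf algebra; this follows from coassociativity applied to $(\Delta \otimes \id)\Delta = (\id \otimes \Delta)\Delta$ together with $S(b_{(2)})b_{(1)} = \varepsilon(b)\cdot 1$, itself a consequence of $S^2 = \id$ combined with the usual antipode axiom $S(b_{(1)})b_{(2)} = \varepsilon(b)\cdot 1$. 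Viewing the summand (after cycling) as a product $(S(e_1')\otimes 1)\cdot(S(a'')a'\otimes a''')\cdot(1\otimes e_1'')$ in $A\otimes A$, where only the middle factor involves the coproduct of $a$, this identity collapses it to $-S(e_1')\otimes a\,e_1''$, i.e. to $-S(e_1')\otimes e_1''a$ in $\check A \otimes \check A$. An entirely analogous computation on the swapped middle term $+a'''e_1''\otimes a'S(e_1')S(a'')$ produces $+ae_1''\otimes S(e_1')$.

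It remains to combine contributions using $\Delta(e) = \Delta(e_1)+\Delta(e_2)$: the pair $-S(e_1')\otimes e_1''a$ (from the middle summand reduced) and $-S(e_2')\otimes e_2''a$ (from the third summand) assemble into $-S(e')\otimes e''a$, while the pair $+ae_1''\otimes S(e_1')$ and $+ae_2''\otimes S(e_2')$ assemble into $+ae''\otimes S(e')$. Together with $aS(e')\otimes e''$ and $-e''\otimes S(e')a$ from the first summand and its $P_{21}$-image, this yields exactly the claimed formula and makes it manifest that $\delta_{e_1,e_2}$ depends only on the sum $e = e_1 + e_2$. The main obstacle is the extraction of the Hopf algebra identity $\sum_{(a)} S(a'')a'\otimes a''' = 1\otimes a$ from inside the middle summand: the factorisation is legitimate only after one has used cyclicity in $\check A$ to isolate $S(a'')a'$ in the first tensor factor, so the argument crucially uses that we are working modulo commutators on both sides.
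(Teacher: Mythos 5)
Your proof is correct and follows essentially the same route as the paper's: both expand $\delta_{e_1,e_2}=d_{e_1,e_2}-P_{21}d_{e_1,e_2}$ using the formula from the preceding lemma, cycle within each tensor factor of $\check{A}\otimes\check{A}$ to isolate $S(a'')a'$, collapse the middle terms via the involutive-Hopf-algebra identity $\sum S(a'')a'\otimes a'''=1\otimes a$ (the paper phrases this as $S(a'')a'=S(S(a')a'')$ followed by the antipode axiom), and reassemble using $\Delta(e)=\Delta(e_1)+\Delta(e_2)$. Your remark that the factorisation is only legitimate after cycling in $\check{A}$ correctly identifies the one delicate point.
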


\begin{proof}
Let $\widetilde{a} \in \widetilde{A}$. We have
\begin{eqnarray*}
\delta_{e_1,e_2}(\widetilde{a}) &=&  aS( e') \otimes e'' -  a'  S(e_1') S( a'') \otimes  a'''  e_1''  -  S(e'_2) \otimes e_2'' a \\
&&  - e'' \otimes a S(e') + a'''  e_1'' \otimes a'  S(e_1') S( a'')  +   e_2'' a \otimes S(e'_2) \\
 &=&  aS( e') \otimes e'' -  S(e_1') S( a'') a'  \otimes  a'''  e_1''  -  S(e'_2) \otimes e_2'' a \\
&&  - e'' \otimes a S(e') +  a'''  e_1'' \otimes S(e_1') S( a'')a'  +   e_2'' a \otimes S(e'_2)  \\
 &=&  aS( e') \otimes e'' -  S(e_1')\, S( S(a')a'')  \otimes  a'''  e_1''  -  S(e'_2) \otimes e_2'' a \\
&&  - e'' \otimes a S(e') +  a'''  e_1'' \otimes S(e_1')\, S( S(a')a'')  +   e_2'' a \otimes S(e'_2)  \\
 &=&  aS( e') \otimes e'' -  S(e_1')  \otimes  a e_1''  -  S(e'_2) \otimes e_2'' a \\
&&  - e'' \otimes a S(e') +  a e_1'' \otimes S(e_1')  +  e_2'' a \otimes S(e'_2)  \\
&=&  aS( e') \otimes e'' -  S(e_1')  \otimes  a e_1''  -  S(e'_2) \otimes a  e_2''  \\
&&  - e'' \otimes a S(e') +  a e_1'' \otimes S(e_1')  +    ae_2''  \otimes S(e'_2)  \\
&=&  aS( e') \otimes e'' -  S(e')  \otimes  a e''   - e'' \otimes a S(e') +  a e'' \otimes S(e').
\end{eqnarray*}

\up
\end{proof}

Let $\vert \check{A}\vert$ be the quotient of the module $\check{A}$ by the submodule spanned by the  class of $1\in A$.

\begin{lemma} \label{lem:delta_q-skew}
Let $\rho$ be a Fox pairing in $A$ such that $\rho + \rho^t= \rho_e$ for some $e \in \K 1 \subset A$.
Then, for any $q\in \QDer(\rho)$ such that $q(\ker p) \subset \K 1 $, the map $\delta_q$ induces a linear map
$\vert \delta_q\vert: \vert \check{A}\vert  \to \vert \check{A} \vert \otimes \vert \check{A}\vert$ with skew-symmetric values. 
\end{lemma}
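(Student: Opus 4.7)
The skew-symmetry of the image in $\vert\check{A}\vert\otimes\vert\check{A}\vert$ is immediate from the defining formula $\delta_q = d_q - P_{21} d_q$. The substance of the lemma is therefore that $\delta_q$ factors through the successive quotients $\widetilde{A} \twoheadrightarrow A \twoheadrightarrow \check{A} \twoheadrightarrow \vert\check{A}\vert$ at the source, all congruences being taken modulo $\vert\check{A}\vert\otimes\vert\check{A}\vert$ at the target. My plan is to verify the three required vanishings in turn.

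First, the vanishing on $1 \in \widetilde{A}$: applied to $1 = 1\cdot 1$, the quasi-derivation identity yields $q(1) = 2q(1) + \rho(1,1)$. Since $\rho(1,-)$ vanishes by the left Fox-derivative property, we get $q(1) = 0$ and thus $d_q(1) = 0$.

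Second, descent through $p$: given $\widetilde{a}\in\ker p$, I would unfold $d_q(\widetilde{a}) = p(\widetilde{a}')\,S(q(\widetilde{a}'')')\otimes q(\widetilde{a}'')''$ using the constraint $(p\otimes p)\Delta(\widetilde{a}) = 0$. The summands in which $q$ is applied to an element of $\ker p$ are scalars by the hypothesis $q(\ker p)\subset \K 1$, and therefore have a tensor factor equal to $1$, which dies in $\vert\check{A}\vert\otimes\vert\check{A}\vert$; the remaining summands should combine with their counterparts from $P_{21}d_q(\widetilde{a})$ to yield commutators in each tensor slot.

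Third and most importantly, descent through the commutator quotient: I would compute $\delta_q(\widetilde{a}\widetilde{b}) - \delta_q(\widetilde{b}\widetilde{a})$ by expanding both via $q(\widetilde{a}\widetilde{b}) = q(\widetilde{a})b + a\,q(\widetilde{b}) + \rho(a,b)$. The difference splits into (a) terms that become commutators in $\check{A}$ after the Sweedler expansion, and hence vanish modulo $[A,A]$ in each tensor slot; and (b) a contribution depending only on $\rho(a,b) - \rho(b,a)$. For (b), the hypothesis $\rho + \rho^t = \rho_e$ with $e\in\K 1$ together with the identity \eqref{eq:re-involution} reduces the problem to $\delta_{e_1,e_2}$, which by Lemma \ref{lem:delta_e1_e2} equals
$$
a S(e')\otimes e'' + a\,e''\otimes S(e') - S(e')\otimes e''\,a - e''\otimes S(e')\,a.
$$
Since $e\in\K 1$ forces $\Delta(e)\in\K(1\otimes 1)$, every summand above has a tensor factor equal to $1$ and hence vanishes in $\vert\check{A}\vert\otimes\vert\check{A}\vert$.

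The main obstacle will be the Sweedler bookkeeping in the third step: organising the many interleaved $\Delta$, $S$, $q$, and $\rho$ contributions so that the cancellations are manifest. The computation should mirror that of \cite[Lemma~6.2]{MT_dim_2}, where an analogous skew-symmetry was established for the bracket $\langle-,-\rangle_\rho$.
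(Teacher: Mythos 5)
Your overall architecture coincides with the paper's: show $\delta_q(1)=0$, show that $\delta_q(\ker p)$ dies in $\vert \check{A}\vert \otimes \vert \check{A}\vert$, and show that $\delta_q(\widetilde{a}\,\widetilde{b})=\delta_q(\widetilde{b}\,\widetilde{a})$. Your first step is exactly the paper's. Your second step invokes the right ingredients ($\Delta(\ker p)\subset (\ker p)\otimes \widetilde{A}+\widetilde{A}\otimes(\ker p)$ together with $q(\ker p)\subset \K 1$), but the description of the outcome is off: the summands with $\widetilde{a}'\in\ker p$ vanish outright because $p(\widetilde{a}')=0$, so no pairing with $P_{21}d_q$ is needed and in fact $d_q(\ker p)\subset A\otimes \K 1$ on the nose. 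This is a minor imprecision that would correct itself in the computation.

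The genuine gap is in your third step. After expanding $\delta_q(\widetilde{a}\,\widetilde{b})-\delta_q(\widetilde{b}\,\widetilde{a})$ and discarding the terms that are symmetric modulo commutators, what survives is neither "a contribution depending only on $\rho(a,b)-\rho(b,a)$" nor $\delta_{e_1,e_2}$ of anything; applying \eqref{eq:re-involution} and $\rho+\rho^t=\rho_e$, it is the bilinear expression
$$
\rho_e(b'',a'')''\otimes a'\,S\bigl(\rho_e(b'',a'')'\bigr)\,b' \;-\; \rho_e(a'',b'')''\otimes b'\,S\bigl(\rho_e(a'',b'')'\bigr)\,a' .
$$
Lemma \ref{lem:delta_e1_e2} computes the \emph{unary} quantity $\delta_{e_1,e_2}(\widetilde{a})$ and is not the relevant formula here; what is needed is the evaluation of $b'S(\rho_e(a'',b'')')a'\otimes \rho_e(a'',b'')''$, which is \cite[Lemma 6.3]{MT_dim_2} and gives, for $e\in \K 1$ and up to the scalar, $1\otimes ab + ba\otimes 1 - b\otimes a - a\otimes b$. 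The summands $a\otimes b$ and $b\otimes a$ carry no scalar tensor factor and do \emph{not} vanish in $\vert\check{A}\vert\otimes\vert\check{A}\vert$: they cancel only against the corresponding summands produced by $\rho_e(b'',a'')$, and the residual terms $(ba-ab)\otimes 1$ and $1\otimes(ab-ba)$ die because one works modulo commutators in each tensor slot. So the vanishing mechanism you propose ("every summand has a tensor factor equal to $1$") is not the one operating at the decisive point, and the step as written does not close; it needs to be replaced by the antisymmetrized cancellation just described.
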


\begin{proof}
Since $p$ is a coalgebra map, we have $\Delta(\ker(p)) \subset \ker (p\otimes p)$ and, since $p$ is surjective,
$\ker (p\otimes p) =(\ker p)\otimes \widetilde{A}+   \widetilde{A} \otimes (\ker p)$.
Therefore, for any $\widetilde{a}\in \ker(p)$,
$$
 d_q (\widetilde{a}) = p(\widetilde{a}') S( q(\widetilde{a}'')') \otimes q(\widetilde{a}'')''
$$
belongs to $A \otimes (\K 1)$. 
It follows that $\delta_q: \widetilde{A}  \to \check{A} \otimes \check{A}$ induces a map $\delta_q: A \to \vert \check{A} \vert \otimes \vert \check{A} \vert$;
moreover, $\delta_q(1_A)=0$ since $q(1_{\widetilde{A}})=0$.
Thus it now remains to prove that $\delta_q(\widetilde{a}\, \widetilde{b}) = \delta_q(\widetilde{b}\, \widetilde{a})$ for any $\widetilde{a}, \widetilde{b}\in \widetilde{A}$. We have
\begin{eqnarray*}
d_q\big(\widetilde{a}\, \widetilde{b}\big) 
&=& p(\widetilde{a}' \widetilde{b}')\, S\big( q(\widetilde{a}'' \widetilde{b}'')'\big) \otimes q(\widetilde{a}'' \widetilde{b}'')'' 
\end{eqnarray*}
and, since $q(\widetilde{a}'' \widetilde{b}'') = p(\widetilde{a}'') q(\widetilde{b}'')  + q(\widetilde{a}'') p(\widetilde{b}'') + \rho(p(\widetilde{a}''),p(\widetilde{b}''))$, we get
\begin{eqnarray*}
d_q\big(\widetilde{a}\, \widetilde{b}\big) 
&=& p(\widetilde{a}' \widetilde{b}') S( p(\widetilde{a}'')' q(\widetilde{b}'')'  ) \otimes p(\widetilde{a}'')'' q(\widetilde{b}'')'' \\ 
&& +  p(\widetilde{a}' \widetilde{b}') S(  q(\widetilde{a}'')' p(\widetilde{b}'')' ) \otimes   q(\widetilde{a}'')'' p(\widetilde{b}'')'' \\
&& + p(\widetilde{a}' \widetilde{b}') S( \rho(p(\widetilde{a}''),p(\widetilde{b}''))') \otimes \rho(p(\widetilde{a}''),p(\widetilde{b}''))'' \\
&=& p(\widetilde{a}') p( \widetilde{b}') S(q(\widetilde{b}'')'  )   S( p(\widetilde{a}'')')  \otimes p(\widetilde{a}'')'' q(\widetilde{b}'')'' \\ 
&& +  p(\widetilde{a}') p( \widetilde{b}') S(p(\widetilde{b}'')') S(  q(\widetilde{a}'')'  ) \otimes   q(\widetilde{a}'')'' p(\widetilde{b}'')'' \\
&& +  p(\widetilde{a}') p(\widetilde{b}') S( \rho(p(\widetilde{a}''),p(\widetilde{b}''))') \otimes \rho(p(\widetilde{a}''),p(\widetilde{b}''))'' \\
&=& a' p(\widetilde{b}') S(q(\widetilde{b}'')'  )   S( a'' )  \otimes a''' q(\widetilde{b}'')'' \\ 
&& +  p(\widetilde{a}')  b' S(b'') S(  q(\widetilde{a}'')'  ) \otimes   q(\widetilde{a}'')'' b''' \\
&&  +  a' b' S( \rho(a'',b'')') \otimes \rho(a'',b'')''  \\
&=& a' p(\widetilde{b}') S(q(\widetilde{b}'')'  )   S( a'' )  \otimes a''' q(\widetilde{b}'')'' \\ 
&& +  p(\widetilde{a}')   S(  q(\widetilde{a}'')'  ) \otimes   q(\widetilde{a}'')'' b \\
&& +   a' b' S( \rho(a'',b'')') \otimes \rho(a'',b'')''  \ \in A \otimes A
\end{eqnarray*}
and, modulo commutators, this is equal to
\begin{eqnarray*}
d_q\big(\widetilde{a}\, \widetilde{b}\big) 
&=& p(\widetilde{b}') S(q(\widetilde{b}'')'  )     \otimes  q(\widetilde{b}'')'' a \\
&& +  p(\widetilde{a}')   S(  q(\widetilde{a}'')'  ) \otimes   q(\widetilde{a}'')'' b + b' S( \rho(a'',b'')') a'\otimes \rho(a'',b'')''   \ \in \check{A} \otimes \check{A}.
\end{eqnarray*}
We deduce that
\begin{eqnarray*}
\delta_q\big(\widetilde{a}\, \widetilde{b}\big)  - \delta_q\big(\widetilde{b}\, \widetilde{a}\big) &=& b' S( \rho(a'',b'')') a'\otimes \rho(a'',b'')'' - \rho(a'',b'')'' \otimes b' S( \rho(a'',b'')') a' \\
& & - a' S( \rho(b'',a'')') b'\otimes \rho(b'',a'')'' + \rho(b'',a'')'' \otimes a' S( \rho(b'',a'')') b'.  
\end{eqnarray*}
It can be proved using the formula \eqref{eq:re-involution} that
$$
\forall x,y \in A, \quad
y' S( \rho(x'',y'')') x'\otimes \rho(x'',y'')''  =  \rho^t(y'',x'')'' \otimes x' S( \rho^t(y'',x'')') y'
$$
(see \cite[Equation (6.1.2)]{MT_dim_2}),  and it follows that
\begin{eqnarray*}
\delta_q\big(\widetilde{a}\, \widetilde{b}\big)  - \delta_q\big(\widetilde{b}\, \widetilde{a}\big)
&=&  \rho^t(b'',a'')'' \otimes a' S( \rho^t(b'',a'')') b' - \rho(a'',b'')'' \otimes b' S( \rho(a'',b'')') a' \\
& & -  \rho^t(a'',b'')'' \otimes b' S( \rho^t(a'',b'')') a' + \rho(b'',a'')'' \otimes a' S( \rho(b'',a'')') b' \\
&=& - \rho_e(a'',b'')'' \otimes b' S( \rho_e(a'',b'')') a'  
 + \rho_e(b'',a'')'' \otimes a' S( \rho_e(b'',a'')') b'.
\end{eqnarray*}
A direct computation (see \cite[Lemma 6.3]{MT_dim_2}) shows that
$$
b' S( \rho_e(a'',b'')') a' \otimes \rho_e(a'',b'')'' =   S(e') \otimes a e''b   + b S\!\left(e'\right)  a \otimes e'' -b S(e') \otimes a e'' -S(e')  a \otimes e'' b
$$
and, using the assumption $e \in \K 1$,  we conclude that
\begin{eqnarray*}
\delta_q\big(\widetilde{a}\, \widetilde{b}\big)  - \delta_q\big(\widetilde{b}\, \widetilde{a}\big)
&=&  - \big(  a e''b \otimes  S(e')  + e'' \otimes  b S\!\left(e'\right)  a - a e''  \otimes b S(e')  -e'' b \otimes S(e')  a \big) \\
&& + \big(  b e''a \otimes  S(e')  + e'' \otimes  a S\!\left(e'\right)  b - b e''  \otimes a S(e')  -e'' a \otimes S(e')  b \big) \\
&= & 0.
\end{eqnarray*}

\up
\end{proof}

The map $\vert \delta_q\vert : \vert \check{A} \vert \to \vert \check{A}\vert \otimes \vert \check{A}\vert$ produced by Lemma \ref{lem:delta_q-skew}
is called the \emph{cobracket} induced by the quasi-derivation~$q$.

\subsection{Filtrations and completions}

A \emph{filtration} on a module $V$ is a decreasing sequence of submodules 
$
V= V_0 \supset V_1 \supset V_2 \supset \cdots.
$
The \emph{completion} of the filtered module $V$ is the module $ \widehat{V}:= \varprojlim_{k} V/V_k$ equipped with the filtration inherited from $V$.
The filtered module $V$ is \emph{complete} if the canonical map $V \to \widehat{V}$ is an isomorphism.
Given an integer $d$, a linear map $f:V \to W$ between filtered modules is said to be \emph{$d$-filtered}
if $f(V_k) \subset W_{k+d}$ for all integers $k\geq \max(0,-d)$.
A~linear map $f:V \to W$ is \emph{filtration-preserving} if it is $0$-filtered.
Any $d$-filtered linear map $f:V \to W$ induces  a ($d$-filtered) linear map $\hat{f}: \widehat{V} \to \widehat{W}$, which we call the \emph{completion} of $f$. 
 
Let $A$ be a \emph{filtered  Hopf algebra} with coproduct $\Delta$, counit $\varepsilon$ and antipode $S$.
This means that   $A$ is a Hopf algebra equipped with a filtration
$$
A_0 \supset A_1\supset A_2 \supset \cdots  
$$
such that  $A_1= \ker \varepsilon$, $A_i A_j \subset A_{i+j}$ for any $i,j\in \N$, and the maps {$\Delta, S$} are filtration-preserving.
For instance, the filtration defined by $A_j {:=} I^j$ for all $j\in \N$, where $I:=\ker \varepsilon$, is called the \emph{$I$-adic filtration} of $A$.

A Fox pairing $\rho: A \times A \to A$ is \emph{filtered} if the induced linear map $\rho_\otimes:A \otimes A \to A$ is $(-2)$-filtered, i.e$.$
$
\rho(A_i,A_j) \subset A_{i+j-2}
$
for any  {$i,j\in \N$} such that $i+j \geq 2$.  Given  another filtered Hopf algebra $\widetilde{A}$,
and given a filtration-preserving surjective Hopf algebra homomorphism $p: \widetilde{A} \to A$, 
we say that a quasi-derivation $q:\widetilde{A} \to A$ ruled by $\rho$ is \emph{filtered} if it is $(-2)$-filtered, i.e$.$
$
q(\widetilde{A}_i) \subset A_{i-2}
$
for any integer $i\geq 2$.

\begin{lemma} \label{lem:filtrations}
Assume that the Hopf algebra $A$ is equipped with the $I$-adic filtration.
Then any Fox pairing $\rho$ in $A$ is filtered.
Furthermore, if  $\widetilde{A}$ is equipped  either with the $I$-adic filtration or with the filtration defined by
\begin{equation} \label{eq:weights_1_2}
\widetilde{A}_k := \sum^{\lfloor k/2 \rfloor}_{i=0} \sum_{\substack{\{1,\dots, k-i\} \stackrel{\sigma}{\to} \{1,2\} \\ \sharp\, \sigma^{-1}(2) = i} }  \prod_{j=1}^{k-i} I_{\sigma(j)}
\quad \hbox{where} \ \left\{\begin{array}{c} I_1 := \ker \varepsilon \\ I_2 := \ker p\end{array}\right.
\end{equation}
for {all $k\in \N$}, then any $q\in \QDer(\rho)$    is filtered.
\end{lemma}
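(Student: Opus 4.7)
The plan is to treat the two assertions in turn. For the claim that every Fox pairing $\rho$ in $A$ is filtered, I would first establish the normalizations $\rho(1,b) = 0 = \rho(a,1)$, which are immediate from the identity $1 = 1 \cdot 1$ combined with the left (respectively right) Fox derivation property in the first (respectively second) variable. Writing $a = (a - \varepsilon(a)) + \varepsilon(a)\cdot 1$ and using these normalizations, the bound $\rho(A_i, A_j) \subset A_{i+j-2}$ reduces to the case $i, j \geq 1$, i.e., $a \in I^i$ and $b \in I^j$. Expanding $a = x_1 \cdots x_i$ and $b = y_1 \cdots y_j$ with $x_k, y_l \in I$, and iterating the Fox derivation properties --- whose correction terms all contain a factor $\varepsilon(x_k)$ or $\varepsilon(y_l)$ that vanishes since $x_k, y_l \in I = \ker\varepsilon$ --- one is left with the clean identity
\[
\rho(a,b) \ = \ x_1 \cdots x_{i-1} \, \rho(x_i, y_1) \, y_2 \cdots y_j,
\]
which manifestly lies in $I^{i-1} \cdot A \cdot I^{j-1} \subset I^{i+j-2} = A_{i+j-2}$.

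For the claim about quasi-derivations, I would induct on the length of a monomial, treating both filtrations on $\widetilde{A}$ in parallel. Given $\widetilde{a} = \widetilde{x}_1 \cdots \widetilde{x}_m$ of total weight $k \geq 2$ --- where each factor has weight $1$ if it lies in $\ker\varepsilon_{\widetilde A}$ and weight $2$ if it lies in $\ker p$, the $I$-adic case corresponding to all factors being of weight $1$ --- I would factor off the last factor and apply the defining identity
\[
q(\widetilde{b}\,\widetilde{x}_m) \ = \ q(\widetilde{b})\, x_m + b\, q(\widetilde{x}_m) + \rho(b, x_m).
\]
The first part of the lemma bounds $\rho(b, x_m)$, and the induction hypothesis bounds $q(\widetilde{b})$. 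The key extra input is that the projection $p$ is filtration-preserving: in the $I$-adic case this is clear, and in the mixed case it follows from $p(\ker p) = 0$, so that any monomial in $\widetilde{A}$ containing an $I_2$-factor is killed by $p$, and only all-$I_1$ monomials contribute to $p(\widetilde{A}_r)$, giving $p(\widetilde{A}_r) \subset I^r = A_r$. A case split on the weight $w \in \{1,2\}$ of $\widetilde{x}_m$ then finishes the job: when $\widetilde{x}_m \in I_2$, we have $x_m = 0$, so the first and third summands of the identity vanish and only $b\, q(\widetilde{x}_m)$ remains (controlled by $b \in A_{k-2}$); when $\widetilde{x}_m \in I_1$, each of the three summands lands separately in $A_{k-2}$. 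The base cases $m \leq 1$ are immediate, using $q(1) = 0$ (which follows from $q(1 \cdot 1) = 2q(1)$) and the triviality of the bound for $m = 1$.

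The main obstacle is really just the bookkeeping in the mixed-filtration case, where one must carefully track how the total weight $k$ is distributed between $\widetilde{b}$ and $\widetilde{x}_m$ and verify each summand of the quasi-derivation identity separately, including the small corner cases $k \in \{2, 3\}$. Once the observation $p(\widetilde{A}_r) \subset A_r$ is in hand, however, all remaining verifications are routine combinations of the filtration axiom $A_i A_j \subset A_{i+j}$ and the first part of the lemma.
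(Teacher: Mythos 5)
Your proof is correct and takes essentially the same route as the paper's: the first assertion is the same peeling computation (reducing to $\rho(aa',b'b)=a\,\rho(a',b')\,b$ for $a',b'\in\ker\varepsilon$), and the second rests on the same three ingredients, namely the quasi-derivation identity, the fact that $p$ kills $\ker p$-factors (so that $p(\widetilde{A}_r)\subset A_r$), and the first part of the lemma to control the $\rho$-term. The only difference is organizational — you run one induction on monomial length with a case split on the weight of the last factor, whereas the paper splits the mixed filtration by the number of $\ker p$-factors ($\geq 2$: $q$ vanishes; $=1$: a closed formula; $=0$: the $I$-adic induction) — and this is purely cosmetic.
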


\begin{proof}
Let $\rho :A \times A \to A $ be a Fox pairing and assume that  $A_k = (\ker \varepsilon)^k$ for any $k\geq 1$.
Then, for any {$i,j\in \N$}, for any $a\in A_i$, $b\in A_j$ and $a',b' \in \ker \varepsilon$, we have
$$
\rho(aa',b'b) = a\rho(a',b')b \in A_{i+j}.
$$
This proves that $\rho(A_{i+1},A_{j+1})\subset A_{i+j}$ for all {$i,j \in \N$,} and it follows that $\rho$ is $(-2)$-filtered.

We now consider a quasi-derivation $q:\widetilde{A} \to A$ ruled by $\rho$. 
Assume first that  $\widetilde{A}_k = (\ker \varepsilon)^k$ for any $k\geq 1$. 
Then, for any integer $i >0$, for any $\widetilde{a} \in \widetilde{A}_i$ and $\widetilde{b} \in \ker \varepsilon$, we have
$$
q(\widetilde{a} \widetilde{b}) = \underbrace{a\,  q( \widetilde{b})}_{\in A_{i}}  + \underbrace{q(\widetilde{a})\, b}_{\in\, ? \cdot A_1} + \underbrace{\rho(a,b)}_{\in A_{i-1}}.
$$
Thus an induction on $i>0$ shows that $q(\widetilde{A}_{i+1}) \subset {A}_{i-1}$.

Let $k\geq 2$ and assume now that $\widetilde{A}_k$ is defined 
by the sum \eqref{eq:weights_1_2} indexed by $i \in \{0,\dots, \lfloor k/2 \rfloor\}$.
For any $i\in \{2,\dots, {\lfloor k/2 \rfloor} \}$, $q$ vanishes on the $i$-th summand since $q$ vanishes on $(\ker p)^2$.
{Besides, $q$ maps the $1$-st summand of \eqref{eq:weights_1_2} to  ${A}_{k-2}$ since, }
 for any $\widetilde{a} \in \ker p$, any $\widetilde{b}_1, \dots, \widetilde{b}_{k-2} \in  \ker \varepsilon$ and $r \in \{1,\dots, k-1\}$ 
we have 
$$
q(\widetilde{b}_1 \cdots \widetilde{b}_{r-1}\, \widetilde{a}\, \widetilde{b}_r \cdots \widetilde{b}_{k-2}) 
= b_1 \cdots b_{r-1}\, q( \widetilde{a})\, b_r \cdots b_{k-2} \in (\ker \varepsilon)^{k-2} =A_{k-2}.
$$ 
{Finally, the previous paragraph shows that $q$ maps the $0$-th summand of \eqref{eq:weights_1_2} to  ${A}_{k-2}$, }
and we conclude that $q$ is $(-2)$-filtered.
\end{proof}

The completion $ \widehat{A}$ of $A$ has a structure of \emph{complete Hopf algebra},
i.e$.$ it is a ``Hopf algebra'' object in the category of complete filtered modules equipped with the monoidal structure defined by the completed tensor product.
Specifically, the coproduct $\hat{\Delta}: \widehat{A} \to \widehat{A} \hat{\otimes}  \widehat{A}$ of $ \widehat{A}$
takes values in the completion of $\widehat{A} {\otimes}  \widehat{A}$. 
All the notions introduced in the previous subsections that do not involve the coproduct extend verbatim to the complete Hopf algebra $\widehat{A}$:
this includes the definition of (filtered) Fox pairing  $\hat \rho:\widehat{A} \times \widehat{A}  \to \widehat{A}$ 
and the definition of (filtered) quasi-derivation $\hat q: \widehat{\widetilde{A}} \to \widehat{A}$. 
Note that any filtered Fox pairing $\rho$ in $A$ and any filtered $q\in \QDer(\rho)$ 
induce, respectively,  a filtered Fox pairing $\hat \rho$ in $\widehat A$ and a filtered $\hat q\in \QDer(\hat \rho)$.

Furthermore, any  construction of the previous subsections that involves the coproduct  can  be adapted mutatis mutandis to the setting of complete Hopf algebras,
and each of this construction ``commutes'' with the procedure of completion.
For instance, a skew-symmetric Fox pairing $\hat \rho$ in $\widehat{A}$ induces a bracket
\begin{equation} \label{eq:<-,->}
\langle -,-\rangle_{\hat \rho}: \check{\widehat{A}} \times \check{\widehat{A}} \longrightarrow \check{\widehat{A}}
\end{equation}
by the same formula \eqref{eq:bracket}, where  $\check{\widehat{A}}$ is the quotient of ${\widehat{A}}$ by the submodule of commutators;
if $\hat \rho$ is now the completion of a skew-symmetric filtered Fox pairing $\rho$ in $A$,
the completion of the $(-2)$-filtered linear map $\langle -,-\rangle_{\hat \rho}: \check{\widehat{A}} \otimes \check{\widehat{A}} \to \check{\widehat{A}} $ induced by \eqref{eq:<-,->}
is the completion of the $(-2)$-filtered linear map  $\langle -,-\rangle_{ \rho}: \check{A}  \otimes \check{A} \to \check{A}$ induced by the bracket of $\rho$. 
Here we use the fact that the completion $\widehat{\check{\widehat{A}}}$  
of $\check{\widehat{A}} $ (with respect to the filtration that it inherits from $\widehat{A}$)
  is canonically isomorphic to  the completion $\widehat{\check A}$  of $\check{A}$
(with respect to the filtration that it inherits from $A$), which implies that 
$$
\reallywidehat{\check{\widehat{A}} \otimes \check{\widehat{ A} }}  \,  \simeq 
\widehat{\check{\widehat{A}}}\, \hat\otimes\, \widehat{\check{\widehat{ A} }} \, \simeq  \,
\widehat{\check{A} }\, \hat\otimes\, \widehat{\check{ A} } \, \simeq \, \reallywidehat{ \check{A} \otimes \check{A}}.
$$
Similarly, any quasi-derivation $\hat q:\widehat{\widetilde A} \to \widehat{A}$ ruled by a Fox pairing $\hat \rho$ in  $\widehat{A}$ induces some linear maps 
$$
d_{\hat q}: \widehat{\widetilde A} \longrightarrow \widehat{A} \hat\otimes \widehat{A}   \quad \hbox{and} \quad
\delta_{\hat q}: \widehat{\widetilde A} \longrightarrow  \widehat{\check{\widehat{A}}}\, \hat\otimes\,  \widehat{\check{\widehat{A}}}
$$
by the   formulas \eqref{eq:d_q} and \eqref{eq:delta_q}, respectively, using the completed tensor product of filtered modules
instead of the usual tensor product of modules;
when $\hat \rho$ is now the completion of a filtered Fox pairing $\rho$ in $A$ and $\hat q $ is the completion of a filtered  $q\in \QDer(\rho)$,
the above maps $d_{\hat q}$ and $\delta_{\hat q}$ are the completions of the $(-2)$-filtered maps
$$
d_{ q}: {\widetilde A} \longrightarrow {A} \otimes {A}   \quad \hbox{and} \quad
\delta_{ q}: {\widetilde A} \longrightarrow{\check A} \otimes {\check A}
$$
respectively.  Lemma \ref{lem:delta_q-skew} can also be adapted to the setting of complete Hopf algebras.

\section{Turaev's loop operations}  \label{sec:Turaev_operations}

In this section, $\K$ is a commutative ring
and $\Sigma$ is a connected oriented surface with non-empty boundary.
We review the  loop operations on $\Sigma$   that Turaev introduced in \cite{Tu_loops}.

\subsection{Turaev's intersection pairing} \label{subsec:eta}

Let $\pi:=\pi_1(\Sigma,\ast)$ where  $\ast \in \partial \Sigma$ and let $\K[\pi]$ denote the group algebra of $\pi$.
The first loop operation introduced by Turaev is the \emph{homotopy intersection pairing} 
$$
\eta: \K[\pi] \times \K[\pi] \longrightarrow \K[\pi].
$$
The map $\eta$ is bilinear and, for any $a,b\in \pi$, we define $\eta(a,b) \in \K[\pi]$ in the following way. 
Let~$\nu$ be the oriented boundary component of $\Sigma$ containing the {base point} $\ast$.
Let $\bullet, \blacktriangle\in \partial \Sigma$ be some additional points  such that $\bullet < \ast < \blacktriangle$ along $\nu$.
Given an oriented path $\gamma$ in $\Sigma$ and two simple points $p<q$ along $\gamma$,
we denote by $\gamma_{pq}$  the arc in $\gamma$ connecting $p$ to $q$, while the same arc with the opposite orientation is denoted by $\overline{\gamma}_{qp}$.
Let $\alpha$ be a loop based at $\bullet$ such that $\overline{\nu}_{\ast \bullet} \alpha \nu_{\bullet \ast}$ represents $a$
and let $\beta$ be a loop based at $\blacktriangle$ such that ${\nu}_{\ast \blacktriangle} \beta \overline{\nu}_{\blacktriangle \ast}$ represents $b$; 
we   assume that these loops  are in transverse position and that $\alpha\cap \beta$ only consists of simple points of $\alpha$ and $\beta$:
$$
\labellist
\scriptsize\hair 2pt
 \pinlabel {$\bullet$} [t] at 172 10
 \pinlabel {$\blacktriangle$} [t] at 538 10
 \pinlabel {$\alpha$} [tr] at 96 69
 \pinlabel {$\beta$} [tr] at 459 80
 \pinlabel {$p$} [b] at 347 187
  \pinlabel {$\Sigma$}  at 35 378
 \pinlabel {$\ast$} at 355 -9
 \pinlabel {$\circlearrowleft$}  at 661 46
 \pinlabel {$\nu$} [t] at 663 3
\endlabellist
\centering
\includegraphics[scale=0.2]{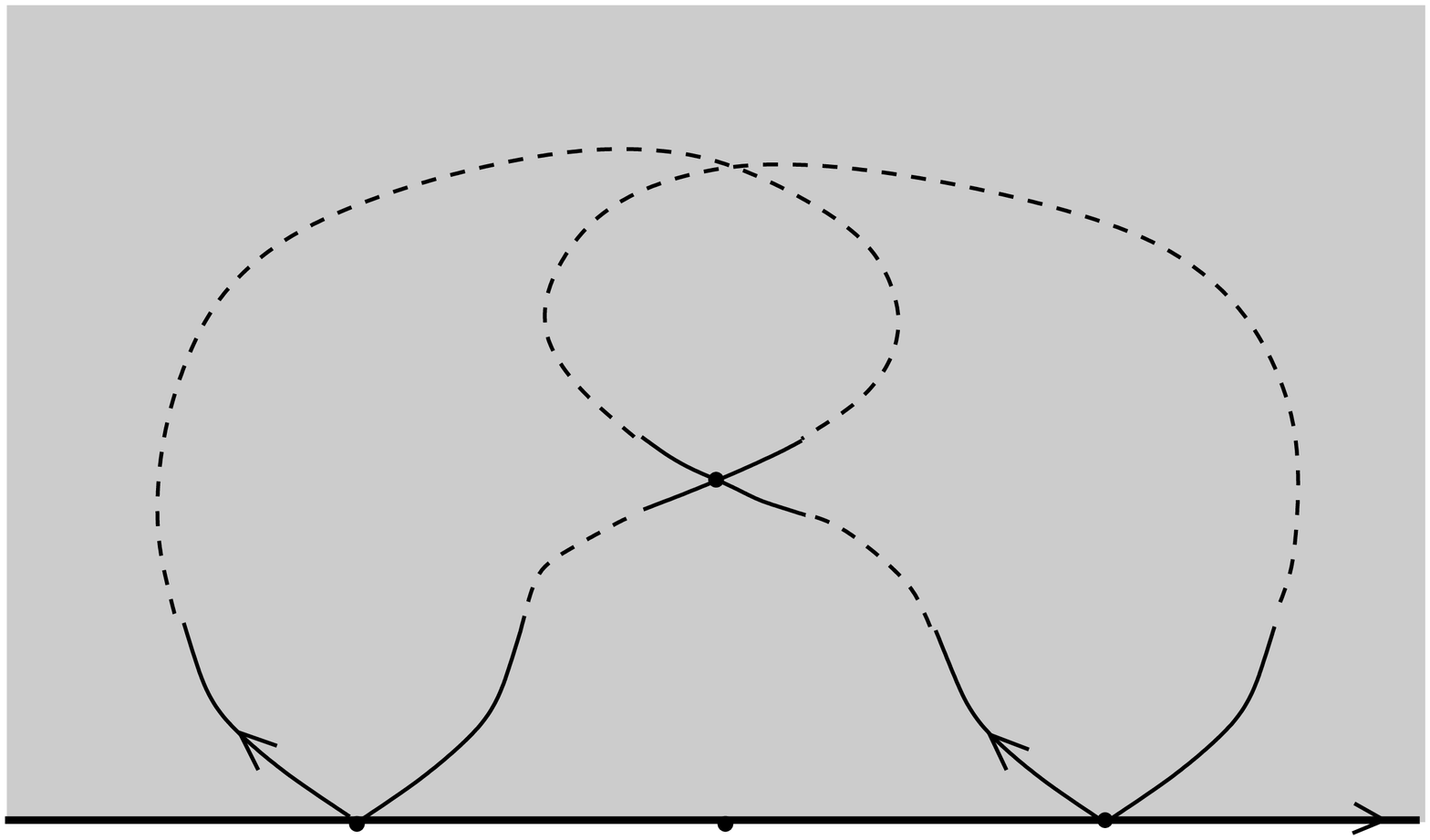}
$$
Then
\begin{equation} \label{eq:def-eta}
\eta(a,b) := \sum_{p \in \alpha \cap \beta} \varepsilon_p(\alpha,\beta)\, \overline{\nu}_{\ast \bullet} \alpha_{\bullet p} \beta_{p\blacktriangle} \overline{\nu}_{\blacktriangle \ast}
\end{equation}
where the sign $\varepsilon_p(\alpha,\beta)=\pm 1$ is equal to $+1$
 if, and only if, a unit tangent vector of $\alpha$ {at~$p$} followed by a unit tangent vector of $\beta$ {at $p$} gives a positively-oriented frame of $\Sigma$.
The homotopy intersection pairing was denoted by $\lambda$ in the article \cite{Tu_loops}, 
where it has a slightly different definition; $\lambda$ is shown there to detect pairs of elements of $\pi$ that can be represented by disjoint loops.
The pairing $\eta$ is also  implicit in the work of Papakyriakopoulos \cite{Pa} who studied Reidemeister's equivariant intersection pairings on surfaces.

As observed  by Turaev in \cite{Tu_loops}  (with  an equivalent terminology adapted to the equivalent pairing $\lambda$), 
the bilinear map $\eta$ is a Fox pairing which satisfies {$\eta+ \eta^t = - \rho_1$}
where $\rho_1$ denotes the inner Fox pairing associated to $1\in \K[\pi]$.
These properties of $\eta$ are easily deduced from the definition \eqref{eq:def-eta}.
Assuming that $1/2 \in \K$, we deduce that  the Fox pairing 
$
\eta^s:= \frac{1}{2}(\eta - \eta^t) = \eta + \rho_{1/2}
$ 
is skew-symmetric.

It follows from the facts (ii) and (iii) mentioned in Section~\ref{subsec:Fox_pairings}
that the Fox pairing $\eta$ induces a skew-symmetric bilinear operation 
$\langle -,- \rangle_\eta$ in  $\K[\pi]/\big[\K[\pi], \K[\pi]\big]$.
We identify this quotient module  with the module $\K\check{\pi}$ freely generated by the set $\check{\pi}$ of conjugacy classes in~$\pi$.
A straightforward computation shows that the resulting operation in $\K\check{\pi}$ is the \emph{Goldman bracket}~\cite{Go} 
$$
\langle-,-\rangle_{\operatorname{G}}: \K\check{\pi} \times \K\check{\pi} \longrightarrow \K\check{\pi},
$$
which is defined  by
$$
\langle a , b\rangle_{\operatorname{G}} := \sum_{p\in \alpha \cap \beta} \varepsilon_p(\alpha,\beta)\, \alpha_{p} \beta_{p}.
$$
Here $\alpha,\beta$ are free loops representing some conjugacy classes $a,b\in \check \pi$ and meeting transversally in a finite number of simple points,
and $\alpha_p \beta_p$ denotes the loop $\alpha$ based at $p$ which is concatenated with the loop $\beta$ based at $p$.

\subsection{Turaev's self-intersection map} \label{subsec:mu}

We  endow the base point $\ast\in \partial \Sigma$  with  the unit vector  $\vec{\ast}$ tangent to $\overline{\partial \Sigma}$,
and we  consider the fundamental group $\overrightarrow{\pi}:= \pi_1(U\Sigma,\vec{\ast})$ of the unit tangent bundle of $\Sigma$ based at  $\vec{\ast}$.
The second loop operation introduced by Turaev is the \emph{homotopy self-intersection map}
$$
\vec{\mu}: \K[\overrightarrow{\pi}] \longrightarrow \K[\pi].
$$
The map $\vec{\mu}$ is linear and, for any  $\vec{a} \in \overrightarrow{\pi}$, we define $\vec{\mu}(\vec{a}) \in  \K[\pi]$ as follows.
Let $\alpha:[0,1] \to \Sigma$ be an immersion  with only finitely many transverse double points 
such that $\dot\alpha(0)= \dot\alpha(1)  = \vec{\ast}$, 
and  the unit tangent vector field of $\alpha$ represents $a$:
$$
\labellist
\scriptsize\hair 2pt
 \pinlabel {$\Sigma$} [t] at 40 396
 \pinlabel {$\circlearrowleft$}  at 59 53
 \pinlabel {$\alpha$} [r] at 166 84
 \pinlabel {$p$} [b] at 356 302
 \pinlabel {$\vec{\ast}$} [t] at 348 8
\endlabellist
\centering
\includegraphics[scale=0.2]{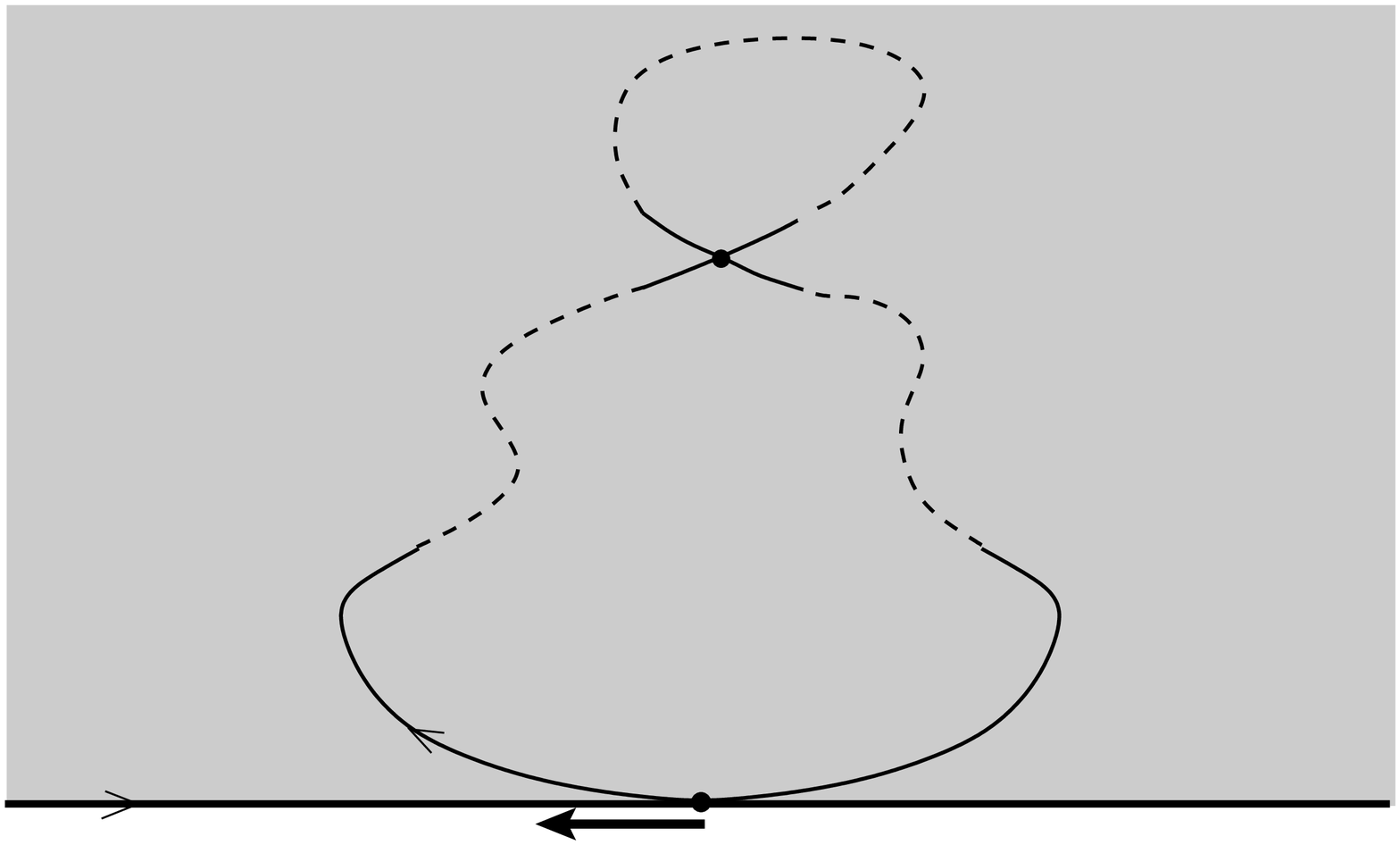}
$$
Then
\begin{equation} \label{eq:def-mu}
\vec{\mu} (\vec{a}) :=
 \sum_{p \in P_\alpha} \varepsilon_p(\alpha)\, (\alpha_{\ast p} \alpha_{p \ast})  + a
\end{equation}
where  $a\in \pi$ is the projection of $\vec a \in \overrightarrow{\pi}$, $P_\alpha$ denotes the set of double points of $\alpha$
and, for every $p\in P_\alpha$,  we use the following notations:
$\alpha_{\ast p}$ is the arc in $\alpha$ running from $\ast$ to the first occurence of $p$ while $\alpha_{p \ast}$ is the arc in $\alpha$ running
from the second occurence of $p$ to $\ast$; the sign $\varepsilon_p(\alpha)=\pm 1$ is equal to $+1$ 
if and only if the first unit tangent vector of $\alpha$ at $p$ followed by the second unit  tangent vector of $\alpha$ at $p$ gives a positively-oriented frame of~$\Sigma$.
The original version of the map $\vec{\mu}$, which was denoted by $\mu$ in \cite{Tu_loops},
 is defined in a different way on $\K[\pi]$ rather than $\K[\overrightarrow{\pi}]$;
 the map $\mu$ is shown there to detect elements of $\pi$ that can be represented by simple loops.
 The above framed version $\vec{\mu}$  has been considered  in  \cite{BP}.

It can be verified from the definition \eqref{eq:def-mu} that  $\vec{\mu}$ has the following properties: first, 
\begin{equation} \label{eq:multiplicativity_mu}
\forall \vec{a},\vec{b} \in \overrightarrow{\pi}, \quad 
\vec{\mu}\big(\vec{a}\, \vec{b}\, \big) = a\, \vec{\mu}( \vec{b}) + \vec{\mu}(\vec{a})\, b + \eta(a,b)
\end{equation}
where $a,b\in \pi$ denote the projections of $\vec{a},\vec{b}\in \overrightarrow{\pi}$; second,
\begin{equation} \label{eq:skew-symmetry}
\forall \vec{a} \in \overrightarrow{\pi}, \quad  
\vec{\mu}\big((\vec{a})^{-1}\big) = - \overline{\vec{\mu}(\vec a)} - 1 +a^{-1}
\end{equation}
where we denote by $x\mapsto \overline{x}$  the antipode of the group algebra $\K[\pi]$.
Similar properties have already been  observed in \cite{Tu_loops} for the unframed version of $\vec{\mu}$.
These properties can be rephrased as follows: first, $\vec{\mu}$ is a quasi-derivation ruled by the Fox pairing $\eta$,
if we consider the homomorphism $p:\K[\overrightarrow{\pi}] \to \K[\pi]$ of Hopf algebras  induced by the bundle projection $U\Sigma \to \Sigma$;
 second, we have  {$\vec{\mu}^t = - \vec{\mu} + q_{-1,0}$.}
Therefore, assuming that $1/2 \in \K$,  the  map 
$
\vec{\mu}^s := \frac{1}{2} (\vec{\mu}-\vec{\mu}^t)= \vec\mu + q_{1/2,0}
$
is a skew-symmetric quasi-derivation ruled by $\eta^s$.

We now compute the map $d_{\vec{\mu}}: \K[\overrightarrow{\pi}] \to \K[\pi] \otimes \K[\pi]$ 
induced by the quasi-derivation $\vec{\mu}$. It follows directly from \eqref{eq:d_q} and \eqref{eq:def-mu} that
\begin{eqnarray*}
 d_{\vec{\mu}}(\vec{a}) &= & \sum_{p \in P_\alpha} \varepsilon_p(\alpha)\, a (\alpha_{\ast p} \alpha_{p \ast})^{-1} \otimes  \alpha_{\ast p} \alpha_{p \ast} + a a^{-1} \otimes a \\
&=&  \sum_{p \in P_\alpha} \varepsilon_p(\alpha)\,  \alpha_{\ast p}  \alpha_{pp}  \overline{\alpha}_{p\ast} \otimes  \alpha_{\ast p} \alpha_{p \ast} +  1 \otimes a
\end{eqnarray*}
where $\alpha_{pp}$ is the arc in $\alpha$ running from the first occcurence of $p$ to its second occurence.
Next, the map $\delta_{\vec{\mu}}:  \K[\overrightarrow{\pi}] \to \K \check{\pi} \otimes \K\check{\pi}$ induced by $\vec{\mu}$ is given by 
\begin{equation} \label{eq:delta_mu} 
\delta_{\vec{\mu}} (\vec{a}) = \sum_{p \in P_\alpha} \left({\alpha_p^{+}} \otimes {\alpha_p^{-}} - {\alpha_p^{-}} \otimes {\alpha_p^{+}} \right) +1 \otimes a -a \otimes 1
\end{equation}
where $\alpha_p^{+}$ and $\alpha_p^{-}$ denote the free loops into which  each point $p\in P_\alpha$ splits $\alpha$,
with the condition that the tangent vector of $\alpha$ at $p$ pointing towards $\alpha_p^+$ 
followed by the tangent vector of $\alpha$ at $p$ pointing towards $\alpha_p^-$
give a positively-oriented frame of $\Sigma$.   

Let $\vert \K\check{\pi} \vert := \K\check{\pi}/(\K 1)$.
According to Lemma~\ref{lem:delta_q-skew}, the map $\delta_{\vec{\mu}}$ 
induces a  linear map $\vert\delta_{\vec{\mu}}\vert: \vert \K\check{\pi}\vert \to \vert \K\check{\pi}\vert \otimes \vert \K\check{\pi}\vert$ with skew-symmetric values.
We deduce from \eqref{eq:delta_mu} that, for any  $a\in \check{\pi}$ represented by a free loop $\alpha$ with only finitely many transverse double points,
$$
\vert\delta_{\vec{\mu}}\vert(\vert a\vert ) = \sum_{p \in P_\alpha} \left(\vert \alpha_p^{+}\vert  \otimes \vert \alpha_p^{-}\vert  - \vert \alpha_p^{-} \vert \otimes \vert \alpha_p^{+}\vert  \right) 
$$
where $\vert\! -\! \vert:  \K\check{\pi} \to  \vert \K\check{\pi}\vert $ denotes the canonical projection.
Let $\vert\check{\pi}\vert$ denote the set $\check{\pi}$ deprived of (the conjugacy class of) $1\in \pi$: we can identify
$\vert \K\check{\pi} \vert $  with the module $\K\vert\check{\pi}\vert$ freely generated by~$\vert\check{\pi}\vert$.
Then the  previous identity for $a\in \vert\check{\pi}\vert$ writes
$$
\vert\delta_{\vec{\mu}}\vert(a) =
\sum_{p \in P_\alpha \setminus P_{\alpha}^0} \left({\alpha_p^{+}} \otimes {\alpha_p^{-}} - {\alpha_p^{-}} \otimes {\alpha_p^{+}} \right)
$$
where $P_\alpha^0$ is the subset of $P_\alpha$ consisting of the points $p$ 
such that one of the free loops $\alpha^{\pm}_p$ is null-homotopic.
We conclude that $\vert\delta_{\vec{\mu}}\vert$ coincides with the Turaev cobracket $\delta_{\operatorname{T}}$ introduced in~\cite{Tu_skein}.
 In other words, we have the following commutative diagram:
\begin{equation}  \label{eq:delta's}
\xymatrix{
 \K[\overrightarrow{\pi}] \ar[r]^-{\delta_{\vec{\mu}}}  \ar[d]_-{ \vert - \vert \circ p }&  \K\check{\pi} \otimes  \K\check{\pi} \ar[d]^-{\vert - \vert \otimes \vert -  \vert} \\
 \K\vert\check{\pi}\vert \ar[r]^-{\delta_{\operatorname{T}}} &  \K\vert\check{\pi}\vert \otimes  \K\vert\check{\pi}\vert
}
\end{equation}
where the left vertical map $\vert\! -\! \vert: \K[\pi] \to \K\vert\check{\pi}\vert$ is the composition of the canonical projection $\K[\pi] \to \K\check{\pi}$
with $\vert\! -\! \vert:  \K\check{\pi} \to  \K\vert\check{\pi}\vert$.

\subsection{Completions of Turaev's loop operations}

The Hopf algebra $\K[\pi]$ is endowed with the $I$-adic filtration:
\begin{equation}   \label{eq:I-adic}
\forall k\in \N, \quad (\K[\pi])_k := I^k \quad \hbox{where } I:= \ker \varepsilon.
\end{equation}
According to the first statement of Lemma \ref{lem:filtrations}, the Fox pairing $\eta$ is filtered
so that it induces a filtered Fox pairing $\hat \eta$ in the complete Hopf algebra
$
\widehat{\K[\pi]} = \varprojlim_k \K[\pi]/(\K[\pi])_k .
$
The bracket $\langle -,- \rangle_{\hat \eta}$ induced by $\hat \eta$ is the completion of the bracket induced by $\eta$, 
namely the completion of  the Goldman bracket:
$$
\langle - , - \rangle_{\operatorname{G}}^{\widehat{} }: \reallywidehat{\K\check \pi} \times \reallywidehat{\K\check \pi} \longrightarrow  \reallywidehat{\K\check \pi}
$$
Here $\reallywidehat{\K\check \pi}$ denotes the completion of ${\K\check \pi} \simeq \K[\pi]/\big[\K[\pi],\K[\pi]\big]$ with respect to the filtration that it inherits from $\K[\pi]$.

Recall that $p: \K[\overrightarrow{\pi}] \to \K[\pi]$ is the Hopf algebra homomorphism induced by the bundle projection $U\Sigma \to \Sigma$.
We endow the Hopf algebra $\K[\overrightarrow{\pi}]$  either with its $I$-adic filtration, 
or  with the filtration \eqref{eq:weights_1_2} where $\widetilde{A} := \K[\overrightarrow{\pi}]$.
Then, according to the second statement of Lemma~\ref{lem:filtrations}, the quasi-derivation $\vec{\mu}$ is filtered  in either case
so that it induces a filtered quasi-derivation $\hat{\vec{\mu}}$  ruled by $\hat{\eta}$ in  the complete Hopf algebra 
$
{\widehat{\K[\overrightarrow{\pi}]} }= \varprojlim_k \K[\overrightarrow{\pi}]/(\K[\overrightarrow{\pi}])_k .
$
The cobracket $\vert \delta_{\hat{\vec{\mu}}} \vert$  induced by $\hat{\vec{\mu}}$ is the completion of the cobracket induced by $\vec{\mu}$,
namely the completion of the Turaev cobracket
$$
{\hat{\delta}_{\operatorname{T}}}:  \reallywidehat{\K\vert  \check{\pi }\vert} 
\longrightarrow  \reallywidehat{\K\vert  \check{\pi }\vert}  \hat \otimes \ \reallywidehat{\K\vert  \check{\pi }\vert}.
$$
Here $\reallywidehat{\K\vert {\check{\pi  }} \vert} $ 
denotes the completion of $\K\vert\check{\pi}\vert \simeq \vert \K\check{\pi}\vert \simeq \K[\pi]\big/\big( {\big[ \K[\pi],\K[\pi] \big]} + \K 1\big)$ 
with respect to the filtration that it inherits from $\K[\pi]$.

\subsection{Remarks}

1. The ways how the Goldman bracket and the Turaev cobracket can be derived from 
Turaev's loop operations have been explained in \cite{MT_dim_2} and in \cite{KK_intersection,MT_draft}, respectively.
Kawazumi has also studied a framed version of an operation which is equivalent to Turaev's self-intersection map $\mu$:
the resulting framed version of the Turaev cobracket ${\delta}_{\operatorname{T}}$ is  announced in \cite{Ka_announcement}.

2. For our purposes in Sections \ref{sec:special}--\ref{sec:proofs} where $\Sigma$ will be a punctured disk,
we only need to consider the $I$-adic filtration on $\K[\overrightarrow{\pi}]$.
Nonetheless, the filtration \eqref{eq:weights_1_2} should be useful for surfaces of positive genus.

3. All the properties of $\eta$ and $\vec\mu$ that have been claimed without proof in this section
(namely, the facts that $\eta$ is a Fox pairing, that $\vec{\mu}$ is  a quasi-derivation ruled by $\eta$ and  the formulas 
computing {$\eta^t$} and $\vec{\mu}^t$) will follow from the results in Section~\ref{sec:Turaev-dim3}.

\section{Three-dimensional formulas for Turaev's loop operations}  \label{sec:Turaev-dim3}

In this section, $\K$ is a commutative ring and $\Sigma$ is a connected oriented surface with non-empty boundary.
We set $\pi:=\pi_1(\Sigma,\ast)$ where $\ast \in \partial \Sigma$. 
We give some three-dimensional formulas for Turaev's loop operations using pure braids in $\Sigma$.

\subsection{Preliminaries}

We recall the notion of ``clasper'' introduced by Habiro in  \cite{Ha}. This notion will be useful in the sequel to describe local modifications on tangles.

A \emph{clasper} $C$ for a framed tangle $T \subset \Sigma \times [0,1]$ is a copy   in the exterior of $T$ of the following surface
$$
\labellist
\scriptsize \hair 2pt
 \pinlabel {leaf} [t] at 74 2
 \pinlabel {edge} [t] at 329 1
 \pinlabel {leaf} [t] at 604 2
\endlabellist
\centering
\includegraphics[scale=0.2]{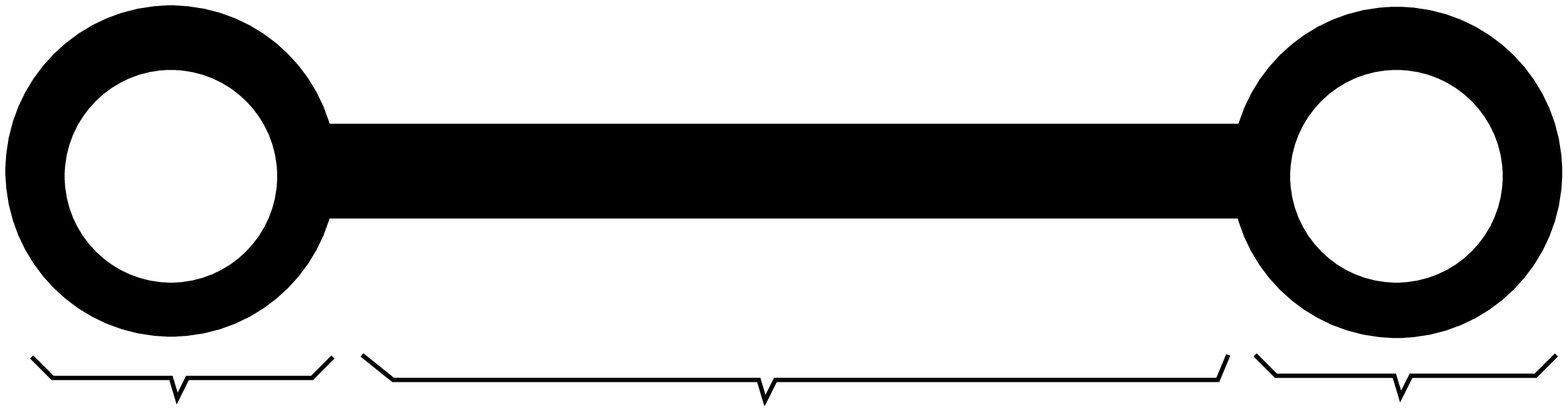}
$$
\vspace{0.1cm}

\noindent
decomposed into an \emph{edge} and two \emph{leaves}.
We assume that each leaf bounds a disk in $\Sigma \times [0,1]$ meeting $T$ transversely in finitely many points.
The \emph{surgery} along $C$ is the framed tangle $T_C$ obtained from $T$ by the following local modification:
$$
\labellist
\scriptsize\hair 2pt
 \pinlabel {$\cdots$} at 92 255
 \pinlabel {$\cdots$} at 99 18
 \pinlabel {$\cdots$}  at 574 251
 \pinlabel {$\cdots$}  at 581 16
 \pinlabel {$T$} at 650 239
\pinlabel {$C$}  at 360 165
 \pinlabel {$\leadsto$}  at 900 136
 \pinlabel {$\cdots$}  at 1125 255
 \pinlabel {$\cdots$}  at 1130 18
 \pinlabel {$\cdots$} at 1600 251
 \pinlabel {$\cdots$} at 1600 18
 \pinlabel {$T_C$}  at 1700 230
\endlabellist
\centering
\includegraphics[scale=0.2]{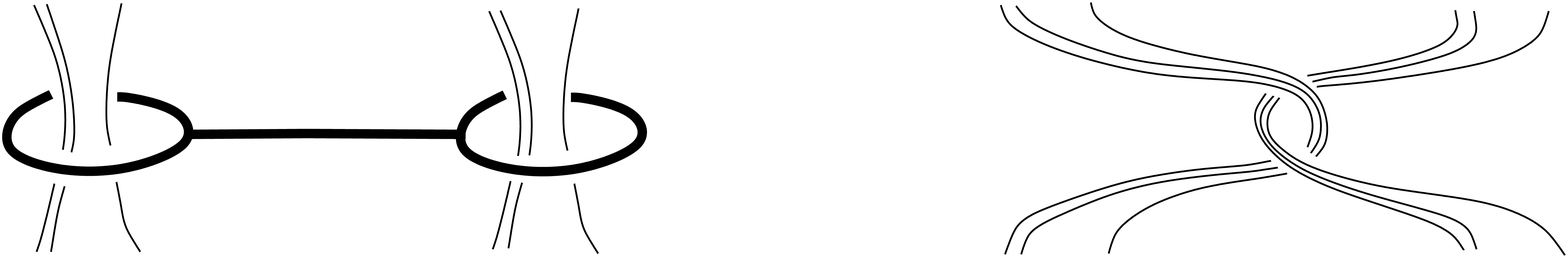}
$$
The clasper $C$ is said to be \emph{simple} if a single string of $T$ goes through each leaf of $T$.
(Note that, in the terminology of \cite{Ha}, our  ``claspers'' should be called ``strict basic claspers.'')

\subsection{A three-dimensional formula for Turaev's intersection pairing} \label{subsec:lambda-dim3}

Let $B_n(\Sigma)$ (respectively, $PB_n(\Sigma)$) denote the $n$-strand  braid group (respectively, the $n$-strand pure braid group)
in $\Sigma$ corresponding to the choice of $n$ points  in the interior of $\Sigma$. 
We will mainly need the case $n=2$, and denote  the two interior points  of $\Sigma$ by $u$ and $v$. 
We choose some simple arcs $\gamma_{\ast u}$  and $\gamma_{\ast v}$ connecting $\ast$ to $u$ and $v$ respectively,
such that $\gamma_{\ast u} \cap \gamma_{\ast v} =\{ \ast \}$ and the unit tangent vectors of  $\gamma_{\ast u}$  and $\gamma_{\ast v}$ at $\ast$ in this order
gives a negatively-oriented frame of $\Sigma$:
$$
\labellist
\scriptsize\hair 2pt
 \pinlabel {$\ast$}   at 364 -6
 \pinlabel {$u$} [b] at 225 248
 \pinlabel {$v$} [b] at 493 248
 \pinlabel {$\gamma_{\ast u}$} [tr] at 278 128
 \pinlabel {$\gamma_{\ast v}$} [tl] at 434 123
 \pinlabel {$\Sigma$} at 47 356
 \pinlabel {$\circlearrowleft$} at 640 356
\endlabellist
\centering
\includegraphics[scale=0.18]{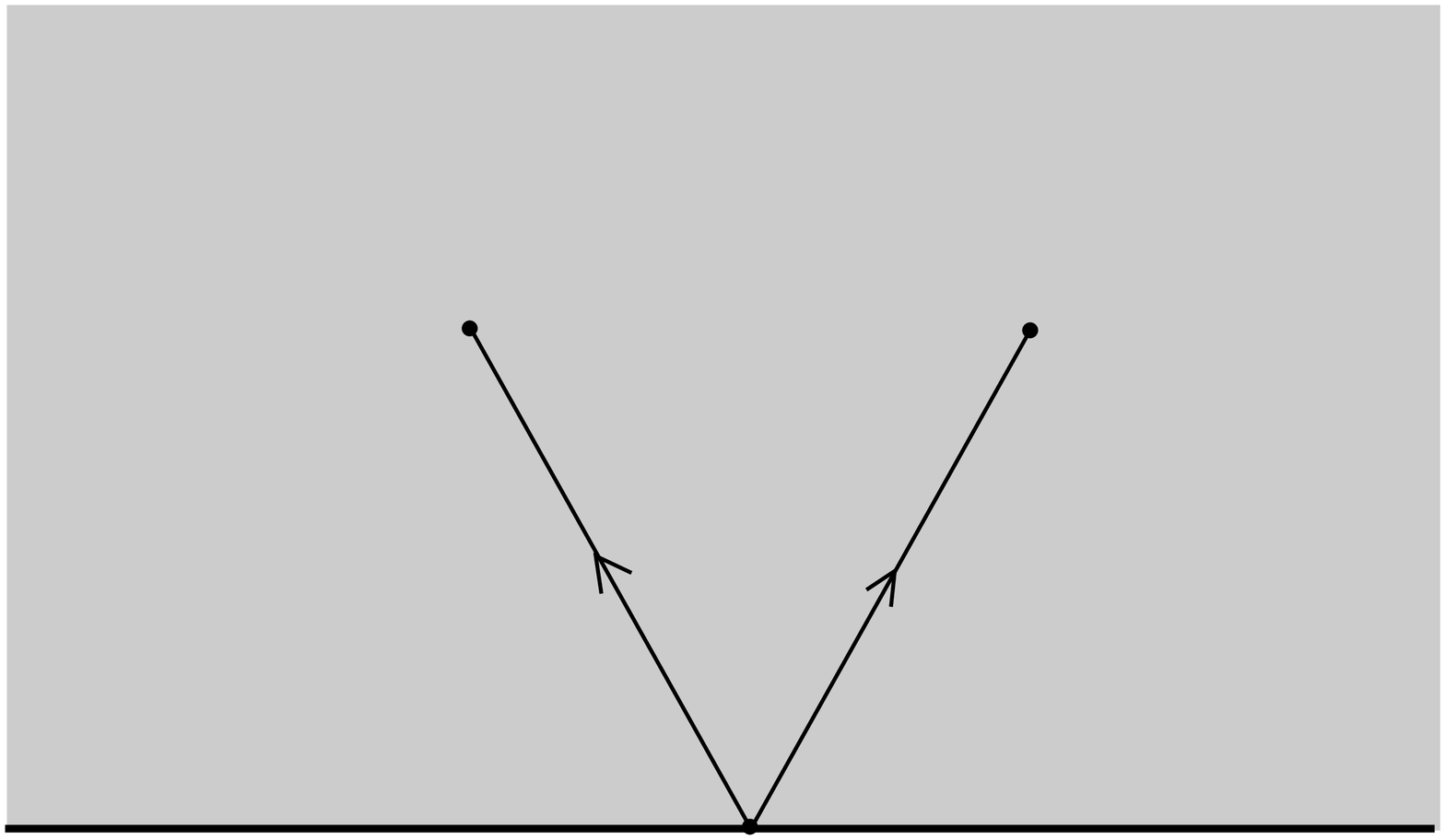}
$$
Set $\pi^u := \pi_1(\Sigma\setminus \{u\},\ast)$ and $\pi^v := \pi_1(\Sigma\setminus \{v\},\ast)$.
There is a natural embedding of ${\pi_1(\Sigma \setminus \{v\}, u)}$ into $PB_2(\Sigma)$ whose image consists of the braids with strand $v$ ``straight vertical'';
by composition with the isomorphism $ \pi^v \simeq\pi_1(\Sigma \setminus \{v\}, u) $ defined by the path $\gamma_{\ast u}$, we obtain  an embedding of groups
\begin{equation} \label{eq:pi^v}
\pi^v \hookrightarrow PB_2(\Sigma).
\end{equation}
 By exchanging the roles of $u$ and $v$,  we define in a similar way an embedding of groups 
\begin{equation} \label{eq:pi^u}
\pi^u \hookrightarrow PB_2(\Sigma).
\end{equation}
Fix now a point $\bullet \in \partial \Sigma$  before $\ast$ along the oriented boundary of $\Sigma$: 
by ``pushing'' the initial point of $\gamma_{\ast u}$ towards $\bullet$, we get a new simple arc $\gamma_{\bullet u}$ connecting $\bullet$ to~$u$.
Let $\Sigma^u$ be the surface obtained by removing from $\Sigma$ a regular {neighborhood} of $\gamma_{\bullet u}$:
$$
\labellist
\small\hair 2pt
 \pinlabel {$\Sigma^u$} [tl] at 23 379
 \pinlabel {\scriptsize $\gamma_{\bullet u}$}  at 147 173
 \pinlabel {$u$} [b] at 224 255
 \pinlabel {$v$} [b] at 494 255
 \pinlabel {\scriptsize $\gamma_{\ast v}$}   at 400 165
 \pinlabel {$\bullet$}   at 90 -5
 \pinlabel {$\ast$}   at 359 -5
\endlabellist
\centering
\includegraphics[scale=0.22]{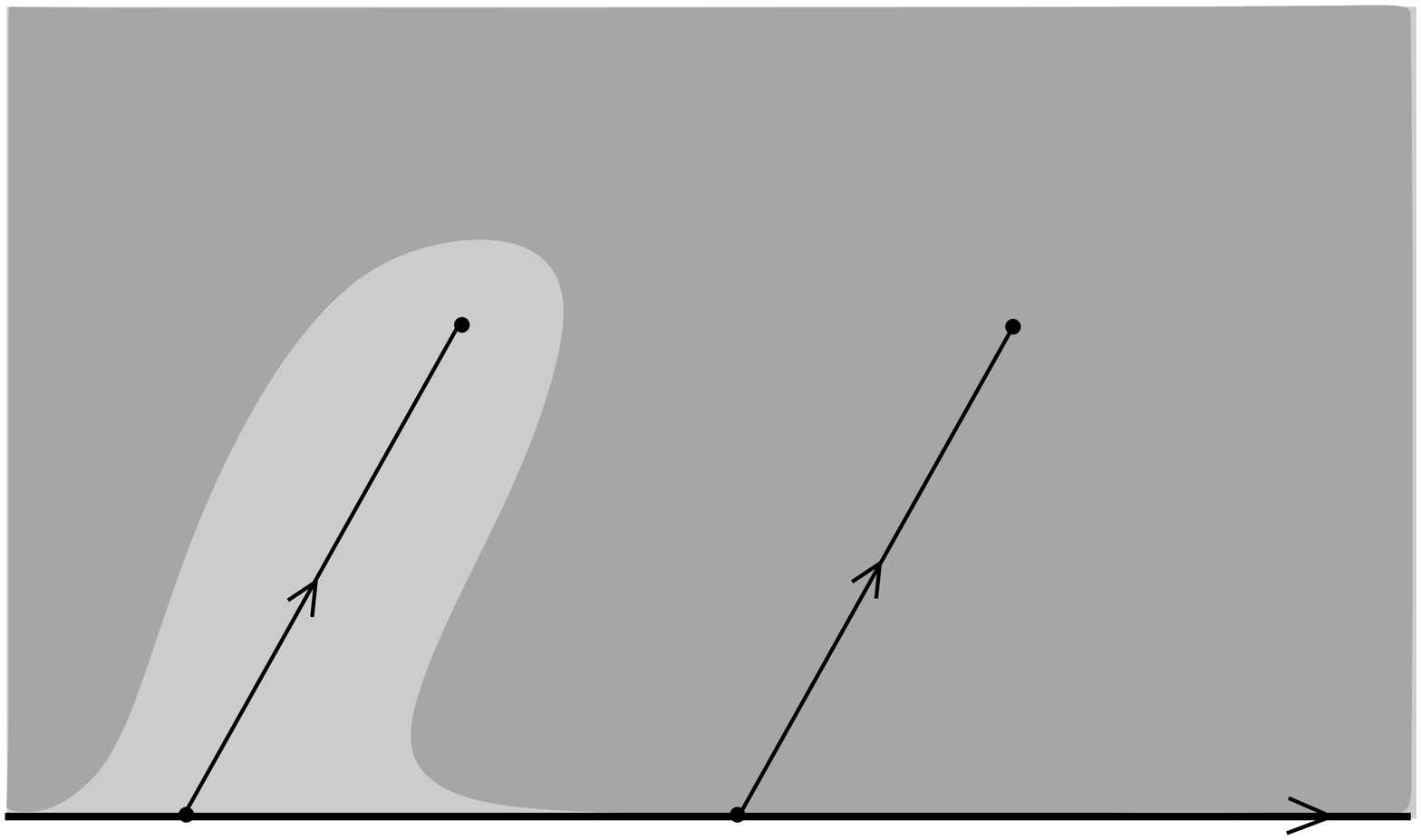}
$$
The obvious retraction $\Sigma \to \Sigma^u$ composed with the inclusion $\Sigma^u \subset \Sigma\setminus\{u\}$ induces an injective group homomorphism
$$
\iota^u:\pi \hookrightarrow \pi^u, 
$$
whose image consists of the  loops that do not cut $\gamma_{\bullet u}$.
Fix another point $\blacktriangle \in \partial \Sigma$ after $\ast$ along the oriented boundary of $\Sigma$:
by ``pushing'' the initial point of $\gamma_{\ast v}$ towards $\blacktriangle$, we get a new simple arc $\gamma_{\blacktriangle v}$ connecting $\blacktriangle$ to~$v$:
$$
\labellist
\small\hair 2pt
\pinlabel {$\Sigma^v$} [tl] at 23 379
 \pinlabel {$u$}  at 225 269
 \pinlabel {$v$}  at 493 269
 \pinlabel {$\ast$}  at 359 -5
 \pinlabel {\scriptsize $\blacktriangle$}  at 628 -5
 \pinlabel {\scriptsize $\gamma_{\ast u}$} at 253 152
 \pinlabel {\scriptsize $\gamma_{\blacktriangle v}$}  at 500 168
\endlabellist
\centering
\includegraphics[scale=0.22]{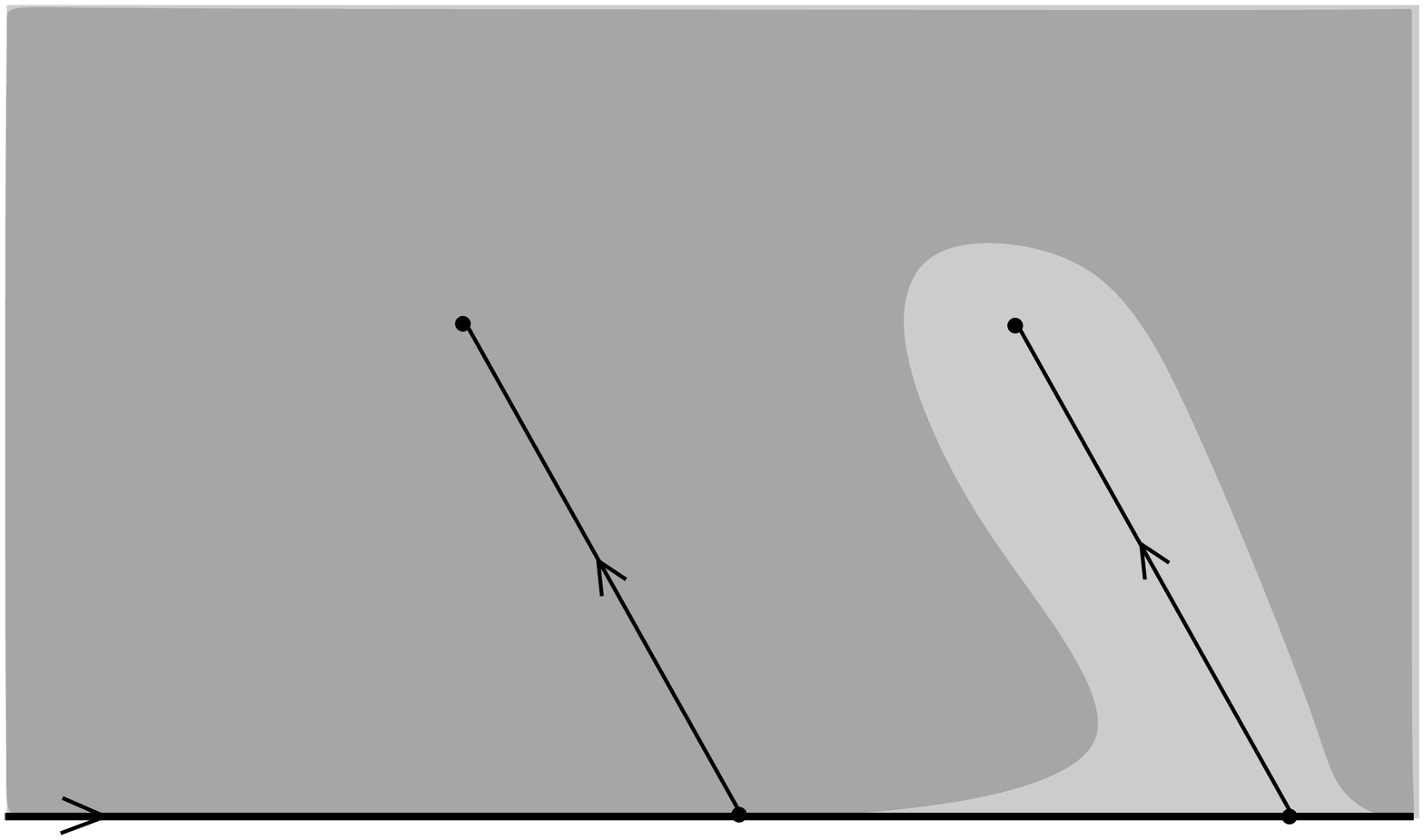}
$$
In a way similar to the definition of  $\iota^u$,  we define an injective  homomorphism of groups
$$
\iota^v:\pi \hookrightarrow \pi^v
$$
whose image consists of the  loops that do not cut $\gamma_{\blacktriangle v}$.
In the sequel, the homomorphisms \eqref{eq:pi^v}  and  \eqref{eq:pi^u}  are regarded as true inclusions, 
so that $\iota^u$ and $\iota^v$ also define some injective group homomorphims
$$
\iota^v:\pi \hookrightarrow PB_2(\Sigma) \quad  \hbox{and}  \quad \iota^u:\pi \hookrightarrow PB_2(\Sigma).
$$
We have the free-product decompositions
\begin{equation} \label{eq:fpd}
\pi^u = \iota^u(\pi) * F(z^u) \quad \hbox{and} \quad \pi^v = \iota^v(\pi) * F(z^v)
\end{equation}
where $(z^u)^{-1} \in \pi^u$ and $(z^v)^{-1}\in \pi^v$ are represented by the oriented boundaries of small neighborhoods of $\gamma_{\ast u}$ and $\gamma_{\ast v}$ respectively:
$$
\labellist
\scriptsize \hair 2pt
 \pinlabel {$\ast$}   at 359 -5
 \pinlabel {$u$} [b] at 225 248
 \pinlabel {$v$} [b] at 493 248
 \pinlabel {$z^u$} [tr] at 252 140
 \pinlabel {$z^v$} [tl] at 465 140
 \pinlabel {$\Sigma$} at 47 356
  \pinlabel {$\circlearrowleft$} at 640 356
\endlabellist
\centering
\includegraphics[scale=0.22]{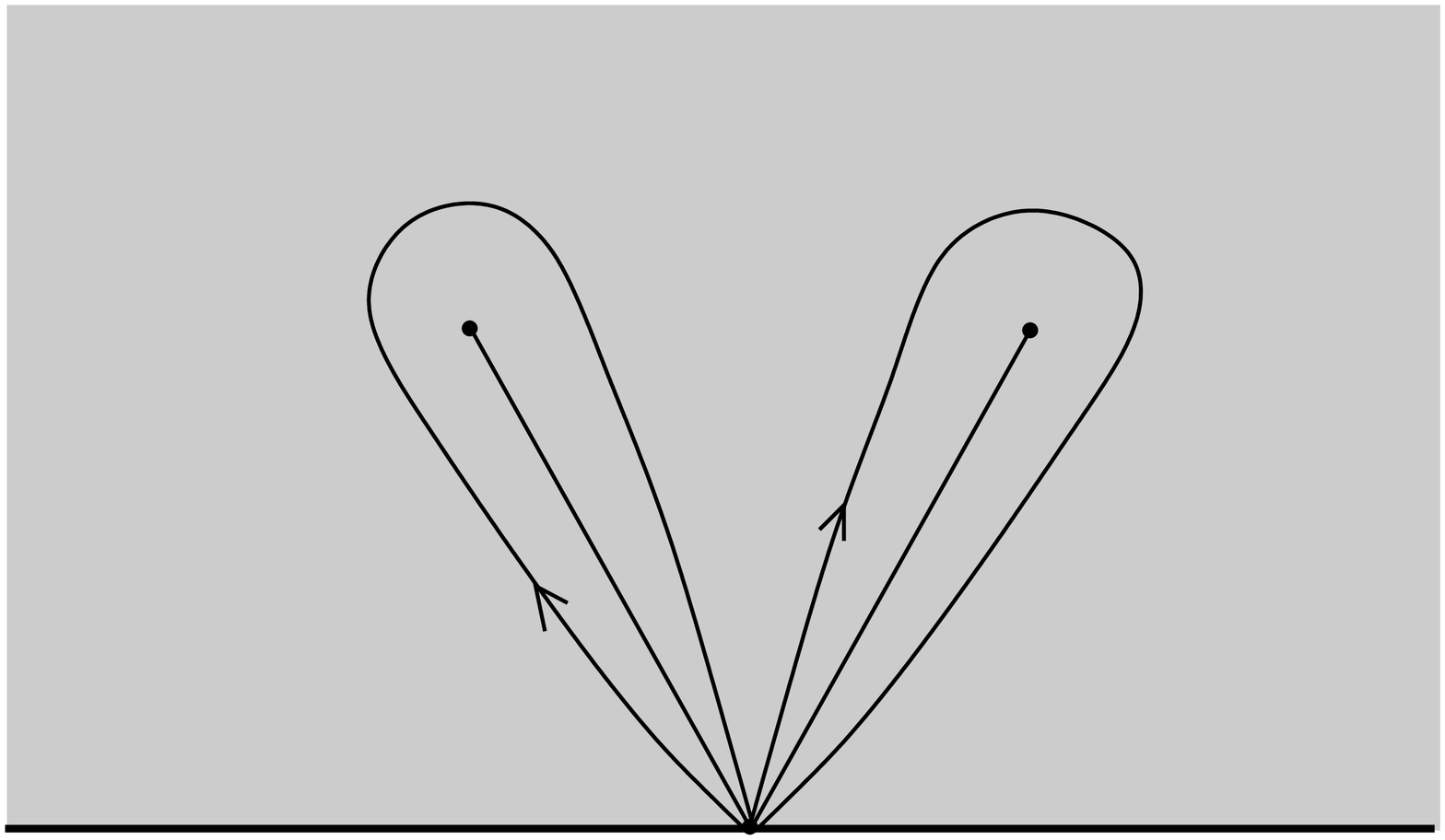}
$$
Note also that $z^u=z^v \in  \pi^u \cap \pi^v \subset PB_2(\Sigma)$; {this pure  braid} can  be obtained from the trivial $2$-strand braid by surgery along the following simple clasper:
$$
\labellist
\scriptsize\hair 2pt
 \pinlabel {$\ast$} [t] at 325 0
 \pinlabel {$\gamma_{\ast u}$} [r] at 284 37
 \pinlabel {$\gamma_{\ast v}$} [l] at 374 40
 \pinlabel {$\Sigma \times \{0\}$} [l] at 643 44
 \pinlabel {$\Sigma \times \{1\}$} [l] at 634 338
\endlabellist
\centering
\includegraphics[scale=0.18]{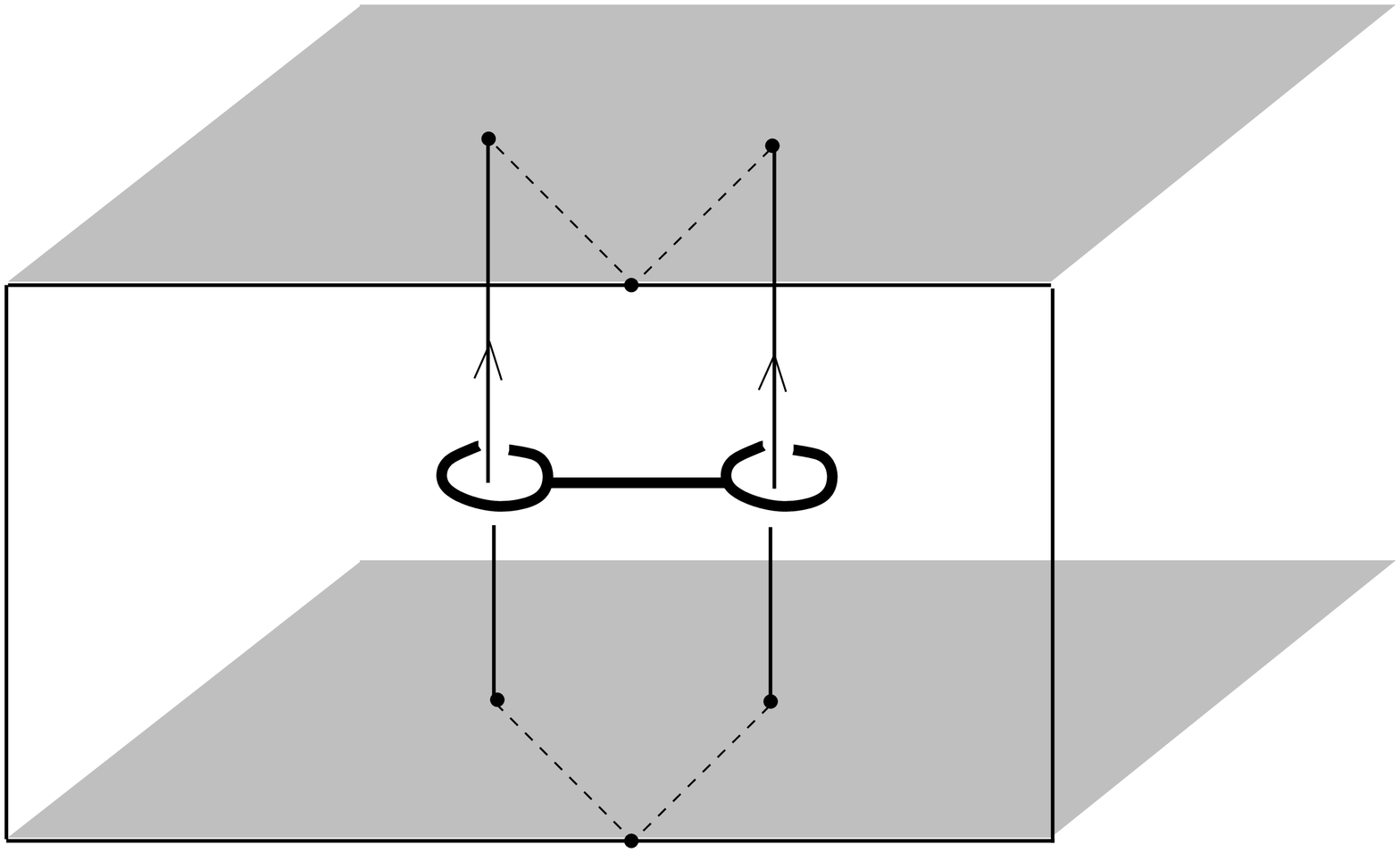}
$$
Finally, observe that the projections
$$
 \pi^u \stackrel{p^u}{\longrightarrow} \iota^u(\pi) \simeq \pi \quad \hbox{and} \quad  \pi^v \stackrel{p^v}{\longrightarrow} \iota^v(\pi) \simeq \pi
$$
defined by the free-product decompositions \eqref{eq:fpd}
are induced by the inclusions $\Sigma \setminus \{u\} \subset\Sigma$ and $\Sigma \setminus \{v\} \subset\Sigma$, respectively.

\begin{lemma} \label{lem:U_V}
For any $x \in \pi^u \cap \pi^v \subset PB_2(\Sigma)$, we have
$$
 p^v\left( \frac{\partial x}{\partial z^v}\right) =   \overline{p^u\left( \frac{\partial x}{\partial z^u}\right) } = \overline{ p^v\left( \frac{\partial x^\times}{\partial z^v} \right)} \in \K[\pi]
$$
where $x^\times := \sigma x \sigma^{-1} $ is the conjugate of $x$ by the  elementary positive braid~$\sigma \in B_2(\Sigma)$:
$$
\labellist
\scriptsize\hair 2pt
 \pinlabel {$\ast$} [t] at 325 0
 \pinlabel {$\gamma_{\ast u}$} [r] at 284 37
 \pinlabel {$\gamma_{\ast v}$} [l] at 374 40
 \pinlabel {$\Sigma \times \{0\}$} [l] at 643 44
 \pinlabel {$\circlearrowleft$}   at 600 100
 \pinlabel {$\Sigma \times \{1\}$} [l] at 634 338
  \pinlabel {$\sigma$}  at 400 209
\endlabellist
\centering
\includegraphics[scale=0.18]{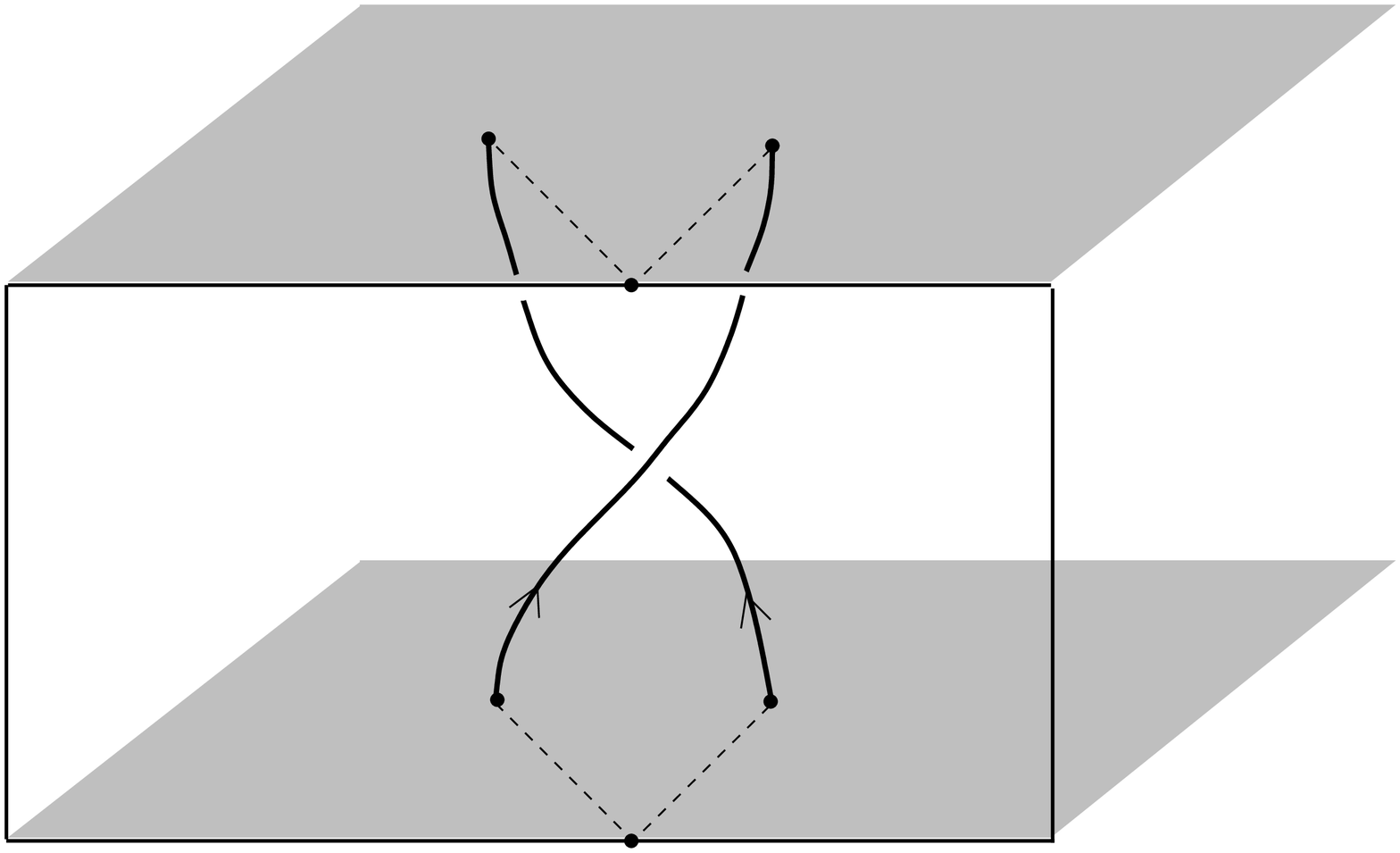}
$$
\end{lemma}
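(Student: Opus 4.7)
The plan is to prove the two equalities separately, using Lemma \ref{lem:Fox} together with a geometric interpretation of the Fox derivatives in terms of braid crossings. For any $y \in \pi^v$, represent $y$ as a $2$-strand pure braid in which strand $v$ is straight vertical, and put the moving strand $u$ in general position with respect to the vertical rectangle $\gamma_{\ast v} \times [0,1]$. Reading off the crossings from bottom to top, we obtain an expression
\[
y = \prod_{j} r_j\, (z^v)^{\varepsilon_j}\, r_j^{-1}
\]
in the free-product decomposition $\pi^v = \iota^v(\pi) \ast F(z^v)$, where $\varepsilon_j = \pm 1$ is the sign of the $j$-th crossing and $r_j \in \iota^v(\pi)$ records the path traced by strand $u$ up to that crossing (closed off to $\ast$ using $\gamma_{\ast u}$). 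Lemma \ref{lem:Fox} then yields $p^v(\partial y/\partial z^v) = \sum_j \varepsilon_j\, p^v(r_j) \in \K[\pi]$, and symmetrically for $\pi^u$.

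For the first equality, I would exploit that an element $x \in \pi^u \cap \pi^v$ is represented by the same pure braid whether we isotope strand $v$ or strand $u$ to be straight. The intersections of the moving strand $u$ with $\gamma_{\ast v} \times [0,1]$ (for the $\pi^v$-description) are in bijection with the intersections of the moving strand $v$ with $\gamma_{\ast u} \times [0,1]$ (for the $\pi^u$-description), since both sets count, in projection, the apparent crossings of the two strands of $x$ in the strip bounded by $\gamma_{\ast u}$ and $\gamma_{\ast v}$. The correspondence reverses the vertical ordering and inverts each path $r_j$ to $r_j^{-1}$, which precisely produces the antipode on $\K[\pi]$; meanwhile the signs $\varepsilon_j$ are preserved, because the arcs $\gamma_{\ast u}$ and $\gamma_{\ast v}$ are arranged coherently at $\ast$. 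Comparing the two applications of Lemma \ref{lem:Fox} gives $p^v(\partial x/\partial z^v) = \overline{p^u(\partial x/\partial z^u)}$.

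For the second equality, I would observe that the half-twist $\sigma \in B_2(\Sigma)$ exchanges the two punctures $u,v$ in such a way that $\sigma \cdot \pi^u \cdot \sigma^{-1} \subset \pi^v$: geometrically, if $x \in \pi^u$ is described by strand $v$ tracing a loop $\alpha$ in $\Sigma \setminus \{u\}$, then $x^\times = \sigma x \sigma^{-1} \in \pi^v$ is described by strand $u$ tracing essentially the same loop (reinterpreted via $\sigma$). The bijection between the corresponding crossings with $\gamma_{\ast u}$ and $\gamma_{\ast v}$ now preserves both the order of crossings and the paths $r_j$, so Lemma \ref{lem:Fox} yields $p^u(\partial x / \partial z^u) = p^v(\partial x^\times / \partial z^v)$, and applying $\overline{(-)}$ to both sides gives the stated equality.

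The main obstacle will be to check carefully the sign conventions and the path-reversal pattern claimed in the first step: one must track how, in a generic $2$-braid diagram, the signs of crossings of strand $u$ with $\gamma_{\ast v}$ match those of strand $v$ with $\gamma_{\ast u}$, and how the ``before'' arc of one strand closes up through $\gamma_{\ast u}$ or $\gamma_{\ast v}$ to become the inverse of the corresponding arc of the other strand. This is essentially a bookkeeping exercise relying on the orientation of $\Sigma$ and the relative positions of $\ast,\bullet,\blacktriangle$ along $\partial \Sigma$, but it is the place where the appearance of the antipode $\overline{(-)}$ must be justified, and it sets the template for the analogous geometric arguments used later in Theorems \ref{th:3d-eta} and \ref{th:3d-mu}.
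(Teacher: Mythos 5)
Your overall strategy matches the paper's: write $x=\prod_j s_j\,(z^v)^{\varepsilon_j}\,s_j^{-1}$ in $\pi^v$ (possible because $x\in\pi^u$ forces $p^v(x)=1$), apply Lemma \ref{lem:Fox} to both free-product descriptions, and use conjugation by $\sigma$ — which sends $z^u$ to $z^v$ and satisfies $p^v(\sigma y\sigma^{-1})=p^u(y)$ — to deduce the second equality from the first. That last part of your argument is essentially the paper's and is sound. The problem is the first equality, which is where the antipode, i.e.\ the actual content of the lemma, has to appear. You posit a bijection between the intersections of the moving strand $u$ with $\gamma_{\ast v}\times[0,1]$ and the intersections of the moving strand $v$ with $\gamma_{\ast u}\times[0,1]$; but these two sets are read off from two \emph{different} isotopy representatives of the same braid (one with strand $v$ straightened, one with strand $u$ straightened), so there is no canonical bijection between them — even their cardinalities need not agree before reduction — and "counting apparent crossings of the two strands in a strip" does not produce one. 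More seriously, the assertion that the correspondence "inverts each path $r_j$ to $r_j^{-1}$" is precisely the statement to be proved, and you explicitly defer it to bookkeeping.

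The missing ingredient is a single geometric decomposition of $x$ from which \emph{both} free-product expressions can be read simultaneously. The paper supplies this with Habiro's claspers: each factor $s_j\,(z^v)^{\varepsilon_j}\,s_j^{-1}$ is realized as surgery on the trivial $2$-strand braid along a simple clasper, and the key observation \eqref{eq:observation} is that if $c\in\pi^{uv}$ denotes the edge of that clasper, oriented from the strand $u$ to the strand $v$, then the surgered braid equals $p_u(c)\,z^v\,(p_u(c))^{-1}$ as an element of $\pi^v$ and $(p_v(c))^{-1}\,z^u\,p_v(c)$ as an element of $\pi^u$. The inversion of the conjugator is thus forced by the single oriented edge being traversed in opposite directions in the two readings; projecting to $\pi$ gives $p^u(r_j)=(p^v(s_j))^{-1}$, which is exactly what turns the sum $\sum_j\varepsilon_j\,p^v(s_j)$ into its image under the antipode. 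Without this device (or an equivalent one), your argument for the first equality does not close.
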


\begin{proof}
We start with the following observation. Let $C$ be a simple clasper for the trivial $2$-strand braid $T\subset \Sigma \times [0,1]$.
Surgery along $C$ produces a $2$-strand string-link $T_C$. 
We can regard $T_C$ as a string-knot in $(\Sigma \setminus \{v\}) \times [0,1]$, which defines an element
$$
[T_C]^v \in \pi_1(\Sigma\setminus\{v\}, u) \mathop{\simeq}^{\gamma_{\ast u}} \pi^v,
$$
or, alternatively,  we can regard it as a string-knot in $(\Sigma \setminus \{u\}) \times [0,1]$, which defines an element
 $$
[T_C]^u \in \pi_1(\Sigma\setminus\{u\}, v) \mathop{\simeq}^{\gamma_{\ast v}} \pi^u.
$$
For instance, if $T_C$ turns out to be a $2$-strand pure braid, then we have $[T_C]^v = T_C$ and $[T_C]^u = T_C$ 
via the inclusions \eqref{eq:pi^v} and \eqref{eq:pi^u} respectively.
Let $c$ be a loop in $\Sigma \times [0,1]$ based at $(\ast ,0)$ which {follows} the edge of $C$ (oriented from the strand $u$ to the strand $v$) in the following way:
$$
\labellist
\small\hair 2pt
 \pinlabel {$u$} [b] at 249 370
 \pinlabel {$v$} [b] at 397 370
 \pinlabel {$c$}  at 320 79
 \pinlabel {$C$}  at 450 112
 \pinlabel {$T$} at 425 260
  \pinlabel {$\ast$} [t] at 325 0
   \pinlabel {$\Sigma \times \{0\}$} [l] at 643 44
 \pinlabel {$\Sigma \times \{1\}$} [l] at 634 338
\endlabellist
\centering
\includegraphics[scale=0.25]{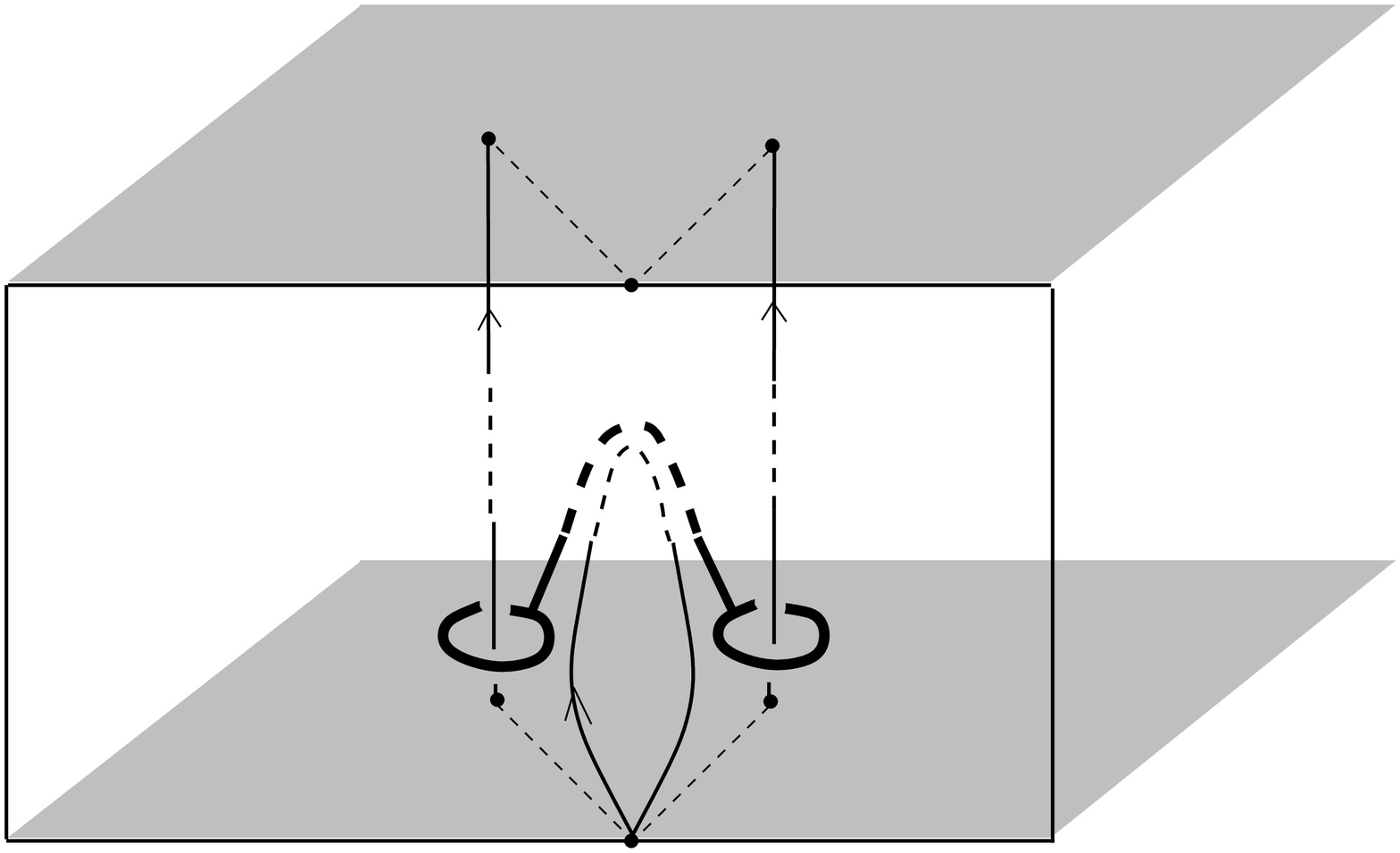}
$$

\vspace{0.2cm}
\noindent
This loop represents an element $c\in\pi_1\big((\Sigma \times [0,1]) \setminus T,(\ast,0)\big) \simeq \pi_1(\Sigma\setminus \{u,v\},\ast)=:\pi^{uv}$. 
Then, our observation is that
\begin{equation} \label{eq:observation}
[T_C]^v =  p_u(c)\, z^v\,   (p_u(c))^{-1} \in \pi^v
\quad \hbox{and} \quad 
[T_C]^u =  (p_v (c))^{-1}\, z^u\,   p_v(c) \in \pi^u
\end{equation}
where $p_u: \pi^{uv} \to \pi^v$ and $p_v: \pi^{uv} \to \pi^u$ are the group homomorphisms 
induced by the inclusions $\Sigma \setminus \{u,v\} \subset \Sigma \setminus \{v\}$ and $\Sigma \setminus \{u,v\} \subset \Sigma \setminus \{u\}$, respectively.

Let now $x$ be an arbitrary element of  $\pi^u \cap \pi^v {\subset PB_2(\Sigma)}$. Then $x\in \pi^v$ satisfies $p^v(x)=1$, so that $x$ belongs to the normal subgroup of $\pi^v$ generated by $z^v$:
therefore, there exist a finite ordered subset $\{s_j\}_{j \in J}$ of $\pi^v$ and some signs $\varepsilon_j \in \{+1,-1\}$ for all $j\in J$ such that
\begin{equation} \label{eq:formula_x}
x = \prod_{j\in J} s_j \, (z^v)^{\varepsilon_j}\, s_{j}^{-1} \in \pi^v.
\end{equation}
By application of Lemma \ref{lem:Fox}, we obtain
\begin{equation} \label{eq:res1}
p^v\left(\frac{\partial x}{ \partial z^v} \right)= \sum_{j \in J} \varepsilon_j\, p^v(s_j).
\end{equation}
For any $j\in J$, the pure braid $s_j \, z^v\, s_j^{-1} \in \pi^u \cap \pi^v \subset PB_2(\Sigma)$ can be obtained from the trivial braid $T$
by surgery along a simple clasper, since $z^v$ has this property. We deduce from our initial observation that there exists a $c_j\in \pi^{uv}$ such that
$$
s_j \, z^v\, s_j^{-1} = p_u(c_j)\, z^v\,   (p_u(c_j))^{-1} \in \pi^v
\quad \hbox{and} \quad 
s_j \, z^v\, s_j^{-1} =  (p_v (c_j))^{-1}\, z^u\,   p_v(c_j) \in \pi^u.
$$
By applying Lemma \ref{lem:Fox} to the first identity, we obtain $p^v(s_j)=p(c_j)$ where $p: \pi^{uv} \to \pi$
is the group homomorphism induced by the inclusion $\Sigma \setminus \{u,v\} \subset \Sigma$.
Setting $r_j := (p_v(c_j))^{-1} \in \pi^u$, the second identity writes $s_j \, z^v\, s_j^{-1} = r_j z^u r_j^{-1}$.
Note that $r_j$ satisfies $p^u(r_j)= (p(c_j))^{-1} = (p ^v (s_j))^{-1} \in \pi$.

Thus we have found for all $j \in J$ an element $r_j\in \pi^u$ such that $p^u(r_j)= (p^v(s_j))^{-1}\in \pi$ and
$$
x \by{eq:formula_x} \prod_{j\in J} r_j\, (z^u)^{\varepsilon_j}\, r_j^{-1} \in \pi^u.
$$
This second formula for $x$ has two consequences. First, by a new application of Lemma \ref{lem:Fox}, we deduce that
\begin{equation} \label{eq:res2}
p^u \left(\frac{\partial x}{ \partial z^u}\right) = \sum_{j \in J} \varepsilon_j\, p^u(r_j) = \sum_{j \in J} \varepsilon_j\, (p^v(s_j))^{-1}.
\end{equation}
Second, it implies that
\begin{eqnarray*}
x^\times \ = \  \sigma x \sigma^{-1} & = & \prod_{j\in J}  ( \sigma r_j \sigma^{-1})\,  (\sigma (z^u)^{\varepsilon_j} \sigma^{-1})\,  (\sigma r_j^{-1} \sigma^{-1}) \\
&=& \prod_{j\in J}  ( \sigma r_j \sigma^{-1})\,  (z^v)^{\varepsilon_j} \,  (\sigma r_j^{-1} \sigma^{-1}).
\end{eqnarray*}
The map $\pi^u \to \pi^v$ defined by $y \mapsto \sigma y \sigma^{-1}$ is the group homomorphism
defined by $z^u \mapsto z^v$ and $\iota^u(a) \mapsto (z^v)^{-1} \iota^v(a) z^v$ for all $a\in \pi$:
in particular, we have $p^u(y) = p^v(\sigma y \sigma^{-1})$ for all $y\in \pi^u$.
By applying Lemma \ref{lem:Fox} a third and last time, we obtain 
\begin{equation} \label{eq:res3}
p^v\left( \frac{\partial\, x^\times}{\partial z^v}\right) = \sum_{j\in J} \varepsilon_j\, p^v( \sigma r_j \sigma^{-1})
= \sum_{j\in J} \varepsilon_j\, p^u(r_j) = \sum_{j\in J} \varepsilon_j\, (p^v(s_j))^{-1}.
\end{equation}
The lemma now  follows from \eqref{eq:res1}, \eqref{eq:res2} and \eqref{eq:res3}.
\end{proof}

We now define a  bilinear pairing $\eta': \K[\pi] \times \K[\pi] \to \K[\pi]$ by setting, for any $a,b\in \pi$,
$$
\eta'(a,b) := p^v\left(\frac{\partial \left[ \iota^u(b^{-1}),\iota^v(a)\right]}{\partial z^v}\right)
$$
where $\left[ \iota^u(b^{-1}),\iota^v(a)\right] \in \pi^u \cap \pi^v$ denotes the commutator in the group $PB_2(\Sigma)$.

\begin{lemma}
For any $a,b\in \pi$, we have
$$
\eta'(a,b) =  p^v\left( \frac{\partial\, \iota^{u}(b^{-1}) \iota^v(a) \iota^{u}(b) }{\partial z^v}\right) =
\overline{p^u\left(\frac{\partial \left[ \iota^u(b^{-1}),\iota^v(a)\right]}{\partial z^u}\right)}  = \overline{b^{-1}p^u\left(\frac{\partial\,  \iota^v(a) \iota^{u}(b) \iota^{v}(a^{-1})}{\partial z^u}\right) }.
$$
Moreover, $\eta'$ is a Fox pairing in $\K[\pi]$.
\end{lemma}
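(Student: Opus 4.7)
The lemma comprises four claims: three displayed equalities and the Fox-pairing property of $\eta'$. All are algebraic manipulations of Fox derivatives; the one recurring subtlety is to arrange each Leibniz application in the appropriate subgroup ($\pi^u$ or $\pi^v$).

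For the first equality, decompose $[\iota^u(b^{-1}), \iota^v(a)] = X \cdot \iota^v(a^{-1})$ with $X := \iota^u(b^{-1}) \iota^v(a) \iota^u(b)$. The element $X$ lies in $\pi^v$, as one checks by inspecting its image under the projection $PB_2(\Sigma) \to \pi$ that forgets strand $u$: the three factors contribute $b^{-1}, 1, b$, whose product is $1$. Since $\iota^v(a^{-1}) \in \iota^v(\pi)$, we have $\partial \iota^v(a^{-1})/\partial z^v = 0$, so Leibniz's rule combined with $\varepsilon(\iota^v(a^{-1})) = 1$ yields $\partial(X \iota^v(a^{-1}))/\partial z^v = \partial X/\partial z^v$; applying $p^v$ gives the first identity. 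The second equality is Lemma~\ref{lem:U_V} applied to $x := [\iota^u(b^{-1}), \iota^v(a)] \in \pi^u \cap \pi^v$ (the $\pi^u$-membership being symmetric to the $\pi^v$-membership). For the third equality, write the commutator as $\iota^u(b^{-1}) \cdot Y$ with $Y := \iota^v(a) \iota^u(b) \iota^v(a^{-1}) \in \pi^u$; using $\partial \iota^u(b^{-1})/\partial z^u = 0$, Leibniz gives $\partial[\iota^u(b^{-1}), \iota^v(a)]/\partial z^u = \iota^u(b^{-1}) \partial Y/\partial z^u$, and applying $p^u$ then combining with the second equality completes the derivation.

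For the Fox-pairing property, it suffices by bilinearity to verify the two derivation identities on group elements $a, b, c \in \pi$. In the first variable, expand $[\iota^u(c^{-1}), \iota^v(ab)] = [\iota^u(c^{-1}), \iota^v(a)] \cdot \iota^v(a)\, [\iota^u(c^{-1}), \iota^v(b)]\, \iota^v(a^{-1})$ via the identity $[X, YZ] = [X,Y] \cdot Y[X,Z]Y^{-1}$. All four factors on the right belong to $\pi^v$, so Leibniz for $\partial/\partial z^v$ applies factor by factor; using $\partial \iota^v(a^{\pm 1})/\partial z^v = 0$ and $p^v([\iota^u(c^{-1}), \iota^v(a)]) = 1$ (the image of this commutator in $\pi$ under the projection forgetting strand $v$ is $[1, a] = 1$), one recovers $\eta'(ab, c) = a\eta'(b, c) + \eta'(a, c)$. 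In the second variable, use the just-proven third equality: $\iota^v(a) \iota^u(bc) \iota^v(a^{-1}) = W_b \cdot W_c$ with both $W_b, W_c \in \pi^u$, so Leibniz for $\partial/\partial z^u$ applies; combined with $p^u(W_b) = b$ and the antipode bookkeeping prescribed by the third formula, this yields $\eta'(a, bc) = \eta'(a, b)\, c + \eta'(a, c)$.

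The main obstacle is book-keeping: each invocation of the Leibniz rule requires every factor in the product to lie in the subgroup ($\pi^u$ or $\pi^v$) on which the relevant Fox derivative is defined. The decompositions chosen above are engineered precisely so that this constraint is respected, and switching between formulas (1) and (3) is what makes the left/right derivation identities in the two variables both accessible to the same argument.
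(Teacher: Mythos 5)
Your proof is correct and follows essentially the same route as the paper's: strip off the factor annihilated by the relevant Fox derivative to get the first and third identities, invoke Lemma~\ref{lem:U_V} for the middle one, and establish the two derivation properties by regrouping into products of elements of $\pi^v$ (resp.\ $\pi^u$) whose images under $p^v$ (resp.\ $p^u$) are the expected elements of $\pi$. The only cosmetic difference is in the first-variable identity, where you expand the commutator via $[X,YZ]=[X,Y]\cdot Y[X,Z]Y^{-1}$ while the paper regroups $\iota^u(b^{-1})\iota^v(a_1)\iota^v(a_2)\iota^u(b)$ directly as a product of two conjugates; the resulting computations coincide.
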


\begin{proof}
We prove the first identity: for any $a,b\in \pi$,
\begin{eqnarray*}
p^v\left(\frac{\partial \left[ \iota^u(b^{-1}),\iota^v(a)\right]}{\partial z^v}\right) &=& p^v\left( \frac{\partial\, \big(\iota^{u}(b^{-1}) \iota^v(a) \iota^{u}(b)\big)\,  \iota^v(a^{-1})}{\partial z^v}\right) \\
&=& p^v\left( \frac{\partial\, \iota^{u}(b^{-1}) \iota^v(a) \iota^{u}(b) }{\partial z^v} +  \iota^{u}(b^{-1}) \iota^v(a) \iota^{u}(b) \frac{\partial \iota^v(a^{-1})}{\partial z^v} \right) \\
&=&  p^v\left( \frac{\partial\, \iota^{u}(b^{-1}) \iota^v(a) \iota^{u}(b) }{\partial z^v}\right).
\end{eqnarray*}
The third identity is proved similarly, and the second identity follows  from Lemma \ref{lem:U_V}.

We now show that $\eta'$ is a Fox pairing using the previous identities.
 For any $a_1,a_2,b\in \pi$, we have
\begin{eqnarray*}
\eta'(a_1a_2,b) &=&  p^v\left( \frac{\partial\, \iota^{u}(b^{-1}) \iota^v(a_1)  \iota^v(a_2) \iota^{u}(b) }{\partial z^v}\right) \\
&=&  p^v\left( \frac{\partial\, \big(\iota^{u}(b^{-1}) \iota^v(a_1)  \iota^u(b)\big)\, \big(\iota^u(b^{-1}) \iota^v(a_2) \iota^{u}(b) \big)}{\partial z^v}\right) \\ 
&=&  p^v\left( \frac{\partial\, \iota^{u}(b^{-1}) \iota^v(a_1)  \iota^u(b)}{\partial z^v}\right) + 
  p^v\left(  \iota^{u}(b^{-1}) \iota^v(a_1)  \iota^u(b) \frac{\partial\, \iota^u(b^{-1}) \iota^v(a_2) \iota^{u}(b) }{\partial z^v}\right) \\
 &=& \eta'(a_1,b) + a_1 \eta'(a_2,b)
\end{eqnarray*}
and, for any $a,b_1,b_2 \in \pi$, we have
\begin{eqnarray*}
\overline{\eta'(a,b_1 b_2)} &=& (b_1 b_2)^{-1}\, p^u\left(\frac{\partial\,  \iota^v(a) \iota^{u}(b_1b_2)  \iota^{v}(a^{-1})}{\partial z^u}\right) \\
&=&  b_2^{-1} b_1^{-1}\, p^u\left(\frac{\partial\, \big(  \iota^v(a) \iota^{u}(b_1)   \iota^{v}(a^{-1})\big)\,  \big(\iota^v(a)\iota^u(b_2)  \iota^{v}(a^{-1})\big)}{\partial z^u}\right) \\
&=& b_2^{-1} b_1^{-1}\, p^u\left(\frac{\partial\,  \iota^v(a) \iota^{u}(b_1)   \iota^{v}(a^{-1}) }{\partial z^u}
+  \iota^v(a) \iota^{u}(b_1)   \iota^{v}(a^{-1}) \, \frac{ \partial\,  \iota^v(a)\iota^u(b_2)  \iota^{v}(a^{-1})}{\partial z^u}\right)  \\
&=& b_2^{-1} \overline{\eta'(a,b_1)} +  \overline{\eta'(a,b_2)}
\end{eqnarray*}
which is equivalent to the identity $\eta'(a,b_1b_2) = \eta'(a,b_1) b_2 + \eta'(a,b_2)$.
\end{proof}

\begin{theorem} \label{th:3d-eta}
We have $\eta'=\eta$.
\end{theorem}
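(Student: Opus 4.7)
The plan is to compute $\eta'(a,b)$ by choosing convenient geometric representatives of the braid $x := [\iota^u(b^{-1}),\iota^v(a)] \in \pi^u \cap \pi^v \subset PB_2(\Sigma)$, expressing $x$ as a product of conjugates of $z^v$ indexed by the intersection points of $a$ and $b$, and then applying Lemma~\ref{lem:Fox}.

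First, I would fix the geometric data used in the definition of $\eta(a,b)$: pick a loop $\alpha$ based at $\bullet$ with $\overline{\nu}_{\ast\bullet}\alpha\nu_{\bullet\ast}$ representing $a$, and a loop $\beta$ based at $\blacktriangle$ with $\nu_{\ast\blacktriangle}\beta\overline{\nu}_{\blacktriangle\ast}$ representing $b$, in transverse position with finitely many simple double points $\alpha \cap \beta = \{p_1,\dots,p_n\}$. Using a small push-off, I would arrange that the loops $\overline{\nu}_{\ast\bullet}\alpha\nu_{\bullet\ast}$ and $\nu_{\ast\blacktriangle}\beta\overline{\nu}_{\blacktriangle\ast}$ avoid $u$ and $v$ respectively (on either side of $\ast$), so that $\iota^v(a)$ and $\iota^u(b)$ are realized by these very loops.

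Next, I would realize the pure braid $x$ as a concrete isotopy of $\{u,v\}$ in $\Sigma$ in four phases corresponding to the four factors of the commutator: during phases 1 and 3 the strand $u$ is stationary and the strand $v$ traces $\beta^{-1}$ and $\beta$ (appropriately translated to a loop based at $v$ via $\gamma_{\ast v}$), while during phases 2 and 4 the strand $v$ is stationary and the strand $u$ traces $\alpha$ and $\alpha^{-1}$ (translated to a loop based at $u$ via $\gamma_{\ast u}$). The key geometric claim is that this braid can be isotoped (rel endpoints) so as to decompose as a product
\[
x \;=\; \prod_{j=1}^n s_j\,(z^v)^{\varepsilon_j}\, s_j^{-1} \;\in\; \pi^v,
\]
with the factors ordered along $\alpha$, where $\varepsilon_j = \varepsilon_{p_j}(\alpha,\beta) \in \{\pm 1\}$ is the intersection sign, and where the conjugating element $s_j \in \pi^v$ satisfies
\[
p^v(s_j) \;=\; \overline{\nu}_{\ast\bullet}\,\alpha_{\bullet p_j}\,\beta_{p_j \blacktriangle}\,\overline{\nu}_{\blacktriangle\ast} \;\in\; \pi.
\]
The mechanism is that whenever the moving strand $u$ crosses $\beta$ during phase 2, while $v$ sits on $\beta$ after having travelled along $\beta^{-1}_{p_j\blacktriangle}$ in phase 1, the two strands become linked by a pure braid obtained from the trivial braid by surgery along a simple clasper whose edge follows $\beta$; by the observation used in the proof of Lemma~\ref{lem:U_V}, this local contribution is exactly a conjugate of $z^v$. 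The sign $\varepsilon_j$ records whether the local crossing produces $z^v$ or $(z^v)^{-1}$, and the conjugating braid $s_j$ is read off from the initial portion of the motion, which projects to the path $\overline{\nu}_{\ast\bullet}\alpha_{\bullet p_j}\beta_{p_j\blacktriangle}\overline{\nu}_{\blacktriangle\ast}$ in $\pi$.

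Granting this decomposition, Lemma~\ref{lem:Fox} applied to $p := p^v$ yields
\[
\eta'(a,b) \;=\; p^v\!\left(\frac{\partial x}{\partial z^v}\right) \;=\; \sum_{j=1}^n \varepsilon_j\, p^v(s_j) \;=\; \sum_{j=1}^n \varepsilon_j\, \overline{\nu}_{\ast\bullet}\,\alpha_{\bullet p_j}\,\beta_{p_j \blacktriangle}\,\overline{\nu}_{\blacktriangle\ast},
\]
which is precisely $\eta(a,b)$ by the definition~\eqref{eq:def-eta}. The main obstacle is the geometric step above: one must carefully isotope the four-phase motion into a sequence of localized ``linking events,'' one per intersection point, while simultaneously tracking the conjugating braid so that its image under $p^v$ is the prescribed concatenation of arcs along $\alpha$ and $\beta$. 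Once this book-keeping is set up (for instance, by deforming $\alpha$ to a small neighborhood of a tree, so that the four phases can be put in sequential position), the signs and the projections fall out from the local model of a single transverse crossing.
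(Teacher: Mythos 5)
Your plan is correct and follows essentially the same route as the paper's proof: one isotopes the commutator braid $[\iota^u(b^{-1}),\iota^v(a)]$ so that each transverse intersection point of $\alpha$ and $\beta$ contributes a localized linking event (a simple clasper surgery), which by the observation in the proof of Lemma~\ref{lem:U_V} yields a conjugate of $z^v$ with the prescribed sign and with conjugator projecting under $p^v$ to the arc appearing in \eqref{eq:def-eta}, and then Lemma~\ref{lem:Fox} gives $\eta'(a,b)=\eta(a,b)$. The paper carries out the bookkeeping you flag by sliding the slice carrying $\alpha$ past the slice carrying $\beta$ and recording the crossings as claspers in a single intermediate slice, which is exactly the sequentialization your plan calls for.
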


\begin{proof}
Let $a,b\in \pi$. 
Recall that we have three points $\bullet < \ast < \blacktriangle$ along the oriented boundary component $\nu$ of $\Sigma$ that contains $\ast$.
Let $\alpha_u$ and $\beta_v$ be some loops based at $u$ and $v$, respectively,
such that $\gamma_{\ast u}\alpha_u \overline{\gamma}_{u \ast}$ and $\gamma_{\ast v}\beta_v \overline{\gamma}_{v \ast}$ represent $a$ and $b$ respectively,
$\alpha_u$ and  $\beta_v$ meet transversely in a finite set of simple points, and we have the following local picture in a neighborhood of $\ast$:
$$
\labellist
\scriptsize \hair 2pt
 \pinlabel {$\bullet$} [t] at 92 1
 \pinlabel {$\ast$} [t] at 359 2
 \pinlabel {$\blacktriangle$} [t] at 630 1
 \pinlabel {$u$} [b] at 224 255
 \pinlabel {$v$} [b] at 493 256
 \pinlabel {$\alpha_u$} [r] at 126 310
 \pinlabel {$\beta_v$} [l] at 596 316
 \pinlabel {$\gamma_{\bullet u}$} [r] at 159 132
 \pinlabel {$\gamma_{\ast u}$} [l] at 280 144
 \pinlabel {$\gamma_{\ast v}$} [r] at 445 143
 \pinlabel {$\gamma_{\blacktriangle v}$} [l] at 555 149
\endlabellist
\centering
\includegraphics[scale=0.2]{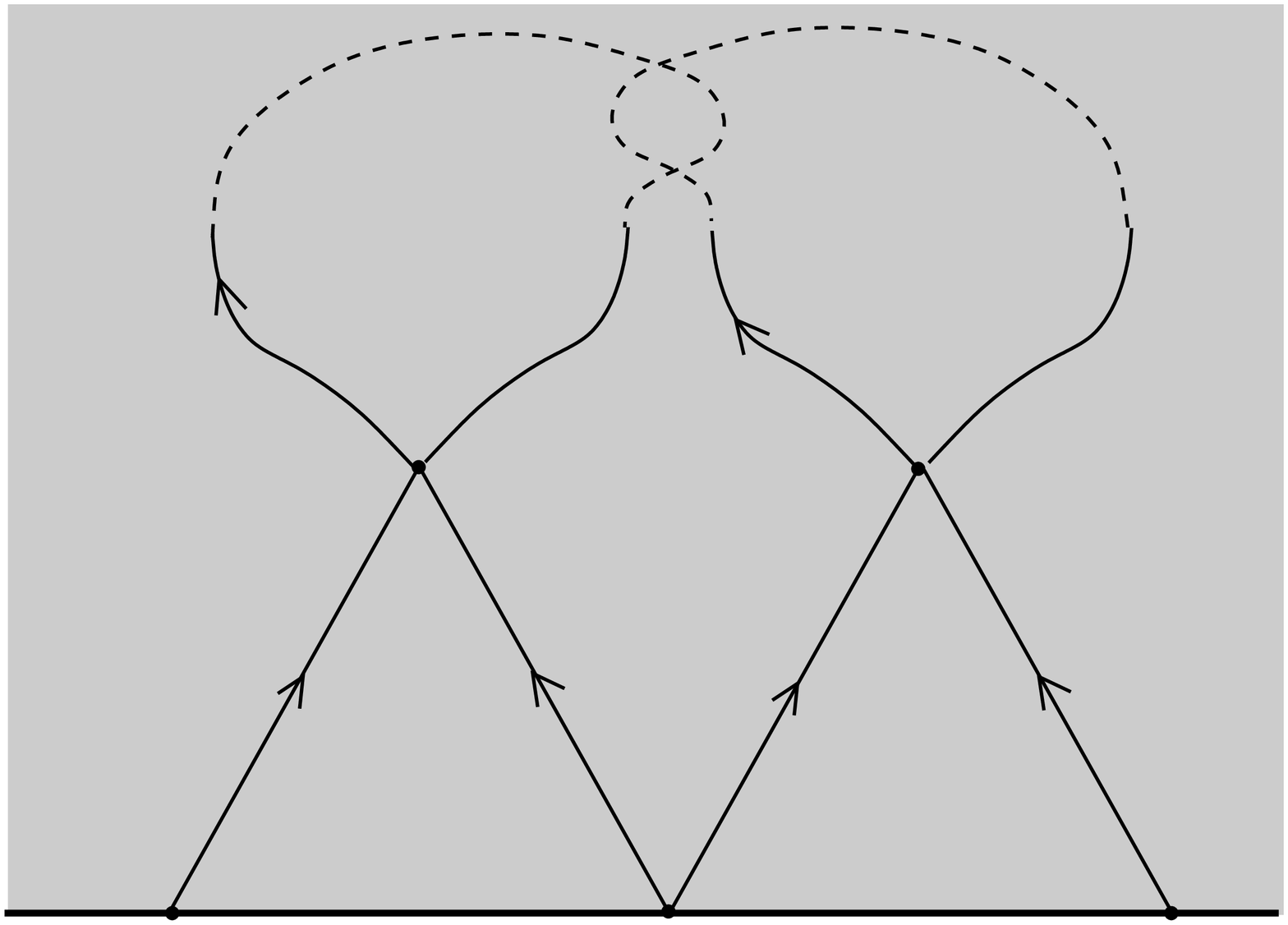}
$$
Set
$\alpha := \gamma_{\bullet u} \alpha_{u} \overline{\gamma}_{u \bullet}$
and $\beta  := \gamma_{\blacktriangle v} \beta_{v} \overline{\gamma}_{v \blacktriangle}$:
then $\eta(a,b) \in \Z[\pi]$ is given by formula~\eqref{eq:def-eta}.

In order to compute $\eta'(a,b)$, we need to consider  $\left[\iota^u(b^{-1}), \iota^v(a)\right] \in PB_2(\Sigma)$.
This  pure braid on two strands $u$ and $v$ can be schematically represented as follows:
$$
\begin{array}{cc}
 \overline\alpha_{u} & \uparrow_v \\
 \uparrow_u & \beta_{v} \\
{\alpha}_{u} & \uparrow_v\\
 \uparrow_u & \overline{\beta}_{v} 
\end{array}
$$
This pure braid  is isotopic to the $2$-strand string-link
$$
\begin{array}{cc}
 \overline\alpha_{u} & \uparrow_v \\
{\alpha}_{u} & \uparrow_v\\
 \delta_{u} & \beta_{v} \\
 \uparrow_u & \overline{\beta}_{v} 
\end{array}
$$
whose second ``slice'' $\delta_{u}\, {\beta}_{v}$  is obtained from $ \uparrow_u \beta_{v}$
by several surgeries along simple claspers:
specifically, there is one clasper for every intersection point $p\in \alpha \cap \beta=\alpha_u \cap \beta_v$. 
The string-link consisting of the first and second slices in the above diagram
$$
\begin{array}{cc}
\delta_{u} & \beta_{v} \\
 \uparrow_u & \overline{\beta}_{v} 
\end{array}
$$  
defines a loop in $\Sigma \setminus \{v\}$ based at $u$ and, 
using the  observation  at the beginning of the proof of Lemma~\ref{lem:U_V},
 we see that this loop  represents  the following product of conjugates of  $Z^v := \overline{\gamma}_{u\ast }z^v\gamma_{\ast u}$
 in the group $\pi_1(\Sigma \setminus \{v\},u)$:
$$
\prod_{p \in \alpha \cap \beta} K_p \,
(Z^v)^{\varepsilon_{p}(\alpha, \beta)}\,  K_p^{-1} \ \in \pi_1(\Sigma \setminus \{v\},u).
$$
Here the product over $\alpha \cap \beta$ is ordered following the positive direction of $\alpha$ (starting from~$u$); for every $p \in \alpha \cap \beta$,
we define the sign $\varepsilon_p(\alpha,\beta)$ as in formula \eqref{eq:def-eta}
and $K_p$ is a certain element of $\pi_1(\Sigma \setminus\{v\}, u)$ which   is mapped to $\alpha_{up} {\beta}_{pv} \overline\gamma_{v\ast} \gamma_{\ast u} \in \pi_1(\Sigma,u)$ by the canonical projection.
We deduce that $\left[\iota^u(b^{-1}), \iota^v(a)\right]$ as an element of $\pi^v=\pi_1(\Sigma \setminus \{v\}, \ast)$ is equal to
$$
\prod_{p \in \alpha \cap \beta} {k_p}\, (z^v)^{\varepsilon_{p}(\alpha, \beta)}\,  k_p^{-1}
$$
where $k_p \in  \pi^v$ has the property to be sent to  $\gamma_{\ast u }\alpha_{up} {\beta}_{pv} \overline{\gamma}_{v\ast}\in \pi$ by the projection $p^v$.
It follows from Lemma \ref{lem:Fox} that
$$
\eta'(a,b) =  p^v\left(\frac{\partial \left[ \iota^u(b^{-1}),\iota^v(a)\right]}{\partial z^v}\right) 
= \sum_{p\in \alpha \cap \beta} \varepsilon_{p}(\alpha, \beta)\,  p^v(k_p) = \eta(a,b) \in \K[\pi].
$$

\up
\end{proof}

\begin{example}
Assume that $\Sigma$ is a compact connected oriented surface of genus $1$ with one boundary component.
Represent $\Sigma$ as a holed square whose opposite edges are linearly identified,
and let $a,b\in \pi$ be represented by the following loops:
$$
\labellist
\small\hair 2pt
 \pinlabel {$\circlearrowleft$}  at 400 405
 \pinlabel {$\ast$}  at 295 180
 \pinlabel {$b$}  at 260 298
 \pinlabel {$a$} at 155 240
\endlabellist
\centering
\includegraphics[scale=0.25]{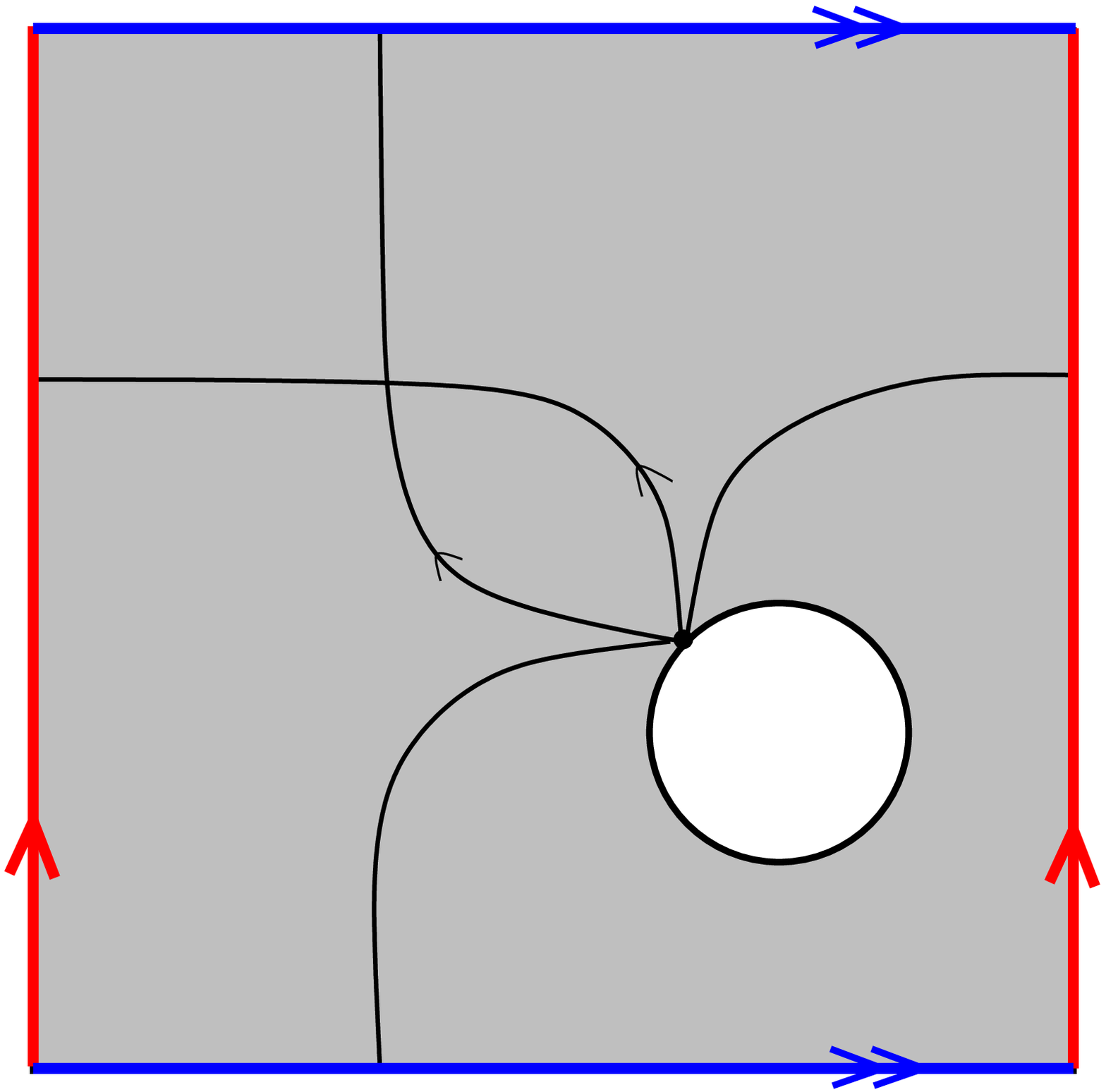}
$$
Then the pure braid $\left[ \iota^u(b^{-1}),\iota^v(a)\right]$ is represented by the following projection diagrams:
$$
\begin{array}{c}
\labellist
\scriptsize\hair 4pt
 \pinlabel {$\ast$}  at 295 180
 \pinlabel {$\circlearrowleft$} [tr] at 436 436
 \pinlabel {$u$} [t] at 154 270
 \pinlabel {$v$} [r] at 216 267
 \pinlabel {$\gamma_{\ast u}$} [t] at 210 240
 \pinlabel {$\gamma_{\ast v}$} [l] at 243 229
\endlabellist
\centering
\includegraphics[scale=0.3]{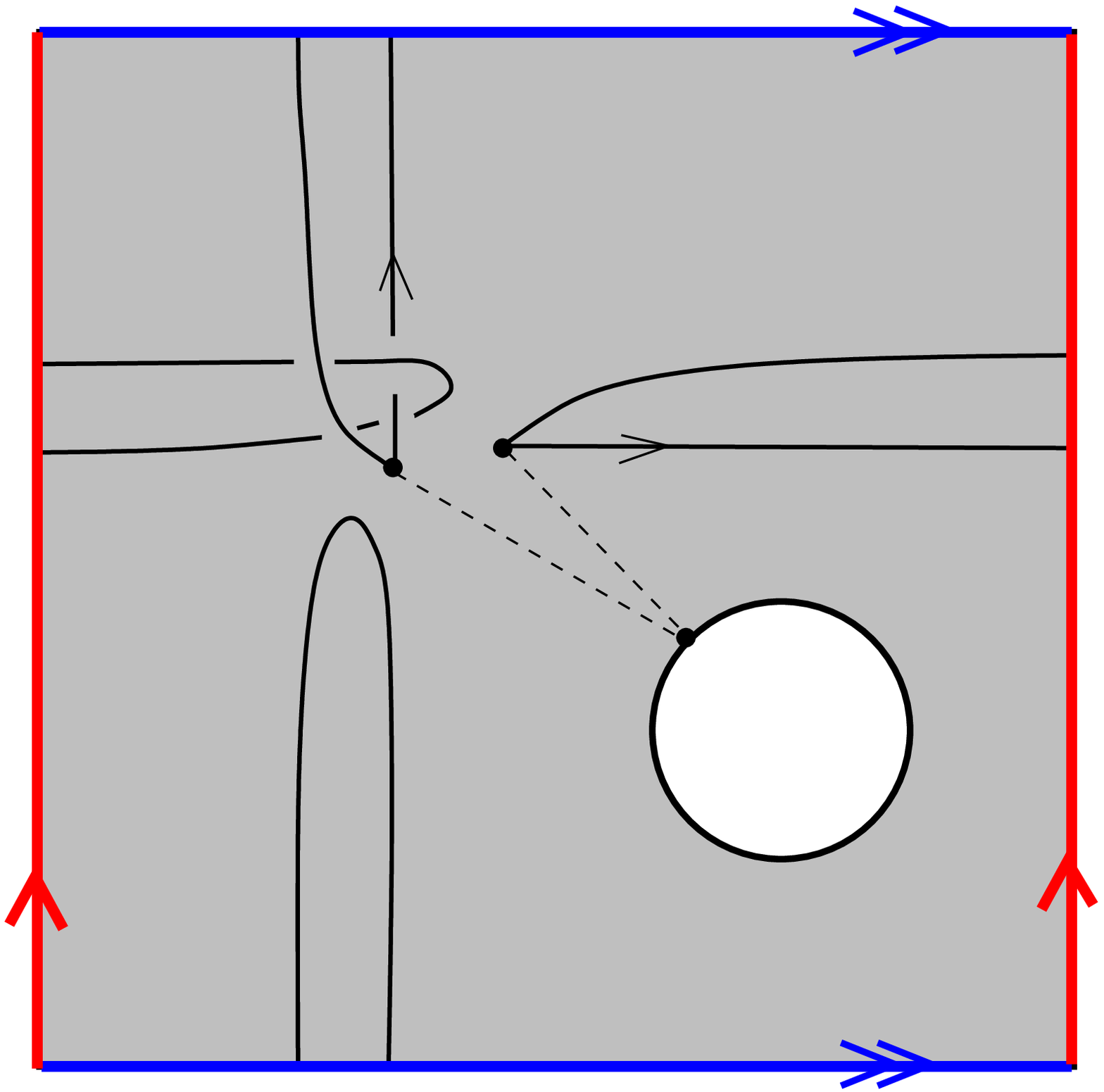}
\end{array}
\quad \stackrel{\hbox{\scriptsize isotopic}}{\simeq} \quad
\begin{array}{c}
\labellist
\scriptsize\hair 4pt
 \pinlabel {$\ast$}  at 295 180
 \pinlabel {$\circlearrowleft$} [tr] at 436 436
 \pinlabel {$u$} [t] at 154 266
 \pinlabel {$v$} [r] at 216 267
 \pinlabel {$\gamma_{\ast u}$} [t] at 206 240
 \pinlabel {$\gamma_{\ast v}$} [l] at 243 229
\endlabellist
\centering
\includegraphics[scale=0.3]{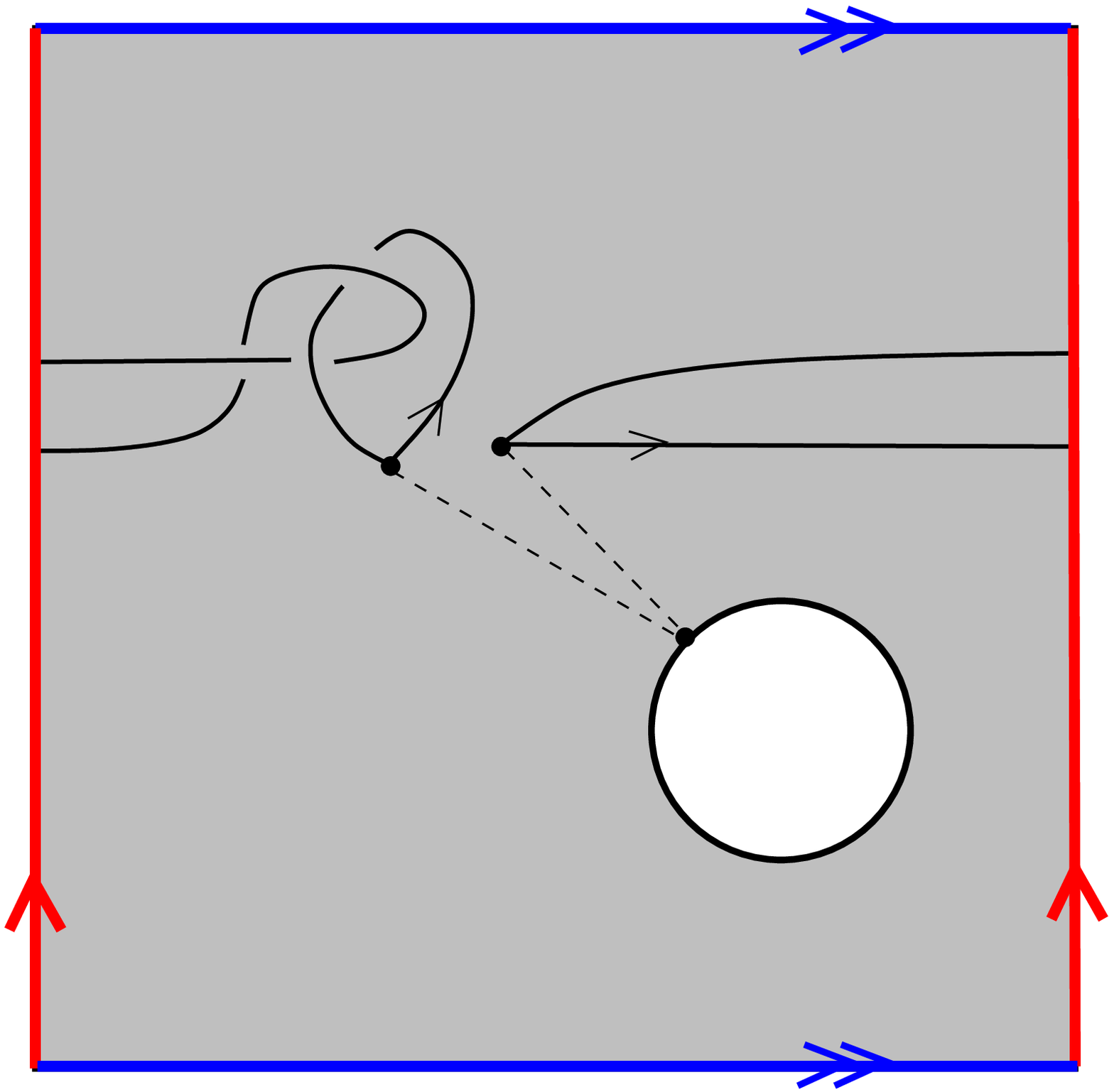}
\end{array}
$$
Therefore, we have  $\left[ \iota^u(b^{-1}),\iota^v(a)\right] =  \iota^u(b^{-1})\, z^u\, \iota^u(b)$ which implies that
$$
\eta'(a,b) = \overline{p^u\left(\frac{\partial \left[ \iota^u(b^{-1}),\iota^v(a)\right]}{\partial z^u}\right)}   = \overline{b^{-1}} = b = \eta(a,b),
$$
as expected from Theorem \ref{th:3d-eta}.
\end{example}

\subsection{A three-dimensional formula for Turaev's self-intersection map} \label{subsec:mu-dim3}

We use the same notations as in Section~\ref{subsec:lambda-dim3}.
 The point $\ast$ endowed with  the unit vector tangent  to $\overline{\partial \Sigma}$ is denoted by $\vec{\ast}$.
Set $\overrightarrow{\pi}:= \pi_1(U\Sigma,\vec{\ast})$ and denote the natural projection $\overrightarrow{\pi} \to \pi$ by $\vec{x} \mapsto x$.
Consider the group homomorphism
$$
c:\overrightarrow{\pi} \longrightarrow PB_2(\Sigma)
$$
that is defined as follows.
Let $\vec \gamma_{\ast v}$ be  a unit  vector field along the arc $\gamma_{\ast v}$ which is equal to $\vec\ast$ at {the} point $\ast$,
is nowhere tangent to $\gamma_{\ast v}$ and gives a unit  vector $\vec{v}$ at  point $v$ ``ending at''~$u$: 
this path in $U\Sigma$ induces an isomorphism between $\overrightarrow{\pi}$ and $ \pi_1(U\Sigma,\vec{v})$.
The latter group can be interpreted as the framed $1$-strand braid group based   at $\vec{v}$ so that, by the ``doubling'' operation, 
we have a group homomorphism  $ \pi_1(U\Sigma,\vec{v}) \to PB_2(\Sigma)$. Then $c$ is  the composition of this homomorphism with the previous isomorphism $\overrightarrow{\pi} \simeq \pi_1(U\Sigma,\vec{v})$.

We define a linear map $\vec{\mu}': \K[\overrightarrow{\pi}] \to \K[\pi]$ by setting, for any $\vec a \in \overrightarrow{\pi}$,
$$
\vec{\mu}'(\vec a) := p^v\left(\frac{\partial\, \iota^u(a^{-1})\, c(\vec{a})\, \iota^v(a^{-1})}{\partial z^v}\right).
$$ 

\begin{lemma} 
For any $\vec a \in \overrightarrow{\pi}$, we have
$$
\vec{\mu}'(\vec a) = p^v\left(\frac{\partial\, \iota^u(a^{-1})\, c(\vec{a})}{\partial z^v}\right) = 
\overline{p^u\left(\frac{\partial\, \iota^u(a^{-1})\, c(\vec{a})\, \iota^v(a^{-1})}{\partial z^u}\right)}
= \overline{a^{-1}  p^u\left(\frac{\partial\,  c(\vec{a})\, \iota^v(a^{-1})}{\partial z^u}\right)}.
$$
Moreover, $\vec{\mu}'$ is a quasi-derivation ruled by $\eta'$ satisfying  $(\vec{\mu}')^t = - \vec{\mu}' + q_{-1,0}$.
\end{lemma}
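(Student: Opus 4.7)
The plan is to establish the three equalities first, and then use them together with Lemma~\ref{lem:U_V} to deduce both the quasi-derivation property and the symmetry. Throughout, set $X(\vec a) := \iota^u(a^{-1}) c(\vec a) \iota^v(a^{-1})$. Strand projections at both $u$ and $v$ confirm that $\iota^u(a^{-1}) c(\vec a) \in \pi^v$ (its $v$-strand is $a^{-1}\cdot a = 1$), that $c(\vec a) \iota^v(a^{-1}) \in \pi^u$, and that $X(\vec a) \in \pi^u \cap \pi^v$. The first equality then follows from Leibniz for $\partial/\partial z^v$ applied to $\iota^u(a^{-1}) c(\vec a) \cdot \iota^v(a^{-1})$: the factor $\iota^v(a^{-1}) \in \iota^v(\pi)$ has vanishing $\partial/\partial z^v$ and augmentation $1$. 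The second equality is Lemma~\ref{lem:U_V} applied to $X(\vec a)$. The third is Leibniz for $\partial/\partial z^u$ applied to the factorization $\iota^u(a^{-1}) \cdot (c(\vec a) \iota^v(a^{-1}))$: here $\iota^u(a^{-1}) \in \iota^u(\pi)$ has vanishing $\partial/\partial z^u$ and $p^u(\iota^u(a^{-1})) = a^{-1}$ factors out.

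For the quasi-derivation property, the key decomposition is $\iota^u((ab)^{-1}) c(\vec a \vec b) = F_1 F_2 F_3$ with $F_1 := \iota^u(b^{-1}) X(\vec a) \iota^u(b)$, $F_2 := \iota^u(b^{-1}) \iota^v(a) \iota^u(b)$, and $F_3 := \iota^u(b^{-1}) c(\vec b)$; that this collapses correctly uses $\iota^v(a^{-1}) \iota^v(a) = 1$. Strand checks show $F_1 \in \pi^u \cap \pi^v$ and $F_2, F_3 \in \pi^v$, so Leibniz for $\partial/\partial z^v$ applies factor by factor. A direct computation of strand projections gives $p^v(F_1) = 1$ and $p^v(F_2) = a$, while the three derivative pieces evaluate, respectively, to $\vec\mu'(\vec a)\, b$ (via Lemma~\ref{lem:U_V} applied to $F_1 \in \pi^u \cap \pi^v$, then Leibniz for $\partial/\partial z^u$ with the $\iota^u(b^{\pm 1})$-factors being $\partial/\partial z^u$-closed, then the second formula for $\vec\mu'(\vec a)$), to $\eta'(a, b)$ (the very definition of $\eta'$), and to $\vec\mu'(\vec b)$ (first formula). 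Assembling the pieces yields exactly $\vec\mu'(\vec a \vec b) = a\vec\mu'(\vec b) + \vec\mu'(\vec a)\, b + \eta'(a, b)$.

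For the symmetry property, rewrite the claim on group-like elements as $\vec\mu'(\vec a) + \overline{\vec\mu'(\vec a^{-1})} = a - 1$. By the second formula applied to $\vec a^{-1}$ combined with the third identity of Lemma~\ref{lem:U_V}, one has $\overline{\vec\mu'(\vec a^{-1})} = p^v(\partial X(\vec a^{-1})^\times/\partial z^v)$ where $X(\vec a^{-1})^\times := \sigma X(\vec a^{-1}) \sigma^{-1} \in \pi^u \cap \pi^v$. Since $p^v$ kills this element, Leibniz reduces the symmetry to
$$
p^v\bigl(\partial(X(\vec a^{-1})^\times \cdot X(\vec a))/\partial z^v\bigr) = a - 1.
$$
The product $X(\vec a^{-1})^\times \cdot X(\vec a) \in \pi^u \cap \pi^v$ is then expanded using the explicit formula $\sigma \iota^u(g) \sigma^{-1} = (z^v)^{-1} \iota^v(g) z^v$ from the proof of Lemma~\ref{lem:U_V}, its analogue for $\sigma \iota^v(g) \sigma^{-1}$ by symmetry, and a geometric identification of $\sigma c(\vec a) \sigma^{-1}$ in terms of $c(\vec a)$ and a framing adjustment. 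After simplification the product presents as $r\,(z^v)^{\pm 1} r^{-1}$ for a specific $r \in \pi^v$, and Lemma~\ref{lem:Fox} delivers the value $a - 1$.

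The main obstacle is the symmetry step, specifically the identification of $\sigma c(\vec a) \sigma^{-1}$: this is the only place where a genuinely geometric argument about the doubling map $c$ is required, all other steps being formal Leibniz bookkeeping and applications of Lemma~\ref{lem:U_V}.
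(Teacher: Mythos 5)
Your treatment of the three displayed identities and of the quasi-derivation property is correct and follows essentially the paper's own route: Leibniz for the Fox derivatives combined with Lemma \ref{lem:U_V}, and, for the quasi-derivation, a three-factor decomposition of $\iota^u((ab)^{-1})\, c(\vec{a}\,\vec{b})$ whose pieces map under $p^v\frac{\partial\ }{\partial z^v}$ to $\vec{\mu}'(\vec{a})\, b$, $\eta'(a,b)$ and $a\, \vec{\mu}'(\vec{b})$. Your factorization $F_1F_2F_3$ differs from the paper's only in the order and conjugation of the factors (the paper isolates $\iota^u(b^{-1})\iota^v(a^{-1})\iota^u(a^{-1})c(\vec{a})\iota^u(b)$ and shows that its image is $a^{-1}\vec{\mu}'(\vec{a})\, b$); the two versions are interchangeable, and your strand-projection checks that $F_1\in \pi^u\cap\pi^v$, $p^v(F_1)=1$ and $p^v(F_2)=a$ are all valid.

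The transpose identity is where your argument has a genuine gap, in two respects. First, the claimed endgame cannot work as stated: Lemma \ref{lem:Fox} applied to a single conjugate $r\,(z^v)^{\pm 1}\,r^{-1}$ yields the single signed term $\pm\, p^v(r)$, which can never equal the two-term target $a-1$; you would need a product of at least two conjugates with opposite signs, and you give no reason why $X(\vec{a}^{-1})^\times\, X(\vec{a})$ admits such a normal form. Second, and more seriously, the whole reduction hinges on ``a geometric identification of $\sigma c(\vec{a})\sigma^{-1}$'' which you name but do not establish --- this is precisely the nontrivial content of the step, so deferring it leaves the symmetry unproved. The paper's proof sidesteps the normal-form issue entirely: it writes $\sigma\, \iota^u(a^{-1})\, c(\vec{a})\, \iota^v(a^{-1})\, \sigma^{-1} = (z^v)^{-1}\iota^v(a^{-1})z^v\, c(\vec{a})\, \iota^u(a^{-1})$, using $\sigma\iota^u(g)\sigma^{-1}=(z^v)^{-1}\iota^v(g)z^v$ together with the fact that conjugation by $\sigma$ carries $c(\vec{a})\,\iota^v(a^{-1})$ to $c(\vec{a})\,\iota^u(a^{-1})$; it then peels off the leading factors by Leibniz to reach $-1+a^{-1}+a^{-1}\, p^v\big(\tfrac{\partial\, c(\vec{a})\iota^u(a^{-1})}{\partial z^v}\big)$, and finally applies the inverse rule $\tfrac{\partial\, w^{-1}}{\partial z^v}=-w^{-1}\tfrac{\partial\, w}{\partial z^v}$ to recognize $-\vec{\mu}'\big((\vec{a})^{-1}\big)$ in the last term. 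If you want to keep your symmetric product $X(\vec{a}^{-1})^\times X(\vec{a})$, you must either prove the relevant $\sigma$-conjugation identities explicitly or import this inverse-rule trick; as written, the symmetry step is a plan rather than a proof.
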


\begin{proof}
 We prove the first identity of the first statement: for any $\vec{a} \in \overrightarrow{\pi}$, we have
\begin{eqnarray*}
p^v\left(\frac{\partial\, \iota^u(a^{-1})\, c(\vec{a})\, \iota^v(a^{-1})}{\partial z^v}\right) &=& 
p^v\left(\frac{\partial\, \iota^u(a^{-1})\, c(\vec{a})}{\partial z^v} + \iota^u(a^{-1})\, c(\vec{a}) \frac{\partial \iota^v(a^{-1})}{\partial z^v}\right) \\
&=& p^v\left(\frac{\partial\, \iota^u(a^{-1})\, c(\vec{a})}{\partial z^v}\right).
\end{eqnarray*}
The third identity of the first statement is proved similarly, and the second identity follows  from Lemma \ref{lem:U_V}.

We now show that $\vec{\mu}' \in\QDer(\eta')$ using the first statement. For any $\vec{a},\vec{b} \in \overrightarrow{\pi}$, we have
\begin{eqnarray*}
\vec{\mu}'\big( \vec{a} \vec{b}\big) &=& p^v\left(\frac{\partial\, \iota^u( b^{-1}) \iota^u(a^{-1})\, c(\vec{a}) c( \vec{b}  )}{\partial z^v}\right) \\
&=& p^v\left(\frac{\partial\, \big(\iota^u( b^{-1}) \iota^u(a^{-1}) c(\vec{a})  \iota^u( b)\big)\, \big(\iota^u( b^{-1}) c( \vec{b}  )\big)}{\partial z^v}\right) \\
&=& p^v\left(\frac{\partial\, \iota^u( b^{-1}) \iota^u(a^{-1}) c(\vec{a})  \iota^u( b) }{\partial z^v}\right)  +
 p^v\left( \iota^u( b^{-1}) \iota^u(a^{-1}) c(\vec{a})  \iota^u( b)\,\frac{\partial\,  \iota^u( b^{-1}) c( \vec{b}  )}{\partial z^v}\right) \\
&=& p^v\left(\frac{\big(\partial\, \iota^u( b^{-1}) \iota^v(a) \iota^u(b) \big)\, \big(\iota^u(b^{-1})  \iota^v(a^{-1}) \iota^u(a^{-1}) c(\vec{a})  \iota^u( b) \big)}{\partial z^v}\right)  +  a \vec{\mu}'(\vec{b}) \\
&=& p^v\left(\frac{\partial\, \iota^u( b^{-1}) \iota^v(a) \iota^u(b) }{\partial z^v}\right)  
+  a p^v\left(\frac{\partial\, \iota^u(b^{-1})  \iota^v(a^{-1}) \iota^u(a^{-1}) c(\vec{a})  \iota^u( b) }{\partial z^v}\right) +  a \vec{\mu}'(\vec{b}) \\
&=& \eta'(a,b)
+  a p^v\left(\frac{\partial\, \iota^u(b^{-1})  \iota^v(a^{-1}) \iota^u(a^{-1}) c(\vec{a})  \iota^u( b) }{\partial z^v}\right) +  a \vec{\mu}'(\vec{b}).
\end{eqnarray*}
To proceed, we observe that $ \iota^u(b^{-1})  \iota^v(a^{-1}) \iota^u(a^{-1}) c(\vec{a})  \iota^u( b)$ belongs to $\pi^u \cap \pi^v$ so that Lemma~\ref{lem:U_V} applies:
\begin{eqnarray*}
 p^v\left(\frac{\partial\, \iota^u(b^{-1})  \iota^v(a^{-1}) \iota^u(a^{-1}) c(\vec{a})  \iota^u( b) }{\partial z^v}\right) &=&
\overline{  p^u\left(\frac{\partial\, \iota^u(b^{-1})  \iota^v(a^{-1}) \iota^u(a^{-1}) c(\vec{a})  \iota^u( b) }{\partial z^u}\right) }\\
  &=&  \overline{ p^u\left(\frac{\partial\,  \iota^v(a^{-1}) \iota^u(a^{-1}) c(\vec{a})  \iota^u( b) }{\partial z^u}\right) }\ b\\
   &=&  \overline{ p^u\left(\frac{\partial\,  \iota^v(a^{-1}) \iota^u(a^{-1}) c(\vec{a})  }{\partial z^u}\right) } \ b \\
      &=&  p^v\left(\frac{\partial\,  \iota^v(a^{-1}) \iota^u(a^{-1}) c(\vec{a})  }{\partial z^v}\right)  b\\
      &=& a^{-1} p^v\left(\frac{\partial\, \iota^u(a^{-1}) c(\vec{a})  }{\partial z^v}\right)  b 
      \ = \ a^{-1} \vec{\mu}'(\vec{a}) b.
\end{eqnarray*}
We deduce that $ \vec{\mu}'\big( \vec{a} \vec{b}\big) = \eta'(a,b) +   \vec{\mu}'(\vec{a})\, b +  a\, \vec{\mu}'(\vec{b})$.

Finally, we show that $(\vec{\mu}')^t = - \vec{\mu}' + q_{-1,0}$.
For any $\vec{a} \in \overrightarrow{\pi}$, we deduce from Lemma \ref{lem:U_V} that
\begin{eqnarray*}
 \overline{\vec{\mu}'(\vec a)} &=& \overline{p^v\left(\frac{\partial\ \iota^u(a^{-1})\, c(\vec{a})\, \iota^v(a^{-1})}{\partial z^v}\right)} \\
 &=&   p^v\left(\frac{\partial\ \sigma \iota^u(a^{-1})\, c(\vec{a})\, \iota^v(a^{-1}) \sigma^{-1}}{\partial z^v}\right) \\
 &=&   p^v\left(\frac{\partial\ \sigma \iota^u(a^{-1})\sigma^{-1}\, \sigma c(\vec{a}) \sigma^{-1}\, \sigma\iota^v(a^{-1}) \sigma^{-1}}{\partial z^v}\right) \\
 &=& p^v\left(\frac{\partial\  (z^v)^{-1 }\iota^v(a^{-1}) z^v\, c(\vec{a})\, \iota^u(a^{-1})}{\partial z^v}\right) \\
&=&   -1 + p^v\left(\frac{\partial\  \iota^v(a^{-1}) z^v\, c(\vec{a})\, \iota^u(a^{-1})}{\partial z^v}\right) \\
  &=& -1 + a^{-1} p^v\left(\frac{\partial\   z^v\, c(\vec{a})\, \iota^u(a^{-1})}{\partial z^v}\right) \\
 &=& -1 + a^{-1} + a^{-1} p^v\left(\frac{\partial\   c(\vec{a})\, \iota^u(a^{-1})}{\partial z^v}\right) \\
 &=& -1 + a^{-1} + a^{-1} p^v\left( - c(\vec{a})\, \iota^u(a^{-1}) \frac{\partial\   \big(c(\vec{a})\, \iota^u(a^{-1})\big)^{-1}}{\partial z^v}\right) \\
 &=& -1 + a^{-1} - p^v\left( \frac{\partial\   \iota^u(a)\, c((\vec{a})^{-1})}{\partial z^v}\right) \ = \ -1 +a^{-1} -\vec{\mu}'((\vec{a})^{-1}).
\end{eqnarray*}
We deduce that $ \vec{\mu}'(\vec a)  = -1 +a -\overline{\vec{\mu}'((\vec{a})^{-1})}= -1 +a - (\vec{\mu}')^t(\vec a)$.
\end{proof}

\begin{theorem} \label{th:3d-mu}
We have $\vec{\mu}' = \vec{\mu}$.
\end{theorem}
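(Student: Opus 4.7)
The plan is to mimic the proof of Theorem~\ref{th:3d-eta}. I first represent $\vec a \in \overrightarrow\pi$ by an immersion $\alpha : [0,1] \to \Sigma$ with $\dot\alpha(0) = \dot\alpha(1) = \vec\ast$, with only finitely many transverse double points $P_\alpha$, and whose tangent vector field represents $\vec a$ in $U\Sigma$. Pushing the basepoint $\ast$ along $\gamma_{\ast u}$ and $\gamma_{\ast v}$ as in Section~\ref{subsec:lambda-dim3}, I get two representative loops of $a$ based at $u$ and $v$ respectively, together with the doubled $2$-strand braid $c(\vec a)$, whose strands track $\alpha$ with a parallel offset given by the tangent framing.

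Next I analyze $B := \iota^u(a^{-1})\, c(\vec a)\, \iota^v(a^{-1}) \in \pi^u\cap\pi^v \subset PB_2(\Sigma)$ as a $2$-strand string-link: the factors $\iota^u(a^{-1})$ and $\iota^v(a^{-1})$ ``undo'' the $\alpha$-path of each strand, so $B$ is indeed a pure braid. By a sequence of isotopies (in the spirit of the four-slice reorganization in the proof of Theorem~\ref{th:3d-eta}), this string-link can be brought into a standard form obtained from the trivial braid $\uparrow_u\!\uparrow_v$ by surgery along a finite collection of simple claspers: for each double point $p \in P_\alpha$ the doubling $c(\vec a)$ produces a local four-strand tangle in which three crossings cancel after isotopy, leaving exactly one simple clasper $C_p$ of sign $\varepsilon_p(\alpha)$, while the framing at the basepoint (encoded by the push-off $\vec\gamma_{\ast v}$ and the boundary direction $\vec\ast$) contributes one additional simple clasper $C_0$ of sign $+1$.

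With this clasper decomposition in hand, I apply the initial observation from the proof of Lemma~\ref{lem:U_V} (formula~\eqref{eq:observation}): each clasper contributes to $B$ a factor of the form $k\,(z^v)^{\pm 1}\,k^{-1}$ in $\pi^v$, where the element $k \in \pi^v$ is explicitly determined by the edge-loop of the clasper. The edge-loop of $C_p$ projects under $p^v$ to the concatenation $\alpha_{\ast p}\alpha_{p\ast}$, while the edge-loop of $C_0$ projects to $a$. This yields
\[
B \;=\; \Big(\prod_{p \in P_\alpha} k_p\, (z^v)^{\varepsilon_p(\alpha)}\, k_p^{-1}\Big)\cdot k_0\, z^v\, k_0^{-1} \;\in\; \pi^v,
\]
where the product is ordered following the positive direction of $\alpha$, $p^v(k_p) = \alpha_{\ast p}\alpha_{p\ast}$ and $p^v(k_0) = a$. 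Applying Lemma~\ref{lem:Fox} then gives
\[
\vec\mu'(\vec a) \;=\; p^v\!\left(\frac{\partial B}{\partial z^v}\right) \;=\; \sum_{p \in P_\alpha} \varepsilon_p(\alpha)\, \alpha_{\ast p}\alpha_{p\ast} + a \;=\; \vec\mu(\vec a).
\]

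The main obstacle is the second paragraph---the clasper analysis of $B$ near each double point and near the basepoint. One must carefully track framing conventions to verify that the sign of the local clasper at $p$ matches $\varepsilon_p(\alpha)$, and that the basepoint framing produces exactly one additional clasper accounting for the ``$+a$'' term in Turaev's formula. Once this geometric input is established, the computational bookkeeping follows the template of Theorem~\ref{th:3d-eta} verbatim.
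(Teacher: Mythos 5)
Your proposal follows essentially the same route as the paper's proof: write $\iota^u(a^{-1})\,c(\vec a)\,\iota^v(a^{-1})$, viewed in $\pi^v$, as a product of conjugates of $z^v$ (one factor of sign $\varepsilon_p(\alpha)$ per double point, obtained through the clasper observation \eqref{eq:observation} from the proof of Lemma \ref{lem:U_V}, plus one extra factor accounting for the $+a$ term) and then apply Lemma \ref{lem:Fox}. The only real difference is how the extra factor --- your flagged ``main obstacle'' at the basepoint --- is handled: instead of producing an additional clasper $C_0$, the paper isotopes the braid into four slices, recognizes the bottom two slices as $\prod_p k_p (z^v)^{\varepsilon_p(\alpha)} k_p^{-1}$ with $p^v(k_p)=\alpha_{\ast p}\alpha_{p\ast}$, and gets the last factor from the homotopy in $\Sigma\setminus\{v\}$ of the tangential push-off $(\alpha_v)^+$ to $\alpha_u Z^v$, which yields the conjugate $\iota^v(a)\,z^v\,\iota^v(a^{-1})$ --- exactly your $k_0 z^v k_0^{-1}$ with $p^v(k_0)=a$.
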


\begin{proof}
Let $\vec a\in \overrightarrow{\pi}$.
We choose an immersion  $\alpha:[0,1] \to \Sigma$  with only finitely many transverse double points such that $\dot\alpha(0) = \dot\alpha(1)  = \vec{\ast}$, 
and  the unit tangent vector field of $\alpha$ represents $\vec{a}$. 
We can also assume that, in a  neighbourhood  of $\ast$, the curve $\alpha$ has the following local picture:
$$
\labellist
\scriptsize\hair 2pt
 \pinlabel {$\vec \ast$} [t] at 440 2
 \pinlabel {$u$} [b] at 340 170
 \pinlabel {$v$} [b] at 542 172
 \pinlabel {$\alpha$} [r] at 166 207
 \pinlabel {$\gamma_{\ast u}$} [r] at 357 126
 \pinlabel {$\gamma_{\ast v}$} [l] at 523 124
\endlabellist
\centering
\includegraphics[scale=0.2]{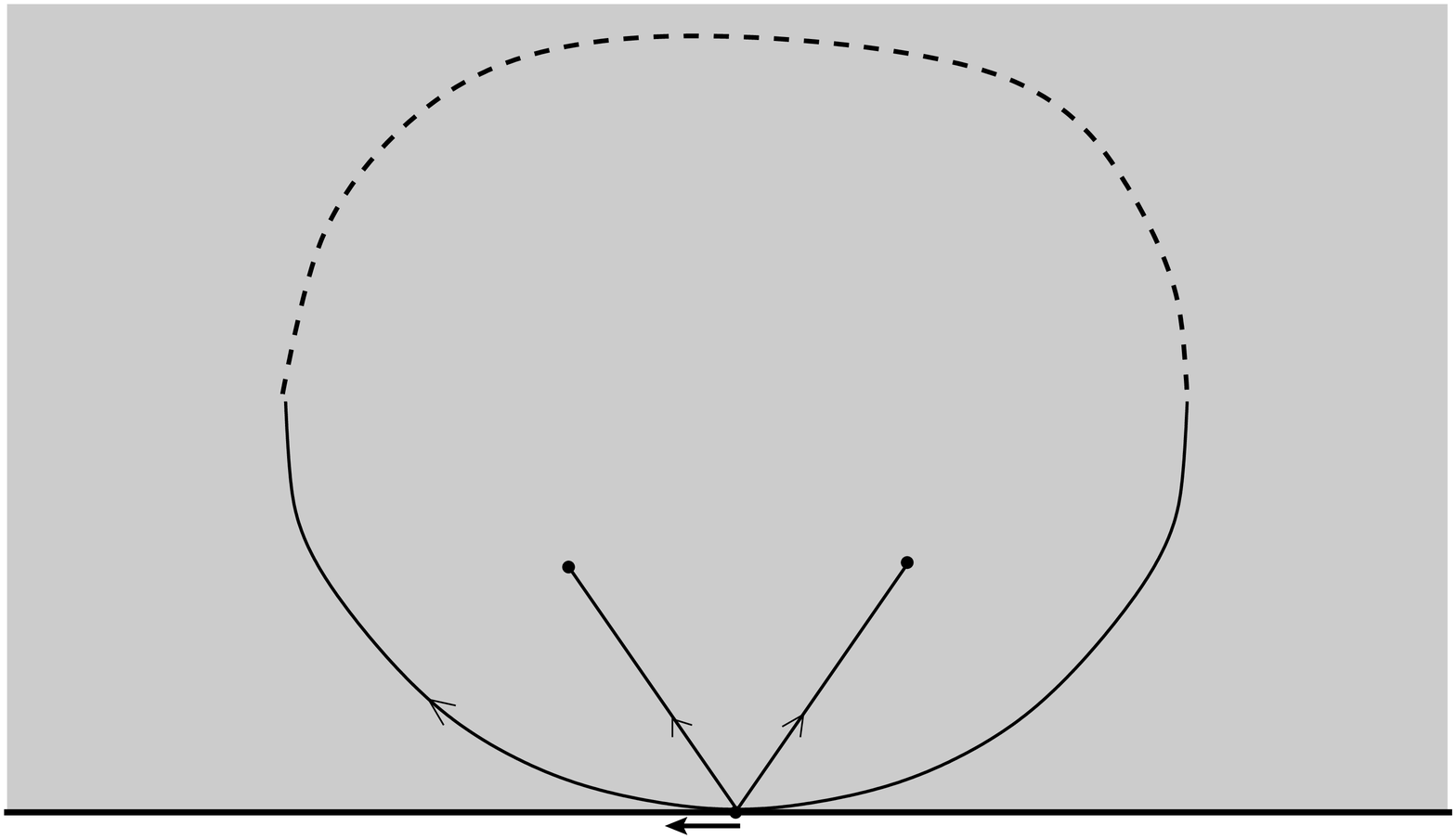}
$$
Then $\vec{\mu}(\vec{a})$  is given by formula \eqref{eq:def-mu}.

In order to compute  $\vec{\mu}'(\vec{a})$, we need  more notations: let $\alpha_v$ be the loop in $U \Sigma$ based at $\vec{v}$
obtained by ``pushing'' away from $\gamma_{\ast u}$ the base point of $\alpha$ along $\vec\gamma_{\ast v}$.
{(Here $\vec v$ and $\vec\gamma_{\ast v}$ are defined as in the definition of the map $c$.)}
A loop  $\alpha_u$ in $U \Sigma$ based at $\vec{u}$ is defined similarly:
$$
\labellist
\scriptsize\hair 2pt
 \pinlabel {$\vec u$} [b] at 339 169
 \pinlabel {$\alpha_u$} [r] at 166 265
 \pinlabel {$\vec\ast$} [t] at 441 8
 \pinlabel {$\vec \gamma_{\ast u}$} [l] at 250 116
\endlabellist
\centering
\includegraphics[scale=0.22]{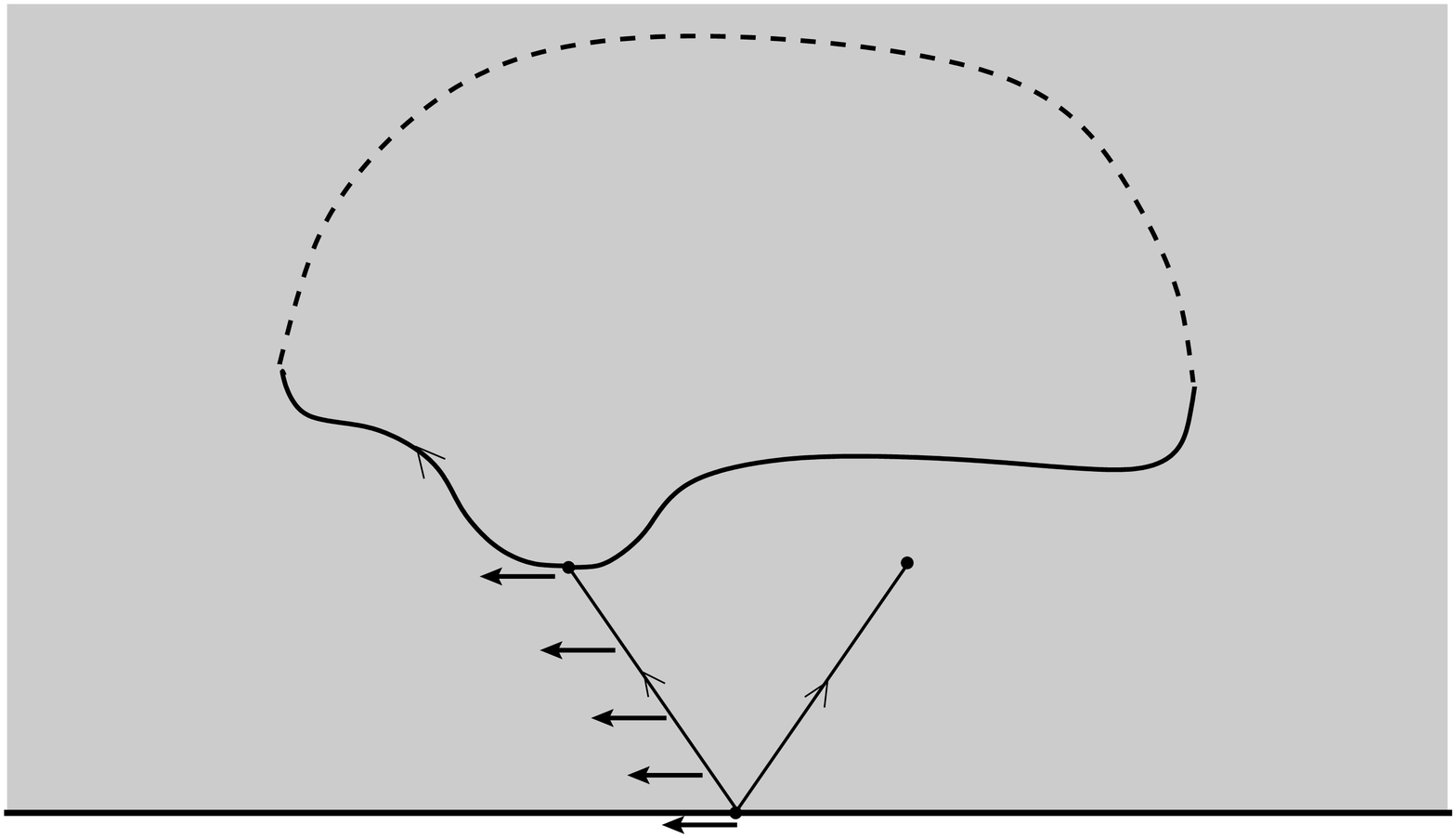} \qquad \quad
\labellist
\scriptsize\hair 2pt
 \pinlabel {$\vec\gamma_{\ast v}$} [l] at 506 92
 \pinlabel {$\vec \ast$} [t] at 439 8
 \pinlabel {$\vec v$} [b] at 542 174
 \pinlabel {$\alpha_v$} [r] at 161 249
\endlabellist
\centering
\includegraphics[scale=0.22]{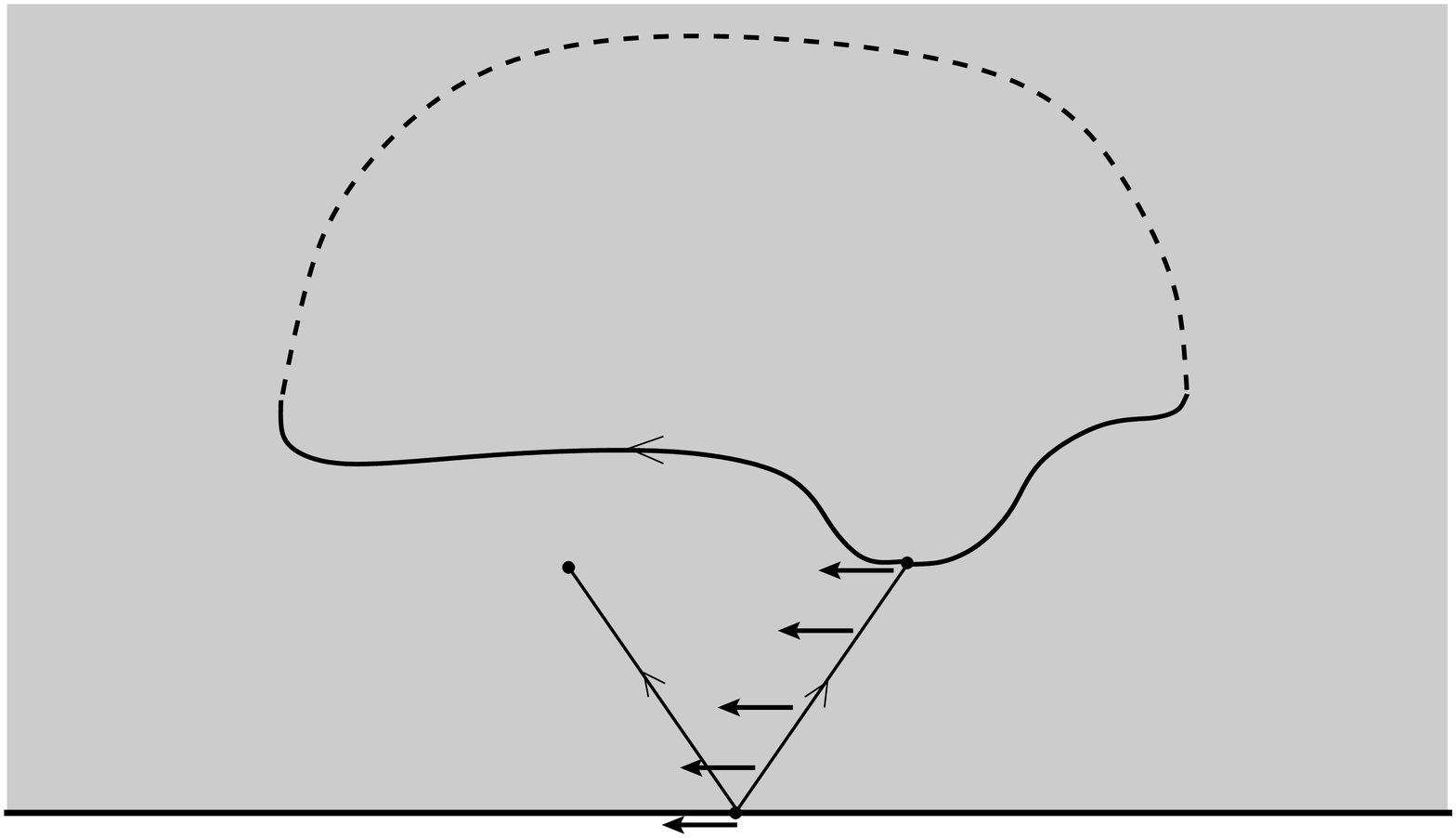}
$$
Observe that  $\iota^u(a)\in \pi^u \subset PB_2(\Sigma)$ is represented by the loop $\alpha_v$ in $\Sigma\setminus\{u\}$ and, similarly,
$\iota^v(a)\in \pi^v \subset  PB_2(\Sigma)$ is represented by the loop $\alpha_u$ in $\Sigma\setminus\{v\}$. 
Thus, as an element of $PB_2(\Sigma)$, $\iota^u(a^{-1})\, c(\vec{a})\, \iota^v(a^{-1})$ is represented by the  $2$-strand pure braid
$$
\begin{array}{cc}
\overline{\alpha}_u  &  \uparrow_v \\
(\alpha_{v})^+  & \alpha_{v} \\
 \uparrow_u  &  \overline{\alpha}_{v}  
\end{array}
$$
where $(\alpha_{v})^+   \alpha_{v}$ is the $2$-strand pure braid whose strand $v$ is $\alpha_v$
and whose strand $u$ is the parallel of $\alpha_v$ obtained by pushing along the unit tangent vector field of $\alpha_v$.
The above pure braid is isotopic to the $2$-strand string-link 
$$
\begin{array}{cc}
\overline{\alpha}_u  &  \uparrow_v \\
(\alpha_{v})^+ & \uparrow_v \\
\delta_{u} &  \alpha_{v} \\
 \uparrow_u  &   \overline{\alpha}_v
\end{array}
$$
whose second ``slice'' $\delta_u\, \alpha_v$  is obtained from $\uparrow_u \alpha_v$ by several surgeries along simple claspers:
specifically, there is one clasper for every double point of $\alpha_v$. The string-link 
$$
\begin{array}{cc}
\delta_{u} &  \alpha_{v} \\
 \uparrow_u  &  \overline{\alpha}_v
\end{array}
$$
defines a loop in $\Sigma \setminus \{v\}$ based at $u$ and, using the  observation  at the beginning of the proof of Lemma~\ref{lem:U_V},
we see that this loop represents the following product of conjugates of $Z^v := \overline{\gamma}_{u\ast }z^v\gamma_{\ast u}$ in the group $ \pi_1(\Sigma \setminus \{v\},u)$:
$$
\prod_{p \in P_{\alpha}} K_p\, (Z^v)^{\varepsilon_p(\alpha)}\, K_p^{-1} \in \pi_1(\Sigma \setminus \{v\},u)
$$
Here the product is over the set $P_{\alpha}$ of double points of $\alpha$ (which is ordered along the positive direction of $\alpha$ starting from $\ast$),
the notation $\varepsilon_p(\alpha)$ is the same as in formula \eqref{eq:def-mu} and 
$K_p$ is a certain element of $\pi_1(\Sigma \setminus \{v\},u)$ 
which is mapped to $\overline{\gamma}_{ u \ast} \alpha_{\ast p}\alpha_{p\ast} \gamma_{\ast u}\in \pi_1(\Sigma,u)$ by the canonical projection.
Observe next that, as loops in $\Sigma\setminus \{v\}$ based at $u$,  $(\alpha_v)^+$ is homotopic to $\alpha_u Z^v$; therefore the loop of $\Sigma \setminus \{v\}$ based at $u$
$$ 
\begin{array}{cc}
\overline{\alpha}_u  &  \uparrow_v \\
(\alpha_{v})^+ & \uparrow_v
\end{array}
$$
represents  $\alpha_u Z^v (\alpha_u)^{-1} \in \pi_1(\Sigma \setminus \{v\},u)$.
We deduce that, as an element of $\pi^v$, the braid $\iota^u(a^{-1})\, c(\vec{a})\, \iota^v(a^{-1})$  is equal to 
$$
\Big(\prod_{p \in P_\alpha} k_p\,  (z^v)^{\varepsilon_p(\alpha)}\,  k_p^{-1}\Big) \  (\iota^v(a)\, z^v\, \iota^v(a^{-1}))
$$ 
where, for all $p\in P_\alpha$,  $k_p\in \pi^v$ has the property to be sent to $\alpha_{\ast p} \alpha_{p\ast}\in \pi$ under $p^v$.
Hence, by application of Lemma~\ref{lem:Fox}, we obtain
$$
\vec{\mu}'(\vec a) =
p^v\left(\frac{\partial\ \iota^u(a^{-1})\, c(\vec{a})\, \iota^v(a^{-1})}{\partial z^v}\right)  
= \sum_{p \in P_\alpha} \varepsilon_p(\alpha)\, (\alpha_{\ast p} \alpha_{p \ast})  + a = \vec{\mu}(\vec{a}).
$$

\up
\end{proof}

\begin{example}
Consider the homotopy class $\vec a\in \overrightarrow{\pi} $ of the small framed loop  based at $\vec{\ast}$
\begin{equation} \label{eq:digamma}
\begin{array}{c}
\labellist
\scriptsize\hair 2pt
 \pinlabel {$\Sigma$} [t] at 40 396
 \pinlabel {$\circlearrowleft$}  at 59 70
 \pinlabel {$\vec{\ast}$}  at 348 8
\endlabellist
\centering
\includegraphics[scale=0.2]{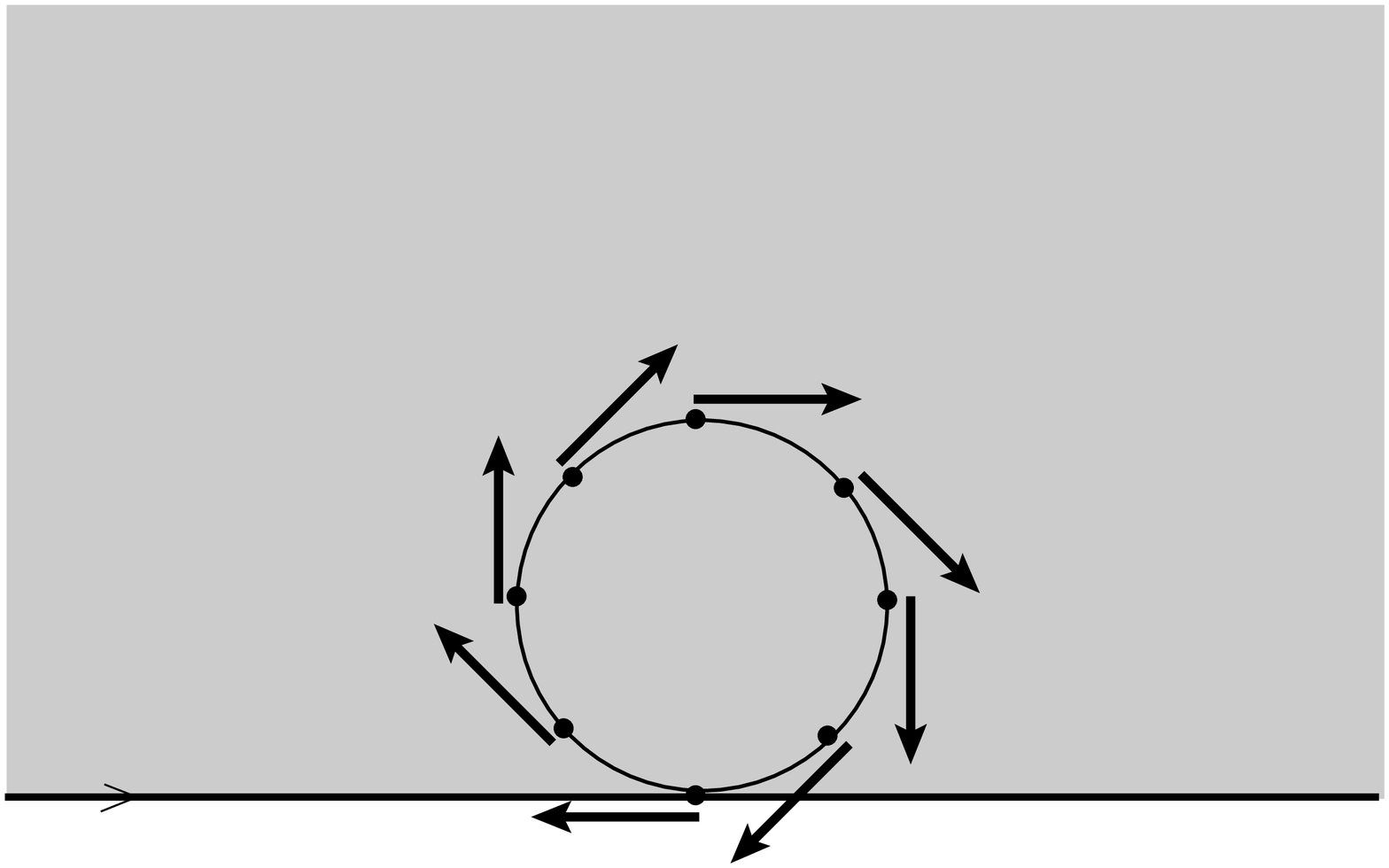}
\end{array}
\end{equation}
(which generates the fundamental group of the fiber of $U\Sigma \to \Sigma$).
Then $c(\vec{a})=  {z^v}$ and $a=1$, so that we have 
$$
\vec\mu'(\vec a) = {p^v\left(\frac{\partial\, \iota^u(a^{-1})\, c(\vec{a})\, \iota^v(a^{-1})}{\partial z^v}\right)
= p^v\left(\frac{\partial z^v }{\partial z^v}\right)} = 1 =  \vec{\mu}(\vec{a}),
$$
as expected from Theorem \ref{th:3d-mu}.
\end{example}

\subsection{Remarks}

1. The facts that $\vec{\mu}' \in\QDer(\eta')$ and  $(\vec{\mu}')^t  + \vec{\mu}' = q_{-1,0}$
easily imply that $\eta' + (\eta')^t= \rho_{-1}$.
Thus we have recovered  by our {three-dimensional} methods 
all the fundamental properties of $\eta=\eta'$ and $\vec{\mu}=\vec{\mu}'$ that have been stated in Section \ref{sec:Turaev_operations}.

2. It is proved in \cite{MT_dim_2} that the skew-symmetrized version $\eta^s$ of $\eta$ 
induces a ``quasi-Poisson double bracket'' in the sense of Van den Bergh \cite{VdB}.
In particular, $\eta^s$ satisfies a kind of ``non-commutative version'' of the Jacobi identity:
it would be interesting to reprove this identity using the {three-dimensional} methods of this section.

\section{Special expansions}  \label{sec:special}

In this section, the ground ring is a commutative field $\K$ of characteristic zero
and $\Sigma$ is a disk with finitely-many punctures numbered from $1$ to $p$.
We formulate the notion of ``special expansion'' which is implicit in \cite{AT,AET},
and we explain its relevance for formal descriptions of the homotopy intersection pairing.

\subsection{Unframed special expansions} \label{subsec:unframed_special}

Set $\pi:= \pi_1(\Sigma,\ast)$ where $\ast \in \partial \Sigma$,
 let $\nu \in \pi$ be the homotopy class of the oriented curve ${\partial \Sigma}$
and, for any $i\in\{1,\dots,p\}$, let $\check \zeta_i$ be the conjugacy class in $\pi$
that is defined by a small counter-clockwise loop around the $i$-th puncture.
Let $H:= H_1(\Sigma;\K)$, let $z_i\in H$ denote the homology class of $\check \zeta_i$ and set $z:= z_1+ \cdots + z_p$.
We denote by  $T(H)$ the  tensor algebra over $H$ and by $T(\!(H)\!)$ the degree-completion of $T(H)$.
The usual Hopf algebra structure of $T(H)$ extends to a complete Hopf algebra structure on $T(\!(H)\!)$ 
whose coproduct $\hat\Delta$, counit $\hat\varepsilon$ and antipode $\hat S$  are defined by 
$$
\forall k\in H, \quad \hat \Delta(k) := k \hat \otimes 1 + 1 \hat \otimes k, \quad \hat \varepsilon(k) :=0, \quad \hat S(k):= -k.
$$

A \emph{special expansion} of $\pi$ is a  map $\theta: \pi \to T(\!(H)\!)$ with the following properties:
\begin{enumerate}
\item[(i)] for all $x,y\in \pi$, $\theta(xy) = \theta(x)\, \theta(y)$;
\item[(ii)] for each $i\in \{1,\dots,p\}$, and given a representative $\zeta_i\in \pi$ of $\check \zeta_i$, 
there exists a primitive element $u_i \in T(\!(H)\!)$ such that $\theta( \zeta_i) = \exp(u_i) \exp(z_i) \exp(-u_i)$;
\item[(iii)]   $\theta(\nu)=\exp(z)$.
\end{enumerate}
Let $ \widehat{\K[\pi]}$ denote the completion of $\K[\pi]$ with respect to the $I$-adic filtration \eqref{eq:I-adic}.
Conditions (i) and (ii)  imply that $\theta$ induces an isomorphism of complete Hopf algebras $\hat \theta: \widehat{\K[\pi]} \to T(\!(H)\!)$
which, at the level of graded Hopf algebras, gives the canonical isomorphism \eqref{eq:canonical_alg_iso}.

Using the fact that $\pi$ is freely generated by some representatives $\zeta_1,\dots,\zeta_p$ of $\check\zeta_1,\dots,\check\zeta_p$,
and by proceeding by successive finite-degree approximations, it is not difficult to construct instances of special expansions.
This kind of expansions of the free group $\pi$ appears implicitly in \cite{AT} and more explicitly in \cite{AET},
in relation with ``special automorphisms'' and ``special derivations'' of  free Lie algebras.

\subsection{Framed special expansions} \label{subsec:framed_special}

Set $\overrightarrow{\pi}:= \pi_1(U \Sigma, \vec\ast\, )$  where $\vec \ast$ is the unit vector tangent to $\overline{\partial \Sigma}$ at $\ast$.
The bundle projection $U\Sigma \to \Sigma$ has a section defined by a unit vector field on the disk closure of $\Sigma$
(which is unique, up to homotopy). This section induces a canonical group homomorphism $s:\pi \to \overrightarrow{\pi}$ at the level of fundamental groups:
 let $w: \overrightarrow{\pi} \to \Z$ be the unique group homomorphism such that
$$
\forall \vec{x} \in \overrightarrow{\pi}, \quad \vec{x} = \digamma^{w(\vec x)} s(x).
$$
where  $x \in \pi$ denotes the projection of $\vec x$ and $\digamma^{-1} \in \overrightarrow{\pi}$ denotes 
the generator \eqref{eq:digamma} of the fundamental group of the fiber of $U \Sigma \to \Sigma$.

Consider  the algebra $\K[[C]]$  of formal power series in the indeterminate $C$:
we declare that $\deg(C):=1$ and we consider the degree-filtration on $\K[[C]]$;
then  $\K[[C]]$ has the  structure of a complete Hopf algebra with coproduct $\hat\Delta$, counit $\hat\varepsilon$ and antipode $\hat S$ defined by 
$$
\hat \Delta(C) := C \hat \otimes 1 + 1 \hat \otimes C, \quad \hat \varepsilon(C) :=0, \quad \hat S(C):= -C
$$
respectively. The complete tensor product $T(\!(H)\!)\hat \otimes \K[[C]]$ of $T(\!(H)\!)$ and $\K[[C]]$  has a natural structure of complete Hopf algebra.

A \emph{special expansion} of $\overrightarrow{\pi}$ is a  map $\vec \theta: \overrightarrow{\pi} \to T(\!(H)\!)\hat \otimes \K[[C]]$ 
with the following properties:
\begin{enumerate}
\item[(i)] for all $\vec x,\vec y\in \overrightarrow{\pi}$, $\vec \theta(\vec x\, \vec y) = \vec \theta(\vec x)\, \vec \theta(\vec y)$;
\item[(ii)] for each $i\in \{1,\dots,p\}$, and given an element $\vec\zeta_i\in {\overrightarrow{\pi}}$ 
which maps to $\check \zeta_i$ under the canonical maps $ \overrightarrow{\pi} \to \pi \to \check \pi$,
there exists a primitive element $u_i \in T(\!(H)\!)$ 
such that $\vec\theta( \vec\zeta_i) = \big(\exp(u_i) \exp(z_i) \exp(-u_i)\big) \hat\otimes \exp\big(\frac{w(\vec\zeta_i)}{2} C\big)$;
\item[(ii')] for all $k\in \Z$, $\vec \theta (\digamma^k) = 1 \hat \otimes \exp\big(\frac{k}{2}C\big)$;
\item[(iii)]  $\vec\theta(\vec{\overline \nu}\, )=\exp(-z)\hat \otimes 1$. 
\end{enumerate}
In condition (iii),  $\vec{\overline \nu} \in \overrightarrow{\pi} $ is the homotopy class of the unit vector field along the curve $\overline{\partial \Sigma}$
which has index $1$ with respect to the tangent vector field  of this oriented curve. {(In other words, we have $\vec{\overline \nu}= s(\nu^{-1})$.)}
Let $ \widehat{\K[ \overrightarrow{\pi} ]}$ denote the completion of $\K[\overrightarrow{\pi}]$ with respect to the $I$-adic filtration.
The conditions (i), (ii) and (ii') imply that $\vec \theta$ induces an isomorphism of complete Hopf algebras 
$\widehat{\vec\theta}: {\widehat{\K [ {\overrightarrow \pi} ] } \to T(\!(H)\!)\hat \otimes \K[[C]]}$
which, at the level of graded Hopf algebras, gives the canonical isomorphism that is provided by the following lemma.

\begin{lemma} \label{lem:iga}
Let $\K[C]$ be the polynomial algebra with indeterminate $C$ of degree  $1$.
There is   a unique  isomorphism of graded  Hopf algebras between $\Gr\, {\K[\overrightarrow{\pi}]}$ and $T(H) \otimes \K[C]$, 
defined in degree $1$ by $(\vec{x}-1) \mapsto [x] \otimes 1 + 1\otimes  \frac{w(\vec{x})}{2} C$ for any $\vec x\in {\overrightarrow{\pi}}$ projecting to $x \in \pi$. 
\end{lemma}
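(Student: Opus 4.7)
The plan is to reduce the statement to the triviality of the unit tangent bundle $U\Sigma \to \Sigma$ and to standard facts on associated graded algebras of group algebras of free groups.

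First I would observe that, since $\Sigma$ is a punctured disk, its closure admits a nowhere-vanishing (unit) vector field, so $U\Sigma$ is a trivial $S^1$-bundle over $\Sigma$, and such a trivialization is unique up to homotopy. Concretely, any such unit vector field provides the section inducing $s:\pi \to \overrightarrow{\pi}$, and the fiber of $U\Sigma \to \Sigma$ over $\ast$ contributes a central copy of $\Z$ generated by $\digamma$. The defining relation $\vec{x} = \digamma^{w(\vec{x})} s(x)$ thus produces a group isomorphism
$$
\overrightarrow{\pi} \stackrel{\simeq}{\longrightarrow} \pi \times \Z, \qquad \vec{x} \longmapsto (x, w(\vec{x})),
$$
and consequently an isomorphism of Hopf algebras $\K[\overrightarrow{\pi}] \simeq \K[\pi] \otimes \K[\Z]$ which is filtration-preserving when both sides are equipped with the $I$-adic filtration. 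Here I use the standard fact that, for augmented $\K$-algebras $A,B$ with augmentation ideals $I_A, I_B$, the augmentation ideal $I$ of $A\otimes B$ satisfies $I^k = \sum_{i+j=k} I_A^i \otimes I_B^j$; since $\K$ is a field, this yields
$$
\Gr\, \K[\overrightarrow{\pi}] \simeq \Gr\, \K[\pi] \otimes \Gr\, \K[\Z]
$$
as graded Hopf algebras.

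Next I would identify the two tensor factors. Because $\pi$ is a free group (of rank $p$) with $H_1(\pi;\K) = H$, Magnus's theorem gives a canonical isomorphism $\Gr\, \K[\pi] \simeq T(H)$ of graded Hopf algebras, sending $(x-1)\bmod I^2$ to $[x]\in H$ for any $x\in \pi$. Similarly $\Gr\, \K[\Z]$ is the polynomial algebra on its degree-one part; I normalize the isomorphism $\Gr\, \K[\Z] \simeq \K[C]$ by declaring $C := 2\,(\digamma-1)\bmod I_{\Z}^2$, so that $(\digamma-1)$ maps to $C/2$. Combining the two identifications yields an isomorphism of graded Hopf algebras $\Gr\,\K[\overrightarrow{\pi}] \simeq T(H)\otimes \K[C]$.

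It remains to check the claimed formula in degree~$1$: for any $\vec{x}\in \overrightarrow{\pi}$ with projection $x\in \pi$, one has
$$
\vec{x}-1 \; = \; \digamma^{w(\vec{x})}s(x)-1 \; \equiv \; (s(x)-1) + w(\vec{x})(\digamma-1) \pmod{I^{2}},
$$
whose image under the above isomorphism is $[x]\otimes 1 + 1\otimes \tfrac{w(\vec{x})}{2}C$, as required. Uniqueness of the isomorphism is immediate because $T(H)\otimes \K[C]$ is generated as a $\K$-algebra by its degree-one part. The only potentially subtle point is the verification that the normalization factor $1/2$ is consistent---but this is precisely built into the choice $C := 2\,(\digamma-1)\bmod I_{\Z}^{2}$ and is therefore a convention rather than a genuine obstacle.
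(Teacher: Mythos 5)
Your argument is correct and follows essentially the same route as the paper's proof: the paper also uses the group isomorphism $\overrightarrow{\pi}\simeq \pi\times F(\digamma)$ given by $\vec x\mapsto (x,\digamma^{w(\vec x)})$, identifies $\Gr$ of the tensor product of group algebras with the tensor product of the associated graded Hopf algebras, and normalizes $\Gr\,\K[F(\digamma)]\simeq\K[C]$ by $(\digamma-1)\mapsto C/2$. Your additional remarks (triviality of $U\Sigma$, the explicit degree-one congruence) only make explicit what the paper leaves as ``easily computed.''
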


\begin{proof}
The unicity is obvious since the graded algebra $\Gr\, {\K[\overrightarrow{\pi}]}$ is generated by its degree~$1$ part.
To prove the existence, consider the group isomorphism
$\overrightarrow{\pi} \to \pi \times F(\digamma)$ defined by $\vec{x} \mapsto (x,\digamma^{w(\vec{x})})$.
This induces a  graded Hopf algebra isomorphism
$$
\Gr\, \K[\overrightarrow{\pi}] \stackrel{\simeq}{\longrightarrow} \Gr(\K[\pi \times F(\digamma)]) \simeq \Gr( \K[\pi] \otimes \K[F(\digamma)] ) \simeq \Gr( \K[\pi]) \otimes \Gr(\K[F(\digamma)] ) .
$$
Since $\Gr\, \K[\pi] \simeq T(H)$ by \eqref{eq:canonical_alg_iso} and $\Gr\, \K[F(\digamma)] \simeq \K[C]$ by the map $(\digamma-1)\mapsto C/2$,
we obtain an isomorphism between $\Gr\, \K[\overrightarrow{\pi}]$ and $T(H) \otimes \K[C]$. 
The values of this isomorphism on the degree $1$ generators $(\vec x -1)$  are easily computed.
\end{proof}

{In fact,} unframed special expansions are in one-to-one correspondence with framed special expansions. 
Indeed, for any special expansion $\theta$  of $\pi$,
it is easily verified that the map $\overrightarrow{(\theta)}_s: {\overrightarrow{\pi}\to T(\!(H)\!)\hat \otimes \K[[C]]}$ defined by
$$
\forall \vec x \in \overrightarrow{\pi}, \quad \overrightarrow{(\theta)}_s(\vec x) := \theta(x) \hat \otimes \exp\left( \frac{w(\vec x)}{2}\, C\right)
$$
is a special expansion of $\overrightarrow{\pi}$.
Conversely, any special expansion $\vec \theta$ of $\overrightarrow{\pi}$ 
induces a special expansion $p_*\big(\vec\theta\, \big)$ of $\pi$ as follows.
Consider the complete Hopf algebra map 
$p:= \id \hat \otimes \hat \varepsilon :T(\!(H)\!) \hat\otimes \K[[C]]  \to T(\!(H)\!)$;
we have 
$$
\forall \vec x \in \overrightarrow{\pi}, \ \forall k \in \Z, \quad  p\big(\vec \theta (\vec x \digamma^k) \big)
= p\left( \vec\theta(\vec x)\, \vec{\theta}(\digamma^k) \right)   = p {\big( \vec\theta(\vec x) \big)} ;
$$
therefore the composition $p\circ \vec\theta$ induces a map $p_*\big(\vec \theta\, \big): \pi \to T(\!(H)\!)$
which is clearly a special expansion. 
It is also clear that the above two constructions are inverse one to the other:
\begin{equation} \label{eq:special's}
\begin{array}{ccc}
\{\hbox{\small special expansions of $\pi$} \} &  	\longleftrightarrow& \{\hbox{\small special expansions of $\overrightarrow{\pi}$} \}  \\
\theta  & \longmapsto & \overrightarrow{(\theta)}_s \\
  p_*\big(\vec \theta\, \big) & \! \longleftarrow\joinrel\mapstochar &\vec{\theta}
\end{array}
\end{equation}

\subsection{Formal description of Turaev's intersection pairing} \label{subsec:hip_special}

{Let $\odot$ be the bilinear operation in $H$ defined by $z_i \odot z_j :=\delta_{ij}z_i$ for all $i,j \in \{1,\dots,p\}$.
It extends uniquely to a  bilinear operation $\odot$ in $T(H)$ defined by 
$(x\odot 1) = (1 \odot x ) :=0$ for any $x\in T(H)$ and  by
$$
\left(h_1  \cdots   h_m \odot  k_1  \cdots   k_n\right)
:=  h_1  \cdots   h_{m-1}\,  (h_m \odot  k_1)\,   k_2   \cdots   k_n
$$
for any integers $m,n\geq 1$ and for any $h_1,\dots,h_m,k_1,\dots,k_n \in H$.
This is a $(-1)$-filtered Fox pairing in $T(H)$, which induces a  filtered Fox pairing}
$$
(- {\odot} -): T(\!(H)\!) \times T(\!(H)\!) \longrightarrow T(\!(H)\!)
$$
in the complete Hopf algebra $T(\!(H)\!)$.
Moreover, the formal power series $s(X) \in \Q[ [X] ]$ defined~by 
\begin{eqnarray}
\notag s(X) & := & -\frac{1}{2} + \frac{1}{X} -\frac{1}{2}\coth(X/2)  \\
\label{eq:s(X)} & = &  \frac{1}{X} + \frac{1}{e^{-X}-1}   \\
\notag &=&   -\frac{1}{2}   - \sum_{k\geq 1} \frac{B_{2k}}{(2k)!} X^{2k-1}
\ = \ -\frac{1}{2} -\frac{X}{12}+ \frac{X^3}{720}-\frac{X^5}{30240} + \cdots
\end{eqnarray}
evaluated at $-z=-z_1- \cdots - z_p \in H$ induces an inner Fox pairing $\rho_{s(-z)}$ in $T(\!(H)\!)$. 

\begin{theorem} \label{th:special}
For any special expansion $\theta$ of $\pi$, we have the commutative diagram 
\begin{equation}   \label{eq:tensorial_eta}
\xymatrix{
\widehat{\K[\pi]} \times \widehat{\K[\pi]}  \ar[d]_-{\hat \theta \times \hat \theta}^-\simeq \ar[rr]^-{\hat \eta} && \widehat{\K[\pi]} \ar[d]^-{\hat \theta}_-\simeq \\
T(\!(H)\!) \times T(\!(H)\!)  \ar[rr]_-{(-\, {\odot}\, - )+ \rho_{s(-z)}} && T(\!(H)\!).
}
\end{equation}
\end{theorem}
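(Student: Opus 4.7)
My strategy is to deduce the formula from the analogous result of [MT\_twists, Theorem 13.1] for a compact oriented surface of genus $p$ with a single boundary component, where the intersection pairing admits a closed tensor-algebraic description once a symplectic expansion has been fixed.

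First I would realize $\Sigma$ as a subsurface of a compact oriented surface $S$ of genus $p$ with one boundary component, obtained by gluing a handle at each puncture. The induced injection $\iota_* : \pi \hookrightarrow \tilde\pi := \pi_1(S, \ast)$ can be arranged so that each small loop $\zeta_i$ around the $i$-th puncture is sent to a commutator $[\alpha_i, \beta_i]$, where $(\alpha_i, \beta_i)_{i=1}^p$ is a symplectic generating system of $\tilde\pi$, and the boundary curve $\nu$ is sent to $\prod_i [\alpha_i, \beta_i]$.

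Next I would introduce the Hopf algebra map $j : T(\!(H)\!) \to T(\!(\tilde H)\!)$ defined on generators by $z_i \mapsto [a_i, b_i]$, where $\tilde H := H_1(S; \K)$ carries the symplectic basis $(a_i, b_i)$ dual to $(\alpha_i, \beta_i)$, and would produce a symplectic expansion $\tilde\theta$ of $\tilde\pi$ fitting into the commutative square $\hat{\tilde\theta} \circ \hat\iota_* = j \circ \hat\theta$. The boundary conditions are compatible since $j(\exp(z)) = \exp\big(\sum_i [a_i, b_i]\big)$, which matches the image of $\prod_i [\alpha_i, \beta_i]$ under any symplectic expansion. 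Existence of such a $\tilde\theta$ lifting $\theta$ can be arranged by freely extending $\theta$ along the handle generators $\alpha_i, \beta_i$ using the special-automorphism framework of [AT, AET].

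By naturality of $\eta$ under subsurface inclusions sharing a boundary portion, $\hat\eta$ on $\widehat{\K[\pi]}$ is the restriction of $\hat{\tilde\eta}$ on $\widehat{\K[\tilde\pi]}$ along $\hat\iota_*$. Transporting the formula of [MT\_twists] for $\hat{\tilde\eta}$ through $j$: the contraction with the symplectic form $\omega = \sum_i (a_i \otimes b_i - b_i \otimes a_i)$, evaluated on tensor monomials built from commutators $[a_i, b_i]$, collapses by a direct computation to precisely the operation $(-\odot-)$ obeying $z_i \odot z_j = \delta_{ij} z_i$; the Bernoulli-type inner Fox pairing $\rho_{s(-\omega^{\mathrm{Lie}})}$ with $\omega^{\mathrm{Lie}} = \sum_i [a_i, b_i]$ transports to $\rho_{s(-z)}$, since $j$ is an algebra homomorphism sending $\omega^{\mathrm{Lie}}$ to $z$.

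The principal obstacle is the construction and verification of the lift $\tilde\theta$ and its compatibility with $j$, together with the careful check that $\omega$-contraction descends along $j$ cleanly to $(-\odot-)$. The latter amounts to confirming that iterated contractions of tensor words involving the commutators $[a_i, b_i]$ collapse in such a way that only the diagonal pattern $\delta_{ij} z_i$ survives; this is where the filtration-doubling nature of $j$ (sending a degree-$1$ generator $z_i$ to the degree-$2$ element $[a_i, b_i]$) plays a decisive role and must be handled carefully.
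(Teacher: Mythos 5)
Your proposal follows essentially the same route as the paper's own proof (given in the appendix): glue a one-holed torus at each puncture to get a genus-$p$ surface with one boundary, lift the special expansion to a symplectic expansion compatible with the algebra map sending $z_i$ to $\pm[a_i,b_i]$ (this is exactly Lemma \ref{lem:special_to_symplectic}), use naturality of $\eta$ under the subsurface inclusion, and transport the symplectic formula of \cite{MT_twists} through this injective map, with the key computation being that $[a_i,b_i]\stackrel{\omega}{\leadsto}[a_j,b_j]$ collapses to $\delta_{ij}$ times a commutator, i.e$.$ to the image of $z_i\odot z_j$. The only differences are cosmetic (sign conventions such as $\iota(\zeta_i)=[(\alpha_i)^{-1},\beta_i]$, and the lift $\theta^+$ being built explicitly from a symplectic expansion of the one-holed torus conjugated by $\exp$ of the elements $u_i$ furnished by condition (ii), rather than by appeal to the special-automorphism framework).
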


\noindent
Theorem \ref{th:special}  was obtained in the unpublished draft \cite{MT_draft}.
The proof, which is postponed to Appendix~\ref{sec:hip_special},
is  based on the formal description of the pairing $\eta$
in the case of a compact connected oriented surface with one boundary component  \cite{MT_twists}. 
We also give in Section~\ref{sec:proofs} a very different proof of Theorem~\ref{th:special}, 
but assuming that $\theta$ is  the special expansion arising from the Kontsevich integral.

\subsection{Remarks}
1. Recall from Section \ref{subsec:eta} that the bracket induced by the Fox pairing $\eta$ 
is the  Goldman bracket $\langle - , - \rangle_{\operatorname{G}}$.
Then it can be deduced from Theorem \ref{th:special} that  the following diagram is commutative for any special expansion $\theta$:
\begin{equation} \label{eq:formal_Goldman}
\xymatrix{
 \widehat{\K\check \pi} \times \widehat{\K\check \pi} \ar[d]_-{\hat \theta \times \hat \theta}^-\simeq \ar[rr]^-{\langle - , - \rangle_{\operatorname{G}}^{\widehat{} }}
  && \widehat{\K\check \pi}  \ar[d]^-{\hat \theta}_-\simeq \\
\check T(\!(H)\!) \times \check T(\!(H)\!)  \ar[rr]_-{\langle - , - \rangle_{\operatorname{N}}^{\widehat{} }} && \check T(\!(H)\!)
}
\end{equation}
Here $\check T(\!(H)\!)$ is the degree-completion of  $\check T(H) = T(H)/[T(H),T(H)]$, 
and ${\langle - , - \rangle_{\operatorname{N}}^{\widehat{} }}$ is the completion of the bilinear map
$\langle - , - \rangle_{\operatorname{N}}: \check T(H) \times \check T(H) \to \check T(H)$ defined by
\begin{eqnarray*}
\big\langle h_1  \cdots  h_m , k_1  \cdots  k_n \big\rangle_{\operatorname{N}} 
&=&  - \sum_{i=1}^m \sum_{j=1}^n   k_{j+1} \cdots k_n  k_1 \cdots k_{j-1}\, (k_j {\odot} h_i)\,  h_{i+1} \cdots h_m  h_1 \cdots h_{i-1}\\
&& +  \sum_{i=1}^m \sum_{j=1}^n  h_{i+1} \cdots h_m  h_1 \cdots h_{i-1}\,  (h_i {\odot} k_j)\,  k_{j+1} \cdots k_n k_1 \cdots k_{j-1}
\end{eqnarray*}
for all integers $m,n\geq 1$ and all $h_1,\dots, h_m, k_1,\dots,k_n \in H$.
It turns out that  $\langle - , - \rangle_{\operatorname{N}}$ is  the necklace Lie bracket \cite{BLB,Gi}
associated to  a star-shaped quiver consisting of one ``central'' vertex  connected by $p$ edges to $p$ ``peripheral'' vertices.

2. The  formal description \eqref{eq:formal_Goldman}  of  the Goldman bracket of a punctured disk
has also been  obtained by Kawazumi and Kuno, {as} announced in the survey paper \cite{KK_survey}. 
They consider there the general case of a compact connected oriented surface with several boundary components.

\section{The special expansion defined by the Kontsevich integral}  \label{sec:special_Z}

In this section, the ground ring is a commutative field $\K$ of characteristic zero
and $\Sigma$ is a disk with finitely-many punctures numbered from $1$ to $p$.
Following mainly \cite{HgM} and \cite{AET}, we explain  how to construct a special expansion $\theta_Z$ from the Kontsevich integral $Z$.

\subsection{Some spaces of Jacobi diagrams}

The reader is refered to the paper \cite{BN1} and to the textbook \cite{Oh} for an introduction to Jacobi diagrams.
For any integer $n\geq 1$,  let $\A(\uparrow_1 \cdots \uparrow_n)$ be the space spanned by Jacobi diagrams on the $1$-manifold  
consisting of $n$ copies of the oriented interval $\uparrow$, modulo the AS, IHX and STU relations.
Recall that $\A(\uparrow_1 \cdots \uparrow_n)$ is a graded Hopf algebra, 
whose degree is  half the total number of vertices; in the sequel, we use the same notation for the degree-completion of $\A(\uparrow_1 \cdots \uparrow_n)$.
For any (possibly equal) $i,j \in \{1,\dots,n\}$,  let
$$
t_{ij} := \begin{array}{c}
\labellist
\small\hair 2pt
 \pinlabel {$1$} [t] at 20 4
 \pinlabel {$i$} [t] at 236 4
 \pinlabel {$j$} [t] at 523 4
 \pinlabel {$n$} [t] at 739 4
  \pinlabel {$\cdots$}  at 89 47
 \pinlabel {$\cdots$}  at 378 50
 \pinlabel {$\cdots$}  at 669 51
\endlabellist
\centering
\includegraphics[scale=0.15]{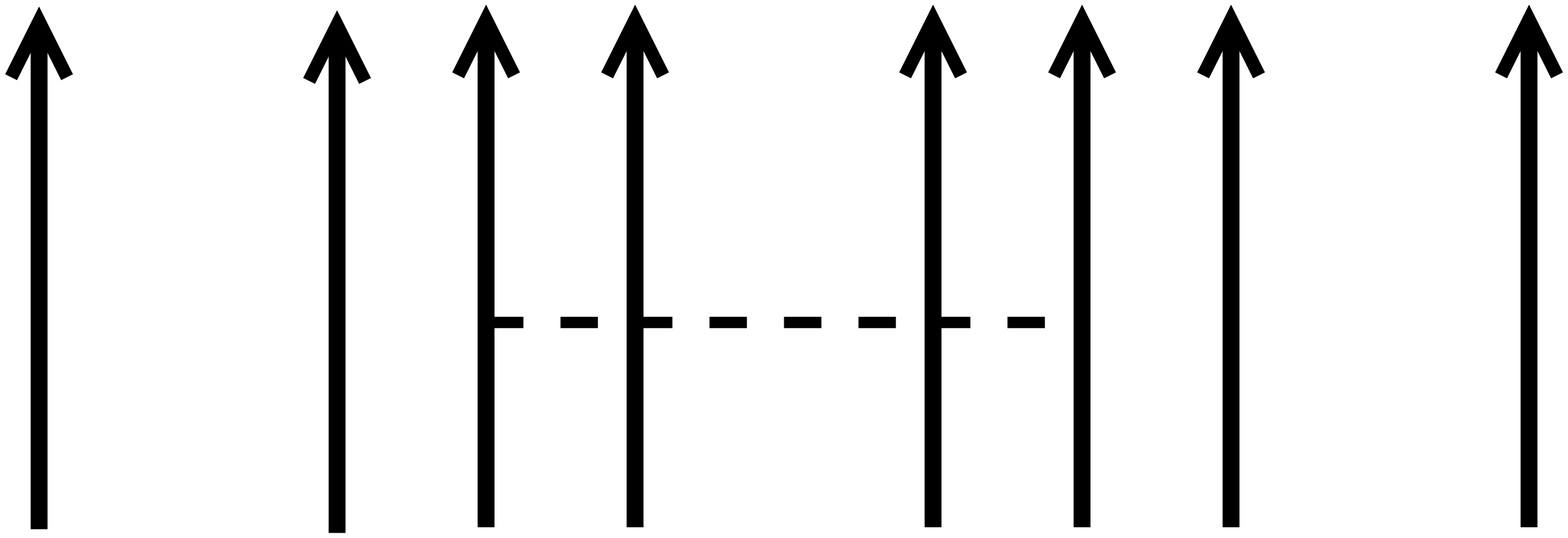}
\end{array} 
$$
\vspace{0.2cm}

\noindent
be the Jacobi diagram with only one edge (a \emph{chord}) connecting $\uparrow_i$ to $\uparrow_j$.
If $n=p+1$, we give a special role to the rightmost copy of $\uparrow$ which we label with $\ast$ (instead of $p+1$):
let $\A_{p,\ast}$ be  the closed subspace of $\A(\uparrow_1 \cdots \uparrow_p\, \uparrow_\ast)$ spanned by Jacobi diagrams 
whose all connected components touch $\uparrow_\ast$. 
Recall {from Section \ref{subsec:unframed_special}}  that $H= H_1(\Sigma; \K)$ is generated by the homology classes $z_1,\dots,z_p$ of 
some small counter-clockwise loops $\check \zeta_1,\dots, \check \zeta_p$ around the punctures.

\begin{lemma} \label{lem:A_p*}
Consider the homomorphism of complete  algebras
\begin{equation} \label{eq:a_simple_map}
T(\!(H)\!) \hat\otimes \K[[C]]   {\longrightarrow} \A_{p,\ast}
\end{equation}
that maps $1 \hat\otimes C$ to the diagram $t_{\ast \ast}$ and $z_i\hat\otimes 1$ to the  diagram $t_{i\ast}$ for all $i\in \{1,\dots,p\}$.
Then the homomorphism \eqref{eq:a_simple_map} is injective and preserves the Hopf algebra structures.
\end{lemma}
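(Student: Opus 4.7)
The plan is to prove the lemma in three stages: well-definedness of the algebra homomorphism, Hopf compatibility, and injectivity.

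For well-definedness, the source $T(\!(H)\!)\hat\otimes\K[[C]]$ is the free complete associative algebra on $z_1,\ldots,z_p$ tensored over $\K$ with the polynomial algebra in the central variable $C$, so the only non-trivial relation to check is that the image of $C$ is central in $\A_{p,\ast}$, i.e.\ $[t_{\ast\ast}, t_{i\ast}] = 0$ for every $i$. This is the well-known centrality of a self-chord and follows from a direct application of STU (slide the two endpoints of $t_{\ast\ast}$ past the endpoint of $t_{i\ast}$ on $\uparrow_\ast$; the two resulting trivalent Jacobi diagrams cancel by inspection). For Hopf compatibility, both complete Hopf algebras are connected, graded and cocommutative; the source generators $z_i$ and $C$ are primitive by construction of the coproduct, and the target generators $t_{i\ast}, t_{\ast\ast}$ are primitive because single-chord Jacobi diagrams are connected. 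A continuous algebra morphism of connected graded cocommutative Hopf algebras sending a topological generating set to primitives is automatically a morphism of Hopf algebras (the coproduct compatibility extends multiplicatively, and the antipode, being the convolution inverse of the identity, is determined).

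The substantive step is injectivity. Using STU to resolve trivalent vertices, one sees that $\A_{p,\ast}$ is spanned by products of the elementary chords $t_{\ast\ast}, t_{1\ast}, \ldots, t_{p\ast}$, so the map is surjective; the question becomes whether all relations among such products are generated by $[t_{\ast\ast}, t_{i\ast}] = 0$. A case analysis of the $4T$ relation $[t_{ab}, t_{ac}+t_{bc}]=0$ shows that every instance whose four diagrams all lie in $\A_{p,\ast}$ reduces to this commutation, while every other instance inescapably involves a chord $t_{ij}$ with $i,j \in \{1,\ldots,p\}$ and therefore cannot impose an internal relation on $\A_{p,\ast}$. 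To convert this observation into a proof I would invoke Milnor--Moore: the cocommutative connected Hopf algebra $\A_{p,\ast}$ is isomorphic to $U(P_{p,\ast})$ for $P_{p,\ast}$ its Lie algebra of primitives. A further STU reduction shows that $P_{p,\ast}$ is Lie-generated by $t_{\ast\ast}$ (central) together with $t_{1\ast}, \ldots, t_{p\ast}$. The remaining point is that $t_{1\ast}, \ldots, t_{p\ast}$ generate a free Lie subalgebra of the Drinfeld--Kohno Lie algebra $\mathfrak{t}_{p+1}$; this is a classical consequence of the iterated semidirect product (Fadell--Neuwirth) structure of the pure braid group of a punctured disk over free groups, which at the Malcev level yields a semidirect product of free Lie algebras. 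Granted this freeness, $P_{p,\ast} \simeq \K t_{\ast\ast} \oplus L(z_1,\ldots,z_p)$ and $U(P_{p,\ast}) \simeq \K[[C]]\hat\otimes T(\!(H)\!)$, identifying our map as the asserted isomorphism.

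The hard part is the freeness of the Lie subalgebra generated by the $t_{i\ast}$'s, a non-trivial classical input from the theory of infinitesimal pure braids; everything else is routine bookkeeping with Jacobi diagrams, STU, and the connectedness of the Hopf structures.
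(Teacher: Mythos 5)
Your argument for Hopf-compatibility (generators primitive, images primitive) and for well-definedness (centrality of the isolated chord $t_{\ast\ast}$) is fine and agrees with the paper. But the injectivity argument has a genuine gap, and two of its intermediate claims are false. First, $\A_{p,\ast}$ is \emph{not} spanned by products of $t_{\ast\ast}, t_{1\ast},\dots,t_{p\ast}$: for instance a $2$-wheel with both legs on $\uparrow_\ast$, or a connected diagram with two legs on some $\uparrow_i$ and one on $\uparrow_\ast$, lies in $\A_{p,\ast}$ but not in the chord-generated subalgebra (this is exactly why the paper introduces the proper subalgebra $\horiz{\A}_{p,\ast}$, and why the lemma claims injectivity only, not surjectivity). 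For the same reason your description of the primitives, $P_{p,\ast}\simeq \K t_{\ast\ast}\oplus \mathfrak{L}(H)$, is false — the $2$-wheel is a degree-$2$ primitive not of that form — so the Milnor--Moore identification $U(P_{p,\ast})\simeq T(\!(H)\!)\hat\otimes\K[[C]]$ cannot hold; it would contradict the known size of $\A_{p,\ast}$. Relatedly, a ``case analysis of 4T'' cannot settle which linear combinations of products of chords vanish, because $\A_{p,\ast}$ is not presented by those chords with 4T as defining relations; relations inside a subalgebra of a bigger algebra are not read off from a presentation of the bigger algebra in this way.

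Second, and more decisively: even after retreating to the image subalgebra (legitimate, since only injectivity is at stake), the classical fact you invoke — that $t_{1,p+1},\dots,t_{p,p+1}$ generate a free Lie subalgebra of the \emph{abstractly presented} Drinfeld--Kohno Lie algebra $\mathfrak{PB}_{p+1}$ — does not suffice. What is needed is that the chords satisfy no unexpected relations \emph{inside the Jacobi-diagram algebra}, i.e. the injectivity of the natural map $U(\mathfrak{PB}_{p+1})\to \A(\uparrow_1\cdots\uparrow_{p+1})/FI$ (at least on the relevant subalgebra); freeness in the abstract object does not transfer, since the map from it into $\A$ could a priori have kernel. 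That injectivity is precisely the nontrivial input — it is known ([BN3, Cor.~4.4], [HgM, Rem.~16.2]), but in this paper it is \emph{deduced from} Lemma \ref{lem:A_p*} in the proof of Proposition \ref{prop:construction}, so your route would be circular relative to the paper unless you import the external reference. The paper's own proof avoids all of this by composing \eqref{eq:a_simple_map} with Bar-Natan's homotopic reduction $\A_{p,\ast}\to\A^h_{p,\ast}$ and showing that the composite is an isomorphism, via the symmetrization maps $\chi_p$, $\chi_\ast$ and the identification of the reduced space with $S\big(\!\big(\mathfrak{L}(H)\big)\!\big)\hat\otimes\K[[C]]$ ([BN2, Theorem 1]); injectivity of \eqref{eq:a_simple_map} then follows with no claim about how much of $\A_{p,\ast}$ the chords generate. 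To repair your proof you would either have to reproduce such a reduction argument or explicitly cite the injectivity of $U(\mathfrak{PB}_{p+1})\to\A/FI$ from the literature; as written, the key step is missing.
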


\begin{proof}
The complete algebra $T(\!(H)\!) \hat\otimes \K[[C]] $ is generated by $1 \hat\otimes C$ and $z_1\hat\otimes 1, \dots, z_p\hat\otimes 1$.
All these elements are primitive and are mapped by \eqref {eq:a_simple_map} to primitive elements of $\A(\uparrow_1 \cdots \uparrow_p\, \uparrow_\ast)$.
Therefore the algebra homomorphism \eqref{eq:a_simple_map} preserves the coproducts, counits and antipodes.

We now prove the injectivity of \eqref{eq:a_simple_map}.
Following Bar-Natan \cite{BN2}, we consider the ``homotopic reduction'' $\A^h_{p,\ast}$ of  $\A_{p,\ast}$:
specifically, we set $\A^h_{p,\ast}:= \A_{p,\ast}/ \mathcal{H}_{p,\ast}$ where $\mathcal{H}_{p,\ast}$ is the closed subspace
generated by Jacobi diagrams (without connected components unattached to $\uparrow_\ast$)
showing at least one connected component with at least one trivalent vertex and  two univalent vertices on $\uparrow_\ast$.
Note that the space $\A^{h}_{p,\ast}$ inherits an algebra structure from $\A_{p,\ast}$.
Let
$$
\rho: T(\!(H)\!) \hat\otimes \K[[C]]  \longrightarrow \A^h_{p,\ast}
$$
be the algebra map obtained by composing \eqref{eq:a_simple_map} with the canonical projection $\A_{p,\ast} \to \A^h_{p,\ast}$.
To prove the lemma, it suffices to prove that $\rho$ is an isomorphism.

Let $\A(\{1,\dots,p\},\uparrow_\ast)$ be the space spanned by Jacobi diagrams on $\uparrow_\ast$ with some univalent vertices colored by $\{1,\dots,p\}$, modulo the AS, IHX and STU relations;
let $\A'_{p,\ast}$ be the closed subspace of $\A(\{1,\dots,p\},\uparrow_\ast)$ spanned by diagrams without connected components unattached to $\uparrow_\ast$,
and let $\mathcal{H}'_{p,\ast}$ be the closed subspace spanned by diagrams of this kind 
and showing  at least one connected component with at least one trivalent vertex and two univalent vertices on~$\uparrow_\ast$.
The space $\A'_{p,\ast}$ is a complete algebra, whose multiplication is simply  defined by concatenation along the oriented  interval $\uparrow_\ast$,
and $\mathcal{H}'_{p,\ast}$  is an ideal of $\A'_{p,\ast}$: therefore the quotient $\A^{'h}_{p,\ast}:= \A'_{p,\ast}/ \mathcal{H}'_{p,\ast}$ is an algebra.
Let 
$$
\rho': T(\!(H)\!) \hat\otimes \K[[C]]  \longrightarrow \A^{'h}_{p,\ast}
$$
be the complete  algebra homomorphism that maps $1 \hat\otimes C$ to the isolated chord $t_{\ast \ast}$ and $z_i\hat\otimes 1$ to the diagram
$$
\labellist
\small\hair 2pt
 \pinlabel {$\ast$} [l] at 141 10
 \pinlabel {$i$} [r] at 5 123
\endlabellist
\centering
\includegraphics[scale=0.15]{tensor_strut}
$$
for all $i \in \{1,\dots,p\}$.
The PBW-type isomorphism (see \cite{BN1})
$$
\chi_p: \A(\{1,\dots,p\},\uparrow_\ast) \stackrel{\simeq}{\longrightarrow} \A(\uparrow_1 \cdots \uparrow_p\uparrow_\ast)
$$
maps  isomorphically $\A'_{p,\ast}$ onto $\A_{p,\ast}$, and $\mathcal{H}'_{p,\ast}$ onto $\mathcal{H}_{p,\ast}$.
It easily follows from the STU relation along the intervals $\uparrow_1,\dots,\uparrow_p$ that the following diagram is commutative:
$$
\xymatrix{
\A^{'h}_{p,\ast} \ar[r]^-{\chi_p}_\simeq & \A^h_{p,\ast} \\
{T(\!(H)\!) \hat\otimes \K[[C]]} \ar[u]^-{\rho'} \ar[ru]_-\rho & 
}
$$
Therefore we are reduced to prove that $\rho'$ is an isomorphism.

Let $\A(\{1,\dots,p,\ast\})$ be the space spanned by Jacobi diagrams whose univalent vertices are colored by $\{1,\dots,p,\ast\}$, modulo the AS and IHX relations;
let $\A''_{p,\ast}$ be the closed subspace of $\A(\{1,\dots,p,\ast\})$  spanned by diagrams without connected components uncolored by $\ast$,
and let $\mathcal{H}''_{p,\ast}$ be the closed subspace spanned by diagrams of this kind 
and showing  at least one connected component which is looped or has at least one trivalent vertex and two univalent vertices colored by $\ast$.
According to \cite[Theorem 1]{BN2}, the PBW-type isomorphism
$$
\chi_\ast: \A(\{1,\dots,p,\ast\}) \stackrel{\simeq}{\longrightarrow} \A(\{1,\dots,p\}, \uparrow_\ast)
$$
maps $\mathcal{H}''_{p,\ast}$ onto $\mathcal{H}'_{p,\ast}$, so that it induces an isomorphism from
$\A^{''h}_{p,\ast}:= \A''_{p,\ast}/\mathcal{H}''_{p,\ast}$ to $\A^{'h}_{p,\ast}$. 
Consider now the following diagram:
$$
\xymatrix{
\A^{'h}_{p,\ast} & \ar[l]_-{\chi_\ast}^-\simeq \A^{''h}_{p,\ast}  \\
T(\!(H)\!) \hat\otimes \K[[C]]\ar[u]^-{\rho'} & \ar[l]_-{\chi\hat\otimes \id}^-\simeq S\big(\!\big(\mathfrak{L}(H)\big)\!\big) \hat \otimes \K[[C]] \ar[u]_-\simeq 
}
$$
Here $\mathfrak{L}(H)$ is  the free Lie algebra generated by the space $H$,
$S\big(\!\big(\mathfrak{L}(H)\big)\!\big)$ is the complete symmetric algebra generated by the space $\mathfrak{L}(H)$
and $\chi$ denotes the usual PBW isomorphism.
The right vertical isomorphism in this diagram identifies  powers of $C$ with  disjoint union of chords \
$
\labellist
\scriptsize \hair 2pt
 \pinlabel {$\ast$} [b] at 3 78
 \pinlabel {$\ast$} [b] at 359 77
\endlabellist
\centering
\includegraphics[scale=0.05]{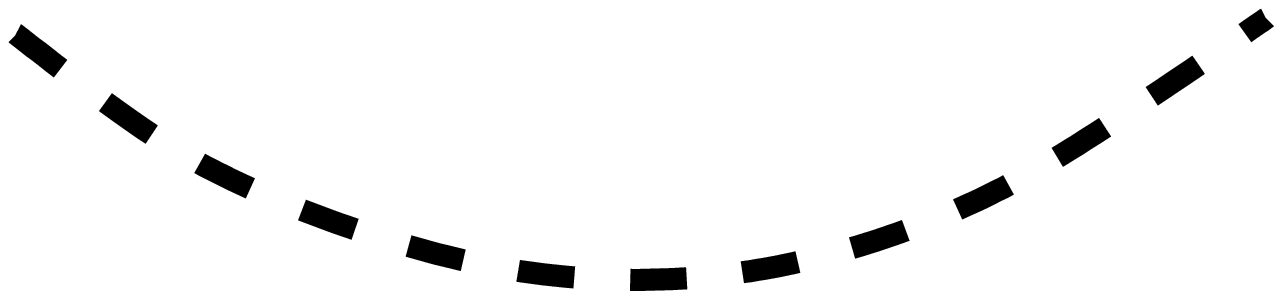},
$
and it identifies Lie words in $z_1,\dots,z_p$ 
with binary trees which are rooted at $\ast$ and whose leaves are colored by $\{1,\dots,p\}$.
It follows from the STU relation along $\uparrow_\ast$ that the above diagram is commutative. We conclude that $\rho'$ is an isomorphism.
\end{proof}

Lemma \ref{lem:A_p*} produces a canonical  isomorphism between the algebra $T(\!(H)\!) \hat\otimes \K[[C]]$ 
and the subalgebra $\horiz{\A}_{p,\ast}$ of ${\A}_{p,\ast}$ generated by $t_{\ast \ast}, t_{1\ast}, \dots, t_{p\ast}$.
Let $FI$ denote the \emph{Framing Independence} relation on  $\A_{p, \ast}$
which sets to zero any Jacobi diagram with an isolated chord on the same interval $\uparrow$.
Then Lemma \ref{lem:A_p*} implies that the homomorphism of complete graded algebras
\begin{equation} \label{eq:another_simple_map}
T(\!(H)\!)  {\longrightarrow}   \A_{p,\ast}/FI
\end{equation}
that maps $z_i$ to $t_{i\ast}$  for all $i \in \{1,\dots,p\}$, is an isomorphism onto $\horiz{\A}_{p,\ast}/FI$.

\subsection{Constructions with the Kontsevich integral}

We now consider the combinatorial version of the Kontsevich integral $Z$.
The reader is refered to the articles \cite{BN4,LM1,LM2,KT}, or to the textbook \cite{Oh} for an introduction to this invariant.
In the next paragraph, we only highlight the main features of $Z$, 
and we fix some conventions on the way how $Z$ is constructed from a Drinfeld associator.

Recall that the Kontsevich integral $Z$ is a topological invariant of framed oriented tangles in the ball $D^2 \times [0,1]$,
whose boundary points are  located at the ``bottom'' $D^2 \times \{0\}$ or at the ``top'' $D^2 \times \{1\}$.
These boundary points should be parenthesized at each extremity: 
tangles with this kind of structure are called \emph{non-associative tangles} in \cite{BN4} and  \emph{$q$-tangles} in \cite{LM1,LM2}.
Parenthesized framed tangles constitute a monoidal category 
whose objects are non-associative words in the two letters ``$+/-$'',
whose composition of morphisms is defined by vertical stacking
$$
  T\,  U := \begin{array}{|c|}\hline U \\ \hline T \\ \hline \end{array}
$$
and whose tensor product of morphisms is defined by horizontal juxtaposition
$$
T \otimes U := \begin{array}{|c|c|}  \hline T & U \\ \hline \end{array}.
$$
The Kontsevich integral $Z$ is a tensor-preserving functor from the category
of parenthesized framed  tangles to the category of Jacobi diagrams on abstract oriented $1$-manifolds, modulo the AS, IHX and STU relations.
Actually, we  will only need the restriction of $Z$ to the category of parenthesized  framed braids
(whose objects are non-associative words in the single letter ``$+$'').
In this case, $Z$ is determined by the following properties:
\begin{itemize}
\item[(i)]  the operation of  doubling one strand  in a parenthesized framed braid 
corresponds under $Z$ to the duplication $\Delta$ of Jacobi diagrams (which should not  be confused with the coproduct map);
\item[(ii)] $Z$ takes the following values on the elementary framed  braids:
$$
Z\bigg( \begin{array}{c} {}_{(++)} \\  \figtotext{20}{20}{pcrossing} \\ {}^{(++)} \end{array} \bigg) =
\exp\Big(\figtotext{20}{20}{t_uv}\!\!/2\Big) \figtotext{20}{20}{crossing}, \quad
Z\bigg( \begin{array}{c} {}_{(++)} \\  \figtotext{20}{20}{ncrossing} \\ {}^{(++)} \end{array} \bigg)
= \exp\Big(-\!\!\figtotext{20}{20}{t_uv}\!\!/2\Big)  \figtotext{20}{20}{crossing}, 
$$
$$
Z\bigg( \begin{array}{c} {}_{(+)} \\  \figtotext{10}{20}{pframed} \ \\ {}^{(+)} \end{array} \bigg)
= \exp\Big( \figtotext{10}{20}{isolated_chord}\!\!/2\Big), \qquad 
Z\bigg( \begin{array}{c} {}_{(+)} \\  \figtotext{10}{20}{nframed} \ \\ {}^{(+)} \end{array} \bigg)
= \exp\Big( -\!\!\figtotext{10}{20}{isolated_chord}\!\!/2\Big);
$$
\item[(iii)] a Drinfeld associator $\Phi \in \A(\uparrow_1 \uparrow_2 \uparrow_3)$ is fixed and 
$$
Z\bigg(\begin{array}{c} {}_{(+ (++))} \\ \figtotext{25}{25}{associator} \\ {}^{((++) +)}\end{array}\bigg)  = \Phi.
$$
\end{itemize}
To be more specific about (iii), we assume here that  $\Phi$ is  \emph{horizontal},
i.e$.$ $\Phi$ is the exponential of an infinite Lie series in  the elements $t_{12}, t_{23} \in \A(\uparrow_1 \uparrow_2 \uparrow_3)$.

The Kontsevich integral produces special expansions in the following way.
Recall the notations of Sections  \ref{subsec:unframed_special} and \ref{subsec:framed_special}.
We identify $\overrightarrow{\pi} = \pi_1(U \Sigma,\vec \ast\, )$ with the framed $1$-strand braid group in $\Sigma$ 
or, equivalently, with the group of framed  $(p+1)$-strand pure braids in the disk that become trivial if the last strand is deleted;
then the Kontsevich integral restricts to a multiplicative map
$$
Z: \overrightarrow{\pi} \longrightarrow \A_{p,\ast}.
$$
 Here, and unless otherwise specified in this sequel, {\it  {we equip any  braid in the disk  with the leftmost parenthesizing $(\cdots ((++)+)\cdots +)$.}}
In the unframed case, we identify ${\pi} = \pi_1( \Sigma, \ast)$ with the $1$-strand braid group in $\Sigma$
or, equivalently, with the group $PB_{p+1}$ of unframed $(p+1)$-strand pure braids in the disk that become trivial if the last string is deleted;
then the Kontsevich integral restricts to a multiplicative map
$$
Z: {\pi} \longrightarrow  \A_{p,\ast}/FI.
$$ 

\begin{proposition} \label{prop:construction}
The maps $\vec \theta_Z:  \overrightarrow{\pi} \to T(\!(H)\!) \hat\otimes \K[[C]]$ and  $\theta_Z:  {\pi} \to T(\!(H)\!) $ defined by 
$$
\xymatrix{
{\overrightarrow{\pi}} \ar[r]^-{Z}  \ar@{-->}[rd]_-{\vec \theta_Z} & \horiz{\A}_{{p,\ast}} \ar[d]^-{\eqref{eq:a_simple_map}^{-1}}_-\simeq \\
& T(\!(H)\!) \hat\otimes \K[[C]]
}
\qquad \hbox{and} \qquad
\xymatrix{
{{\pi}} \ar[r]^-{Z}  \ar@{-->}[rd]_-{ \theta_Z} &  \horiz{\A}_{{p,\ast}} /FI \ar[d]^-{\eqref{eq:another_simple_map}^{-1}}_-\simeq \\
& T(\!(H)\!)}
$$
are special expansions, which correspond one to the other by the bijection \eqref{eq:special's}.
\end{proposition}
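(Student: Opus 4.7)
The plan is to verify that $\vec\theta_Z$ is a framed special expansion of $\overrightarrow{\pi}$ in the sense of Section~\ref{subsec:framed_special}, then to check that the pair $(\theta_Z,\vec\theta_Z)$ corresponds under the bijection~\eqref{eq:special's}; since this bijection transports special expansions of $\overrightarrow{\pi}$ to special expansions of $\pi$, the statement about $\theta_Z$ will follow automatically.

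First I would justify that the dashed arrows exist, i.e., that $Z$ actually lands in $\horiz{\A}_{p,\ast}$ for $\vec\theta_Z$ (and hence in $\horiz{\A}_{p,\ast}/FI$ for $\theta_Z$). Any element of $\overrightarrow{\pi}$ is represented by a framed pure $(p+1)$-strand braid in which the first $p$ strands are straight vertical; by Kohno's identification of the Kontsevich integral with the monodromy of the KZ connection, $Z$ of such a braid lies in the closed subalgebra generated by the horizontal chords $t_{1\ast},\dots,t_{p\ast}$ and, in the framed case, by $t_{\ast\ast}$. Granted this, multiplicativity (i) is immediate from the functoriality of $Z$ applied to the uniformly leftmost-parenthesized braids.

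Next I would verify conditions (ii'), (ii), and (iii) in turn. Condition (ii') is an immediate computation: a positive framing change on strand $\ast$ gives $Z(\digamma^{-1})=\exp(-t_{\ast\ast}/2)$ by the defining axioms of~$Z$, which under Lemma~\ref{lem:A_p*} reads $\vec\theta_Z(\digamma^{-1})=1\hat\otimes\exp(-C/2)$. For condition (ii), I would represent $\vec\zeta_i$ concretely as a framed pure braid whose strand $\ast$ travels from the basepoint to near puncture $i$, circles it once positively, and returns. A step-by-step evaluation of $Z$ on this tangle, using the prescribed values of $Z$ on elementary crossings, on framing changes, and on the associator $\Phi$, yields an expression of the shape $Z(\vec\zeta_i)=\exp(U_i)\exp(t_{i\ast})\exp(-U_i)\cdot\exp(w(\vec\zeta_i)\,t_{\ast\ast}/2)$ for an explicit primitive element $U_i\in\horiz{\A}_{p,\ast}$ built out of~$\Phi$; Lemma~\ref{lem:A_p*} then gives the form prescribed by (ii). For condition (iii), since $\vec{\overline\nu}=s(\nu^{-1})$ has zero writhe, it suffices to show that $\theta_Z(\nu)=\exp(z)$. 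I would represent $\nu$ by the pure braid whose strand $\ast$ encircles the other $p$ strands once counterclockwise; a direct computation of $Z$ on this braid (equivalently, the boundary monodromy of the KZ connection on the punctured disk, a classical consequence of the pentagon and hexagon identities for~$\Phi$) gives $Z(\nu)=\exp(t_{1\ast}+\cdots+t_{p\ast})$, which corresponds to $\exp(z)$ under the inverse of \eqref{eq:a_simple_map}.

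Finally, for the correspondence under \eqref{eq:special's}, Lemma~\ref{lem:A_p*} identifies the projection $\id\hat\otimes\hat\varepsilon\colon T(\!(H)\!)\hat\otimes\K[[C]]\to T(\!(H)\!)$ with the $FI$-projection $\horiz{\A}_{p,\ast}\to\horiz{\A}_{p,\ast}/FI$, so the compatibility $p_*(\vec\theta_Z)=\theta_Z$ follows at once from the definitions and from the fact that forgetting the framing on a braid in $\overrightarrow{\pi}$ corresponds to passing to this $FI$-quotient. The main obstacle is condition (iii): extracting the identity $Z(\nu)=\exp(t_{1\ast}+\cdots+t_{p\ast})$ is the boundary/infinity monodromy identity for $Z$, whose justification rests on the pentagon/hexagon relations satisfied by $\Phi$; condition (ii) is less conceptually difficult but still requires a careful tracking of the associator corrections along the chosen loop.
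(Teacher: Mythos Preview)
Your overall strategy matches the paper's: verify (i), (ii'), (iii), (ii) for $\vec\theta_Z$ and then deduce $\theta_Z$ via the bijection~\eqref{eq:special's}. However, there is a genuine gap in your first step, and it propagates to (ii).

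You justify that $Z(\overrightarrow{\pi})\subset\horiz{\A}_{p,\ast}$ by invoking the KZ monodromy interpretation. But the paper fixes an \emph{arbitrary horizontal} associator $\Phi$, not $\Phi_{\mathrm{KZ}}$; for general $\Phi$ there is no KZ connection to appeal to. Even granting that the horizontality of $\Phi$ forces $Z(x)\in\horiz{\A}(\uparrow_1\cdots\uparrow_{p+1})/FI$ for $x\in\pi$, you still must explain why no chords $t_{ij}$ with $i,j\le p$ survive, i.e.\ why $Z(x)$ actually lies in $\horiz{\A}_{p,\ast}/FI$. The paper settles this by the Drinfeld--Kohno short exact sequence~\eqref{eq:ses_Lie_bis}: since erasing $\uparrow_\ast$ kills $\log Z(x)$, this sequence forces $\log Z(x)\in\operatorname{Prim}\horiz{\A}_{p,\ast}/FI$. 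The same mechanism is what the paper uses in (ii) to ensure that the conjugating factor $Z\big((\beta_i)^\sim\big)\theta_i$ lies in $\horiz{\A}_{p,\ast}$ and hence corresponds to something in $T(\!(H)\!)$; your ``step-by-step evaluation'' would a priori produce an element of the full $\horiz{\A}(\uparrow_1\cdots\uparrow_{p+1})$, and you give no reason why the $t_{ij}$ with $i,j\le p$ disappear.

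For (iii) your route is more laborious than necessary. The paper observes that $\vec{\overline\nu}$ is obtained from the $2$-strand pure braid $\gamma$ of~\eqref{eq:gamma} by cabling the first strand $p-1$ times; since $Z$ intertwines cabling with diagrammatic doubling and $Z(\gamma)=\exp(-t_{12})$, one gets $Z(\vec{\overline\nu})=\exp(-t_{1\ast}-\cdots-t_{p\ast})$ immediately, with no direct appeal to the pentagon/hexagon equations.
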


\begin{proof}
First of all, to give sense to the definition of the map $\theta_Z$,
we have to justify that $Z: {\pi} \to \A_{p,\ast}/FI$ takes values in the subalgebra $\horiz{\A}_{p,\ast}/FI$.
(This will also  imply that $Z: \overrightarrow{\pi} \to \A_{p,\ast}$ takes values in the subalgebra $\horiz{\A}_{p,\ast}$, 
so that the map $\vec\theta_Z$ is well-defined too.)
On this purpose, we consider for any integer $n\geq 1$ 
the Drinfeld--Kohno Lie algebra $\mathfrak{PB}_n$ of \emph{infinitesimal $n$-strand pure braids}  \cite{Kh,Dr1,Dr2}:
by definition, $\mathfrak{PB}_n$  is the Lie algebra generated by the symbols $t_{ij}=t_{ji}$ for any distinct $i,j\in \{1,\dots,n\}$ subject to the relations
$$
[t_{ij},t_{kl}]= 0, \quad [t_{ij}+t_{ik}, t_{jk}] =0
$$
for any pairwise-disjoint $i,j,k,l\in \{1,\dots,n\}$;
recall that $\mathfrak{PB}_n$ is canonically isomorphic to the graded  Lie algebra associated  to the lower central series of the $n$-strand pure braid group $PB_n$.
It is well-known that the  short exact sequence of groups
$$
1 \longrightarrow \pi_1(D_n) \longrightarrow PB_{n+1} \longrightarrow PB_n \longrightarrow 1
$$
 where  the fundamental group   $ \pi_1(D_n)$ of a disk with $n$ punctures $D_n$
is mapped to the subgroup of $(n+1)$-strand pure braids that become trivial after forgetting the last strand,
induces a short exact sequence of Lie algebras
\begin{equation} \label{eq:ses_Lie}
0 \longrightarrow \mathfrak{L}_n \longrightarrow \mathfrak{PB}_{n+1} \longrightarrow \mathfrak{PB}_n \longrightarrow 0
\end{equation}
 where the   $n$ generators of the free Lie algebra  $\mathfrak{L}_n$ {are mapped to  $t_{1,n+1},\dots, t_{n,n+1} \in \mathfrak{PB}_{n+1}$.}
By  induction on $n\geq 1$,
we  deduce from \eqref{eq:ses_Lie} and from the injectivity of  the homomorphism  \eqref{eq:another_simple_map} 
{that the algebra map }
$$
U(\mathfrak{PB}_n) \longrightarrow \A(\uparrow_1 \cdots \uparrow_n)/FI, \ t_{ij} \longmapsto t_{ij}
$$
is injective: its image $\horiz{\A}(\uparrow_1 \cdots \uparrow_n)/FI$ is the subalgebra of $\A(\uparrow_1 \cdots \uparrow_n)/FI$
generated by the diagrams $t_{ij}$ for all distinct $i,j \in \{1,\dots,n\}$. ({This injectivity is well known:} see \cite[Corollary~4.4]{BN3} or \cite[Remark 16.2]{HgM}.)
Then it follows from \eqref{eq:ses_Lie} for $n:=p$  that we have a short exact sequence of Lie algebras
\begin{equation} \label{eq:ses_Lie_bis}
0 \longrightarrow \operatorname{Prim} \horiz{\A}_{p,\ast}/FI
\longrightarrow  \operatorname{Prim} \horiz{\A}(\uparrow_1 \cdots \uparrow_{p+1})/FI \longrightarrow \operatorname{Prim} \horiz{\A}(\uparrow_1 \cdots \uparrow_{p})/FI \longrightarrow 0
\end{equation}
where $\operatorname{Prim}(-)$ denotes the primitive part of a Hopf algebra. 
We now come back to the Kontsevich integral $Z$:
the horizontality of $\Phi$ implies that the restriction of $Z$ to {$PB_{p+1}$} takes values in $\horiz{\A}(\uparrow_1 \cdots \uparrow_{p+1})/FI$.
For any $x\in \pi$, $\log Z(x) \in \operatorname{Prim} \horiz{\A}(\uparrow_1 \cdots \uparrow_{p+1})/FI$ becomes trivial if the strand $\uparrow_{p+1}=\uparrow_\ast$ is deleted from Jacobi diagrams:
we deduce that $\log Z(x)$ belongs to $\operatorname{Prim} \horiz{\A}_{p,\ast}/FI$ or, equivalently, that $Z(x)$ belongs to $\horiz{\A}_{p,\ast}/FI$.

We now prove that $\vec\theta_Z$  verifies the four conditions (i), (ii), (ii'), and (iii) of a framed special expansion. 
Since $Z: \overrightarrow{\pi} \to \horiz{\A}_{p,\ast}$ is multiplicative and since \eqref{eq:a_simple_map} is an algebra homomorphism,
$\vec\theta_Z$ satisfies (i). We show (ii'): for any $k\in \Z$, 
$\digamma^k \in \overrightarrow{\pi}$ regarded as a $(p+1)$-strand framed pure braid is trivial except that its last strand has framing number $k$;
therefore $Z(\digamma^k)= \exp(\frac{k}{2}t_{\ast \ast})$ which transforms to $1 \hat \otimes  \exp(\frac{k}{2}C)$ by \eqref{eq:a_simple_map}.
We show (iii): $\vec{\overline \nu} \in \overrightarrow{\pi}$ regarded as a $(p+1)$-strand framed pure braid 
is obtained from the  $2$-strand framed pure braid
\begin{equation}   \label{eq:gamma}
\gamma :=  \begin{array}{c}
\labellist
\small\hair 2pt
 \pinlabel {$1$} [r] at 19 8
 \pinlabel {$2$} [l] at 102 8
\endlabellist
\includegraphics[scale=0.2]{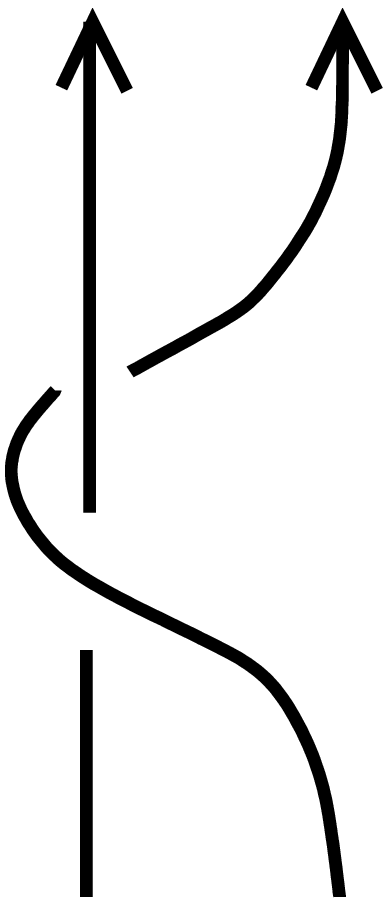} \end{array}
\end{equation}
by doubling $(p-1)$ times the first strand; therefore
$$
Z(\vec{\overline \nu}) =\Delta_{\uparrow_1 \mapsto \uparrow_1 \cdots \uparrow_p, \uparrow_2 \mapsto \uparrow_\ast } \big(Z(\gamma)\big)
=  \Delta_{\uparrow_1 \mapsto \uparrow_1 \cdots \uparrow_p, \uparrow_2 \mapsto \uparrow_\ast }\big(\exp(-t_{12})\big)
= \exp(-t_{1\ast}- \cdots -t_{p\ast})
$$
which transforms to $\exp(-z) \hat \otimes 1$ by \eqref{eq:a_simple_map}.
It remains to show (ii): let $i\in \{1,\dots,p\}$ and consider the following framed $(p+1)$-strand braids:
$$
\labellist
\small\hair 2pt
 \pinlabel {$\cdots$}  at 88 707
 \pinlabel {$\cdots$}  at 394 719
 \pinlabel {$\cdots$}  at 449 415
 \pinlabel {$\cdots$}  at 89 415
 \pinlabel {$\alpha_i:=$} [r] at 6 415
 \pinlabel {$\beta^{-1}_i:=$} [r] at 5 706
  \pinlabel {$\beta_i:=$} [r] at 5 136
   \pinlabel {$\cdots$}  at 88 136
 \pinlabel {$\cdots$}  at 394 136
  \pinlabel {$1$} [t] at 15 4
 \pinlabel {$i$} [t] at 229 4
 \pinlabel {$p$} [t] at 450 4
\endlabellist
\centering
\includegraphics[scale=0.18]{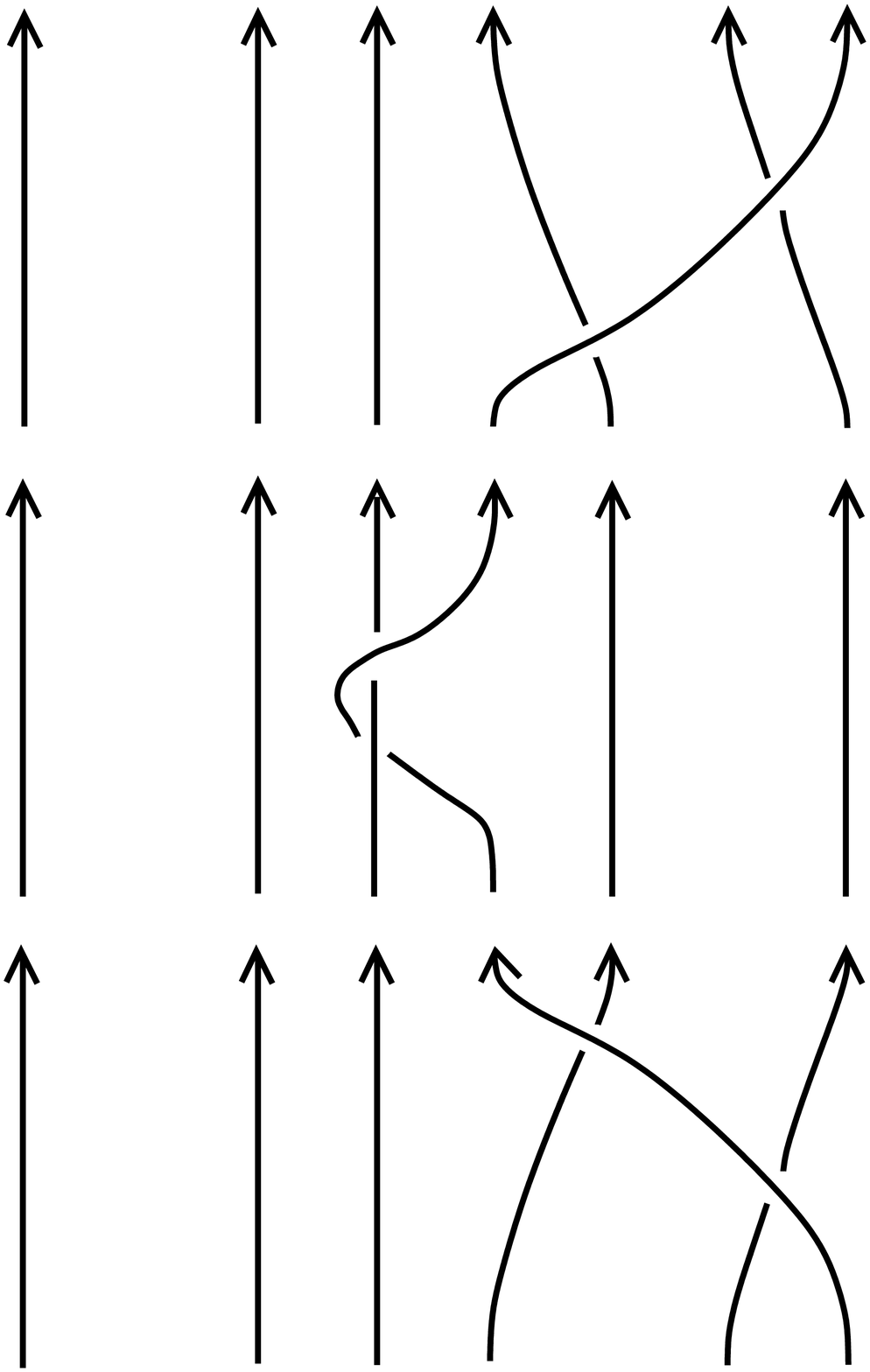}
$$

\vspace{0.2cm}
\noindent
The product $\vec\zeta_i:=\beta_i \alpha_i \beta_i^{-1}$ is a framed pure braid 
which can be regarded as an element of $\overrightarrow{\pi}$ and then represents~$\check \zeta_i$.
Let $l_{p+1}$ be the leftmost parenthesizing $(\cdots ((++)+)\cdots +)$ of length $(p+1)$ 
and let $l'_{p+1}$ be the parenthesizing 
that is obtained from $l_p$  by transforming the $i$-th letter $+$ to $(++)$.
Let ${(\beta_i)^\sim}$ be the braid $\beta_i$ with parenthesizing $l'_{p+1}$ at the top and $l_{p+1}$ at the bottom,
and let ${(\alpha_i)_\sim^\sim}$ be the braid $\alpha_i$ with parenthesizing $l'_{p+1}$ at the top and bottom. Then
\begin{eqnarray*}
Z(\vec\zeta_i) &=& Z\big({(\beta_i)^\sim}\big)\, Z\big({(\alpha_i)_\sim^\sim}\big)\, Z\big({(\beta_i)^\sim}\big)^{-1} \\
&=&  \left(Z\big({(\beta_i)^\sim}\big)\theta_i\right)\, \left(\theta_i^{-1}Z\big({(\alpha_i)_\sim^\sim}\big)\theta_i\right)\, 
\left(Z\big({(\beta_i)^\sim}\big) \theta_i\right)^{-1}
\end{eqnarray*}
where $\theta_i$ is the empty Jacobi diagram on the abstract oriented $1$-manifold\\[0.1cm]
$$
\labellist
\small\hair 2pt
 \pinlabel {$\cdots$}  at 86 131
 \pinlabel {$\cdots$}  at 402 140
 \pinlabel {$1$} [b] at 13 265
 \pinlabel {$i$} [b] at 228 266
 \pinlabel {$p$} [b] at 443 265
\endlabellist
\includegraphics[scale=0.2]{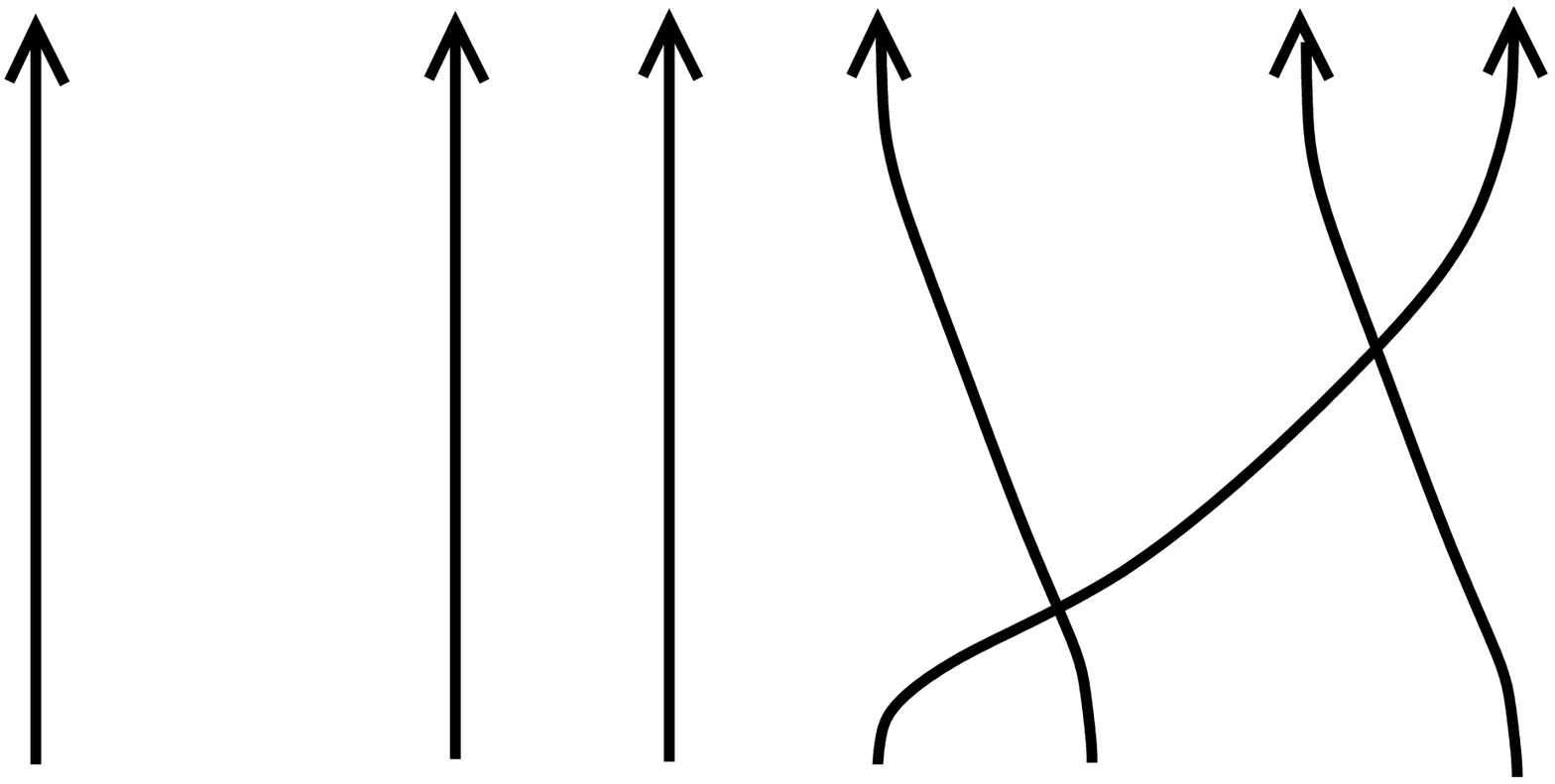}.
$$
Thus we have decomposed $Z(\vec\zeta_i)$ as a conjugate of group-like elements of $\A(\uparrow_1 \cdots \uparrow_p \uparrow_\ast)$,
and we now consider this decomposition in more details. On the one hand, we have
$$
\theta_i^{-1}Z\big({(\alpha_i)_\sim^\sim}\big)\theta_i =
\theta_i^{-1} (e^{t_{i,i+1}}) \theta_i = e^{t_{i\ast}};
$$
on the other hand, $Z\big({(\beta_i)^\sim}\big)\theta_i$ belongs to $\horiz{\A}(\uparrow_1\cdots \uparrow_p \uparrow_\ast)$ 
and is transformed to the empty Jacobi diagram if the interval  $\uparrow_\ast$ is deleted: 
it follows from \eqref{eq:ses_Lie_bis} that  $Z\big({(\beta_i)^\sim}\big)\theta_i$ belongs to $\horiz{\A}_{p,\ast}$.
Therefore, $Z\big({(\beta_i)^\sim}\big)\theta_i$ is mapped by the inverse of the isomorphism  \eqref{eq:a_simple_map} 
to a group-like element $U_i$ of $T(\!(H)\!) \hat \otimes \K[[C]$].
We conclude that $\vec\theta_Z(\vec \zeta_i) = U_i \exp(z_i) U_i^{-1} \hat \otimes 1$, 
which proves~(ii) since  we have $w(\vec{\zeta}_i)=0$ for the above choice of the representative  $\vec{\zeta}_i$ of $\check{\zeta}_i$.

Finally, the map $p_*(\vec\theta_Z): \pi \to T(\!(H)\!)$ derived from the special expansion $\vec\theta_Z$ of $\overrightarrow{\pi}$
is clearly equal to the map $\theta_Z$. Consequently, $\theta_Z$ is a special expansion of $\pi$.
\end{proof}

\subsection{Remarks} \label{subsec:remarks_special}

1) The unframed special expansion $\theta_Z$ given by Proposition \ref{prop:construction} appears implicitly in \cite{HgM} and explicitly in \cite{AET}.

2) Regard $\pi$ as a subgroup  of the group $PB_{p+1}$ of unframed $(p+1)$-strand pure braids in the disk. 
Proposition \ref{prop:construction} implies that the $I$-adic filtration of $\K[\pi]$  
coincides with the filtration inherited from the $I$-adic  filtration of $\K[PB_{p+1}]$.

\section{Formal description of Turaev's self-intersection map} \label{sec:statement}

In this section, the ground ring is a commutative field $\K$ of characteristic zero
and $\Sigma$ is a disk with finitely-many punctures numbered from $1$ to $p$.
Set $\pi:=\pi_1(\Sigma,\ast)$ and $\overrightarrow{\pi} :=\pi_1(\Sigma,\vec\ast\, )$ 
where $\ast \in \partial \Sigma$ and $\vec\ast$ denotes the unit vector tangent to $\overline{\partial \Sigma}$ at the point $\ast$.

Recall {the operation $\odot$ in $H$ which has been} introduced in Section~\ref{subsec:hip_special}.
It is easily verified that the $(-1)$-filtered linear map ${\xi}: T(\!(H)\!) \hat\otimes \K[[C]] \to T(\!(H)\!)$ defined~by 
$$
{\xi}(k_1 \cdots k_m \otimes C^n) := \left\{\begin{array}{ll}
\sum_{1 \leq i \leq m-1 } \ k_1 \cdots k_{i-1}\, (k_i {\odot} k_{i+1})\, k_{i+2} \cdots k_m & \hbox{if } n=0 \hbox{ and } m\geq 2\\
0 & \hbox{if } n=0  \hbox{ and  } m \in \{0,1\} \\
-2   k_1 \cdots  k_m & \hbox{if } n=1 \\\
0 & \hbox{if } n>1 
\end{array}\right.
$$
is a quasi-derivation ruled by the Fox pairing $(-\!{\odot}\!-)$.
Let  $s(X) \in \Q[[X]]$ be the series defined by \eqref{eq:s(X)},
and recall from Section~\ref{subsec:quasi-der} that any pair of elements $e_1,e_2 \in T(\!(H)\!)$ such that $e_1+e_2=s(-z)$ 
defines a quasi-derivation $q_{e_1,e_2}: T(\!(H)\!) \hat\otimes \K[[C]] \to T(\!(H)\!)$ ruled by the inner Fox pairing $\rho_{s(-z)}$:
 specifically, we have 
$$
\forall a \in T(\!(H)\!), \ \forall n\in \N, \quad q_{e_1,e_2}(a\otimes C^n) = 
\left\{\begin{array}{ll}
(\varepsilon(a)-a)\, e_1 + e_2\, (\varepsilon(a)-a) & \hbox{if } n=0 \\
0 & \hbox{if } n> 0.
\end{array}\right.
$$

We can now state our formal description of the homotopy self-intersection map $\vec{\mu}$.
We consider here the special expansions $\theta_Z: \K[\pi] \to T(\!(H)\!)$
and  $\vec \theta_Z: \K[\overrightarrow\pi] \to  T(\!(H)\!) \hat \otimes \K[[C]]$ induced by  the Kontsevich integral $Z$  (see Proposition \ref{prop:construction}).
Recall that the construction of  $Z$ depends on the choice of an associator $\Phi$, which is assumed to be horizontal.

\begin{theorem} \label{th:vec_mu}
There exists a  series $\phi(X) \in \K[[X]]$ depending  explicitly on $\Phi$ such that 
$$
\phi(-X) - \phi(X)-1/2 = s(X)
$$
and the following diagram is commutative:
$$
\xymatrix{
{\widehat{\K[ \overrightarrow{\pi} ] }} \ar[d]_-{\hat{\vec{\theta}}_Z}^-\simeq \ar[rr]^-{\hat{\vec{\mu}}} && {\widehat{\K[{\pi}]}}   \ar[d]^-{\hat \theta_Z}_-\simeq & \\
 T(\!(H)\!) \hat \otimes \K[[C]] \ar[rr]_-{{\xi} + q } && T(\!(H)\!)  & \hbox{where $q:= q_{-1/4+\phi(z)\, ,\, -1/4-  \phi(-z)}$}
}
$$
\end{theorem}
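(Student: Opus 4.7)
\medskip

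The plan is to combine the three-dimensional formula for $\vec{\mu}$ (Theorem \ref{th:3d-mu}) with the formal description of $\eta$ (Theorem \ref{th:special}), exploiting the fact that $\vec{\mu}$ is a quasi-derivation ruled by $\eta$. First, I would check directly from the definitions in Section \ref{subsec:quasi-der} that $\xi + q$ is a quasi-derivation from $T(\!(H)\!) \hat\otimes \K[[C]]$ to $T(\!(H)\!)$ ruled by the Fox pairing $(-\!\odot\!-) + \rho_{s(-z)}$: indeed $\xi$ is a quasi-derivation ruled by $(-\!\odot\!-)$, while any $q_{e_1,e_2}$ with $e_1 + e_2 = s(-z)$ is a quasi-derivation ruled by the inner Fox pairing $\rho_{s(-z)}$. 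By Theorem \ref{th:special}, the conjugate $\hat\theta_Z \circ \hat{\vec{\mu}} \circ \hat{\vec{\theta}}_Z^{-1}$ is also a quasi-derivation ruled by the same pairing.

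Since the module of derivations acts transitively on the set of quasi-derivations ruled by a fixed Fox pairing, the difference
$$D := \hat\theta_Z \circ \hat{\vec{\mu}} \circ \hat{\vec{\theta}}_Z^{-1} - (\xi + q)$$
is a derivation $T(\!(H)\!) \hat\otimes \K[[C]] \to T(\!(H)\!)$. To conclude that $D=0$, it suffices to show that it vanishes on a set of topological generators of the complete algebra. Pulling back, I would test the formula on the group generators $\vec{\zeta}_1, \dots, \vec{\zeta}_p$ and $\digamma$ of $\overrightarrow{\pi}$. The case of $\digamma$ is straightforward: $c(\digamma) = 1$ and $Z(\digamma) = \exp(t_{\ast\ast}/2)$, yielding the constant term of $q$.

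The crux is to evaluate the formula on $\vec{\zeta}_i$, which is also where $\phi$ becomes defined. Applying Theorem \ref{th:3d-mu} gives
$$\vec{\mu}(\vec{\zeta}_i) = p^v\Bigl(\tfrac{\partial\, \iota^u(\zeta_i^{-1})\, c(\vec{\zeta}_i)\, \iota^v(\zeta_i^{-1})}{\partial z^v}\Bigr),$$
and I would then compute $\hat\theta_Z$ of this by means of the Kontsevich integral of the explicit $2$-strand pure braid involved. The free Fox derivative $p^v(\partial/\partial z^v)$, applied to a $2$-strand pure braid $b$ viewed in $\pi^v$, translates through $Z$ into a diagrammatic operation on $Z(b) \in \horiz{\A}_{p,\ast}$: essentially, one ``cuts'' one chord connecting strand $v$ to the rest and re-interprets the result as an element of $T(\!(H)\!)$ via the isomorphism~\eqref{eq:another_simple_map}. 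Since the nontrivial $v$-strand crossings all come from the associator $\Phi$ and the elementary crossings whose exponent is $t_{uv}/2 = z_i \hat\otimes 1/2$, this extraction produces a specific formal power series in $z_i$; I would take its coefficients as the definition of $\phi(X) \in \K[[X]]$.

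The functional equation $\phi(-X) - \phi(X) - 1/2 = s(X)$ then follows from the identity $\vec{\mu}^t = -\vec{\mu} + q_{-1,0}$ of Section~\ref{subsec:mu}: transporting this relation through $\hat\theta_Z$ and confronting it with the transposition behaviour of $\xi$ and of the series $s(X)$ (which satisfies $s(-X) - s(X) = -1$ from the Bernoulli expansion~\eqref{eq:s(X)}) forces $\phi$ to obey the stated symmetry. The main obstacle will be Step on $\vec\zeta_i$: extracting $\phi$ requires carefully tracking the associator's contribution to $Z$ of the conjugating braid, since $c(\vec\zeta_i)$ is a nontrivial doubled braid and $\iota^u(\zeta_i^{-1}), \iota^v(\zeta_i^{-1})$ involve associator insertions through the chosen parenthesizations; showing that all these contributions assemble into a single one-variable series depending only on $z_i$ (and not on the other $z_j$'s) is the delicate combinatorial point.
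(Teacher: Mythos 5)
Your skeleton (use Theorem \ref{th:special} to see that $N:=\hat\theta_Z\circ\hat{\vec{\mu}}\circ\hat{\vec{\theta}}_Z^{-1}$ and $\xi+q$ are quasi-derivations ruled by the same Fox pairing, then compare them on generators via Theorem \ref{th:3d-mu} and the Kontsevich integral) is the same as the paper's, and the $\digamma$-step is indeed easy (although $c(\digamma)\neq 1$ — it is $(z^v)^{-1}$; what one actually uses is $\vec\mu(\digamma^{-1})=1$ plus the quasi-derivation property). The gap is in the crux step, which is where all the content of the theorem lies. Testing the identity on the group generators $\vec\zeta_i$ cannot serve to \emph{define} $\phi$: one has $\hat{\vec\theta}_Z(\vec\zeta_i)=U_i\exp(z_i)U_i^{-1}\hat\otimes 1$ with a group-like conjugator $U_i$ depending on $\Phi$ in a way you do not control, so both sides of the identity you want to check involve $U_i$, and you have no independent handle on $(\xi+q)\big(U_ie^{z_i}U_i^{-1}\big)$; the ``extraction'' is circular. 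Moreover your expectation that the extracted series depends only on $z_i$ is wrong: in the correct answer the inner part involves $\phi$ evaluated at the \emph{total} class $z=z_1+\cdots+z_p$ (e.g. $N(t_{i\ast})=(1/2+\phi(-z))z_i-z_i\phi(z)$), because the associator gets doubled over all $p$ strands, $a=t_{1u}+\cdots+t_{pu}$. The paper's route is different and avoids $U_i$ entirely: it constructs the operator $D^v:=Z\circ p^v\frac{\partial\ }{\partial z^v}\circ Z^{-1}$ on $\horiz{P}_u$ (Lemma \ref{lem:vvv}), proves a product rule \eqref{eq:vvv1} and a conjugation/antipode rule \eqref{eq:vvv2}, and computes $D^v(t_{uv})=-1$, $D^v(t_{iu})=-z_i\phi$ with $\phi:=D^v(\varphi)$, $\varphi$ the doubled associator (and similarly $D^u(t_{iv})=-z_i(1/2+\phi)$ in Lemma \ref{lem:uuu}); combined with Theorem \ref{th:3d-mu} and a density argument this yields the closed formula \eqref{eq:formula_N} for $N$ on all of $\horiz{\A}_{p,\ast}$, which is then evaluated on the \emph{algebra} generators $t_{i\ast},t_{\ast\ast}$ rather than on $\hat{\vec\theta}_Z(\vec\zeta_i)$. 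Finally the explicit one-variable form $\phi(X)=X/24-\sum_{i\geq2}q_iX^i$ comes from the shape \eqref{eq:log(Phi)} of $\log\Phi$ together with two small computations ($D^v$ kills the Lie words with two or more $b$'s, and $D^v((\operatorname{ad}_a)^i(b))=-z^i$). None of this machinery is supplied by the phrase ``one cuts a chord connecting strand $v$ to the rest,'' which is not an accurate description of $D^v$ and carries none of the properties actually needed.

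Your proposed source of the functional equation is also incorrect. Since $(q_{e_1,e_2})^t=q_{S(e_2),S(e_1)}$, with $e_1=-1/4+\phi(z)$, $e_2=-1/4-\phi(-z)$ the transported relation $\vec\mu^t=-\vec\mu+q_{-1,0}$ becomes
$$
\xi+\xi^t \;=\; q_{-1,0}-q_{e_1,e_2}-q_{S(e_2),S(e_1)} \;=\; q_{-1-e_1-S(e_2)\,,\,-e_2-S(e_1)} \;=\; q_{-1/2\,,\,1/2}\;=\;0,
$$
in which every $\phi$-term cancels identically (only $s(X)+s(-X)=-1$ is used); so the skew-symmetry of $\vec\mu$ imposes no condition whatsoever on $\phi$. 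In the paper the identity $\phi-\hat S(\phi)=1/2+s(-z)$, i.e. $\phi(-X)-\phi(X)-1/2=s(X)$, is proved inside Lemma \ref{lem:vvv} by applying $D^v$ to the boundary loop $\kappa$ of $\Sigma\setminus\{v\}$, using $Z(\overline{\nu})=e^{-z}$ (condition (iii) of special expansions). Alternatively, once one knows that $N-\xi$ equals $q_{e_1,e_2}$ with $e_1,e_2$ one-variable series of the stated shape, the equality $e_1+e_2=s(-z)$ is forced by Theorem \ref{th:special} and is equivalent to the functional equation — but that one-variable shape is precisely what your plan does not establish.
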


\begin{corollary} \label{cor:even_case}
If the associator $\Phi$ is  even,  then
the following diagram is commutative:
$$
\xymatrix{
{\widehat{\K[ \overrightarrow{\pi} ] }} \ar[d]_-{{\hat{\vec\theta}}_Z}^-\simeq \ar[rr]^-{\hat{\vec{\mu}}} && {\widehat{\K[{\pi}]}}   \ar[d]^-{\hat \theta_Z}_-\simeq & \\
 T(\!(H)\!) \hat\otimes \K[[C]] \ar[rr]_-{{\xi} + q } && T(\!(H)\!)  & \hbox{where $q:= q_{s(-z)/2\, ,\, s(-z)/2}$}
}
$$
\end{corollary}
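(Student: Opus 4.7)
The plan is to deduce the corollary directly from Theorem~\ref{th:vec_mu} by showing that the evenness of $\Phi$ forces the series $\phi$ to be odd, which rigidifies the splitting of the inner Fox pairing $\rho_{s(-z)}$ into the symmetric one appearing in the corollary.

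First, I would observe that since the quasi-derivation $q_{e_1,e_2}$ depends only on the pair $(e_1,e_2)$, the identity we want to establish amounts to
\begin{equation*}
-\tfrac{1}{4}+\phi(z) \ = \ \tfrac{s(-z)}{2} \quad \hbox{and} \quad  -\tfrac{1}{4}-\phi(-z) \ = \ \tfrac{s(-z)}{2}.
\end{equation*}
Using the elementary identity $s(X)+s(-X)=-1$ (which is immediate from~\eqref{eq:s(X)} since $\coth$ is odd), both equalities are equivalent to the single oddness condition $\phi(-X) = -\phi(X)$. Combining this oddness with the relation $\phi(-X)-\phi(X)-1/2=s(X)$ provided by Theorem~\ref{th:vec_mu} determines $\phi$ uniquely as $\phi(X) = -\tfrac{1}{4}-\tfrac{1}{2}s(X)$, and a direct substitution then yields $-\tfrac{1}{4}+\phi(z) = -\tfrac{1}{2}-\tfrac{1}{2}s(z) = \tfrac{1}{2}s(-z)$, as required.

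Thus everything boils down to proving that $\phi$ is an odd formal power series whenever $\Phi$ is even. To this end, I would reopen the construction of $\phi$ used in the proof of Theorem~\ref{th:vec_mu}: by its very definition (and in agreement with the identification mentioned in the introduction with Enriquez's $\Gamma$-function), $\phi$ is extracted from specific coefficients of $\log \Phi$ via the Kontsevich integral of the $2$-strand pure braid pictured in~\eqref{eq:gamma} and its variants from Section~\ref{sec:Turaev-dim3}. Now an even associator satisfies $\Phi(-t_{12},-t_{23})=\Phi(t_{12},t_{23})$, so that $\log \Phi$ contains only terms of even total weight in $t_{12},t_{23}$. Tracking how this parity-reversing substitution acts on the horizontal chord diagrams that produce $\phi(z)$ in $\horiz{\A}_{p,\ast}$ (a substitution which, under the isomorphism~\eqref{eq:another_simple_map}, corresponds to $z_i\mapsto -z_i$), one sees that each chord on $\uparrow_\ast$ contributes one sign, so that the coefficient of $X^n$ in $\phi$ is multiplied by $(-1)^{n+1}$; evenness of $\Phi$ forces this coefficient to vanish for all even $n$, which is exactly the oddness of $\phi$.

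The main obstacle in carrying this out is step three: one has to be careful about the diagrammatic origin of $\phi$ so as to correctly count the sign associated with each homogeneous piece. Concretely, this amounts to controlling the interaction between the parity involution $t_{ij}\mapsto -t_{ij}$ and the extraction procedure of $\phi$ from the Kontsevich integral of the braids in Section~\ref{subsec:mu-dim3}, and then matching the resulting symmetry with the symmetry $X\mapsto -X$ of the one-variable series $\phi(X)$. Once this bookkeeping is settled, the corollary follows formally from Theorem~\ref{th:vec_mu} by the calculation above.
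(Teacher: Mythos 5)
Your proof is correct and takes essentially the same route as the paper: evenness of $\Phi$ forces $\phi$ to be odd, which combined with $\phi(-X)-\phi(X)-1/2=s(X)$ (and $s(X)+s(-X)=-1$) gives $\phi(X)=\frac{1}{4}+\frac{1}{2}s(-X)$, so both entries of the pair in Theorem \ref{th:vec_mu} become $s(-z)/2$. The only difference is cosmetic: the paper reads the oddness directly off the explicit formula \eqref{eq:formula_phi}, since the words $(\operatorname{ad}_a)^i(b)$ contributing $q_i$ have length $i+1$ and evenness of $\log\Phi$ kills $q_i$ for $i$ even, so the diagrammatic sign-bookkeeping you flag as the main obstacle is not actually needed.
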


{We shall derive from Theorem \ref{th:vec_mu} a  formal description of the Turaev cobracket $\delta_{\operatorname{T}}$.
To state this result,} recall from Section~\ref{subsec:quasi-der} the following notations:
$$ 
\check{T}(H) = T(H)/ [T(H),T(H)]  \quad \hbox{and}   \quad \vert\check{T}(H)\vert = T(H)/([T(H),T(H)]+ \K 1) {.}
$$
Let  $\delta_{\operatorname{S}}:\check{T}(H) \to \check{T}(H) \otimes \check{T}(H) $ be the linear map defined by $\delta_{\operatorname{S}}(1):=0$, $\delta_{\operatorname{S}}(k):=0$  for any $k\in H$ and by 
\begin{eqnarray*}
\delta_{\operatorname{S}}({k_1 \cdots k_m})
&:=& \sum_{1 \leq i<j \leq m}  {k_{i+1} \cdots k_{j-1}} \wedge  { k_1 \cdots k_{i-1}\, (k_i {\odot} k_j)\, k_{j+1} \cdots k_m} \\
&& -  \sum_{1 \leq i<j \leq m}  { (k_j {\odot} k_i)\, k_{i+1} \cdots k_{j-1}} \wedge { k_1 \cdots k_{i-1}  k_{j+1} \cdots k_m}
\end{eqnarray*}
for any integer $m\geq 2$ and for all $k_1,\dots, k_m\in H$,
where we use the notation 
$$
\forall x,y \in T(H), \quad x\wedge y := x \otimes y -y \otimes x \in T(H) \otimes T(H).
$$
We refer to  this map as the \emph{Schedler cobracket} 
since it is the Lie cobracket associated by Schedler to a star-shaped quiver consisting of one ``central'' vertex  connected by $p$ edges to $p$ ``peripheral'' vertices   \cite{Sc_necklace}.
The map $\delta_{\operatorname{S}}$ also induces a linear map
$\delta_{\operatorname{S}}:{\vert\check{T}(H)\vert \to {\vert\check{T}(H)\vert \otimes \vert \check{T}(H)\vert}}$ 
defined by  $\delta_{\operatorname{S}}(\vert k\vert):=0$  for any $k\in H$ and
\begin{eqnarray*}
\delta_{\operatorname{S}}(\vert{k_1 \cdots k_m}\vert) &=& 
\sum_{1 \leq i<j \leq m}   \vert {k_{i+1} \cdots k_{j-1}} \vert \wedge  \vert { k_1 \cdots k_{i-1}\, (k_i {\odot}  k_j)\, k_{j+1} \cdots k_m} \vert \\
&& -  \sum_{1 \leq i<j \leq m}  \vert {(k_j {\odot} k_i)\, k_{i+1} \cdots k_{j-1}} \vert \wedge \vert { k_1 \cdots k_{i-1}  k_{j+1} \cdots k_m} \vert
\end{eqnarray*}
for any integer $m\geq 2$ and for all $k_1,\dots, k_m\in H$,
where $\vert\! -\! \vert: \check{T}(H) \to \vert \check{T}(H)\vert$ denotes the canonical projection. 
Since $\delta_{\operatorname{S}}$ shifts degrees by $(-1)$, it  induces a $(-1)$-filtered linear map 
$$
\hat \delta_{\operatorname{S}}:\vert\check{T}(\!(H)\!)\vert \longrightarrow \vert\check{T}(\!(H)\!)\vert \hat\otimes \vert \check{T}(\!(H)\!)\vert
$$ 
where $\vert\check{T}(\!(H)\!) \vert$ denotes the degree-completion of $\vert\check{T}(H)\vert$.
Let $\reallywidehat{ \vert \K{\check{\pi}}\vert }$ denote 
the completion of $\vert \K\check{\pi}\vert \simeq \K[\pi]\big/\big(\big[ \K[\pi],\K[\pi]\big] + \K 1\big)$ 
with respect to the filtration that it inherits from $\K[\pi]$.

\begin{corollary} \label{cor:delta}
For any  associator $\Phi$, the following diagram is commutative:
$$
\xymatrix{
 {\reallywidehat{ \vert \K{\check{\pi}}\vert } }  \ar[d]_-{\hat \theta_Z}^-\simeq \ar[rr]^-{\hat \delta_{\operatorname{T}}} 
 &&    {\reallywidehat{ \vert \K{\check{\pi}}\vert } } \hat\otimes  {\reallywidehat{ \vert \K{\check{\pi}}\vert } }  \ar[d]^-{\hat \theta_Z \hat\otimes \hat \theta_Z}_-\simeq & \\
 \vert\check{T}(\!(H)\!)\vert  \ar[rr]_-{\hat \delta_{\operatorname{S}}} && \vert\check{T} (\!(H)\!)\vert \hat \otimes \vert \check{T}(\!(H)\!)\vert
 }
$$
\end{corollary}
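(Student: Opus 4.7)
The strategy is to apply Theorem \ref{th:vec_mu} to transport the Turaev cobracket to the target side of $\hat{\vec\theta}_Z$, and then identify it with the Schedler cobracket by a direct computation. From the completed version of the commutative diagram \eqref{eq:delta's}, the Turaev cobracket $\hat{\delta}_{\operatorname{T}}$ coincides with the cobracket $\vert \delta_{\hat{\vec\mu}}\vert$ induced on $\reallywidehat{\vert \K\check\pi\vert}$ by the quasi-derivation $\hat{\vec\mu}$ via the completed version of Lemma \ref{lem:delta_q-skew}. Moreover, the bijection \eqref{eq:special's} between special expansions of $\pi$ and of $\overrightarrow{\pi}$ identifies the projection $\hat p : \widehat{\K[\overrightarrow\pi]} \to \widehat{\K[\pi]}$ with the complete Hopf algebra map $\id \hat\otimes \hat\varepsilon : T(\!(H)\!)\hat\otimes\K[[C]] \to T(\!(H)\!)$. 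Consequently, Theorem \ref{th:vec_mu} reduces the statement to the identity $\vert \delta_{\xi + q}\vert = \hat{\delta}_{\operatorname{S}}$ on $\vert\check T(\!(H)\!)\vert$, where $q = q_{-1/4 + \phi(z),\, -1/4 - \phi(-z)}$.

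By linearity $\vert\delta_{\xi+q}\vert = \vert\delta_\xi\vert + \vert\delta_q\vert$, and Lemma \ref{lem:delta_e1_e2} says that $\vert\delta_q\vert$ depends on $q$ only through $e_1+e_2 = s(-z)$, which is $\Phi$-independent. My plan is then to show that $\vert\delta_q\vert$ actually vanishes. The key observations are that $z\in H$ is primitive and that the power series $s(X)$ defined by \eqref{eq:s(X)} has only odd-degree terms beyond its constant term $-1/2$. Using the binomial expansion of $\Delta(z^n)$, a direct computation shows that the symmetric combinations $S(u)\otimes v + v\otimes S(u)$ appearing in the formula of Lemma \ref{lem:delta_e1_e2} vanish for odd $n$, so each odd power of $z$ contributes zero to $\delta_{e_1,e_2}$ already in $T(\!(H)\!)\hat\otimes T(\!(H)\!)$; the constant contribution, coming from the $-1/2$ term, lands in $(\K1\hat\otimes T(\!(H)\!)) + (T(\!(H)\!)\hat\otimes \K1)$ and hence is killed by passage to $\vert\check T(\!(H)\!)\vert\hat\otimes\vert\check T(\!(H)\!)\vert$.

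It remains to verify $\vert\delta_\xi\vert = \hat\delta_{\operatorname{S}}$. On a homogeneous element $k_1\cdots k_m$ with $k_i \in H$, I would expand $d_\xi$ using the coproduct $\Delta(k_1\cdots k_m) = \sum_{S\subseteq\{1,\dots,m\}} k_S\otimes k_{\bar S}$ and the definition \eqref{eq:d_q}. For each pair $i < j$, the contraction $k_i\odot k_j$ arises in $\xi(k_{\bar S})$ when $\bar S$ contains $i$ and $j$ as consecutive indices; the complementary factor $k_S$ then consists of the remaining letters. Tracing the signs produced by the antipode and the anti-symmetrization $d_\xi - P_{21}d_\xi$, and then reducing modulo commutators and modulo $\K 1$, one recovers exactly the two sums that define $\hat\delta_{\operatorname{S}}$ (here one uses that $\odot$ is symmetric on $H\times H$, so that $k_i\odot k_j = k_j\odot k_i$). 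The main obstacle is this final combinatorial matching, but the low-degree cases $m = 2,3$ can be checked by hand to confirm that all signs and indices align, and the general case follows by the same bookkeeping.
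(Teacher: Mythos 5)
Your proposal reproduces the paper's own proof: the same reduction of $\hat\delta_{\operatorname{T}}$ through the diagram \eqref{eq:delta's} and Theorem \ref{th:vec_mu} (using that $\theta_Z$ and $\vec\theta_Z$ correspond under \eqref{eq:special's}, so that $p$ becomes $\id\hat\otimes\hat\varepsilon$), the same argument that the $q$-part contributes nothing (Lemma \ref{lem:delta_e1_e2}, primitivity of $z$ and the parity of $s(X)$, with the constant term producing only tensors with a factor in $\K 1$, hence killed in $\vert\check{T}(\!(H)\!)\vert\hat\otimes\vert\check{T}(\!(H)\!)\vert$), and the same plan of identifying $(\vert-\vert\hat\otimes\vert-\vert)\,\delta_{\xi}$ with $\hat\delta_{\operatorname{S}}$ by expanding the coproduct of $k_1\cdots k_m$. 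The only shortfall is that this last combinatorial identity --- which occupies most of the paper's actual proof --- is asserted from the cases $m=2,3$ rather than carried out in general, but the bookkeeping you describe (contractions $k_i\odot k_j$ arising when all intermediate letters go to the other tensor factor, antipode cancellations via the counit axiom, cyclic rearrangement modulo commutators, the symmetry of $\odot$ on $H\times H$, and the fact that the $C^{n}$-parts with $n\geq 1$ only yield terms with a scalar factor) is exactly the computation the paper performs, so no idea is missing.
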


\section{Proof of the formal descriptions} \label{sec:proofs}

In this section, the ground ring is a commutative field $\K$ of characteristic zero
and $\Sigma$ is {a} disk with finitely-many punctures numbered from $1$ to $p$.
We prove the results that have been stated in Section~\ref{sec:statement}.
Before that, we introduce some diagrammatic operators 
and we illustrate their efficiency by giving a second proof of Theorem \ref{th:special} 
(when the special expansion under consideration is induced by the Kontsevich integral).

\subsection{Some diagrammatic operators}

We use the same notations as in Section~\ref{subsec:lambda-dim3}. 
In particular, recall that we have a free product decomposition
$$
\pi^v = \iota^v(\pi) * F(z^v)
$$
where $\pi = \pi_1(\Sigma,\ast)$  and  $\pi^v = \pi_1(\Sigma\setminus\{v\},\ast)$, 
which induces a projection $p^v: \pi^v \to \iota^v (\pi) \simeq \pi$.
The goal of this subsection is to give a diagrammatic description of the composition
$$
\xymatrix{
{\K[\pi^v]} \ar[r]^-{\frac{\partial  \ }{\partial z^v}}  & {\K[\pi^v]}  \ar[r]^{p^v} & {\K[\pi]}. 
}
$$

On this purpose, we  regard the unframed pure braid group on $2$ strands in $\Sigma$ 
as the group of unframed $(p+2)$-strand pure braids in the disk that become trivial if the last two strings are deleted.
Thus, by restriction of the Kontsevich integral, we obtain a multiplicative  map
$$
Z: PB_2(\Sigma) \longrightarrow \A_{p,uv}/FI
$$
where $\A_{p,uv}$ denotes the closed subspace of $\A(\uparrow_1 \cdots \uparrow_p \uparrow_u \uparrow_v)$
spanned by Jacobi diagrams whose all connected components touch $\uparrow_u$ or $\uparrow_v$,
and $FI$ is the ``Framing Independence'' relation.
Here, and unless otherwise specified in {the} sequel, {\emph{any $(p+2)$-strand pure braid in the disk arising from  $PB_2(\Sigma)$ 
is  equipped with the parenthesizing}} $(l_p(++))$ where $l_p$ is the leftmost parenthesizing $(\cdots ((++)+)\cdots +)$ of lenght $p$.

We also need some additional notations.
We denote by $\varepsilon^{u {,} v}: \A_{p,uv}/FI \to \K$ the map that ``deletes'' the two strands $\uparrow_u$ and $\uparrow_v$,
and by  $\varepsilon^u: {\A_{p,uv}/FI} \to {\A_{p,\ast}/FI}$   the map that only ``deletes'' $\uparrow_u$ and renames $\ast$ the label $v$.
Let $P_u$ be the closed subspace of ${ \A_{p,uv}/FI}$
 spanned by Jacobi diagrams whose all connected components  touch $\uparrow_u$, 
and let $\horiz{P}_u$ be the subalgebra of ${ \A_{p,uv}/FI}$ generated by $t_{uv}, t_{1u},\dots,t_{pu}$: clearly $\horiz{P}_u \subset P_u$.
We shall also need the subalgebra $\horiz{P}_{u,v}$ of ${ \A_{p,uv}/FI}$ 
generated by commutators of the form 
$$
[t_{i_1j_1},[t_{i_2j_2},\cdots [t_{i_{n-1}j_{n-1}},t_{i_n j_n}]\cdots ]]
$$
for all  $n\in \N$ and  $i_1,\dots,i_n,j_1,\dots,j_n \in \{1,\dots,p,u,v\}$,
where $i_k\neq j_k $ and  $\{i_k,j_k \} \cap \{u,v\} \neq \varnothing$ for any $k \in \{1,\dots, n\}$, 
and where at least one of the pairs $\{i_1,j_1\}$, \dots, $\{i_n,j_n\}$ is equal to $\{u,v\}$.
Finally, for any diagram $y\in \A_{p,uv}/FI$, we~set
$$
y^\times := 
\begin{array}{c}\labellist
\scriptsize\hair 2pt
\pinlabel {$\cdots$}  at 220 418
 \pinlabel {$\cdots$}  at 209 84
 \pinlabel {$u$} [r] at 435 10
 \pinlabel {$v$} [r] at 590 12
 \pinlabel {$p$} [r] at 345 12
 \pinlabel {$1$} [r] at 100 15
 \pinlabel {$y$}  at 350 256
\endlabellist
\centering
\includegraphics[scale=0.1]{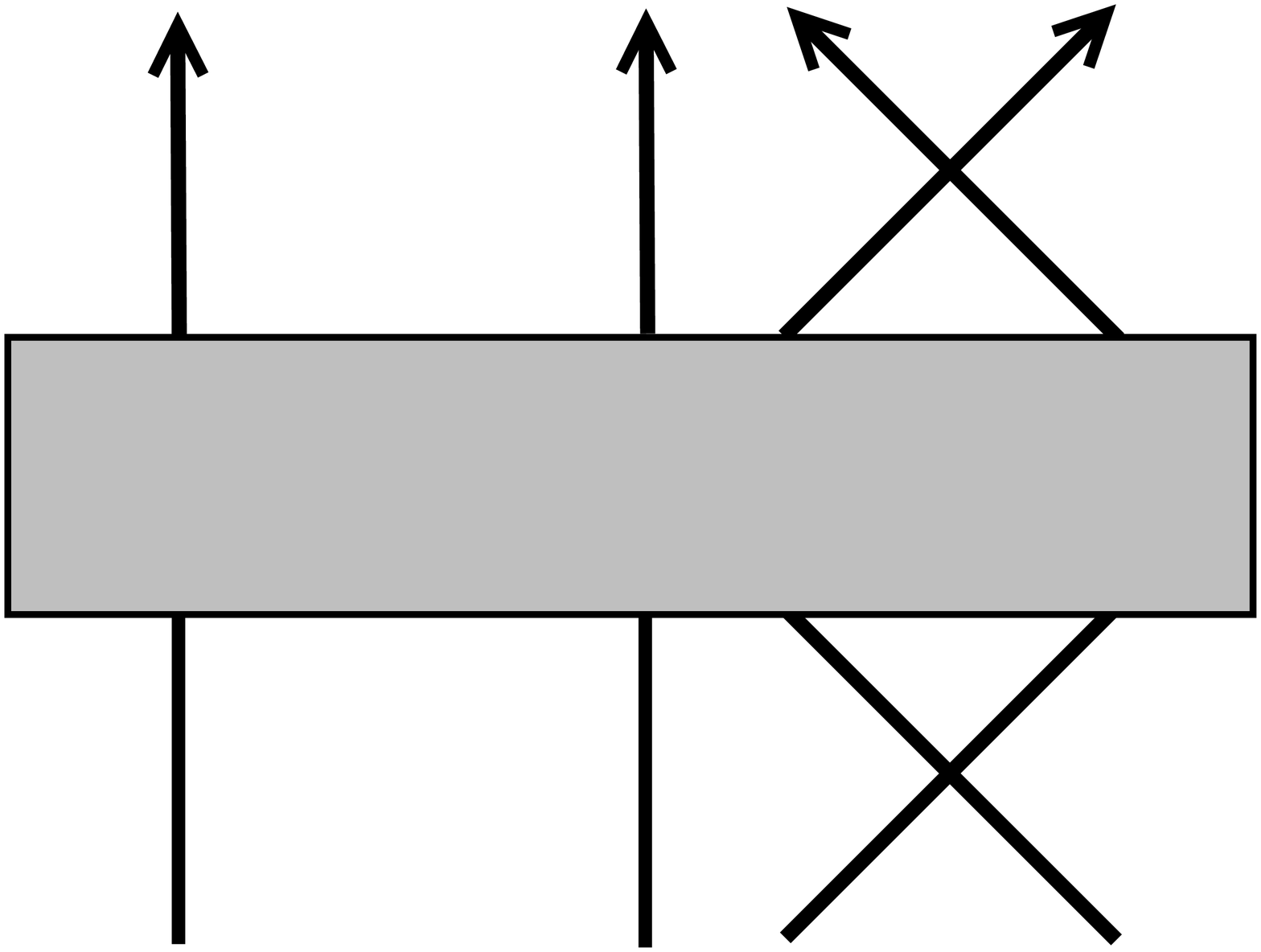} \end{array}.
$$
The above notations relative to the points $u$ and $v$ 
have obvious analogues when the roles of $u$ and $v$ are reversed.
Clearly $\horiz{P}_{u,v}=\horiz{P}_{v,u}$ and $y^\times \in \horiz{P}_{u,v}$ for all $y\in \horiz{P}_{u,v}$.

\begin{lemma} \label{lem:vvv}
There is a unique $(-1)$-filtered linear  map $D^v: \horiz{P}_u \to \horiz{\A}_{p,\ast}/FI\stackrel{\eqref{eq:another_simple_map}}{\simeq} T(\!(H)\!)$ such that the following  diagram is commutative:
\begin{equation} \label{eq:square_vvv}
\xymatrix{
\K[\pi^v] \ar[r]^-{\frac{\partial\ }{\partial z^v}} \ar[d]_-{Z} & \K[\pi^v]  \ar[r]^{p^v} & \K[\pi] \ar[d]^-{Z} \\
\horiz{P}_u \ar[rr]^-{D^v} & &  \horiz{\A}_{p,\ast}/FI
}
\end{equation}
Moreover, we have the following properties:
\begin{equation} \label{eq:vvv1}
\forall x,y\in \horiz{P}_u, \quad D^v(xy) = D^v(x)\ \varepsilon^{u,v}(y) +  \varepsilon^v(x)\, D^v(y);
\end{equation}
\begin{equation}  \label{eq:vvv2}
\horiz{P}_{u,v}\subset \horiz{P}_{u} \qquad \hbox{and} \qquad \forall y \in \horiz{P}_{u,v}, \quad D^v(y^\times) = \hat SD^v(y);
\end{equation}
\begin{equation}  \label{eq:vvv3}
D^v(t_{uv})=-1, \quad 
\forall i\in \{1,\dots,p\}, \ D^v(t_{iu}) = - z_i \phi,
\end{equation}
where $\phi\in T(\!(H)\!)$ is a constant (depending on $\Phi$) such that $\phi -\hat S(\phi) = 1/2 +s(-z)$.
\end{lemma}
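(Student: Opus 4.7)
The plan is to construct $D^v$ directly from the commutative square \eqref{eq:square_vvv} and then verify the three properties (vvv1)--(vvv3) separately. For existence, the key observation is that the free product decomposition $\pi^v = \iota^v(\pi) \ast F(z^v)$ combined with Proposition \ref{prop:construction} (together with the computation $Z(z^v)=\exp(-t_{uv})$, which is a group-like generator of $\horiz{P}_u$ sitting in ``transverse'' degree to the image of $\iota^v(\pi)$) shows that the restricted Kontsevich integral extends to an isomorphism of complete filtered algebras $\widehat{\K[\pi^v]}\stackrel{\simeq}{\longrightarrow}\horiz{P}_u$. Hence $Z \circ p^v \circ \widehat{\partial/\partial z^v}\circ Z^{-1}$ is a well-defined $(-1)$-filtered linear map $\horiz{P}_u\to \horiz{\A}_{p,\ast}/FI\simeq T(\!(H)\!)$, which I take as the definition of $D^v$. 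Uniqueness then follows, and can alternatively be checked from (vvv1) and (vvv3): since $\horiz{P}_u$ is generated by $t_{uv}, t_{1u},\dots,t_{pu}$, any linear map satisfying (vvv1) is determined by its values on those generators.

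The identity (vvv1) is a direct translation of the left Fox derivative rule $\frac{\partial(xy)}{\partial z^v}=x\frac{\partial y}{\partial z^v}+\frac{\partial x}{\partial z^v}\,\varepsilon(y)$: the projection $p^v$ corresponds under $Z$ to deleting the strand $\uparrow_v$ (i.e.\ to $\varepsilon^v$), whereas $\varepsilon$ corresponds to deleting both distinguished strands (i.e.\ to $\varepsilon^{u,v}$), whence the asymmetry between the two terms on the right. Property (vvv3) is proved by explicit computation: applying the square \eqref{eq:square_vvv} to $\frac{\partial z^v}{\partial z^v}=1$, together with $Z(z^v)=\exp(-t_{uv})$ and the vanishing $\varepsilon^v(t_{uv})=0$, gives $D^v(t_{uv})=-1$; applying the square to a suitable commutator of $\iota^v(\zeta_i)$ with $z^v$ and expanding $Z(\zeta_i)$ via the standard associator recipe of Section \ref{sec:special_Z} yields $D^v(t_{iu})=-z_i\,\phi$ with $\phi\in T(\!(H)\!)$ explicitly extracted from $\Phi$ (essentially Enriquez's $\Gamma$-function).

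Property (vvv2) splits into two. The inclusion $\horiz{P}_{u,v}\subset\horiz{P}_u$ is proved by induction on the number of nested commutators, using the infinitesimal braid relation $[t_{iu}+t_{iv},t_{uv}]=0$ to replace, in any nested bracket expression that already contains a $\{u,v\}$-pair, each occurrence of $t_{iv}$ by $-t_{iu}$ modulo terms that have been pushed to the ``outside'' and can be absorbed into the subalgebra generated by $t_{uv},t_{1u},\dots,t_{pu}$. The identity $D^v(y^\times)=\hat S D^v(y)$ is the diagrammatic incarnation of Lemma \ref{lem:U_V}: the operation $y\mapsto y^\times$ is conjugation by the half-twist $\sigma\in B_2(\Sigma)$, and Lemma \ref{lem:U_V} precisely asserts that such conjugation converts $p^v(\partial\cdot/\partial z^v)$ into its antipode-twisted counterpart; feeding this through the square \eqref{eq:square_vvv} yields the claimed formula.

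The main technical obstacle is the identification of $\phi$ and, above all, the verification of the self-duality constraint $\phi-\hat S(\phi)=1/2+s(-z)$. This constraint is forced by the three-dimensional identity $(\vec\mu')^t=-\vec\mu'+q_{-1,0}$ of Section \ref{sec:Turaev-dim3}: through the square \eqref{eq:square_vvv} and property (vvv2), that transpose identity becomes a quadratic relation on $\phi$, which one extracts concretely by evaluating $D^v$ on a single carefully chosen element $y\in\horiz{P}_{u,v}$ with known value of $y+y^\times$. The precise form $1/2+s(-z)$ of the right-hand side then emerges from the hexagon relations for $\Phi$ applied to the two-strand configuration around $\ast$; this is the step where the detailed combinatorics of the associator enters in an essential way.
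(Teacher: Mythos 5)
Your construction of $D^v$ through the isomorphism $Z:\widehat{\K[\pi^v]}\to\horiz{P}_u$, the uniqueness argument, property \eqref{eq:vvv1}, and the value $D^v(t_{uv})=-1$ agree with the paper. Your algebraic proof of the inclusion $\horiz{P}_{u,v}\subset\horiz{P}_u$ (the Lie subalgebra generated by $t_{1u},\dots,t_{pu},t_{uv}$ is an ideal by the infinitesimal braid relations) is a legitimate alternative to the paper, which gets it instead from surjectivity of $Z:\widehat{\K[\pi^u\cap\pi^v]}\to\horiz{P}_{u,v}$. But precisely because you bypass that surjectivity, your proof of the identity $D^v(y^\times)=\hat S D^v(y)$ has a gap: Lemma \ref{lem:U_V} is a statement about group elements, so feeding it through \eqref{eq:square_vvv} only gives the identity for $y$ in the linear span of $Z(\pi^u\cap\pi^v)$ (and even there one must first strip the factors in $Z(\sigma y'\sigma^{-1})=e^{t_{uv}/2}Z(y')^{\times}e^{-t_{uv}/2}$ using \eqref{eq:vvv1}). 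To conclude for every $y\in\horiz{P}_{u,v}$ you need this span to be dense in $\horiz{P}_{u,v}$, i.e.\ exactly the surjectivity statement, which the paper proves via the braids $\tau_{ij}$ with $Z(\tau_{ij})=1+t_{ij}+(\deg>1)$ and a graded-level argument; as written, the second half of \eqref{eq:vvv2} is unproved.

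The more serious gap is the constraint $\phi-\hat S(\phi)=1/2+s(-z)$, which you propose to extract from the transpose identity $(\vec{\mu}')^t=-\vec{\mu}'+q_{-1,0}$. That identity cannot determine $\phi$: it holds identically, whatever $\phi$ is. Indeed, for inner quasi-derivations one has $q_{e_1,e_2}^t=q_{\hat S(e_2),\hat S(e_1)}$, and with $e_1=-1/4+\phi$, $e_2=-1/4-\hat S(\phi)$ one gets $e_1+\hat S(e_2)=e_2+\hat S(e_1)=-1/2$, so $q_{e_1,e_2}+q_{e_1,e_2}^t=q_{-1/2,-1/2}$ with all $\phi$-dependence cancelling; the skew-symmetry of $\vec{\mu}$ therefore imposes no condition on $\phi$, and no choice of element $y$ with known $y+y^\times$ will make it do so. The constraint is of a different nature: it encodes the boundary condition (the special-expansion identity $\theta_Z(\nu)=\exp(z)$). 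In the paper it is obtained by evaluating $D^v$ on $Z(\kappa)$, where $\kappa\in\pi^v$ is the class of $\overline{\partial(\Sigma\setminus\{v\})}$: on one hand $Zp^v\big(\tfrac{\partial\kappa}{\partial z^v}\big)=e^{-z}$, on the other hand $Z(\kappa)$ is expanded through the doubling of the $2$-strand braid \eqref{eq:gamma} and conjugation by $\varphi$, and the Bernoulli series enters through $D^v\big(e^{-t_{1u}-\cdots-t_{pu}-t_{uv}}\big)=(1-e^{-z})\,\phi+\frac{1-e^{-z}}{z}$ — not through the hexagon relations (invoking Enriquez's $\Gamma$-function duality at this point would amount to importing the statement to be proved). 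So the key part of \eqref{eq:vvv3} is missing, and the mechanism you propose for it would fail.
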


\begin{proof}
The group $\pi^v$ is the fundamental group of the disk with $p+1$ punctures.
Thus, by using the same arguments as in Proposition \ref{prop:construction}, we obtain that the restriction to $\pi^v$ of the Kontsevich integral $Z: PB_2(\Sigma) \to \A_{p,uv}/FI$
takes values in $\horiz{P}_u$, and that the resulting linear  map $Z:\K[\pi^v] \to \horiz{P}_u$ induces an isomorphism of complete algebras
\begin{equation} \label{eq:Z_P_u}
Z:\widehat{\K[\pi^v]} \longrightarrow \horiz{P}_u.
\end{equation}
Here $\widehat{\K[\pi^v]} $ denotes the completion of ${\K[\pi^v]}$ with respect to its $I$-adic filtration, which is also 
the filtration inherited from the $I$-adic filtration of $\K[PB_{p+2}]$ (see Remark \ref{subsec:remarks_special}.2).
Besides, observe the following fact:   for any integer $m \geq 1$,
$\frac{\partial }{\partial z^v}$ maps the $m$-th term of the $I$-adic filtration of $\K[\pi^v]$ to its $(m-1)$-st term,
which implies that the linear map $p^v \frac{\partial \ }{\partial z^v}:  {\K[\pi^v]} \to {\K[\pi]}$ is $(-1)$-filtered.
In particular, it induces a $(-1)$-filtered linear map $p^v \frac{\partial \ }{\partial z^v}: \widehat{\K[\pi^v]} \to \widehat{\K[\pi]}$.
Then we define the map  $D^v$  by the following composition
$$
\xymatrix{
\widehat{\K[\pi^v]} \ar[rr]^-{p^v \frac{\partial \ }{\partial z^v} }&& \widehat{\K[\pi]} \ar[d]^-{Z}_-\simeq \\
\horiz{P}_u \ar@{-->}[rr]^-{ D^v} \ar[u]^-{Z^{-1}}_-\simeq & & {\horiz{\A}_{p,\ast}}/FI.
}
$$
This proves the existence of the map $D^v$ in the commutative square \eqref{eq:square_vvv}, and the unicity is clear:
since $\K[\pi^v]$ is dense in $\widehat{\K[\pi^v]}$,
the image of $\K[\pi^v]$ by $Z$ is dense in $\horiz{P}_u$.

To prove property \eqref{eq:vvv1}, let $x,y \in \horiz{P}_u$.
We  set $x':= Z^{-1}(x)  \in \widehat{\K[\pi^v]}  $ and  $y':= Z^{-1}(y) \in \widehat{\K[\pi^v]} $.
Then 
\begin{eqnarray*}
D^v(xy)  \ = \ D^v\big(Z(x')\,Z(y')\big) 
& \by{eq:square_vvv}  & Zp^v \Big(\frac{\partial x'y'}{\partial z^v} \Big) \\
&=&  Zp^v \Big(\frac{\partial x'}{\partial z^v} \hat\varepsilon(y') + x' \frac{\partial y'}{\partial z^v} \Big) \\
& \by{eq:square_vvv} & \hat \varepsilon(y')\, D^v(x) + Z\big(p^v(x')\big)\, D^v (y) \\
& = & \varepsilon^{u,v}(y)\, D^v(x) +  \varepsilon^v(x)\, D^v (y).
\end{eqnarray*}

We  prove the first part of property \eqref{eq:vvv2}.
Firstly, we  show that $Z: PB_2(\Sigma) \to \A_{p,uv}/FI$ maps $\pi^u \cap \pi^v$ to $\horiz{P}_{u,v}$.
Let $\beta \in \pi^u \cap \pi^v \subset PB_2(\Sigma)$. 
Since $\beta\in \pi^v$, we have $Z(\beta) \in \horiz{P}_u$ 
so that $\log Z(\beta)$ is a series of commutators of the form
$$
[t_{i_1u},[t_{i_2u},\cdots [t_{i_{n-1}u},t_{i_n u}]\cdots ]]
$$
where $n\in \N$  and  $i_1,\dots,i_n\in \{1,\dots,p,v\}$;
by distinguishing those commutators that involve a $t_{uv}$ from those commutators that only involve $t_{1u},\dots,t_{pu}$,
we can decompose $\log Z(\beta)$ as a sum of two terms  $(\log Z(\beta))'$ and $(\log Z(\beta))''$.
Since $\beta \in \pi^u$, we obtain 
$$
0= \varepsilon^v \log Z(\beta) = \varepsilon^v \big((\log Z(\beta))''\big)
$$
and it follows that $(\log Z(\beta))''=0$: in particular $\log Z(\beta)$ belongs to $\horiz{P}_{u,v}$, so that $Z(\beta)$ belongs to $\horiz{P}_{u,v}$.
Thus, the Kontsevich integral $Z: PB_2(\Sigma) \to \A_{p,uv}/FI$ induces a filtration-preserving homomorphism
\begin{equation} \label{eq:Z_P_uv}
Z: \widehat{\K[\pi^u \cap \pi^v]}{\longrightarrow}  \horiz{P}_{u,v}.
\end{equation}
Here  $\widehat{\K[\pi^u \cap \pi^v]} $ denotes the completion of ${\K[\pi^u \cap \pi^v]}$ 
with respect to the filtration inherited from the $I$-adic filtration of $\K[PB_{p+2}]$
(which does \emph{not} {coincide} with the $I$-adic filtration of ${\K[\pi^u \cap \pi^v]}$) 
or, equivalently,  it is the completion with respect to the filtration inherited from the $I$-adic filtration of $\K[\pi^v]$.
Next we show that \eqref{eq:Z_P_uv} is surjective,  which will imply that $\horiz{P}_{u,v} \subset \horiz{P}_{u}$.
Consider the following $(p+2)$-strand  pure braid  for any elements $i,j \in \{1,\dots,p,u,v\}$:
$$
\tau_{ij} :=
\begin{array}{c}
\labellist
\small\hair 2pt
 \pinlabel {$1$} [t] at 11 4
 \pinlabel {$i$} [t] at 264 4
 \pinlabel {$j$} [t] at 659 4
  \pinlabel {$p$} [t] at 870 4
 \pinlabel {$u$} [t] at 950 4
 \pinlabel {$v$} [t] at 1010 4
 \pinlabel {$\cdots$} at 96 44
 \pinlabel {$\cdots$}  at 466 41
  \pinlabel {$\cdots$}  at 806 41
\endlabellist
\centering
\includegraphics[scale=0.2]{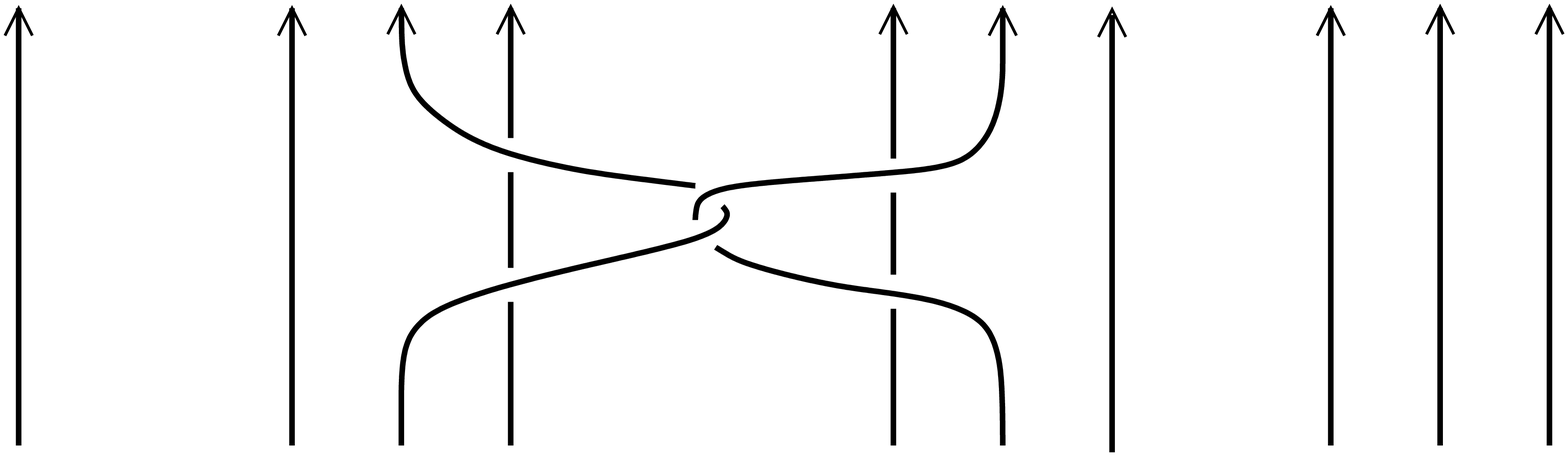}
\end{array}
$$

\vspace{0.1cm}
\noindent
and note that $Z(\tau_{ij})= 1+ t_{ij} + (\deg >1)$. Therefore, we have
$$
Z\big([\tau_{i_1j_1},[\tau_{i_2 j_2},\cdots [\tau_{i_{n-1} j_{n-1}},\tau_{i_n j_n}]\cdots ]]\big) =
1+ [t_{i_1j_1},[t_{i_2 j_2},\cdots [t_{i_{n-1} j_{n-1}},t_{i_n j_n}]\cdots ]] + (\deg >n)
$$
for any  $n\in \N$ and  $i_1,\dots,i_n,j_1,\dots,j_n \in \{1,\dots,p,u,v\}$ where $i_k\neq j_k$ and $\{i_k,j_k \} \cap \{u,v\} \neq \varnothing$ for all $k \in \{1,\dots, n\}$;
if we further assume that  at least one of the pairs $\{i_1,j_1\}$, \dots, $\{i_n,j_n\}$ is equal to $\{u,v\}$,
then the pure braid $[\tau_{i_1j_1},[\tau_{i_2 j_2},\cdots [\tau_{i_{n-1} j_{n-1}},\tau_{i_n j_n}]\cdots ]] \in PB_2(\Sigma)$ 
becomes trivial if any of the strands $u$ or $v$ is deleted, which implies that it belongs to $\pi^u \cap \pi^v$.
This fact implies that the filtered map \eqref{eq:Z_P_uv} is surjective at the graded level, 
which implies its surjectivity.

We now prove the second part of property \eqref{eq:vvv2}.
Since $\K[\pi^u \cap \pi^v]$ is dense in $\widehat{\K[\pi^u \cap \pi^v]}$ and \eqref{eq:Z_P_uv} is surjective, 
it is enough to prove the identity of \eqref{eq:vvv2} 
for $y  := Z(y') \in \horiz{P}_{u,v}$ where $y'$ is an arbitrary element of $\pi^u \cap \pi^v\subset PB_2(\Sigma)$. 
Then, using  Lemma \ref{lem:U_V}, we obtain 
$$
Zp^v \Big(\frac{\partial \sigma y' \sigma^{-1} }{\partial z^v}\Big) 
= Z\Big(\overline{p^v \frac{\partial  y'  }{\partial z^v} } \Big) = \hat S Z\Big(p^v \frac{\partial  y'  }{\partial z^v}  \Big)
\by{eq:square_vvv} \hat S D^v Z(y') = \hat SD^v (y).
$$
Besides we have
\begin{eqnarray*}
Zp^v \big(\frac{\partial \sigma y' \sigma^{-1} }{\partial z^v} \big) &\by{eq:square_vvv}& D^v Z (\sigma y' \sigma^{-1}) \\
&=& D^v  \big( {e^{t_{uv}/2}} {\!\ecrossing} Z(y')\,  {e^{-t_{uv}/2}}  {\!\ecrossing} \big) \\
&=& D^v  \big( e^{t_{uv}/2} Z(y')^\times  e^{-t_{uv}/2} \big) \\ 
&\by{eq:vvv1} & D^v ( e^{t_{uv}/2})  +  D^v \big(  Z(y')^\times  e^{-t_{uv}/2} \big) \\
& \by{eq:vvv1} &  D^v ( e^{t_{uv}/2})    + D^v \big(Z(y')^\times\big) + D^v ( e^{-t_{uv}/2}) \\ 
& \by{eq:vvv1} &  D^v  \big(Z(y')^\times\big) \ = \ D^v  ( y^\times).
\end{eqnarray*}
We deduce that $\hat SD^v (y) =  D^v  ( y^\times)$ as required.

We now prove the first identity of  \eqref{eq:vvv3}.
To compute $D^v(t_{uv})$, we consider $T:= e^{-{t_{uv}}} \in  \horiz{P}_{u}$. Then 
\begin{eqnarray*}
D^v(t_{uv}) &= &D^v\big(-\log(1+(T-1))\big) \\
& = & - D^v\Big(\sum_{r\geq 1} \frac{(-1)^{r+1}}{r} (T-1)^r\Big)
\ = \  \sum_{r\geq 1} \frac{(-1)^{r}}{r} D^v\big((T-1)^r\big);
\end{eqnarray*}
for any $r\geq 2$, we have
$$
D^v \big((T-1)^r\big)=D^v\big((T-1)^{r-1}(T-1)\big)  \by{eq:vvv1} 0 
$$
and we have
$$
D^v(T-1)= D^v(T) = D^v  Z(z^v) \by{eq:square_vvv} Z p^v( \frac{\partial z^v}{\partial z^v}) = 1. 
$$
It follows that 
\begin{equation} \label{eq:recall'}
D^v(t_{uv}) = -1. 
\end{equation}

We now prove the second identity of  \eqref{eq:vvv3}.
Let $x\in \pi$ and {consider}  $\iota^v(x)\in \pi^v\subset PB_2(\Sigma)$. We have
\begin{equation} \label{eq:change_par}
Z(\iota^v(x))= \varphi^{-1}\, Z({\iota^v(x)_\sim^\sim})\, \varphi
\end{equation}
where ${\iota^v(x)_\sim^\sim}$ is the pure braid $\iota^v(x)$ equipped with the parenthesizing $((l_p+)+)$ instead of $(l_p (++))$ {at the top and bottom,}
and where $\varphi \in {\A_{p,uv}/FI}$ is constructed from  the associator $\Phi\in \A(\uparrow_1 \uparrow_2 \uparrow_3)$
 by duplicating $(p-1)$ times the first string: 
\begin{equation} \label{eq:Phi_to_varphi}
\varphi := \Delta_{\uparrow_1 \mapsto \uparrow_1 \cdots \uparrow_p,\, \uparrow_2 \mapsto \uparrow_u,\, \uparrow_3 \mapsto \uparrow_v} (\Phi) \in {\A_{p,uv}/FI}.
\end{equation}
Since $\Phi$ is assumed to be horizontal, we have $\varphi \in \horiz{P}_{u,v}$.
It follows from \eqref{eq:change_par} that
\begin{eqnarray*}
D^v Z(\iota^v(x)) \ = \  D^v \left( \varphi^{-1}\, Z({\iota^v(x)_\sim^\sim})\, \varphi \right) 
&\by{eq:vvv1}& D^v(\varphi^{-1}) +D^v \big(  Z({\iota^v(x)_\sim^\sim})\, \varphi \big) \\
& \by{eq:vvv1} & D^v(\varphi^{-1}) +D^v  Z({\iota^v(x)_\sim^\sim})  +  Z(x)\,   D^v(\varphi) \\
& \by{eq:vvv1} & D^v  Z({\iota^v(x)_\sim^\sim})  +  \big(Z(x)-1\big)\,   D^v(\varphi).
\end{eqnarray*}
Besides, we have
$$ 
D^v Z(\iota^v(x))  \by{eq:square_vvv}  Z p^v \frac{\partial \iota^v(x)}{\partial z^v}= 0
\quad \hbox{and} \quad
 Z({\iota^v(x)_\sim^\sim})  = {Z(x)_u}\! \uparrow_v
$$
where $ {Z(x)_u}\! \uparrow_v$ means that  $Z(x) \in {{\A}_{p,\ast}}/FI$
is transformed to an element of  $\A_{p,uv}/FI$ by labelling $u$ the interval $\uparrow_\ast$
and by juxtaposing a disjoint interval $\uparrow_v$.
We deduce that
$$
\forall x\in \pi, \quad D^v \left({Z(x)_u} \!\uparrow_v\right) = \big(1- Z(x)\big)\,  D^v(\varphi).
$$
Since the image of $\K[\pi]$ by $Z$ is dense in  $\horiz{\A}_{p,\ast}/FI \simeq T(\!(H)\!)$, it follows that
$$
\forall r\in {\horiz{\A}_{p,\ast}}/FI , \quad D^v ( {r_u} \! \uparrow_v ) = (\varepsilon(r)- r) D^v(\varphi).
$$
In particular, we obtain
\begin{equation} \label{eq:repeat}
\forall i\in\{1,\dots,p\}, \quad D^v(t_{iu})  = -z_i\,  D^v(\varphi).
\end{equation}

In the sequel, we set $\phi := D^v(\varphi)$ and it remains to prove that $\phi -\hat S(\phi) = 1/2 +s(-z)$.
Denote by $\kappa\in \pi^v$ the homotopy class of $\overline{\partial(\Sigma\setminus \{v\})}$ and let $t_{\square u}:= t_{1u}+ \cdots + t_{pu}$: we have 
$$
Z p^v \big( \frac{\partial \kappa}{\partial z^v}\big)= Zp^v\big( \frac{\partial \iota^v({\overline \nu}) z^v}{\partial z^v}\big) = Z({\overline \nu})  = e^{-z}.
$$
{Using the fact that $D^v (e^{t_{uv}/2} )=-1/2$ which follows from \eqref{eq:recall'}, 
decomposing $\kappa$ in the form $\sigma( \sigma^{-1} \kappa \sigma)\sigma^{-1}$,
and observing that $ \sigma^{-1} \kappa \sigma$ is  obtained from the pure braid \eqref{eq:gamma} by doubling $p$ times its first strand,}
we obtain 
\begin{eqnarray*}
D^v Z(\kappa) &=& D^v  \left( {e^{t_{uv}/2}} \!\ecrossing\, \varphi^{-1} e^{-t_{\square v}-t_{uv}} \varphi\,\, { e^{-t_{uv}/2 } } \!\ecrossing \right) \\
&=& D^v \left(e^{t_{uv}/2}  (\varphi^{-1})^\times e^{-t_{\square u}-t_{uv}} \varphi^\times e^{-t_{uv}/2}  \right) \\
&\by{eq:vvv1}& D^v (e^{t_{uv}/2} ) + D^v\left((\varphi^{-1})^\times e^{-t_{\square u}-t_{uv}} \varphi^\times e^{-t_{uv}/2}  \right) \\
&\by{eq:vvv1}& -1/2+  D^v((\varphi^{-1})^\times) + D^v\left( e^{-t_{\square u}-t_{uv}} \varphi^\times e^{-t_{uv}/2}  \right) \\
&\by{eq:vvv1}& -1/2 -  D^v(\varphi^{\times}) + D^v(e^{-t_{\square u}-t_{uv}}) + e^{-z} D^v  \left(  \varphi^\times e^{-t_{uv}/2}  \right) \\
& \by{eq:vvv1} & -1/2 -  D^v(\varphi^{\times}) + D^v(e^{-t_{\square u}-t_{uv}}) + e^{-z} D^v(  \varphi^\times ) + e^{-z} D^v(   e^{-t_{uv}/2}) \\
&=& -1/2 -  D^v(\varphi^{\times}) + D^v(e^{-t_{\square u}-t_{uv}}) + e^{-z} D^v (  \varphi^\times ) + e^{-z}/2 \\
& \by{eq:vvv2} &  D^v(e^{-t_{\square u}-t_{uv}})  +(e^{-z} -1)/2  + (e^{-z} -1) \hat S(\phi).
\end{eqnarray*}
Using  \eqref{eq:square_vvv}, we deduce that
$$
e^{-z}/2 +1/2 =   D^v(e^{-t_{\square u}-t_{uv}})   + (e^{-z} -1) \hat S(\phi).
$$
We now compute
\begin{eqnarray*}
D^v (e^{-t_{\square u}-t_{uv}}) & = & \sum_{k\geq 0} \frac{1}{k!} D^v\big( (-t_{\square u}-t_{uv})^k\big) \\
& \by{eq:vvv1}&  \sum_{k\geq 1} \frac{1}{k!} {(-z)}^{k-1} D^v (-t_{\square u}-t_{uv}) \\
& \by{eq:recall'} &\frac{e^{-z}-1}{-z}\, (D^v(-t_{\square u})+1) \ \by{eq:repeat}  \ (1-e^{-z})\, \phi + \frac{1-e^{-z}}{z}
\end{eqnarray*}
and it follows that
$$
(e^{-z}-1)/2 +1 =   (1-e^{-z})\, \phi + \frac{1-e^{-z}}{z}  + (e^{-z} -1) \hat S(\phi)  
$$
or, {equivalently,} $1/2 + s(-z) =\phi - \hat S(\phi).$
\end{proof}

We now give a diagrammatic description of the composition
$$
\xymatrix{
{\K[\pi^u]} \ar[r]^-{\frac{\partial  \ }{\partial z^u}}  & {\K[\pi^u]}  \ar[r]^{p^u} & {\K[\pi]}
}
$$
which is similar to Lemma \ref{lem:vvv}.

\begin{lemma} \label{lem:uuu}
There is a unique $(-1)$-filtered linear  map $D^u: \horiz{P}_{v} \to \horiz{\A}_{p,\ast}/FI \stackrel{\eqref{eq:another_simple_map}}{\simeq}T(\!(H)\!)$ such that the following  diagram is commutative:
\begin{equation} \label{eq:square_uuu}
\xymatrix{
\K[\pi^u] \ar[r]^-{\frac{\partial \ }{\partial z^u}} \ar[d]_-{Z}& \K[\pi^u]  \ar[r]^{p^u} & \K[\pi] \ar[d]^-{Z} \\
\horiz{P}_{v} \ar[rr]^-{D^u} & &  \horiz{\A}_{p,\ast}/FI
}
\end{equation}
Moreover, we have the following properties: 
\begin{equation} \label{eq:uuu1}
\forall x,y \in \horiz{P}_{v}, \quad D^u(xy) = D^u(x)\ \varepsilon^{u,v}(y) +  \varepsilon^u(x)\, D^u(y);
\end{equation}
\begin{equation} \label{eq:uuu2}
\horiz{P}_{u,v} \subset \horiz{P}_{v}\qquad \hbox{and} \qquad \forall y \in  \horiz{P}_{u,v}, \quad D^u(y)= \hat S{D^v(y)};
\end{equation}
\begin{equation} \label{eq:uuu3}
D^u(t_{uv})=-1, \qquad \forall i\in \{1,\dots,p\}, \  D^u(t_{iv})=  -z_i(1/2 +\phi),
\end{equation}
where $\phi \in T(\!(H)\!)$ is the same  constant as in Lemma \ref{lem:vvv}.
\end{lemma}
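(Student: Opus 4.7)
The plan is to mimic the proof of Lemma~\ref{lem:vvv} with the roles of $u$ and $v$ exchanged throughout. Applying the argument behind the isomorphism~\eqref{eq:Z_P_u} to the group $\pi^u$ would yield an isomorphism of complete algebras $Z: \widehat{\K[\pi^u]} \to \horiz{P}_v$; since $p^u \circ \partial/\partial z^u: \K[\pi^u]\to \K[\pi]$ is $(-1)$-filtered by the same computation as before, conjugating by~$Z$ defines the unique $(-1)$-filtered linear map $D^u$ making~\eqref{eq:square_uuu} commute. The multiplicativity identity~\eqref{eq:uuu1} would follow from the Fox-derivative manipulation of~\eqref{eq:vvv1} verbatim, and the identity $D^u(t_{uv}) = -1$ in~\eqref{eq:uuu3} from the same logarithmic computation leading to~\eqref{eq:recall'}, starting from $T = e^{-t_{uv}} = Z(z^u) \in \horiz{P}_v$ (using $z^u = z^v$).

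For~\eqref{eq:uuu2}, the inclusion $\horiz{P}_{u,v} \subset \horiz{P}_v$ would follow as in the first part of~\eqref{eq:vvv2}: the iterated commutators of the braids $\tau_{ij}$ whose Kontsevich integrals span $\horiz{P}_{u,v}$ at the graded level lie in $\pi^u \cap \pi^v \subset \pi^u$, so that $Z$ sends $\widehat{\K[\pi^u \cap \pi^v]}$ onto $\horiz{P}_{u,v}$ viewed as a filtered subspace of $\horiz{P}_v$. The identity $D^u(y) = \hat S D^v(y)$ for $y \in \horiz{P}_{u,v}$ would then be a direct translation through~$Z$ of the second equality of Lemma~\ref{lem:U_V}, namely $p^v(\partial x/\partial z^v) = \overline{p^u(\partial x/\partial z^u)}$ for $x \in \pi^u \cap \pi^v$, combined with the density of $\K[\pi^u \cap \pi^v]$ in $\widehat{\K[\pi^u \cap \pi^v]}$ and the involutivity of $\hat S$.

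The main obstacle is the computation of $D^u(t_{iv}) = -z_i(1/2+\phi)$: since $t_{iv}\notin \horiz{P}_{u,v}$, the identity $D^u = \hat S D^v$ cannot be applied directly. My plan is to exploit the identity $\sigma \iota^u(x) \sigma^{-1} = (z^v)^{-1} \iota^v(x) z^v$ in $\pi^v$ from the proof of Lemma~\ref{lem:U_V}. Applying~$Z$, using  $Z(z^v) = e^{-t_{uv}}$ and $Z(\sigma y \sigma^{-1}) = e^{t_{uv}/2} Z(y)^\times e^{-t_{uv}/2}$, and then simplifying (using that $t_{uv}$ is fixed under the $\times$-operation) would yield the identity
\begin{equation*}
Z(\iota^u(x))\, =\, e^{t_{uv}/2}\, Z(\iota^v(x))^{\times}\, e^{-t_{uv}/2} \qquad \text{for all } x\in\pi.
\end{equation*}
Substituting $Z(\iota^v(x))=\varphi^{-1}(Z(x)_u\!\uparrow_v)\varphi$ from~\eqref{eq:change_par}, the right-hand side becomes $e^{t_{uv}/2}(\varphi^\times)^{-1}(Z(x)_v\!\uparrow_u)\varphi^\times e^{-t_{uv}/2}$. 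Since $\iota^u(x)\in\iota^u(\pi)$ has vanishing Fox derivative with respect to~$z^u$, applying $D^u$ to the left-hand side gives zero. Iterated application of~\eqref{eq:uuu1} to the right-hand side---using $D^u(e^{\pm t_{uv}/2}) = \mp 1/2$, the identities $\varepsilon^u((\varphi^\times)^{\pm 1}) = \varepsilon^v(\varphi^{\pm 1}) = 1$, and $D^u(\varphi^\times) = \hat S D^v(\varphi^\times) = \hat S\hat S(\phi) = \phi$ (together with $D^u((\varphi^\times)^{-1}) = -\phi$, which follows by differentiating $\varphi^\times (\varphi^\times)^{-1} =1$)---would produce after simplification the formula $D^u(r_v\!\uparrow_u) = -(r-\varepsilon(r))(1/2+\phi)$ valid for all $r\in\horiz{\A}_{p,\ast}/FI$ by density. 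Specialising to $r = z_i$ then gives $D^u(t_{iv}) = -z_i(1/2+\phi)$, as claimed; here the $-z_i/2$ contribution traces to the two conjugating factors $e^{\pm t_{uv}/2}$ and the $-z_i\phi$ contribution to the associator factor~$\varphi^\times$.
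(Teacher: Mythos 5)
Your proposal is correct and follows essentially the same route as the paper: existence, uniqueness and \eqref{eq:uuu1} by repeating the arguments of Lemma \ref{lem:vvv}, the identity \eqref{eq:uuu2} via Lemma \ref{lem:U_V} and the surjectivity of $Z$ onto $\horiz{P}_{u,v}$, and the computation of $D^u(t_{iv})$ by applying $D^u$ to $Z(\iota^u(x))=e^{t_{uv}/2}(\varphi^{\times})^{-1}(Z(x)_v\!\uparrow_u)\varphi^{\times}e^{-t_{uv}/2}$ and unwinding with \eqref{eq:uuu1}, $D^u(\varphi^\times)=\hat S\hat S(\phi)=\phi$ and $D^u(e^{\pm t_{uv}/2})=\mp 1/2$. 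The only cosmetic deviation is that you recompute $D^u(t_{uv})=-1$ from the logarithm of $Z(z^u)$, whereas the paper deduces it directly from \eqref{eq:uuu2} and \eqref{eq:vvv3}; both are fine.
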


\begin{proof}
The existence and unicity of the map $D^u$, as well as its property \eqref{eq:uuu1}, are proved as we did for the map $D^v$ in Lemma \ref{lem:vvv}.

We now prove property \eqref{eq:uuu2}.
The inclusion $\horiz{P}_{u,v} \subset \horiz{P}_{v}$ is proved in the same way as 
we proved that $\horiz{P}_{u,v} \subset \horiz{P}_{u}$  in  Lemma \ref{lem:vvv}.
We saw during the proof of this lemma that the Kontsevich integral  restricts to  a surjective filtration-preserving map
$$
Z: \widehat{\K[\pi^u \cap \pi^v]}{\longrightarrow}  \horiz{P}_{u,v}.
$$
Therefore, it is enough to prove the identity \eqref{eq:uuu2} 
for $y  := Z(y') \in \horiz{P}_{u,v}$ where $y'$ is an arbitrary element of $\pi^u \cap \pi^v\subset PB_2(\Sigma)$. 
Using Lemma \ref{lem:U_V}, we obtain 
\begin{eqnarray*}
D^u (y) \ = \ D^uZ(y') & \by{eq:square_uuu} & Z p^u \Big( \frac{\partial y'}{\partial z^u} \Big) \\
&=&  Z\Big( \overline{ p^v \Big( \frac{\partial y'}{\partial z^v} \Big)} \Big) \\ 
&=&  \hat S Z   p^v \Big( \frac{\partial y'}{\partial z^v} \Big)  \  \by{eq:square_vvv}   \ \hat S D^v Z(y')  
\ = \  \hat S D^v (y).
\end{eqnarray*}

We now prove \eqref{eq:uuu3}. The fact that $D^u(t_{uv})=-1$ follows from  \eqref{eq:uuu2} and \eqref{eq:vvv3}.
It remains to compute  $D^u(t_{iv})$ for any $i\in \{1,\dots,p\}$. Let $x\in \pi$ and consider its image $\iota^u(x) \in \pi^u$. 
{Since $\iota^u(x) = \sigma \iota^v (x) \sigma^{-1} \in PB_2(\Sigma)$, we obtain }
\begin{eqnarray*} 
 \notag Z(\iota^u(x)) &=& {e^{t_{uv}/2}} \!\ecrossing \varphi^{-1} Z ( {\iota^v(x)_\sim^\sim}) \varphi\,  {e^{-t_{uv}/2}}  \!\ecrossing \\
&=& e^{t_{uv}/2} (\varphi^{-1})^\times  \big(Z ( {\iota^v(x)_\sim^\sim})\big)^\times \varphi^\times  e^{-t_{uv}/2}  
\end{eqnarray*}
where ${\iota^v(x)_\sim^\sim}$ is the  pure braid $\iota^v(x)\in \pi^v \subset PB_2(\Sigma)$ 
computed with the parenthesizing $((l_p +)+)$ instead of the parenthesizing $(l_p(++))$,
and  where $\varphi\in  \horiz{P}_{u,v}$
is constructed  from the associator $\Phi\in \A(\uparrow_1 \uparrow_2 \uparrow_3)$ 
as we have  explained in the proof of Lemma~\ref{lem:vvv}.
Besides, we have 
$$ 
D^u Z(\iota^u(x))  \by{eq:square_uuu}  Z p^u \Big(\frac{\partial \iota^u(x)}{\partial z^u} \Big) = 0,
$$
and we also have
$$
 Z({\iota^v(x)_\sim^\sim})  ={Z(x)_u} \! \uparrow_v,
$$
meaning that  $Z(x) \in \A_{p,\ast}/FI$
is transformed to an element of  $\A_{p,uv}/FI$ by labelling $u$ the interval $\uparrow_\ast$
and by juxtaposing a disjoint  interval $\uparrow_v$. It follows that 
$$
\forall x \in \pi, \quad 
0 = D^u  \big(e^{t_{uv}/2} (\varphi^{-1})^\times  \big( {Z(x)_u}\! \uparrow_v \big)^\times \varphi^\times  e^{-t_{uv}/2} \big) 
$$
which implies that
$$
\forall r \in {\horiz{\A}_{p,\ast}}/FI, \quad 
0 = D^u  \big(e^{t_{uv}/2} (\varphi^{-1})^\times  \big( r_u \uparrow_v \big)^\times \varphi^\times  e^{-t_{uv}/2} \big).
$$
In particular, we obtain 
\begin{equation} \label{eq:ul2}
\forall i\in \{1,\dots,p\}, \quad 0 = D^u \big(e^{t_{uv}/2} (\varphi^{-1})^\times\,  t_{iv} \, \varphi^\times  e^{-t_{uv}/2} \big).
\end{equation}
We now develop this identity using \eqref{eq:uuu1}:
\begin{eqnarray*}
0 & = & D^u  \big(e^{t_{uv}/2} (\varphi^{-1})^\times\,  t_{iv}  \, \varphi^\times  e^{-t_{uv}/2} \big) \\
&=&  D^u \big(  t_{iv} \, \varphi^\times  e^{-t_{uv}/2} \big)\\
&=& D^u (t_{iv}) + z_i\, D^u  \big( \varphi^\times  e^{-t_{uv}/2} \big) \\
&=& D^u (t_{iv}) + z_i\, D^u  ( \varphi^\times) + z_i\, D^u ( e^{-t_{uv}/2}) \\
& \by{eq:uuu2} & D^u (t_{iv}) + z_i\, \hat SD^v  ( \varphi^\times) + z_i\, \hat SD^v ( e^{-t_{uv}/2})\\
&{\displaystyle \mathop{=}^{\eqref{eq:vvv2}}_{ \eqref{eq:vvv3}} } & D^u (t_{iv}) + z_i\, D^v ( \varphi) + \frac{1}{2} z_i \ = \  D^u (t_{iv}) + z_i \phi + \frac{1}{2} z_i
\end{eqnarray*}
{where $\phi := D^v(\varphi)$ as in the proof of Lemma~\ref{lem:vvv}.}
We  conclude that $D^u (t_{iv}) = -z_i(1/2+ \phi)$.
\end{proof}

\subsection{Another proof of Theorem \ref{th:special}.} 

In this subsection, we prove Theorem \ref{th:special} in the case where the special expansion $\theta$ under consideration is the expansion $\theta_Z$ induced by the Kontsevich integral $Z$.

Let $E$ be the  bilinear map defined by the following composition:
$$
\xymatrix{
\widehat{\K[\pi]} \times \widehat{\K[\pi]}  \ar[rr]^-{\hat \eta} && \widehat{\K[\pi]} \ar[d]^-{Z}_-\simeq \\
{\big( \horiz{\A}_{p,\ast}/FI \big)} \times {\big( \horiz{\A}_{p,\ast}/FI \big)}  \ar@{-->}[rr]_-{E} \ar[u]^-{Z^{-1} \times Z^{-1}}_-\simeq  && \horiz{\A}_{p,\ast}/FI
}
$$
Thus $E$ is a filtered Fox pairing in the Hopf algebra $\horiz{\A}_{p,\ast}/FI $, 
which we identify with the Hopf algebra $T(\!(H)\!)$ via the isomorphism \eqref{eq:another_simple_map}.
For any $x,y\in \pi$, we deduce from Theorem \ref{th:3d-eta} that
\begin{eqnarray*}
E(Z^{-1}(x),Z^{-1}(y)) \
&=& Z \eta(x,y) \\
&=& Z p^v\left( \frac{\partial\, \iota^{u}(y^{-1}) \iota^v(x) \iota^{u}(y) }{\partial z^v}\right) \\
& \by{eq:square_vvv} &  D^v Z \big( \iota^{u}(y^{-1}) \iota^v(x) \iota^{u}(  {y}  ) \big) \\ 
&=& D^v \left( Z(\iota^{u}(y^{-1})) \, Z(\iota^v(x)) \, Z(\iota^{u}(y))  \right) \\
&=& D^v \Big( {e^{t_{uv}/2}}  \!\ecrossing \varphi^{-1} \big(Z(y)^{-1}_u\! \uparrow_v \big) \varphi\, {e^{-t_{uv}/2} } \!\ecrossing 
\  \varphi^{-1} \big(Z(x)_u\! \uparrow_v\big)  \varphi \\
&& \qquad {e^{t_{uv}/2}}  \!\ecrossing \varphi^{-1} \big(Z(y)_u\! \uparrow_v \big) \varphi\, {e^{-t_{uv}/2}}  \!\ecrossing  \Big)\\
&=& D^v \Big( e^{t_{uv}/2} (\varphi^\times)^{-1} \big(\uparrow_u Z(y)^{-1}_v\big) \varphi^\times e^{-t_{uv}/2} 
\  \varphi^{-1} \big(Z(x)_u \uparrow_v\big)  \varphi \\
&& \qquad e^{t_{uv}/2} (\varphi^\times)^{-1} \big( \uparrow_u Z(y)_v \big) \varphi^\times e^{-t_{uv}/2}  \Big),
\end{eqnarray*}
where $Z(x)_u\! \uparrow_v$ denotes the element of $\A_{p,uv}/FI$ obtained from $Z(x) \in {\A}_{p,\ast}/FI$
by changing the label $\ast$  to the label $u$ and by juxtaposing  a disjoint interval  $\uparrow_v$, 
and where  the notations $Z(y)^{\pm 1}_u\! \uparrow_v ,\ \uparrow_u\! Z(y)^{\pm 1}_v$  have similar meanings.
 Since $Z(\K[\pi])$ is dense in $\horiz{\A}_{p,\ast}/FI$, we deduce that  the pairing $E$ is given by 
\begin{eqnarray*}
E(a,b) &=&   D^v \Big( e^{t_{uv}/2} (\varphi^\times)^{-1} \big(\uparrow_u\! (S(b'))_v\big) \varphi^\times e^{-t_{uv}/2} 
\  \varphi^{-1} \big(a_u\! \uparrow_v\big)  \varphi \\
 && \qquad e^{t_{uv}/2} (\varphi^\times)^{-1} \big( \uparrow_u\! (b'')_v \big) \varphi^\times e^{-t_{uv}/2}  \Big)
\end{eqnarray*}
 for any $a,b \in \horiz{\A}_{p,\ast}/FI$.
In particular, we have for any $i,j \in \{1,\dots, p\}$
\begin{eqnarray*}
E(t_{i\ast},t_{j\ast})
&=&   D^v \big( - e^{t_{uv}/2} (\varphi^\times)^{-1} t_{jv} \varphi^\times e^{-t_{uv}/2} 
\,  \varphi^{-1}\,  t_{iu}  \varphi\, e^{t_{uv}/2} (\varphi^\times)^{-1} \, 1 \, \varphi^\times e^{-t_{uv}/2}   \\
&& \qquad + e^{t_{uv}/2} (\varphi^\times)^{-1}\, 1 \, \varphi^\times e^{-t_{uv}/2} 
\,  \varphi^{-1}\,  t_{iu}  \varphi\, e^{t_{uv}/2} (\varphi^\times)^{-1} \, t_{jv} \, \varphi^\times e^{-t_{uv}/2}  \big) \\
&=& D^v \big( - e^{t_{uv}/2} (\varphi^\times)^{-1} t_{jv} \varphi^\times e^{-t_{uv}/2} \,  \varphi^{-1}\,  t_{iu}  \varphi   \\
&& \qquad +    \varphi^{-1}\,  t_{iu}  \varphi\, e^{t_{uv}/2} (\varphi^\times)^{-1} \, t_{jv} \, \varphi^\times e^{-t_{uv}/2}  \big).
\end{eqnarray*}
The definition {\eqref{eq:Phi_to_varphi}} of $\varphi$ from the horizontal associator $\Phi$ implies that
 $\ell:= \log(\varphi) \in \A_{p,uv}/FI$ is a series of Lie words in $t_{1u}, \dots, t_{pu}, t_{uv}$
showing at least one occurence of $t_{uv}$: therefore
$$
U_i := \varphi^{-1}\,  t_{iu}  \varphi  -  t_{iu}  = e^{-\ell} t_{iu} e^{\ell}  -  t_{iu} = \exp(-[\ell,-])(t_{iu}) -  t_{iu} 
$$
 belongs to $\horiz{P}_{u,v}$; by the same argument,
$$
V_j :=  e^{t_{uv}/2} (\varphi^\times)^{-1} t_{jv} \varphi^\times e^{-t_{uv}/2} - t_{jv}
 $$
 belongs to $\horiz{P}_{u,v}$. Thus we obtain
\begin{eqnarray*}
E(t_{i\ast},t_{j\ast})
&=& D^v\big( - (t_{jv}+V_j)\, (t_{iu} +U_i) + (t_{iu} +U_i)(t_{jv}+V_j) \big) \\
&=& D^v\big([t_{iu},t_{jv}] + [U_i,t_{jv}] + [t_{iu},V_j] + [U_i,V_j]\big).
\end{eqnarray*}
Since $U_i \in \horiz{P}_{u,v}$, we have $[U_i,t_{jv}] \in \horiz{P}_{u,v}$ and, similarly,
since $V_j \in \horiz{P}_{u,v}$, we have $[t_{iu},V_j]  \in \horiz{P}_{u,v}$; 
since $\horiz{P}_{u,v}$ is a subalgebra of $\A_{p,uv}/FI$, we also have $[U_i,V_j] \in \horiz{P}_{u,v}$;
the STU relation  implies that $[t_{iu},t_{jv}]= \delta_{ij} [t_{iv},t_{uv}]$ and, in particular, $[t_{iu},t_{jv}] \in \horiz{P}_{u,v}$.
We now compute the value of $D^v$ on each of those four elements of $\horiz{P}_{u,v}$.
Since $\varepsilon^v (V_j) = \varepsilon^{u,v} (V_j) = \varepsilon^v (U_i) = \varepsilon^{u,v} (U_i)=0$, we have
$$
D^v([U_i,V_j]) \by{eq:vvv1} 0 ;
$$
since $\varepsilon^{u,v}(V_j) = \varepsilon^v(V_j)=\varepsilon^{u,v}(t_{iu})=0$, we have
\begin{eqnarray*}
D^v( [t_{iu},V_j]) \   \by{eq:vvv1} \ z_i D^v(V_j) 
&  \by{eq:uuu2} & z_i \hat SD^u(V_j) \\
&=& z_i \hat S D^u \left(-t_{jv} + e^{t_{uv}/2} (\varphi^\times)^{-1} t_{jv} \varphi^\times e^{-t_{uv}/2} \right) \\
&  \by{eq:uuu1} & - z_i \hat S D^u (t_{jv})  +  z_i \hat SD^u\left( t_{jv} \varphi^\times e^{-t_{uv}/2} \right) \\
& \by{eq:uuu1} & z_i \hat S\big( z_j D^u (\varphi^\times e^{-t_{uv}/2}) \big) \\
&=& - z_i\, \hat SD^u (\varphi^\times e^{-t_{uv}/2})\,  z_j \\
& \by{eq:uuu2} & - z_i\, D^v  (\varphi^\times e^{-t_{uv}/2})\,  z_j  \\
& \by{eq:vvv1} &  - z_i \big(D^v  (\varphi^\times)+ D^v( e^{-t_{uv}/2})\big)  z_j
\ \mathop{=}_{\eqref{eq:vvv2}}^{\eqref{eq:vvv3}} \ -z_i (\hat S(\phi) +1/2) z_j;
\end{eqnarray*}
since $\varepsilon^{u,v}(U_i) = \varepsilon^u(U_i)=\varepsilon^{u,v}(t_{jv})=0$, we have
\begin{eqnarray*}
D^v([U_i,t_{jv}]) & \by{eq:uuu2} & \hat SD^u([U_i,t_{jv}]) \\
& \by{eq:uuu1}& -\hat S (z_j D^u(U_i))\\
&=& \hat SD^u(U_i)\, z_j \\
& \by{eq:uuu2} & D^v(U_i)\, z_j \\
&=& D^v(-t_{iu} + \varphi^{-1} t_{iu} \varphi)\, z_j \\
& \by{eq:vvv1} & - D^v (t_{iu})\, z_j + D^v (t_{iu}\varphi)\, z_j  \ \by{eq:vvv1} \ z_i D^v(\varphi) z_j \ = \ z_i \phi z_j;
\end{eqnarray*}
finally, we have
\begin{eqnarray*}
D^v([t_{iu},t_{jv}]) \ = \ \delta_{ij} D^v([t_{iv},t_{uv}]) 
&\by{eq:uuu2}&  \delta_{ij} \hat SD^u([t_{iv},t_{uv}]) \\
&\by{eq:uuu1}&  \delta_{ij} \hat S\left( z_i D^u(t_{uv}) \right) \ \by{eq:uuu3} \ \delta_{ij} z_i \ = \ (z_i {\odot} z_j).
\end{eqnarray*}
We deduce that 
$$
E(t_{i\ast},t_{j\ast}) = (z_i {\odot} z_j) + z_i (\phi -\hat S(\phi) -1/2) z_j = (z_i {\odot} z_j) + z_i\, s(-z)\, z_j.
$$
Since the complete algebra $T(\!(H)\!) \simeq \horiz{\A}_{p,\ast}/FI$ is generated by the $z_i =t_{i\ast}$ for all $i\in \{1,\dots,p\}$,
we conclude that the Fox pairings $E$ and $( {- \odot - } )+ \rho_{s(-z)}$ coincide.

\subsection{Proof of Theorem \ref{th:vec_mu}}

Let $N: \horiz{\A}_{p,\ast} \to \horiz{\A}_{p,\ast} /FI $ be the  map defined by the following composition:
\begin{equation} \label{eq:N_def}
\xymatrix{
{\widehat{\K[ \overrightarrow{\pi} ] }} \ar[rr]^-{\hat{\vec{\mu}}} && {\widehat{\K[{\pi}]}}   \ar[d]^-{Z}_-\simeq  \\
 \horiz{\A}_{p,\ast}  \ar@{-->}[rr]_-{N } \ar[u]^-{Z^{-1}}_-\simeq &&  \horiz{\A}_{p,\ast} /FI
}
\end{equation}
In this subsection, $ \horiz{\A}_{p,\ast}$ {is identified with} $T(\!(H)\!) \hat\otimes \K[[C]]$ via the isomorphism \eqref{eq:a_simple_map},
and  $ \horiz{\A}_{p,\ast}/FI$ is identified with $T(\!(H)\!)$ via the isomorphism \eqref{eq:another_simple_map}. 

According to \eqref{eq:tensorial_eta}, the Fox pairing  $\eta$ translates 
into the Fox pairing ${(\!-\!{\odot}\!-\!) + \rho_{s(-z)}}$ through the special expansion $\theta_Z$.
Therefore,  the map $N$ is a quasi-derivation ruled by ${(\!-\!{\odot}\!-\!) + \rho_{s(-z)}}$.
We shall compute the values of $N$ on the generators $t_{\ast \ast}=C$ and $t_{i\ast}=z_i$ (for all $i\in\{1,\dots,p\}$) 
of the complete algebra $ \horiz{\A}_{p,\ast} \simeq T(\!(H)\!) \hat\otimes \K[[C]]$.

First we compute $N(t_{\ast \ast})$. 
Since $N$  is a quasi-derivation and $t_{\ast \ast}$ is mapped to zero under  the projection $p: \horiz{\A}_{p,\ast}  \to \horiz{\A}_{p,\ast}/FI$, we have
$$
N(-t_{\ast \ast}/2)  =  
N(e^{- t_{\ast \ast} /2}) =  N Z(\digamma^{-1}) \by{eq:N_def} Z \vec{\mu}(\digamma^{-1}) =  Z(1)= 1.
$$
We deduce that 
\begin{equation} \label{eq:C_case}
N(t_{\ast \ast}) = -2 = {\xi}(C) = {\xi}(C) + q_{-1/4+\phi\, ,\, -1/4-\hat S(\phi)}(C).
\end{equation}

The computation of $N(t_{i\ast})$ for an arbitrary element $i\in \{1,\dots,p\}$  needs some preliminaries.
For any  $\vec{x} \in \overrightarrow{\pi}$, we obtain using Theorem \ref{th:3d-mu} that
\begin{eqnarray*}
Z\vec{\mu}(\vec{x})  & =& 
Z p^v\left(\frac{\partial\ \iota^u(x^{-1})\, c(\vec{x})}{\partial z^v}\right)\\
& \by{eq:square_vvv} & D^v Z\big(\iota^u(x^{-1})\, c(\vec{x})\big) \\
& = &  D^v \left( Z(\iota^u(x^{-1})) \ Z( c( \vec{x})) \right) \\ 
& = &  D^v \left( {e^{t_{uv}/2} } \!\ecrossing \varphi^{-1} Z( {\iota^v(x^{-1})_\sim^\sim} ) \varphi\,  {e^{-t_{uv}/2} } \!\ecrossing  Z( c( \vec{x}) ) \right)\\
& = &  D^v \left( e^{t_{uv}/2} (\varphi^{-1})^{\times} \big(Z( {\iota^v(x^{-1})_\sim^\sim} )\big)^\times \varphi^\times  e^{-t_{uv}/2} \ Z( c( \vec{x}) ) \right) \\
&=&   D^v \left( e^{t_{uv}/2} (\varphi^{-1})^{\times} \big((X^{-1})_u\! \uparrow_v \big)^\times \varphi^\times  e^{-t_{uv}/2} 
\ \Delta_{\uparrow_\ast \mapsto \uparrow_u \uparrow_v}(\vec{X}) \right)
\end{eqnarray*}
where we have denoted  $X:=Z(x)\in \A_{p,\ast}/FI$ and $\vec{X}:= Z( \vec{x})\in  \A_{p,\ast}$,
and where $(X^{-1})_u\! \uparrow_v$ is the element of $\A_{p,uv}/FI$ obtained from $X^{-1}$
by labelling $u$ the interval $\uparrow_\ast$ and by  juxtaposing a disjoint interval $\uparrow_v$.  Thus we have obtained
\begin{equation} \label{eq:special_case}
\forall \vec{x} \in \overrightarrow{\pi}, \quad
N Z (\vec{x}) 
=  D^v \left( e^{t_{uv}/2} (\varphi^{-1})^{\times}\,  \big(\uparrow_u\! (X^{-1})_v  \big)\, \varphi^\times  e^{-t_{uv}/2}   
\ \Delta_{\uparrow_\ast \mapsto \uparrow_u \uparrow_v}(\vec{X}) \right).
\end{equation}
Since the image of $\K[\overrightarrow{\pi}]$ by $Z$ is dense in $\horiz{\A}_{p,\ast}$,  we deduce that
\begin{equation}  \label{eq:formula_N}
  N(\vec r) = D^v \left( e^{t_{uv}/2} (\varphi^{-1})^{\times}\,  \big(\uparrow_u (S(\vec{r}\,'))_v  \big)\, \varphi^\times  e^{-t_{uv}/2}   
\ \Delta_{\uparrow_\ast \mapsto \uparrow_u \uparrow_v}(\vec{r}\,'') \right) 
\end{equation}
for any $\vec{r} \in \horiz{\A}_{p,\ast}$,
where $r$ is the projection of $\vec r$ in  $ \horiz{\A}_{p,\ast}/FI$, 
$\Delta(\vec r) = \vec{r}\,' \otimes \vec{r}\,''$ denotes the coproduct  in $\horiz{\A}_{p,\ast}$, 
the projection of $\vec{r}\,' $ in $\horiz{\A}_{p,\ast}/FI$ is still denoted by $\vec{r}\, '$,
and $S$ denotes the antipode of ${\horiz{\A}_{p,\ast}/FI}$.
In particular, we obtain
\begin{eqnarray*}
N(t_{i\ast}) &=& D^v \Big( e^{t_{uv}/2} (\varphi^{-1})^{\times}\!  \big(\uparrow_u\! (S(t_{i\ast}))_v  \big)\, \varphi^\times  e^{-t_{uv}/2}   
\ \Delta_{\uparrow_\ast \mapsto \uparrow_u \uparrow_v}(1)  \\
&& \qquad +  e^{t_{uv}/2} (\varphi^{-1})^{\times}\,  \big(\uparrow_u\! (S(1))_v  \big)\, \varphi^\times  e^{-t_{uv}/2}   
\ \Delta_{\uparrow_\ast \mapsto \uparrow_u \uparrow_v}(t_{i\ast}) \Big) \\
&=& D^v \Big( -e^{t_{uv}/2} (\varphi^{-1})^{\times}\,  t_{iv}\, \varphi^\times  e^{-t_{uv}/2}    
 + \Delta_{\uparrow_\ast \mapsto \uparrow_u \uparrow_v}(t_{i\ast}) \Big)  \\
 &=& D^v \Big( -e^{t_{uv}/2} (\varphi^{-1})^{\times}\,  t_{iv}\, \varphi^\times  e^{-t_{uv}/2}    + t_{iu} + t_{iv} \Big) \\
 &=& D^v \Big( \underbrace{-e^{t_{uv}/2} (\varphi^{-1})^{\times}\,  t_{iv} \, \varphi^\times  e^{-t_{uv}/2}  
 +  t_{iv}}_{\in \horiz{P}_{u,v}} \Big) +D^v(t_{iu}) \\
 & \by{eq:uuu2} & 
 \hat SD^u \Big( {-e^{t_{uv}/2} (\varphi^{-1})^{\times}\,  t_{iv}\, \varphi^\times  e^{-t_{uv}/2}  +  t_{iv}} \Big) +D^v(t_{iu}) \\
 & \by{eq:ul2} & \hat SD^u (t_{iv}) +  D^v (t_{iu}) \\
 & {\displaystyle \mathop{=}^{\eqref{eq:uuu3}}_{\eqref{eq:vvv3}}} &  \hat S(-z_i(1/2+\phi)) - z_i \phi
  \ = \ (1/2+\hat S(\phi))z_i - z_i \phi.
\end{eqnarray*}
We deduce that
\begin{equation} \label{eq:k_case}
\forall i \in \{1,\dots,p\}, \ N(t_{i\ast}) = q_{\phi, -1/2-\hat S(\phi)}(z_i) =  {\xi}(z_i) + q_{-1/4+\phi\, ,\, -1/4-\hat S(\phi)}(z_i).
\end{equation}

Since $N$ and ${\xi} + q_{-1/4+\phi\, ,\, -1/4-\hat S(\phi)}$ are both quasi-derivations ruled by the Fox pairing $(\!-\!{\odot}\!-\!) + \rho_{s(-z)}$,
and since the complete  algebra $\horiz{\A}_{p,\ast} \simeq T(\!(H)\!) \hat\otimes \K[[C]]$ is generated by $C$ and~$H$,
we conclude thanks to  \eqref{eq:C_case} and  \eqref{eq:k_case}  that 
$$
N = {\xi} + q_{-1/4+\phi\, ,\, -1/4-\hat S(\phi)}.
$$

To conclude the proof  of Theorem \ref{th:vec_mu}, 
we still need to study in more detail the element $\phi = D^v (\varphi) \in T(\!(H)\!)$. Based on this purpose, we set 
$$
\ell := \log(\varphi) \in \mathcal{A}_{p,uv}/FI.
$$
Note that $\ell$ is obtained from $\log(\Phi) \in \A(\uparrow_1 \uparrow_2 \uparrow_3)$ 
in the same way as $\varphi$ is obtained from $\Phi$,
i.e$.$ by duplicating $(p-1)$ times the first string (see  the proof of Lemma \ref{lem:vvv}).
{If the interval $\uparrow_3$ is deleted, then  $\Phi$ becomes $1 \in  \A(\uparrow_1 \uparrow_2 )$
so that  $\log(\Phi)$ becomes $0  \in  \A(\uparrow_1 \uparrow_2 )$,} 
which implies that $\varepsilon^v(\ell) = \varepsilon^{u,v}(\ell)=0$. Therefore
\begin{eqnarray*}
\phi \ = \ D^v (\exp(\ell)) 
&=& \sum_{k\geq 1} \frac{1}{k!} D^v(\ell^k) \\
& \by{eq:vvv1}& \sum_{k\geq 1} \frac{1}{k!} \left( \varepsilon^v(\ell^{k-1})\,  D^v(\ell) + D^v(\ell^{k-1})\, \varepsilon^{u,v}(\ell) \right) \ = \ D^v(\ell).
\end{eqnarray*}
{We set $a:= t_{12}$  and $b:=t_{23}$. Recall that $\log(\Phi)$ is a Lie series in $a,b$ and that $\Phi$ satisfies a ``pentagon'' equation and two ``hexagon'' equations (see \cite{Oh}, for instance):
the pentagon implies  that the linear part of $\log(\Phi)$ is trivial, while the hexagon force its quadratic part  to be equal to $-[a,b]/24$.
Therefore, by isolating the $b$-linear part of $\log(\Phi)$, we obtain}
\begin{equation} \label{eq:log(Phi)}
\log(\Phi) = - \frac{1}{24} [a,b] + \sum_{i\geq 2} q_i\, (\operatorname{ad}_{a})^i(b) + \sum_{n\geq 3}  \sum_{j\in J_n}w_j(a,b) \  \in \A(\uparrow_1 \uparrow_2 \uparrow_3)
\end{equation}
where $\operatorname{ad}_{a}:=[a,-]$,
$q_2,q_3,\dots$ are elements of $\K$, $J_3,J_4,\dots$ are finite sets and, for all $n\geq 3$ and $j\in J_n$, 
$w_j(a,b)$ is a Lie word in $a,b$ of length $n$ containing at least two copies of $b$. 
(Note that the scalars  $q_2,q_3,\dots$ in this decomposition are unique.)
Setting now $a:= t_{1u}+\cdots + t_{pu}$ and $b:=t_{uv}$, we deduce that
\begin{equation} \label{eq:formula_ell}
\ell =   - \frac{1}{24} [a,b] +  \sum_{i\geq 2}  q_i\, (\operatorname{ad}_{a})^i(b) + \sum_{n\geq 3}  \sum_{j\in J_n}  w_j(a,b) 
\ \in \mathcal{A}_{p,uv} /FI.
\end{equation}

\begin{lemma}
For any {integer} $n\geq 3$ and any  $j\in J_n$, we have $D^v(w_j(a,b))=0$.
\end{lemma}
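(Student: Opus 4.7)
The plan is to argue by induction on the length $n$ of the Lie word, exploiting the asymmetric derivation property \eqref{eq:vvv1}: for $x,y \in \horiz{P}_u$,
$$
D^v(xy) = D^v(x)\, \varepsilon^{u,v}(y) + \varepsilon^v(x)\, D^v(y),
$$
so that for a bracket one gets
$$
D^v([x,y]) = D^v(x)\big(\varepsilon^{u,v}(y)-\varepsilon^v(y)\big) + \big(\varepsilon^v(x)-\varepsilon^{u,v}(x)\big)\, D^v(y).
$$
The key input is the behavior of the two ``counits'' on the generators $a=t_{1u}+\cdots+t_{pu}$ and $b=t_{uv}$: namely $\varepsilon^{u,v}(a)=\varepsilon^{u,v}(b)=0$ and $\varepsilon^v(b)=0$, whereas $\varepsilon^v(a)=t_{1\ast}+\cdots+t_{p\ast}=z$. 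Consequently, $\varepsilon^{u,v}$ annihilates \emph{every} Lie word in $a,b$ of degree $\geq 1$ (both generators belong to $\ker\varepsilon^{u,v}$), and $\varepsilon^v$ annihilates every Lie word containing at least one factor $b$ (such a word lies in the Lie ideal generated by $b$, which is sent into the augmentation ideal on the $\ast$-strand after deletion of $v$, but concretely: in any commutator expansion at least one bracket involves $b$, so the result is zero after deletion of $\uparrow_v$).

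Now I induct on $n$. The base case $n=2$ is trivial since the only Lie word of length $2$ with two $b$'s is $[b,b]=0$. For the inductive step, write $w_j(a,b)=[x,y]$ with $x,y$ Lie words whose total number of $b$'s is at least two. Three subcases occur. \textbf{Subcase A:} both $x$ and $y$ contain at least one $b$. Then $\varepsilon^v(x)=\varepsilon^v(y)=0$ and $\varepsilon^{u,v}(x)=\varepsilon^{u,v}(y)=0$, so the bracket formula above gives $D^v([x,y])=0$ directly. \textbf{Subcase B:} $x$ is a Lie word in $a$ only. Since $[a,a]=0$, either $x=0$ (if $\deg x\geq 2$) or $x=a$; in the latter case $y$ is a Lie word of length $n-1$ containing at least two $b$'s, so $D^v(y)=0$ by induction. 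In addition $\varepsilon^v(y)=\varepsilon^{u,v}(y)=0$ as $y$ contains a $b$ and has degree $\geq 1$, so every term of $D^v([a,y])$ vanishes. \textbf{Subcase C:} the symmetric case where $y$ is a Lie word in $a$ only; this reduces to Subcase B via antisymmetry of the bracket.

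The only genuine issue to double-check is the computation of $\varepsilon^v$ on a general Lie word in $a,b$, which I would justify cleanly by noting that the algebra map $\varepsilon^v:\A_{p,uv}/FI\to\A_{p,\ast}/FI$ sends $b=t_{uv}$ to $0$, hence annihilates the two-sided ideal generated by $b$, which contains every Lie word containing at least one $b$-factor. Once this and the analogous (trivial) statement for $\varepsilon^{u,v}$ are in place, the inductive argument above yields $D^v(w_j(a,b))=0$ for every $j\in J_n$ and every $n\geq 3$. The whole argument is formal, relying only on \eqref{eq:vvv1} and \eqref{eq:vvv3}; there is no substantial obstacle beyond bookkeeping the two different counits.
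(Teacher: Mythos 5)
Your argument is correct and essentially the same as the paper's: both rest on \eqref{eq:vvv1} together with the facts $\varepsilon^{u,v}(a)=\varepsilon^{u,v}(b)=\varepsilon^{v}(b)=0$, $\varepsilon^{v}(a)=z$, plus an induction on length, the paper writing $w_j$ in the form $[b,m]$ or $[a,m]$ while you analyse a general outer bracket $[x,y]$ (a harmless refinement). One cosmetic slip: since $\varepsilon^{v}$ takes values in the noncommutative algebra $\horiz{\A}_{p,\ast}/FI$, the term $\varepsilon^{v}(y)\,D^{v}(x)$ in your bracket formula is a left multiplication and cannot be factored as $D^{v}(x)\bigl(\varepsilon^{u,v}(y)-\varepsilon^{v}(y)\bigr)$, but in every instance where you invoke the formula that factor vanishes, so the argument is unaffected.
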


\begin{proof}
The Lie word $w_j(a,b)$ can be written in the form $[b,m]$ or  in the form $[a,m]$
  where $m$ is a certain Lie word in $a,b$ of length at least $2$ and, in the second  case,
we also require that $m$ contains at least two copies of $b$. In the first case, we have 
$$
D^v([b,m])  \by{eq:vvv1}  \underbrace{\varepsilon^v(b)}_{=0}\, D^v(m)  + D^v(b)\, \varepsilon^{u,v}(m)
-  \varepsilon^v(m)\, D^v(b) - D^v(m)\, \underbrace{\varepsilon^{u,v}(b)}_{=0}  
$$
and, since $m$ is Lie word in $a,b$ of length at least $2$, we also have $\varepsilon^v(m)= \varepsilon^{u,v}(m) =0$ so that $D^v([b,m])  = 0$.
In the second case, we have
\begin{eqnarray*}
D^v([a,m])  &\by{eq:vvv1}  &\varepsilon^v(a)\, D^v(m) + D^v(a)\, \underbrace{\varepsilon^{u,v}(m)}_{=0}
 - \underbrace{\varepsilon^v(m)}_{=0} D^v(a)- D^v (m)\, \underbrace{\varepsilon^{u,v}(a)}_{=0} \\
 &=& z  D^v(m)
\end{eqnarray*}
and, by an  induction on the length of $m$, we conclude that $D^v([a,m]) =0$ as well.
\end{proof}

\begin{lemma}
For any {integer} $i\geq 1$, we have $D^v\big((\operatorname{ad}_{a})^i(b)\big)= -z^i$.
\end{lemma}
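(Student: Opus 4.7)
The plan is to proceed by induction on $i\geq 1$, using the Leibniz-type identity \eqref{eq:vvv1} together with the explicit values of $D^v$ on the generators given in \eqref{eq:vvv3}.

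First I would record the counit-like values: since $\varepsilon^v$ deletes $\uparrow_v$ and renames $\uparrow_u$ to $\uparrow_\ast$, and $\varepsilon^{u,v}$ deletes both, we have
\[
\varepsilon^v(a) = \sum_{i=1}^p \varepsilon^v(t_{iu}) = \sum_{i=1}^p z_i = z, \qquad \varepsilon^v(b) = \varepsilon^v(t_{uv}) = 0,
\]
and $\varepsilon^{u,v}(a) = \varepsilon^{u,v}(b) = 0$. Meanwhile \eqref{eq:vvv3} gives $D^v(a) = -z\phi$ and $D^v(b) = -1$. Since $\varepsilon^v$ and $\varepsilon^{u,v}$ are algebra homomorphisms, they preserve Lie brackets, so a trivial induction yields
\[
\varepsilon^v\big((\mathrm{ad}_a)^i(b)\big) = [z, \cdots [z,0]\cdots] = 0 \quad \text{and} \quad \varepsilon^{u,v}\big((\mathrm{ad}_a)^i(b)\big) = 0 \quad \text{for all } i\geq 1.
\]

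Next I would carry out the induction on the formula for $D^v$. For the base case $i=1$, apply \eqref{eq:vvv1} to $ab$ and $ba$:
\[
D^v([a,b]) = \varepsilon^v(a)\,D^v(b) + D^v(a)\,\varepsilon^{u,v}(b) - \varepsilon^v(b)\,D^v(a) - D^v(b)\,\varepsilon^{u,v}(a) = z\cdot(-1) = -z.
\]
For the inductive step, set $m := (\mathrm{ad}_a)^i(b)$ and assume $D^v(m) = -z^i$ with $\varepsilon^v(m) = \varepsilon^{u,v}(m) = 0$. Applying \eqref{eq:vvv1} to the two terms of $[a,m] = am - ma$, all four terms but one vanish on account of the above vanishings:
\[
D^v([a,m]) = \varepsilon^v(a)\,D^v(m) = z\cdot(-z^i) = -z^{i+1},
\]
which is the desired equality at level $i+1$.

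No serious obstacle arises: the argument is an elementary induction once one knows that $\varepsilon^v$ and $\varepsilon^{u,v}$ kill all the iterated brackets $(\mathrm{ad}_a)^i(b)$ for $i\geq 1$, which in turn follows from $\varepsilon^v(b) = \varepsilon^{u,v}(a) = \varepsilon^{u,v}(b) = 0$ and the fact that these maps are algebra homomorphisms. The only mildly delicate point is to check that $\varepsilon^v$ is indeed multiplicative on $\horiz{P}_u$ (which follows from its diagrammatic definition by strand-deletion, since concatenation along $\uparrow_\ast$ is preserved after removing $\uparrow_v$). In particular, the complicated quantity $\phi = D^v(\varphi)$ does not enter the final expression, because $D^v(a) = -z\phi$ is systematically multiplied by a factor $\varepsilon^{u,v}(\cdot) = 0$ in the expansion of $D^v([a,m])$.
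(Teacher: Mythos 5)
Your proof is correct and follows essentially the same route as the paper: induction on $i$, expanding $D^v([a,m])$ via \eqref{eq:vvv1} and using the values in \eqref{eq:vvv3}, with the extra terms killed because $\varepsilon^v$ and $\varepsilon^{u,v}$ vanish on $b$ and on the iterated brackets. Your only addition is to spell out (via multiplicativity of the strand-deletion maps) why $\varepsilon^v\big((\operatorname{ad}_a)^{i}(b)\big)=\varepsilon^{u,v}\big((\operatorname{ad}_a)^{i}(b)\big)=0$, a point the paper leaves implicit.
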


\begin{proof}
For any {integer $i\geq 1$}, we have
\begin{eqnarray*}
 D^v \big((\operatorname{ad}_{a})^i(b)\big) &=& D^v \big([a,(\operatorname{ad}_{a})^{i-1}(b)]\big)\\
&\by{eq:vvv1} & \varepsilon^v(a)\, D^v\big((\operatorname{ad}_{a})^{i-1}(b)\big)  + D^v(a)\, \underbrace{\varepsilon^{u,v}\big((\operatorname{ad}_{a})^{i-1}(b)\big)}_{=0} \\
&& -  \underbrace{\varepsilon^v\big((\operatorname{ad}_{a})^{i-1}(b)\big)}_{=0}\, D^v(a) - D^v\big((\operatorname{ad}_{a})^{i-1}(b)\big)\, \underbrace{\varepsilon^{u,v}(a)}_{=0} \\
&=& z  D^v\big((\operatorname{ad}_{a})^{i-1}(b)\big).
\end{eqnarray*}
{In particular, for $i=1$, we obtain  $D^v (\operatorname{ad}_{a} (b))= zD^v(b)$ which is equal to $-z$ by \eqref{eq:vvv3}.}
Thus the lemma is proved by an induction on {$i\geq 1$.}
\end{proof}

By applying the above two lemmas to \eqref{eq:formula_ell}, 
we obtain that  $\phi \in T(\!(H)\!)$  is the evaluation at ${z\in H}$ of the formal power series  
\begin{equation} \label{eq:formula_phi}
\phi(X) :=  \frac{1}{24} X - \sum_{i\geq 2}  q_i X^i \in \K[[X]].
 \end{equation}
 This provides an explicit formula for $\phi$ in terms of the coefficients  $q_2,q_3,q_4,\dots$ appearing in the expression \eqref{eq:log(Phi)} of the Drinfeld associator $\Phi$.
Since $\phi -\hat S(\phi)=1/2+s(-z)$ by Lemma~\ref{lem:vvv}, the  series $\phi(X)$ satisfies 
\begin{equation} \label{eq:phi_S}
\phi(-X) - \phi(X)-1/2 = s(X).
\end{equation}

\subsection{Proof of Corollary \ref{cor:even_case}} \label{subsec:even_case}

Assume now  that the Drinfeld associator $\Phi$ is even. 
Then  the expression \eqref{eq:log(Phi)}  only shows some Lie words in $a,b$ of \emph{even} length: 
consequently, the sum in \eqref{eq:formula_phi} is a sum over all \emph{odd} integers $i\geq 3$.
We deduce that $\phi(-X)=- \phi(X)$ and the equation \eqref{eq:phi_S} implies that
$$
\phi(X) = \frac{1}{4} +\frac{1}{2} s(-X).
$$
Thus Corollary \ref{cor:even_case} follows from Theorem \ref{th:vec_mu}.

\subsection{Proof of Corollary \ref{cor:delta}}

Recall that $\vert\! -\! \vert :\K[\pi] \to \K \vert {\check{\pi}}\vert$ is the canonical projection $\K[\pi] \to \K {\check{\pi}}$
composed with $\vert\! -\! \vert:  \K {\check{\pi}} \to  \K \vert{\check{\pi}}\vert$.
Similarly, let   $\vert\! -\! \vert: T(H) \to \vert\check{T}(H)\vert$ be the 
composition of the canonical  projections $T(H)\to \check T(H)$
and $\vert\! -\! \vert: \check T(H) \to \vert \check T(H) \vert$: by completion, we obtain a map
 $\vert\! -\! \vert: T(\!(H)\!) \to \vert\check{T}(\!(H)\!)\vert$.
Consider an $x\in {\K \vert {\check{\pi}}\vert}$ and let $y\in \K[\overrightarrow{\pi}]$ be such that $\vert p(y) \vert=x$. 
Then, 
\begin{eqnarray*}
(\theta_Z \otimes \theta_Z) \delta_{\operatorname{T}}(x) \ = \ (\theta_Z \otimes \theta_Z) \delta_{\operatorname{T}}( \vert p(y) \vert)  
& \by{eq:delta's} & (\theta_Z \otimes \theta_Z) (\vert\! -\! \vert \otimes \vert\! -\! \vert)\delta_{\vec \mu}(y) \\
&=&  (\vert\! -\! \vert \otimes \vert\! -\! \vert) (\theta_Z \otimes \theta_Z) \delta_{\vec \mu}(y) \\
&=&   (\vert\! -\! \vert \otimes \vert\! -\! \vert) \delta_{{\xi}}({\vec{\theta}}_Z(y)) 
+  (\vert\! -\! \vert \otimes \vert\! -\! \vert)  \delta_q({\vec{\theta}}_Z(y)) 
\end{eqnarray*}
where the last identity follows from Theorem \ref{th:vec_mu} and we have {set} $ q:= q_{-1/4+{\phi(z)}\, ,\, -1/4-{\phi(-z)}}$.
We claim that 
\begin{equation}\label{eq:claim1}
 {(\vert\! -\! \vert \hat\otimes \vert\! -\! \vert)}\, \delta_q =0: T(\!(H)\!) \hat\otimes \K[[C]] 
 \longrightarrow {\vert \check{T}(\!(H)\!) \vert} \hat\otimes {\vert \check{T}(\!(H)\!) \vert}
\end{equation}
and 
\begin{equation} \label{eq:claim2}
 (\vert\! -\! \vert \hat\otimes \vert\! -\! \vert)\, \delta_{{\xi}} = \hat \delta_{\operatorname{S}}\, \vert\! -\! \vert\, p: 
 T(\!(H)\!) \hat\otimes \K[[C]] \longrightarrow \vert\check{T}(\!(H)\!)\vert \hat\otimes \vert \check{T}(\!(H)\!)\vert {.}
\end{equation}
By the previous computation, these two claims imply that
$$
\forall x\in {\K \vert {\check{\pi}}\vert}, \quad
(\theta_Z \otimes \theta_Z) \delta_{\operatorname{T}}(x) = \hat \delta_{\operatorname{S}}(\vert p\, {\vec{\theta}}_Z(y)\vert) = \hat \delta_{\operatorname{S}}( \theta_Z(x)).
$$
Since  $\K \vert \check \pi\vert \simeq \vert  \K \check \pi\vert$ is dense in $\reallywidehat{ \vert \K{\check{\pi}}\vert }$, 
this will conclude the proof of Corollary \ref{cor:delta}.

We {first} prove  claim \eqref{eq:claim1}. Using   Lemma \ref{lem:delta_e1_e2}, we obtain 
$$
\delta_q(x \hat \otimes C^n) = 
\left\{\begin{array}{ll}
{0} & {\hbox{if } n>0}\\
x \big(\hat S( e') \hat \otimes e'' + e'' \hat \otimes  \hat S(e') \big) -  \big( \hat S(e') \hat \otimes   e''       + e'' \hat \otimes  \hat S(e') \big) x & \hbox{if } n=0
\end{array}\right.
$$ 
{for any $x\in T(\!(H)\!)$  and  $n\in \N$,} where $e:= {s(-z)= -1/2 + z/12-z^3/720 + \cdots }$. For any odd integer $r\geq 1$, we have 
\begin{eqnarray*}
(S \otimes \id+ \id  \otimes S)  \Delta({z^r})&=& \sum_{i=0}^r \binom{r}{i}
(S \otimes \id   + \id  \otimes S) ({z^i} \otimes {z^{r-i}})\\
&=& \sum_{i=0}^r \binom{r}{i} \left((-1)^i {z^i} \otimes {z^{r-i}}+ (-1)^{r-i} {z^i} \otimes {z^{r-i}}\right) \ = \  0
\end{eqnarray*}
which proves that $\hat S( e') \hat\otimes e'' + e'' \hat\otimes \hat S(e') {= -\frac{1}{2} (1 \otimes 1)}$. 
It follows that ${(\vert\! -\! \vert \hat\otimes \vert\! -\! \vert)}\, \delta_q=0$.

Finally, we prove  claim \eqref{eq:claim2}. Using \eqref{eq:d_q} and \eqref{eq:delta_q}, we obtain
\begin{eqnarray*}
\forall x\in T(\!(H)\!), \ \forall n\in \N, \quad
\delta_{{\xi}} (x \hat\otimes C^n) =  x' \hat S\big( ({\xi}(x'' \hat\otimes C^n))'\big)  \wedge ({\xi}(x'' \hat\otimes C^n))''.
\end{eqnarray*}
This quantity is clearly trivial if $n>1$, and it is equal to $-2 \wedge x$ if $n=1$. 
We are going to show that it is equal to $\hat \delta_{\operatorname{S}}(x)$ if $n=0$.
This will imply that
$$
\forall x\in T(\!(H)\!), \ \forall n\in \N, \quad
(\vert\! -\! \vert \hat\otimes \vert\! -\! \vert)\delta_{{\xi}} (x\hat \otimes C^n) 
= (\vert\! -\! \vert {\hat\otimes} \vert\! -\! \vert) \hat \delta_{ \operatorname{S}}(p(x\hat \otimes C^n))= \hat \delta_{\operatorname{S}}(\vert p(x\hat \otimes C^n) \vert)
$$
which will prove that $(\vert\! -\! \vert \hat \otimes \vert\! -\! \vert)\delta_{{\xi}} = \hat \delta_{\operatorname{S}} \vert\! -\! \vert p$
and will conclude the proof of Corollary~\ref{cor:delta}.
To compute $\delta_{{\xi}} (x\hat \otimes 1)$,
we can assume without loss of generality that $x= x_1 \cdots x_m$ where $x_1,\dots,x_m \in H$.
We will use the following notations:
for any subset $J$ of $[1,m]:= \{1,\dots, m\}$, we denote by $x_J$ the ordered product of the $x_j$'s for 	all $j\in J$,
we denote $J_*:= J \setminus\{\max(J)\}$ and, for any $j\in J_*$, we set  $s(j) := {\min(J \setminus (J \cap [1,j]))}$,
$L(j):= J \cap [1,j[$ and $U(j):= J \cap ]s(j),m]$. Then
\begin{eqnarray*}
\delta_{{\xi}} (x\otimes 1) &=& x' S\big( ({\xi}(x'' \otimes 1))'\big)  \wedge ({\xi}(x'' \otimes 1))'' \\
&=& \sum_{\substack{I,J \subset[1,m] \\ I\sqcup J = [1,m]}} x_I S\big( ({\xi}(x_{J} \otimes 1))'\big)  \wedge ({\xi}(x_{J} \otimes 1))''\\
  &=& \sum_{\substack{I,J \subset[1,m] \\ I\sqcup J = [1,m]}} \sum_{j \in J_* } 
x_I S\big( \big(x_{L(j)}\, (x_j {\odot} x_{s(j)})\, x_{U(j)} \big)' \big)  \wedge \big(x_{L(j)}\, (x_j {\odot} x_{s(j)})\, x_{U(j)}\big)'' \\
&=&  \sum_{\substack{I,J \subset[1,m] \\ I\sqcup J = [1,m]}} \sum_{j \in J_* } \sum_{\substack{P,Q \subset J\setminus\{j, s(j)\}\\ P \sqcup Q=  J\setminus\{j, s(j)\} }} 
  \big(x_I S\big( x_{P \cap L(j)}\, (x_j {\odot} x_{s(j)})\, x_{P\cap U(j)} \big) \big) \wedge (x_{Q \cap L(j)}\, x_{Q\cap U(j)}) \\
  && + \sum_{\substack{I,J \subset[1,m] \\ I\sqcup J = [1,m]}} \sum_{j \in J_* } \sum_{\substack{P,Q \subset J\setminus\{j, s(j)\}\\ P \sqcup Q=  J\setminus\{j, s(j)\} }} 
 \big( x_I S( x_{P \cap L(j)}\, x_{P\cap U(j)} ) \big) \wedge \big(x_{Q \cap L(j)}\, (x_j {\odot} x_{s(j)})\, x_{Q\cap U(j)}\big).
\end{eqnarray*}
These triple sums can be rearranged as follows: 
\begin{eqnarray*}
&& \delta_{{\xi}} (x\otimes 1) \\
&=& \sum_{1 \leq j<l \leq m}  \sum_{\substack{I,P,Q \subset [1,m] \setminus  \{j,l\} \\ I \sqcup P \sqcup Q = [1,m] \setminus  \{j,l\} \\ I \supset [j+1,l-1] }}
 \big( x_I S\big( x_{P \cap [1,j[}\, (x_j {\odot} x_l)\, x_{P \cap ]l,m]} \big)  \big) \wedge \big( x_{Q \cap [1,j[}\, x_{Q \cap ]l,m]} \big)\\
  &&+ \sum_{1 \leq j<l \leq m}  \sum_{\substack{I,P,Q \subset [1,m] \setminus  \{j,l\} \\ I \sqcup P \sqcup Q = [1,m] \setminus  \{j,l\} \\ I \supset [j+1,l-1] }}
  \big(x_I S\big( x_{P \cap [1,j[}\, x_{P \cap ]l,m]} \big) \big) \wedge \big( x_{Q \cap [1,j[}\, (x_j {\odot} x_l)\, x_{Q \cap ]l,m]} \big)\\
&=& \sum_{1 \leq j<l \leq m}    \sum_{\substack{I',P',Q' \subset [1,j-1]\\ I' \sqcup P' \sqcup Q' = [1,j-1] }}
\sum_{\substack{I'',P'',Q'' \subset [l+1,m]\\ I'' \sqcup P'' \sqcup Q'' = [l+1,m] }}
  \big(x_{I'} x_{[j+1,l-1]} x_{I''} S\big( x_{P'}\, (x_j {\odot} x_l)\, x_{P''} \big) \big) \wedge( x_{Q'}\, x_{Q''} )\\
  &&+ \sum_{1 \leq j<l \leq m}   \sum_{\substack{I',P',Q' \subset [1,j-1]\\ I' \sqcup P' \sqcup Q' = [1,j-1] }}
\sum_{\substack{I'',P'',Q'' \subset [l+1,m]\\ I'' \sqcup P'' \sqcup Q'' = [l+1,m] }}
 \big( x_{I'} x_{[j+1,l-1]} x_{I''} S\big( x_{P'}\, x_{P''} \big)\big)  \wedge \big( x_{Q'}\, (x_j{\odot} x_l)\, x_{Q''} \big)\\  
  &=&- \sum_{1 \leq j<l \leq m}    \sum_{\substack{I',P',Q' \subset [1,j-1]\\ I' \sqcup P' \sqcup Q' = [1,j-1] }}
\sum_{\substack{I'',P'',Q'' \subset [l+1,m]\\ I'' \sqcup P'' \sqcup Q'' = [l+1,m] }}
  \big(x_{I'} x_{[j+1,l-1]} x_{I''} S(x_{P''}) (x_j {\odot} x_l) S(x_{P'})  \big)  \wedge( x_{Q'}\, x_{Q''} )\\
  &&+ \sum_{1 \leq j<l \leq m}   \sum_{\substack{I',P',Q' \subset [1,j-1]\\ I' \sqcup P' \sqcup Q' = [1,j-1] }}
\sum_{\substack{I'',P'',Q'' \subset [l+1,m]\\ I'' \sqcup P'' \sqcup Q'' = [l+1,m] }}
  \big( x_{I'} x_{[j+1,l-1]} x_{I''} S( x_{P''}) S (x_{P'}) \big)  \wedge\big( x_{Q'}\, (x_j {\odot} x_l)\, x_{Q''} \big)\\
  &=&- \sum_{1 \leq j<l \leq m}    \sum_{\substack{I',P',Q' \subset [1,j-1]\\ I' \sqcup P' \sqcup Q' = [1,j-1] }}
\sum_{\substack{ Q''\subset  [l+1,m] }}
  \big(x_{I'} x_{[j+1,l-1]} \varepsilon(x_{[l+1,m]\setminus Q''}) (x_j {\odot} x_l)\, S(x_{P'}) \big)  \wedge( x_{Q'}\, x_{Q''} )\\
  &&+ \sum_{1 \leq j<l \leq m}   \sum_{\substack{I',P',Q' \subset [1,j-1]\\ I' \sqcup P' \sqcup Q' = [1,j-1] }}
\sum_{\substack{ Q''\subset  [l+1,m] }}
  \big( x_{I'} x_{[j+1,l-1]}  \varepsilon(x_{[l+1,m]\setminus Q''}) S (x_{P'}) \big)  \wedge \big( x_{Q'}\, (x_j {\odot} x_l)\, x_{Q''} \big)\\
   &=&- \sum_{1 \leq j<l \leq m}    \sum_{\substack{I',P',Q' \subset [1,j-1]\\ I' \sqcup P' \sqcup Q' = [1,j-1] }}
  \big(x_{I'} x_{[j+1,l-1]}  (x_j {\odot} x_l)\, S(x_{P'}) \big)  \wedge( x_{Q'}\, x_{[l+1,m]} )\\
  &&+ \sum_{1 \leq j<l \leq m}   \sum_{\substack{I',P',Q' \subset [1,j-1]\\ I' \sqcup P' \sqcup Q' = [1,j-1] }}
  \big( x_{I'} x_{[j+1,l-1]}  S (x_{P'}) \big)  \wedge\big( x_{Q'}\, (x_j {\odot} x_l)\, x_{[l+1,m]} \big)\\
     &=&- \sum_{1 \leq j<l \leq m}    \sum_{\substack{I',P',Q' \subset [1,j-1]\\ I' \sqcup P' \sqcup Q' = [1,j-1] }}
  \big(x_{[j+1,l-1]} (x_j {\odot} x_l)\, S(x_{P'})x_{I'}  \big)  \wedge( x_{Q'}\, x_{[l+1,m]} )\\
  &&+ \sum_{1 \leq j<l \leq m}   \sum_{\substack{I',P',Q' \subset [1,j-1]\\ I' \sqcup P' \sqcup Q' = [1,j-1] }}
  \big( x_{[j+1,l-1]}  S (x_{P'}) x_{I'}  \big)  \wedge\big( x_{Q'}\, (x_j {\odot} x_l)\, x_{[l+1,m]} \big)\\
   &=&- \sum_{1 \leq j<l \leq m}    \sum_{\substack{ Q' \subset  [1,j-1] }}
  \big(x_{[j+1,l-1]} (x_j {\odot} x_l)\, \varepsilon(x_{[1,j-1] \setminus Q'})  \big)  \wedge( x_{Q'}\, x_{[l+1,m]} )\\
  &&+ \sum_{1 \leq j<l \leq m}   \sum_{\substack{ Q' \subset [1,j-1] }}
  \big( x_{[j+1,l-1]}  \varepsilon(x_{[1,j-1] \setminus Q'})  \big)  \wedge\big( x_{Q'}\, (x_j {\odot} x_l)\, x_{[l+1,m]} \big)\\
   &=&- \sum_{1 \leq j<l \leq m}  
  \big(x_{[j+1,l-1]} (x_j {\odot} x_l)\,   \big)  \wedge( x_{[1,j-1]}\, x_{[l+1,m]} )\\
  &&+ \sum_{1 \leq j<l \leq m}  
   x_{[j+1,l-1]}     \wedge   \big( x_{[1,j-1]}\, (x_j {\odot} x_l)\, x_{[l+1,m]} \big)
\ = \ \delta_{\operatorname{S}} (x_1 \cdots x_m) \ = \ \delta_{\operatorname{S}} (x).
\end{eqnarray*}

\subsection{Remarks.} \label{subsec:Gamma}
{1)} Let $\Phi$ be an arbitrary associator
whose logarithm is written in the form~\eqref{eq:log(Phi)}.
It follows from  \eqref{eq:formula_phi} that
\begin{eqnarray*}
\phi(X) - \phi(-X)  &=&   \frac{1}{12} X -  \sum_{j\geq 1}  2q_{2j+1}\, X^{2j+1}.
\end{eqnarray*}
Besides, we have
$$
\phi(X) - \phi(-X)  \by{eq:phi_S}  -  s(X) - 1/2  \by{eq:s(X)}  \sum_{k\geq 1} \frac{B_{2k}}{(2k)!} X^{2k-1}.
$$
{Thus we obtain the following identity, relating some coefficients of $\Phi$ to Bernoulli numbers:}
\begin{equation}    \label{eq:Bernoulli}
\forall j\geq 1, \quad q_{2j+1} = - \frac{B_{2j+2}}{2\, (2j+2)!} \ \in \Q \subset \K.
\end{equation}

{2) The identity  \eqref{eq:Bernoulli}  also follows from a result of Enriquez \cite[Corollary 0.4]{En_Gamma},
which generalizes a result of Drinfeld for the KZ associator \cite[Equation (2.15)]{Dr2} 
and is also contained in an unpublished work of Deligne and Terasoma.
As a matter of fact,  the  series $\phi(X)\in \K[[X]]$ is essentially  Enriquez's $\Gamma$-function.

To be more specific, let $\K\langle\! \langle A,B \rangle \! \rangle$ be the complete associative algebra freely generated by  $A$ and $B$,
and let $(-)^{\operatorname{op}}$ be the unique anti-homomorphism of algebras defined by $A^{\operatorname{op}} := A $ and $B^{\operatorname{op}} := B$.
Then, for any ``associator'' $\Psi \in \K\langle\! \langle A,B \rangle \! \rangle$ of ``parameter'' $\lambda\in \K\setminus\{0\}$ in the sense of~\cite{En_Gamma},
$$
\Phi := \Psi^{\operatorname{op}} \Big(\frac{t_{12}}{\lambda},\frac{t_{23}}{\lambda} \Big) \in  \A(\uparrow_1 \uparrow_2 \uparrow_3)
$$
is an ``associator'' with our conventions. Recall that the ``{$\Gamma$-function}'' of $\Psi$ is a series  
$$
\Gamma_\Psi(u) = \exp\Big( - \sum_{n\geq 2} \zeta_\Psi(n) \frac{u^n}{n}\Big) \in \K[[u]]
$$
where $(\zeta_\Psi(n))_{n\geq 2}$  is a sequence of elements of $\K$ satisfying $\zeta_\Psi(n) = -\lambda^n B_n/(2\, n!)$ for $n$ even.
By comparing the expression \eqref{eq:log(Phi)} of $\Phi$ to the definitions of \cite{En_Gamma}, it can be verified that }
$$
\forall n\geq 2, \quad   {\zeta_\Psi(n+1)} = (-\lambda)^{n+1} q_n.
$$

\appendix

\section{Formal description of Turaev's intersection pairing}

\label{sec:hip_special}

This appendix, which was a part of  \cite{MT_draft}, is aimed at proving Theorem \ref{th:special}.
In this appendix, the ground ring is a commutative field $\K$ of characteristic zero.

\subsection{Symplectic expansions} \label{subsec:symplectic}

We recall from \cite{Ma} the notion of ``symplectic expansion.''
Let $\Sigma^+$ be a compact connected oriented surface of genus $p\geq 1$ with one boundary component,
and set $\pi^+:= \pi_1(\Sigma^+, \ast)$ where $\ast \in \partial \Sigma^+$.
Let $H^+:= H_1(\Sigma^+;\K)$, denote by $T(H^+)$ the  tensor algebra over $H^+$ and denote by $T(\!(H^+)\!)$ the degree-completion of $T(H^+)$.

A \emph{symplectic expansion} of $\pi^+$ is a  map $\theta^+: \pi^+ \to T(\!(H^+)\!)$ with the following properties:
\begin{enumerate}
\item[(i)] for all $x,y\in \pi^+$, $\theta^+(xy) = \theta^+(x)\, \theta^+(y)$;
\item[(ii)] for all $x\in \pi^+$,  $\theta^+(x)$ is group-like;
\item[(iii)] for all $x\in {\pi^+}$,   $\theta^+(x)= 1 + [x] + (\deg \geq 2)$  
where $[x]\in {H^+} \simeq {(\pi^+/[\pi^+,\pi^+])}\otimes_\Z \K$;
\item[(iv)]   $\theta^+(\nu)=\exp(-\omega)$.
\end{enumerate}
In condition (iv), $\nu \in \pi^+$ is the homotopy class of the oriented curve ${\partial \Sigma^+}$,
while   $\omega \in \Lambda^2 H^+$ is the degree $2$ element of $T(H^+)$
corresponding to the homology intersection form $\omega\in \Lambda^2 \Hom(H^+,\K)$ 
through the isomorphism $H^+ \to \Hom(H^+,\K), x \mapsto \omega(x,-)$.
Because $\pi$ is a free group, it is not difficult to construct a symplectic expansion of $\pi$ 
 by proceeding by successive finite-degree approximations. See  \cite[Lemma 2.16]{Ma}.

Let $ \widehat{\K[\pi^+]}$ denote the completion of $\K[\pi^+]$ with respect to the $I$-adic filtration.
The conditions (i), (ii) and (iii) imply that $\theta^+$ induces an isomorphism of complete Hopf algebras 
$$
\hat \theta^+: \widehat{\K[\pi^+]} \stackrel{\simeq}{\longrightarrow} T(\!(H^+)\!)
$$
which, at the level of graded Hopf algebras, gives the canonical isomorphism \eqref{eq:canonical_alg_iso}.  

\subsection{Formal description of Turaev's intersection pairing in the symplectic case}

We use the same notations as in Section~\ref{subsec:symplectic},
and recall from \cite{MT_twists} how symplectic expansions 
provide an algebraic description of the homotopy intersection pairing $\eta^+$ of the surface $\Sigma^+$.

Consider the operation $\stackrel{\omega}{\leadsto}$ in $T(H^+)$ defined by
$(x\stackrel{\omega}{\leadsto} 1) = (1 \stackrel{\omega}{\leadsto} x) :=0$ for any $x\in T(H^+)$ and  by
$$
\left(h_1 \cdots  h_m \stackrel{\omega}{\leadsto} k_1 \cdots  k_n\right)
:= \omega(h_m, k_1)\  h_1 \cdots  h_{m-1}  k_2  \cdots  k_n
$$
for any integers $m,n \geq 1$ and for any $h_1,\dots,h_m,k_1,\dots,k_n \in H^+$.
This operation extends to a filtered Fox pairing
$$
(- \stackrel{\omega}{\leadsto} -): T(\!(H^+)\!) \times T(\!(H^+)\!) \longrightarrow T(\!(H^+)\!)
$$
in the complete Hopf algebra $T(\!(H^+)\!)$.
Moreover, the formal power series $s(X)\in \Q[[X]]$ defined by \eqref{eq:s(X)} can be evaluated at $\omega$
to get an element $s(\omega) \in T(\!(H^+)\!)$, so that 
we can also consider the inner Fox pairing $\rho_{s(\omega)}$ in $T(\!(H^+)\!)$. 
Then, according to \cite[Theorem 10.4]{MT_twists},
the diagram 
\begin{equation}   \label{eq:tensorial_eta_symplectic}
\xymatrix{
\widehat{\K[\pi^+]} \times \widehat{\K[\pi^+]}  \ar[d]_-{\hat \theta^+ \times \hat \theta^+}^-\simeq \ar[rr]^-{\hat \eta^+} 
&& \widehat{\K[\pi^+]} \ar[d]^-{\hat \theta^+}_-\simeq \\
T(\!(H^+)\!) \times T(\!(H^+)\!)  \ar[rr]_-{(-\, \stackrel{\omega}{\leadsto}\, - )+ \rho_{s(\omega)}} && T(\!(H^+)\!)
}
\end{equation}
is commutative for any symplectic expansion $\theta^+$ of $\pi^+$.
This implies a prior result of Kawazumi and Kuno,
which provides  a formal description of the Goldman bracket  $\langle - , - \rangle_{\operatorname{G}}$ for the surface~$\Sigma^+$ \cite{KK_twists} .

\subsection{From special expansions to symplectic expansions} \label{subsec:special_to_symplectic}

Let $\Sigma$ be a disk with finitely many punctures numbered from $1$ to $p$.
We set $\pi:=\pi_1(\Sigma,\ast)$ where $\ast \in \partial \Sigma$ and, 
for any $i\in\{1,\dots,p\}$, let $\check \zeta_i$ be the conjugacy class in $\pi$
that is defined by a small counter-clockwise loop around the $i$-th puncture.
Let $H:=H_1(\Sigma;\K)$ and set $z_i:= [\check \zeta_i]\in H$ for all $i\in \{1,\dots,p\}$.
Recall the notion of ``special expansion'' $\theta: \pi \to T(\!(H)\!)$ given  in Section~\ref{subsec:unframed_special}. 

\begin{lemma} \label{lem:special_to_symplectic}
Let $\Sigma^\circ\subset \Sigma$ be a disk with $p$ holes onto which $\Sigma$ deformation retracts,
and let~$\Sigma^+$ be the compact connected oriented surface of genus $p$ with one boundary component
that is obtained from $\Sigma^\circ$ by gluing a one-hole torus to each boundary component not containing $\ast$.
Let $\pi^+ :=\pi_1(\Sigma^+,\ast)$ and $H^+:= H_1(\Sigma^+;\K)$. Then, for any special expansion $\theta$ of $\pi$,
there exists a symplectic expansion $\theta^+$ of $\pi^+$ such that the following diagram is commutative:
\begin{equation}\label{eq:theta_theta^+}
\xymatrix{
\pi \ar[d]_-{\iota} \ar[r]^-{\theta} & T(\!(H)\!)  \ar[d]^-{I}\\
\pi^+ \ar[r]_-{\theta^+} & T(\!(H^+)\!)
}
\end{equation}
Here $\iota$ is the group homomorphism  induced by the inclusion $ \Sigma \simeq \Sigma^\circ \subset \Sigma^+$,
and $I$ is the complete algebra homomorphism mapping $-z_i\in H$ 
to the homology intersection form  of the $i$-th one-hole torus, which we regard as  a degree $2$ element of $T(\!(H^+)\!)$.
\end{lemma}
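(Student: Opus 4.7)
The plan is to build $\theta^+$ by prescribing its values on a free basis of $\pi^+$ handle by handle, twisting a ``standard'' symplectic expansion of each one-hole torus by the primitive elements dictated by $\theta$. Concretely, fix free generators $a_1,b_1,\dots,a_p,b_p$ of $\pi^+$ such that $\iota(\zeta_i) = [a_i,b_i]$ for each $i$ and such that $\iota(\nu_\Sigma)=\nu_{\Sigma^+}=[a_1,b_1]\cdots[a_p,b_p]$ in $\pi^+$. Let $H^+_i\subset H^+$ denote the span of $a_i,b_i$, so that $T(\!(H^+_i)\!)$ embeds canonically in $T(\!(H^+)\!)$ and the $i$-th symplectic form $\omega_i = a_ib_i-b_ia_i\in T(\!(H^+_i)\!)$ is primitive; observe also that $\omega=\sum_i\omega_i$.

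For each $i$ choose (using the classical existence result, e.g.\ Lemma~2.16 of \cite{Ma}) a symplectic expansion $\theta_i^\square\colon F(a_i,b_i)\to T(\!(H^+_i)\!)$ of the one-hole torus group; in particular $\theta_i^\square([a_i,b_i])=\exp(-\omega_i)$. Next, since $u_i\in T(\!(H)\!)$ is primitive with no degree-$0$ component and $I$ doubles degrees (because $I(z_j)=-\omega_j$ has degree $2$), the element $I(u_i)\in T(\!(H^+)\!)$ is primitive and supported in degree ${\geq 2}$; hence $\exp(I(u_i))$ is group-like and conjugation by it does not affect degrees $\leq 1$. Define
\[
\theta^+(a_i) := \exp(I(u_i))\,\theta_i^\square(a_i)\exp(-I(u_i)), \qquad
\theta^+(b_i) := \exp(I(u_i))\,\theta_i^\square(b_i)\exp(-I(u_i)),
\]
and extend multiplicatively to $\pi^+=F(a_1,b_1,\dots,a_p,b_p)$.

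Conditions (i)--(iii) of a symplectic expansion are then straightforward: multiplicativity holds by construction, group-likeness is preserved by products and conjugation by group-likes, and the degree-$1$ part of $\theta^+(a_i)$ equals that of $\theta_i^\square(a_i)$, namely $a_i$, because the conjugating factor acts trivially in degrees $\leq 1$. For the diagram \eqref{eq:theta_theta^+}, it suffices to check equality on the generators $\zeta_i$ of $\pi$; one computes directly
\[
\theta^+(\iota(\zeta_i)) = \theta^+([a_i,b_i]) = \exp(I(u_i))\,[\theta_i^\square(a_i),\theta_i^\square(b_i)]\,\exp(-I(u_i)) = \exp(I(u_i))\exp(-\omega_i)\exp(-I(u_i)),
\]
which coincides with $I(\theta(\zeta_i))=I(\exp(u_i)\exp(z_i)\exp(-u_i))$ since $I$ is an algebra map with $I(z_i)=-\omega_i$.

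Finally, condition (iv) follows from this commutativity together with property (iii) of the special expansion~$\theta$: indeed
\[
\theta^+(\nu_{\Sigma^+}) = \theta^+(\iota(\nu_\Sigma)) = I(\theta(\nu_\Sigma)) = I(\exp(z)) = \exp(-\omega),
\]
completing the verification. The only non-routine ingredient is the existence of the local expansions $\theta_i^\square$ of the one-hole torus, which is classical; everything else is formal bookkeeping with primitive/group-like elements and the fact that $I$ is a Hopf-algebra map doubling degrees.
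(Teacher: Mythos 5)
Your proposal is correct and follows essentially the same route as the paper: transplant a fixed symplectic expansion of the one-hole torus to each handle, conjugate by $\exp(I(u_i))$ with $u_i$ taken from condition (ii) of the special expansion, extend multiplicatively, and verify (i)--(iv) and the diagram on the generators $\zeta_i$, with (iv) deduced from commutativity and $\theta(\nu)=\exp(z)$. The only difference is cosmetic: the paper's explicit pictures yield the normalization $\iota(\zeta_i)=[\alpha_i^{-1},\beta_i]$ (rather than your $[a_i,b_i]$), which is how it settles the orientation/sign bookkeeping you assert can be arranged.
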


\begin{proof}
Consider the  basis $(\zeta_1,\dots, \zeta_p)$ of $\pi$ {given} by the following closed oriented curves in $\Sigma^\circ$:
$$
\labellist
\scriptsize\hair 2pt
 \pinlabel {$\zeta_1$} [b] at 128 350
 \pinlabel {$\zeta_p$} [b] at 443 352
  \pinlabel {$\nu$}  at 400  5
    \pinlabel {$\ast$}  [t] at 280  5
 \pinlabel {$\cdots$}  at 285 286
  \pinlabel {$\circlearrowleft$}  at 285 480
\endlabellist
\centering
\includegraphics[scale=0.15]{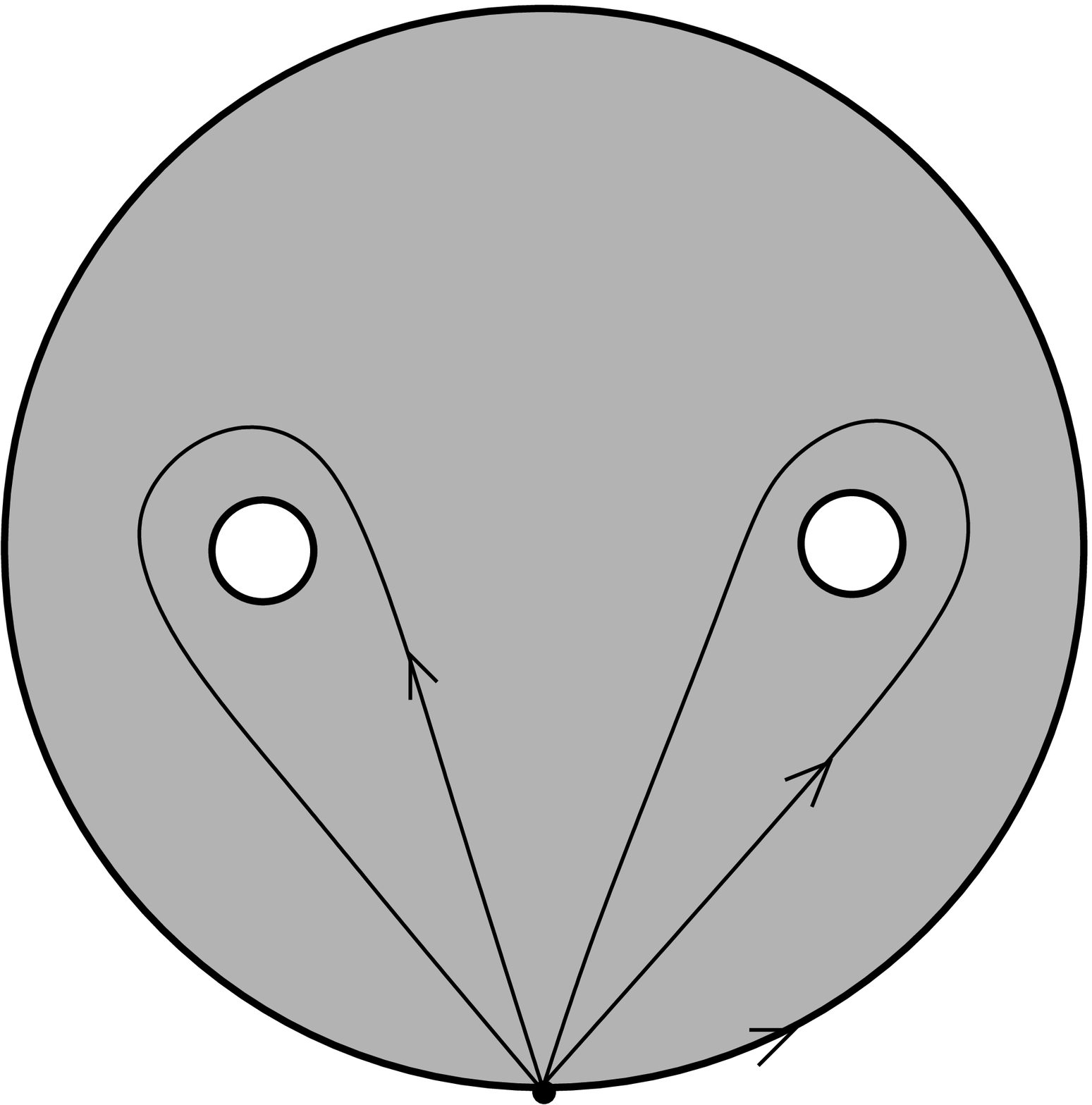}
$$
Note that $\zeta_i$ is a representative of $\check \zeta_i$ for all $i\in \{1,\dots,p\}$. 
Consider also the basis $(\alpha,\beta)$ of the fundamental group of the one-hole torus represented by the following closed oriented curves:
$$
\labellist
\scriptsize\hair 2pt
  \pinlabel {$\circlearrowleft$}  at 285 480
  \pinlabel {$\beta$} [b] at 409 270
 \pinlabel {$\alpha$} [b] at 441 398
\endlabellist
\centering
\includegraphics[scale=0.15]{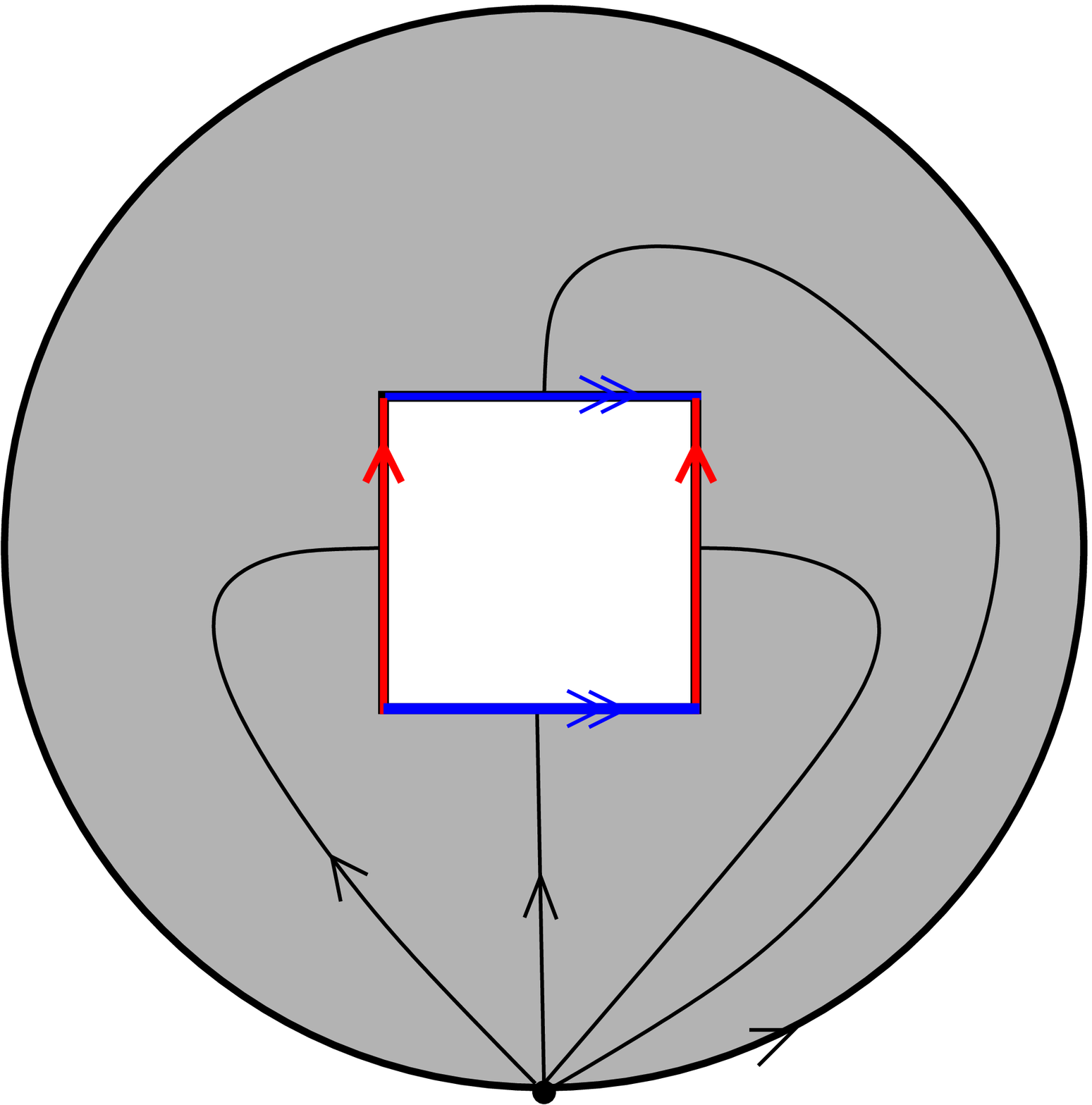}
$$
By repeating this picture $p$ times in $\Sigma^+$,
we obtain a basis $(\alpha_1,\beta_1,\dots, \alpha_p,\beta_p)$ of $\pi^+$ 
such that $\iota(\zeta_i)=[(\alpha_i)^{-1},\beta_i]$ for any $i\in \{1,\dots,p\}$. 
We set $a_i:=[\alpha_i] \in H^+$ and $b_i:=[\beta_i]\in H^+$.

{We choose a symplectic expansion $\theta': F(\alpha,\beta) \to \K\langle \! \langle a,b \rangle \! \rangle$ of 
the one-hole torus, where $a$ and $b$ denote the homology classes of $\alpha$ and $\beta$ respectively.
Thus, we have
\begin{equation} \label{eq:glike_1}
\theta'(\alpha) = \exp( a + c) \quad \hbox{and}  \quad \theta'(\beta) = \exp( b + d)
\end{equation}
where $c, d$ are series of Lie words in $a,b$ of length greater than $1$, and 
\begin{equation} \label{eq:symplectic_1}
\theta'([\alpha^{-1},\beta]) = \exp(-[a,b]).
\end{equation}
By condition (ii) of a special expansion, there exists for each $i\in \{1,\dots,p\}$ a primitive element $u_i \in T(\!(H)\!)$ such that 
\begin{equation} \label{eq:conjugacy}
\theta( \zeta_i) = \exp(u_i) \exp(z_i) \exp(-u_i).
\end{equation}
Then there is a unique multiplicative map $\theta^+: \pi^+ \to  T(\!(H^+)\!)$ such that
$$
\left\{\begin{array}{l}
\theta^+(\alpha_i) = \exp(I(u_i))\, \theta'(\alpha)\big\vert_{a \mapsto a_i, b \mapsto b_i}\exp(-I(u_i)) \\
\theta^+(\beta_i) = \exp(I(u_i))\, \theta'(\beta)\big\vert_{a \mapsto a_i, b \mapsto b_i}\exp(-I(u_i))
\end{array}\right.
$$
for all $i\in \{1,\dots,p\}$. Note that
\begin{eqnarray*}
\theta^+ \iota(\zeta_i) &=& \theta^+\big( [(\alpha_i)^{-1},\beta_i]\big) \\
&=& \theta^+(\alpha_i^{-1})\, \theta^+(\beta_i)\, \theta^+(\alpha_i)\, \theta^+(\beta_i^{-1})  \\
&=& \exp(I(u_i))\,   \theta'([\alpha^{-1},\beta])\big\vert_{a \mapsto a_i, b \mapsto b_i} \, \exp(-I(u_i)) \\
& \by{eq:symplectic_1} &  \exp(I(u_i))\,    \exp(-[a_i,b_i]) \, \exp(-I(u_i)) \\
&=&   \exp(I(u_i))\,    \exp(I(z_i)) \, \exp(-I(u_i)) \ \by{eq:conjugacy} \ I \theta(\zeta_i)
\end{eqnarray*}
which shows the commutativity of the diagram \eqref{eq:theta_theta^+}.
That $\theta^+$ satisfies the conditions (ii) and (iii) of a symplectic expansion follows easily from \eqref{eq:glike_1}.
The condition (iv) of a symplectic expansion  follows from the condition (iii) of a special expansion:
$$
\theta^+(\nu)    =   \theta^+(\iota(\nu))   \by{eq:theta_theta^+}  I \theta(\nu)   =  I \exp(z)  =  \exp I(z)  =  \exp (-\omega).
$$

\up}
\end{proof}

\subsection{Proof of Theorem \ref{th:special}}
 
We use the same notations as in Section~\ref{subsec:special_to_symplectic}.
Let $\theta:{ \pi \to T(\!(H)\!)}$ be a special expansion:
we consider the surface $\Sigma^+$
and the symplectic expansion $\theta^+: {\pi^+ \to  T(\!(H^+)\!)}$
provided by Lemma \ref{lem:special_to_symplectic}.

Denote by  $\eta^+$ the homotopy intersection pairing of $\Sigma^+$,
and let $\hat \eta^+$ be the Fox pairing in  $\widehat{\K[\pi^+]}$ obtained  from $\eta^+$ by completion.
Clearly, the diagram
\begin{equation}\label{eq:eta_eta^+}
\xymatrix{
\widehat{\K[\pi]} \times \widehat{\K[\pi]} \ar[rrrr]^-{\hat \eta}\ar[d]_-{\hat\iota \times \hat\iota} & & &&
\widehat{\K[\pi]}  \ar[d]^-{ \hat \iota} \\
\widehat{\K[\pi^+]} \times \widehat{\K[\pi^+]}\ar[rrrr]_-{\hat \eta^+} & & && \widehat{\K[\pi^+]}\\
}
\end{equation}
is commutative. Moreover, for any $i,j\in \{1,\dots,p\}$,
\begin{eqnarray*}
I(z_i) \stackrel{\omega}{\leadsto} I(z_j)&=&  [a_i,b_i]  \stackrel{\omega}{\leadsto} [a_j,b_j] \\
&=& \delta_{ij} \big( (a_ib_i)\stackrel{\omega}{\leadsto} (a_ib_i) + (b_ia_i) \stackrel{\omega}{\leadsto} (b_ia_i)\big) \\
&=& \delta_{ij} (-a_ib_i + b_i a_i) \ = \ \delta_{ij} I(z_i) \ = \ I (z_i {\odot} z_j) 
\end{eqnarray*}
which implies that $(-\stackrel{\omega}{\leadsto}-) \circ (I \times I) = I \circ (-{\odot}-)$.
Using the fact that $I(-z)=\omega$,  we deduce that the following diagram is commutative:
\begin{equation}\label{eq:z_omega}
\xymatrix{
T(\!(H)\!) \times T(\!(H)\!) \ar[rrrr]^-{(-\, {\odot}\, - )+ \rho_{s(-z)}}\ar[d]_-{I \times I} & & &&
T(\!(H)\!)  \ar[d]^-{ I} \\
T(\!(H^+)\!) \times T(\!(H^+)\!)\ar[rrrr]_-{(-\, \stackrel{\omega}{\leadsto}\, - )+ \rho_{s(\omega)}} & & && T(\!(H^+)\!)\\
}
\end{equation}
Now, for any $x,y\in \widehat{\K[\pi]}$, we have
\begin{eqnarray*}
I \hat\theta \hat \eta(x,y)&\stackrel{\eqref{eq:theta_theta^+}}{=}& \hat \theta^+ \hat \iota   \hat \eta(x,y)\\
&\stackrel{\eqref{eq:eta_eta^+}}{=}& \hat \theta^+   \hat \eta^+\big(\hat \iota (x),\hat \iota (y)\big)\\
&\stackrel{\eqref{eq:tensorial_eta_symplectic}}{=}&  
\big(\hat \theta^+ \hat \iota (x)\big) \stackrel{\omega}{\leadsto} \big(\hat \theta^+ \hat \iota (y)\big)  
+ \rho_{s(\omega)} \big(\hat \theta^+ \hat \iota (x),\hat \theta^+ \hat \iota (y)\big)\\
&\stackrel{\eqref{eq:theta_theta^+}}{=}& \big(I \hat \theta (x)\big) \stackrel{\omega}{\leadsto} \big(I \hat \theta (y)\big)  
+ \rho_{s(\omega)} \big(I \hat \theta (x),I \hat \theta (y)\big) \\
&\stackrel{\eqref{eq:z_omega}}{=}& I\left(\hat \theta (x)  {\odot}  \hat \theta (y)
+ \rho_{s(-z)} \big( \hat \theta (x), \hat \theta (y)\big)\right).
\end{eqnarray*}
Since the map  $I: T(\!(H)\!) \to T(\!(H^+)\!)$ is injective, 
we conclude that the diagram \eqref{eq:tensorial_eta} is commutative.

%
% Bibliography
%

%
%
%
\end{document}